\documentclass[10pt,a4paper]{amsart}
\pdfoutput=1
\usepackage[subsec,skipmbb]{rune}
\usepackage{runediag}

\title[The Universal Property of Bispans]{On Distributivity in Higher Algebra I: \\ The Universal Property of Bispans}

\author{Elden Elmanto}
\address{Department of Mathematics, Harvard University, Cambridge,
  USA}
\urladdr{http://eldenelmanto.com}
  
\author{Rune Haugseng}
\address{Department of Mathematical Sciences, NTNU, Trondheim, Norway}
\urladdr{http://folk.ntnu.no/runegha}
\date{\today}

\newcommand{\igpds}{$\infty$-groupoids}
\theoremstyle{definition}
\newtheorem{variant}[thm]{Variant}

\newcommand{\Bispan}{\txt{Bispan}}
\newcommand{\BISPAN}{\txt{BISPAN}}
\newcommand{\CAT}{\txt{CAT}}
\newcommand{\FUN}{\txt{FUN}}
\newcommand{\MAP}{\txt{MAP}}
\newcommand{\ADJ}{\txt{ADJ}}
\newcommand{\CATI}{\CAT_{\infty}}
\newcommand{\CatIT}{\Cat_{(\infty,2)}}

\newcommand{\Sch}{\mathrm{Sch}}
\newcommand{\AlgSpc}{\mathrm{AlgSpc}}
\newcommand{\Perf}{\mathrm{Perf}}
\newcommand{\QCoh}{\mathrm{QCoh}}
\newcommand{\SpDM}{\mathrm{SpDM}}
\newcommand{\FP}{\mathcal{FP}}
\newcommand{\all}{\txt{all}}
\newcommand{\fet}{\txt{f\'et}}
\newcommand{\fold}{\txt{fold}}

\newcommand{\sm}{\mathrm{sm}}
\newcommand{\smqp}{\mathrm{smqp}}
\newcommand{\proj}{\mathrm{proj}}
\newcommand{\prop}{\mathrm{prop}}

\newcommand{\ostar}{\circledast}
\newcommand{\fin}{\mathrm{fin}}

\newcommand{\poly}{\mathrm{poly}}
\newcommand{\stab}{\mathrm{stab}}

\newcommand{\igpd}{$\infty$-groupoid}

\usepackage{extarrows}
\newcommand{\longequal}{\xlongequal{\,\,}}

\newcommand{\twop}{2\txt{-op}}
\newcommand{\onop}{1\txt{-op}}

\newcommand{\Flpadj}{F\txt{-lpreadj}}
\newcommand{\Fradj}{F\dradj}
\newcommand{\Fladj}{F\dladj}

\newcommand{\Ldist}{L\txt{-dist}}

\newcommand{\ddist}{\txt{-dist}}

\newcommand{\Fcart}{F\txt{-cart}}

\newcommand{\fmnd}{\mathfrak{mnd}}

\newcommand{\ladj}{\txt{ladj}}
\newcommand{\radj}{\txt{radj}}
\newcommand{\dradj}{\txt{-radj}}
\newcommand{\dladj}{\txt{-ladj}}
\newcommand{\drcoadj}{\txt{-rcoadj}}
\newcommand{\Frcoadj}{F\drcoadj}
\newcommand{\Pair}{\txt{Pair}}
\newcommand{\Trip}{\txt{Trip}}

\newcommand{\SH}{\mathrm{SH}}
\newcommand{\HH}{\mathrm{H}}

\newcommand{\flf}{\txt{flf}}
\newcommand{\qp}{\txt{qp}}
\newcommand{\xF}{\mathbb{F}}
\newcommand{\Yo}{\mathsf{y}}
\newcommand{\Vect}{\txt{Vect}}
\newcommand{\Rep}{\txt{Rep}}
\newcommand{\dRep}{\Rep^{\infty}}
\newcommand{\angled}[1]{\langle #1 \rangle}
\DeclareMathOperator{\Spec}{Spec}
\DeclareMathOperator{\CMon}{CMon}
\DeclareMathOperator{\PolyFun}{PolyFun}
\DeclareMathOperator{\AnFun}{AnFun}
\DeclareMathOperator{\Sq}{Sq}
\crefformat{equation}{(#2#1#3)}

\newcommand{\drpullback}{\arrow[phantom]{dr}[very near
  start,description]{\lrcorner}}
\newcommand{\ddrpullback}{\arrow[phantom]{ddr}[very near start,description]{\lrcorner}}
\newcommand{\dlpullback}{\arrow[phantom]{dl}[very near start,description]{\llcorner}}

\newcommand{\urpullback}{\arrow[phantom]{ur}[very near start,description]{\urcorner}}

\begin{document}

\begin{abstract}
  Structures where we have both a contravariant (pullback) and a
  covariant (pushforward) functoriality that satisfy base change can
  be encoded by functors out of ($\infty$-)categories of spans (or
  correspondences). In this paper we study the more complicated setup
  where we have two pushforwards (an ``additive'' and a
  ``multiplicative'' one), satisfying a distributivity relation. Such
  structures can be described in terms of bispans (or polynomial
  diagrams). We show that there exist $(\infty,2)$-categories of
  bispans, characterized by a universal property: they corepresent
  functors out of $\infty$-categories of spans where the pullbacks
  have left adjoints and certain canonical 2-morphisms (encoding
  base change and distributivity) are invertible. This gives a
  universal way to obtain functors from bispans, which amounts to
  upgrading ``monoid-like'' structures to ``ring-like'' ones. For
  example, symmetric
  monoidal $\infty$-categories can be described as product-preserving
  functors from spans of finite sets, and if the tensor product is
  compatible with finite coproducts our universal property gives
  the canonical semiring structure using the coproduct and tensor
  product. More interestingly, we encode the additive and
  multiplicative transfers on equivariant spectra as a functor from
  bispans in finite $G$-sets, extend the norms for finite \'etale
  maps in motivic spectra to a functor from certain bispans in
  schemes, and make $\Perf(X)$ for $X$ a spectral Deligne--Mumford stack
  a functor of bispans using a multiplicative pushforward for finite
  \'etale maps in addition to the usual pullback and pushforward
  maps. Combining this with the polynomial functoriality of $K$-theory
  constructed by Barwick, Glasman, Mathew, and Nikolaus, we obtain
  norms on algebraic $K$-theory spectra.
\end{abstract}

\maketitle
\tableofcontents

\section{Introduction}
This paper is the first part of a project aimed at better understanding
certain sophisticated ring-like structures that occur in ``homotopical
mathematics''.
By this we mean not just the theory of $E_{\infty}$-rings, where
additions and multiplications are indexed over finite sets, but also
more exotic structures occuring in equivariant and motivic homotopy
theory where operations can be indexed over finite $G$-sets and finite
\'etale morphisms, respectively. Such structures are also relevant to
derived algebraic geometry and algebraic $K$-theory.

In the present paper we construct equivariant and motivic versions of
the canonical semiring structure on a symmetric monoidal \icat{} whose
tensor product commutes with finite coproducts.

In the $G$-equivariant case this structure encodes the compatibility of
additive and multiplicative transfers (or norms) along maps of finite
$G$-sets. In the case of genuine $G$-spectra such multiplicative
transfers were defined by Hill, Hopkins, and
Ravenel~\cite{HHRKervaire} (extending a construction on the level of
cohomology groups due to Greenlees--May
\cite{GreenleesMayMU,BohmannNorm}) and played a key role in their
solution of the Kervaire invariant one problem; more recently, they have
been considered as the defining structure of an equivariant symmetric
monoidal \icat{} in ongoing work of Barwick, Dotto, Glasman, Nardin,
and Shah~\cite{BDGNS1}.

In the motivic version, we have multiplicative transfers along finite
\'etale morphisms and additive transfers along smooth morphisms of
schemes. Such multiplicative transfers were constructed for motivic
spectra (and in a number of related examples) by Bachmann and
Hoyois~\cite{norms}. These generalize, among other constructions,
Fulton--Macpherson's norms on Chow groups \cite{fulton-mac} and
Joukhovitski's norms on $K_0$ \cite{kzero}.

We will also show that the \icats{} $\Perf(X)$ of perfect
quasicoherent sheaves on a spectral Deligne--Mumford stack $X$ have a
similar structure given by a multiplicative pushforward for finite
\'etale maps in addition to the usual pushforward and pullback
functors. In all these cases we will obtain the canonical ``semiring''
structures using a universal property of $(\infty,2)$-categories of
\emph{bispans}, which is the main result of this paper.

\subsection{Spans and commutative monoids}
Before we explain what we mean by bispans, it is helpful to first
recall the relation between commutative monoids and spans: if $\xF$
denotes the category of finite sets, then we can define a
(2,1)-category $\Span(\xF)$ where
\begin{itemize}
\item objects are finite sets,
\item morphisms from $I$ to $J$ are \emph{spans} (or correspondences)
  \[
    \begin{tikzcd}[row sep=small]
      {} & S \arrow{dl} \arrow{dr} \\
      I & & J,
    \end{tikzcd}
  \]
\item composition is given by pullback: the composite
  \[ \left(\begin{tikzcd}[row sep=small]
      {} & T \arrow{dl} \arrow{dr} \\
      J & & K,
    \end{tikzcd} \right) \circ    \left(\begin{tikzcd}[row sep=small]
      {} & S \arrow{dl} \arrow{dr} \\
      I & & J,
    \end{tikzcd} \right)\]
is the outer span in the diagram
\[
  \begin{tikzcd}[row sep=small]
    {} & {} & S \times_{K} T \arrow{dl} \arrow{dr} \\
    & S \arrow{dl} \arrow{dr} & & T \arrow{dl} \arrow{dr} \\
    I & & J & & K,
  \end{tikzcd}
  \]
\item 2-morphisms are isomorphisms of spans.
\end{itemize}
If $M$ is a commutative monoid in $\Set$, we can use the monoid structure to define a functor
\[ \Span(\xF) \to \Set \]
which takes $I \in \xF$ to $M^{I} := \prod_{i \in I} M$ and a span $I
\xfrom{f} S \xto{g} J$ to the composite $g_{\otimes}f^{*}$ where
$f^{*}\colon M^{I} \to M^{S}$ is given by composition with $f$ (so
$f^{*}\phi(s) = \phi(fs)$) and $g_{\otimes}$ is defined using the
product on $M$ by
\[ g_{\otimes}(\phi)(j) = \prod_{s \in g^{-1}(j)} \phi(s).\]
This is compatible with composition of spans, since a pullback square
gives a canonical isomorphism of fibres and we have $(gg')_{\otimes}
= g_{\otimes}g'_{\otimes}$ as the multiplication is associative.

It can be shown that if $\mathbf{C}$ is any category with finite
products, every functor $\Phi \colon \Span(\xF) \to \mathbf{C}$ such
that $\Phi(I) \cong \Phi(*)^{\times |I|}$ via the canonical maps
arises in this way from a commutative monoid in $\mathbf{C}$. More
precisely, we can identify commutative monoids in $\mathbf{C}$ with
product-preserving functors $\Span(\xF) \to \mathbf{C}$. (In other
words, the homotopy category of $\Span(\xF)$ is precisely the
\emph{Lawvere theory} for commutative monoids.) This is also true
homotopically:
\begin{thm}
  Let $\mathcal{C}$ be an \icat{} with finite products. There is a
  natural equivalence of \icats{} between commutative monoids in
  $\mathcal{C}$ and product-preserving functors $\Span(\xF) \to \mathcal{C}$.
\end{thm}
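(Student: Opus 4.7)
The plan is to construct a canonical functor $\iota \colon \Fin_{*} \to \Span(\xF)$ and to prove that restriction along it induces the desired equivalence of \icats{}, where, following Lurie, a commutative monoid in $\mathcal{C}$ is a functor $M \colon \Fin_{*} \to \mathcal{C}$ satisfying the Segal condition that the maps $M(\angled{n}_{*}) \to M(\angled{1}_{*})^{n}$ induced by the inert projections are equivalences.

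First, I would define $\iota$ on morphisms by sending a pointed map $\phi \colon \angled{m}_{*} \to \angled{n}_{*}$ to the span $\angled{m} \leftarrow \phi^{-1}(\angled{n}) \xrightarrow{\phi} \angled{n}$, where $\angled{k}$ denotes the underlying unpointed finite set. The key observation is that $\iota$ carries each inert Segal map $\rho^{i} \colon \angled{n}_{*} \to \angled{1}_{*}$ to a span exhibiting $\angled{1}$ as a product factor of $\angled{n}$ in $\Span(\xF)$; consequently restriction along $\iota$ sends a product-preserving functor $\Phi \colon \Span(\xF) \to \mathcal{C}$ to a Segal functor, i.e., to a commutative monoid.

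Second, to show that this restriction is an equivalence, I would use that every span $I \xleftarrow{f} S \xrightarrow{g} J$ factors as a purely backward span followed by a purely forward span, and that forward spans in $\Span(\xF)$ can be realized as images under $\iota$ of active maps in $\Fin_{*}$. Given a commutative monoid $M$, one constructs an extension $\Phi$ by setting $\Phi(I) \simeq M(\angled{1}_{*})^{\times |I|}$, defining $f^{*}$ by reindexing along $f$ and $g_{!}$ by applying the monoid multiplication fibrewise along $g$. Compatibility with composition of spans then follows from the associativity and commutativity of the monoid structure together with naturality in the pullback squares of $\xF$.

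The main obstacle is packaging these constructions into a coherent functor of \icats{}, so that every higher coherence for composition of spans is matched by a correspondingly higher coherence in the monoid structure of $M$. This can be handled by presenting $\Span(\xF)$ via Barwick's effective Burnside construction as a complete Segal space and performing the extension level by level, or alternatively by characterizing $\Span(\xF)$ through a universal property as the initial \icat{} under $\xF$ in which every morphism acquires a right adjoint satisfying base change. This second formulation, which appears in work of Cranch, Harpaz, and Bachmann--Hoyois, is the direct precursor to the universal property for bispans established in the rest of the paper, so the desired statement can ultimately be obtained either by citing these references or as a degenerate case of the main theorem.
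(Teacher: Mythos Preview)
Your outline is essentially the right shape, and in fact the paper does not supply an independent proof of this statement: it simply cites \cite[Proposition~C.1]{norms} (see \cref{lem:cranch}).  So the comparison is really between your sketch and the argument of Bachmann--Hoyois.

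The one substantive point you do not quite land on is the construction of the inverse.  You propose to build $\Phi$ from $M$ by hand, sending a span to $g_{!}f^{*}$, and then correctly flag that the ``main obstacle'' is making this coherent.  The cleaner device, which is what the paper records, is that the inverse to restriction along $j\colon \xF_{*}\to \Span(\xF)$ is given by \emph{right Kan extension} along $j$.  This produces a genuine functor of \icats{} without any ad hoc coherence management; one then checks that the resulting extension is product-preserving and that unit and counit are equivalences.  Your level-by-level Segal-space argument can be made to work, but the right Kan extension packaging is both shorter and more robust, and it is what the cited references actually do.

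Two small corrections.  First, the universal property you mention characterizes $\SPAN_{F}(\mathcal{C})$ via \emph{left} adjoints to backwards maps (left $F$-adjointability), not right adjoints; compare \cref{thm:spanuniv}.  Second, the present statement is not a degenerate case of the paper's main theorem on bispans: the bispan universal property concerns distributivity and left adjoints to $f^{\ostar}$, whereas the commutative-monoid statement is about identifying product-preserving functors on $\Span(\xF)$ with Segal objects on $\xF_{*}$.  These are related in spirit but logically independent; the paper treats them as separate inputs.
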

The earliest proof of this seems to be the in thesis of Cranch
\cite{CranchThesis,CranchSpan}; other proofs (as special cases of
different generalizations) are due to Bachmann--Hoyois~\cite[Appendix
C]{norms} and Glasman~\cite[Appendix A]{GlasmanStrat}. In addition, it
appears in
Harpaz~\cite[Section 5.2]{HarpazAmbi}  as the bottom case of
his theory of $m$-commutative monoids.

\subsection{Bispans and commutative semirings}
We can ask for a similar description for commutative
semirings. In this case, we have two operations --- addition and multiplication
--- so we want a (2,1)-category $\Bispan(\xF)$ whose objects
are again finite sets, with a morphism from $I$ to $J$ given by a
\emph{bispan} (or polynomial diagram)
\begin{equation}\label{eq:bispan1}
  \begin{tikzcd}[row sep=small]
    {} & X \arrow{r}{f} \arrow{dl}[swap]{p} & Y \arrow{dr}{q} \\
    I & & & J.
  \end{tikzcd}
\end{equation}
If $R$ is a commutative semiring in $\Set$, we want a functor
\[
\Bispan(\xF) \to \Set
\] that takes a set $I$ to $R^{I}$ and the bispan
\cref{eq:bispan1} to $q_{\oplus}f_{\otimes}p^{*}$ where
\begin{itemize}
\item $p^{*} \colon R^{I}\to R^{X}$ is defined by composing with $p$,
  \[ p^{*}(\phi)(x) = \phi(px),\]
\item $f_{\otimes} \colon R^{X} \to R^{Y}$ is defined by multiplying
  in $R$ fibrewise,
  \[ f_{\otimes}(\phi)(y) = \prod_{x \in f^{-1}(y)} \phi(x),\]
\item $q_{\oplus}$ is defined by adding in $R$ fibrewise,
  \[ q_{\oplus}(\phi)(j) = \sum_{y \in q^{-1}(j)} \phi(y).\]
\end{itemize}
The question is then whether there is a way to define composition of
bispans so that this gives a functor. Given a pullback square
\begin{equation}
  \label{eq:setpb}
  \begin{tikzcd}
  I' \arrow{r}{g} \arrow{d}[swap]{i} & J' \arrow{d}{j} \\
  I \arrow{r}{f} & J
\end{tikzcd}
  \end{equation}
in $\xF$, we have identities $g_{\otimes}i^{*} = j^{*}f_{\otimes}$ and
$g_{\oplus}i^{*} = j^{*}f_{\oplus}$ as before, but now we also need to
deal with compositions of the form $v_{\otimes}u_{\oplus}$ for $u
\colon I \to J$ and $v \colon J \to K$. Using the distributivity of
addition over multiplication, for $\phi \colon I \to R$ and $k \in K$ we can write
\begin{equation}
  \label{eq:disteqn}
 v_{\otimes}u_{\oplus}(\phi)(k) = \prod_{j \in J_{k}} \sum_{i \in
    I_{j}} \phi(i) = \sum_{(i_{j}) \in \prod_{j \in J_{k}} I_{j}}
  \prod_{t \in J_{k}} \phi(i_{t}). 
\end{equation}
We can interpet this in terms of a \emph{distributivity diagram} in
$\xF$: if we let $h \colon X \to K$ be the family of sets $X_{k} = \prod_{j \in
J_{k}} I_{j}$ (so that $h = v_{*}u$ where $v_{*}$ is the right adjoint
to pullback along $v$), then the pullback $v^{*}X$ has a canonical map
to $I$ over $J$: on the fibre $(v^{*}X)_{j}$, which is the product $\prod_{j'
  \in J_{v(j)}} I_{j'}$, we take the projection to the factor
$I_{j}$. This gives a commutative diagram
\begin{equation}
  \label{eq:distdiagF}
  \begin{tikzcd}
    {} & v^{*}X \arrow{r}{\tilde{v}} \arrow{dd} \arrow{dl}[swap]{\epsilon}\arrow[phantom]{ddr}[very near
    start,description]{\lrcorner} &
    X \arrow{dd}{h} \\
    I \arrow{dr}{u} \\
     & J \arrow{r}{v} & K
  \end{tikzcd}
\end{equation}
where the square is cartesian, and we can rewrite the distributivity
relation \cref{eq:disteqn} as
\[ v_{\otimes}u_{\oplus} =
  h_{\oplus}\tilde{v}_{\otimes}\epsilon^{*}.\]
This means we will get a functor $\Bispan(\xF) \to \Set$ from the
commutative semiring $R$ if we define the composition of two bispans
\[ I \xfrom{s} E \xto{p} B \xto{t} J,\]
\[ J \xfrom{u} F \xto{q} C \xto{v} K,\]
as the outer bispan in the following diagram:
 \begin{equation}
   \label{eq:bispancomp}
 \begin{tikzcd}
   {} &   &   & G \arrow[bend left=32]{rrr}{w} \arrow[bend
   right=20]{dddlll}[above]{r} \arrow{rr}{p''} \arrow{dl}{\epsilon'} 
	\drpullback &                
	& X \arrow[phantom]{ddr}[very near start,description]{\lrcorner}\arrow{r}{\tilde{q}} \arrow{dl}{\epsilon}
   \arrow{dd}{q^{*}q_{*}\pi}& D \arrow{dd}{q_{*}\pi} \arrow[bend left=15]{dddr}{x} \\
   {} &   & Y \drpullback \arrow{rr}{p'} \arrow{dl}{u''} &   &  B \times_{J} F \arrow[phantom]{dd}[very near start]{\rotatebox{-45}{$\lrcorner$}} \arrow{dl}{u'} \arrow{dr}{\pi} \\
   {} & E\arrow[swap]{rr}{p} \arrow{dl}{s} &   & B \arrow[swap]{dr}{t} &     { }           &
   F \arrow{dl}{u}  \arrow[swap]{r}{q} & C \arrow[swap]{dr}{v} \\
   I  &   &   &   & J              &   &   & K.
 \end{tikzcd}
\end{equation}
Here we have used a distributivity diagram for $q$ and the pullback
$\pi$.

An explicit construction of a (2,1)-category $\Bispan(\xF)$ with this
composition is given in the
thesis of Cranch~\cite{CranchThesis}, where it is also proved that
this has products (given by the disjoint union of sets), so that we
can define commutative semirings in $\mathcal{S}$ as functors
$\Bispan(\mathbb{F}) \to \mathcal{S}$ that preserve finite
products. Alternatively, one can relate bispans of finite sets to
\emph{polynomial functors}, which gives an easier definition of
$\Bispan(\xF)$ (as the complicated composition law
\cref{eq:bispancomp} corresponds to the ordinary composition of such
functors); this approach was carried out by
Gambino--Kock~\cite{GambinoKock}, who also show that the homotopy
category of $\Bispan(\xF)$ is the Lawvere theory for commutative
semirings, so that commutative semirings in an ordinary category
$\mathbf{C}$ with finite products are equivalent to product-preserving
functors
\[ \Bispan(\xF) \to \mathbf{C}.\]
We expect that the homotopical analogue of this statement\footnote{Specifically, the definition
of commutative semirings in an \icat{} with finite products in terms
of $\Bispan(\xF)$ should be equivalent to that of
Gepner--Groth--Nikolaus~\cite{GGN}.} is also
true, but this has not yet been proved.

\subsection{The universal property of spans}
If $\mathcal{C}$ is a symmetric monoidal \icat{} such that
$\mathcal{C}$ has finite coproducts and the tensor product preserves
these in each variable, then we expect that $\mathcal{C}$ has a
canonical semiring structure in $\CatI$ with multiplication and
addition given by the tensor product and coproduct, respectively. This
follows\footnote{This semiring structure is also constructed in
  \cite{GGN} by a different method.}  from the universal property of
an $(\infty,2)$-category of bispans in $\xF$, which is a special case
of our main result. Before we state this, it is convenient to first
recall the simpler universal property of the \itcat{} $\SPAN(\xF)$ of
spans in $\xF$, which can be used to prove that an \icat{} with finite
coproducts has a canonical symmetric monoidal structure.

Here $\SPAN(\xF)$ has finite sets as objects, spans as morphisms, and
morphisms of spans as 2-morphisms, \ie{} 2-morphisms are commutative
diagrams
\[
  \begin{tikzcd}
    {} & X \arrow{dd} \arrow{dr} \arrow{dl} \\
    I & & J \\
    & Y. \arrow{ul} \arrow{ur}
  \end{tikzcd}
\]
Suppose $\mathcal{X}$ is an \itcat{}. A functor $\Phi \colon \xF^{\op} \to
\mathcal{X}$ is called \emph{left adjointable} if for every morphism
$f \colon I \to J$ in $\xF$, the morphism $f^{\ostar} := \Phi(f) \colon \Phi(J) \to
\Phi(I)$ in $\mathcal{X}$ has a left adjoint $f_{\oplus}$, and for every
pullback square \cref{eq:setpb} in $\xF$, the canonical
(Beck--Chevalley or mate) transformation
\[ g_{\oplus}i^{\ostar} \to j^{\ostar}f_{\oplus}\]
is an equivalence. 
\begin{thm}
  Restricting along the inclusion $\xF^{\op} \to \SPAN(\xF)$ (of the
  subcategory containing only the backwards maps and no non-trivial
  2-morphisms) gives a natural equivalence between functors
  $\SPAN(\xF) \to \mathcal{X}$ and left adjointable functors
  $\xF^{\op} \to \mathcal{X}$.
\end{thm}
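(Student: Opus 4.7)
The plan is to construct an explicit inverse to the restriction functor. I fix a convenient model for $(\infty,2)$-categories (for instance, complete 2-fold Segal spaces) together with the standard simplicial model in which the $n$-simplices of $\SPAN(\xF)$ are ``zig-zag'' diagrams of length $n$ with the appropriate squares required to be cartesian.

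First I check that restriction really lands among left adjointable functors. For $g\colon J \to I$ in $\xF$, the forward span $J \xleftarrow{\mathrm{id}} J \xrightarrow{g} I$ and the backward span $I \xleftarrow{g} J \xrightarrow{\mathrm{id}} J$ form an adjoint pair inside $\SPAN(\xF)$, with unit given by the diagonal $J \to J \times_I J$ and counit by the map $g\colon J \to I$, each viewed as a 2-morphism of spans. Any functor $\Phi$ out of $\SPAN(\xF)$ therefore yields an adjunction $g_{\oplus} \dashv g^{\ostar}$; the Beck--Chevalley condition for a pullback square in $\xF$ is automatic, because it is literally the composition of the two spans corresponding to the two sides of the square.

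For the inverse, given a left adjointable $\Phi_0\colon \xF^{\op} \to \mathcal{X}$, I want to extend it via the formula
\[ \Phi\bigl(I \xleftarrow{q} S \xrightarrow{p} J\bigr) \;:=\; p_{\oplus}\, q^{\ostar}, \]
with $p_{\oplus}$ supplied by left adjointability. The central computation is that composition is respected: for composable spans $I \xleftarrow{q} S \xrightarrow{p} J \xleftarrow{q'} T \xrightarrow{p'} K$ with pullback $S \times_J T$ and induced maps $\tilde p$, $\tilde q$, the Beck--Chevalley equivalence $q'^{\ostar} p_{\oplus} \simeq \tilde{p}_{\oplus}\, \tilde{q}^{\ostar}$ is exactly what is needed to identify $p'_{\oplus}\, q'^{\ostar}\, p_{\oplus}\, q^{\ostar}$ with the functor assigned to the composite span. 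Likewise, 2-morphisms of spans are sent to the natural transformations obtained by pasting with the adjunction units and counits.

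The main obstacle is handling all higher coherences uniformly, not merely at the level of 1- and 2-morphisms. The cleanest strategy is to exhibit $\SPAN(\xF)$ as a \emph{free} construction: the universal $(\infty,2)$-category receiving a functor from $\xF^{\op}$ in which every morphism acquires a left adjoint and every pullback square becomes Beck--Chevalley. Once such a universal property is in place the theorem follows formally. To establish it, one option is an inductive argument using the Segal conditions of the chosen model to reduce higher coherences to the checks above; another is to compare with a double-categorical presentation of $\SPAN$ in the spirit of Barwick's unfurling or of Dyckerhoff--Kapranov's 2-Segal framework, where the requisite universal property is more directly accessible. I expect the latter route to be most economical, as it recasts the problem as identifying $\SPAN(\xF)$ with a specific pushout in $\CatIT$.
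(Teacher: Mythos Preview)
Your low-dimensional analysis is correct: the forward/backward spans do form an adjunction in $\SPAN(\xF)$, the formula $p_{\oplus}q^{\ostar}$ is the right candidate, and the Beck--Chevalley identity is exactly what makes it respect composition of spans. You also correctly locate the entire difficulty in the higher coherences. The gap is that your proposal stops precisely there. Neither suggested route is actually carried out, and both are problematic as stated. A ``Segal induction'' is not a mechanism for producing functors \emph{out of} a Segal object: Segal conditions control maps \emph{into} such an object from low-dimensional data, not maps out of it, so there is no reduction of the coherence problem along these lines. The ``specific pushout in $\CatIT$'' idea is circular unless you can compute that pushout, and pushouts of $(\infty,2)$-categories are essentially intractable to identify explicitly; asserting that $\SPAN(\xF)$ is such a pushout is just a restatement of the universal property you are trying to prove. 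Invoking unfurling or 2-Segal presentations does not help either, since those frameworks describe $\SPAN$ but do not by themselves supply the mapping-out universal property.

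The paper, following Macpherson, takes a route that avoids building the extension by hand altogether. First one shows abstractly that the copresheaf $\mathcal{X}\mapsto\Map_{\txt{ladj}}(\xF^{\op},\mathcal{X})$ on $\CatIT$ is corepresentable: since $\CatIT$ is presentable, it suffices to check accessibility and preservation of limits, and this follows by expressing left adjointability as a pullback of representable copresheaves built from the universal adjunction $\ADJ$. This produces \emph{some} $(\infty,2)$-category with the desired universal property, with no coherence bookkeeping whatsoever. Only afterwards does one identify this abstract corepresenting object with the concrete $\SPAN(\xF)$, using Hinich's $(\infty,2)$-categorical Yoneda lemma together with the free cocartesian fibration construction to compute the mapping $\infty$-categories. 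Your formula $p_{\oplus}q^{\ostar}$ and the Beck--Chevalley observation then emerge as \emph{consequences} of this identification rather than as a definition requiring infinitely many coherences to be verified.
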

This is a special case of a recent result of
Macpherson~\cite{MacphersonCorr}, which we review in more generality
below in \S\ref{sec:spans}. Another proof is sketched in the book of
Gaitsgory and Rozenblyum~\cite{GaitsgoryRozenblyum1} where this
universal property is used to encode a ``six-functor formalism'' for
various categories of quasicoherent sheaves on derived
schemes. Lastly, we note that the analogous result for ordinary
2-categories seems to have been first proved by
Hermida~\cite{Hermida}*{Theorem A.2}.

\subsection{The universal property of bispans}
We now want to consider a 2-category $\BISPAN(\xF)$ whose objects are finite
sets, with morphisms given by bispans and 2-morphisms by commutative
diagrams of the form
\begin{equation}
  \label{eq:bispanmor}
  \begin{tikzcd}
    {} & E \arrow{dd} \arrow{r}  \arrow{dl}
    \arrow[phantom]{ddr}[very near
    start,description]{\lrcorner} & B \arrow{dd} \arrow{dr} \\
    I & & & J \\
     & E' \arrow{r} \arrow{ul} & B' \arrow{ur}
  \end{tikzcd}  
\end{equation}
where the middle square is cartesian. If we look at the subcategory
where the morphisms are bispans whose rightmost leg is invertible and
with no non-trivial 2-morphisms, we get an inclusion
$\Span(\xF) \to \BISPAN(\xF)$. A special case of our main result gives
a universal property of $\BISPAN(\xF)$ in terms of this subcategory:
\begin{thm}\label{thm:BispanSet}
  Let $\mathcal{X}$ be an \itcat{}. Restricting along the inclusion
  $\Span(\xF) \to \BISPAN(\xF)$ gives an equivalence between functors
  $\BISPAN(\xF) \to \mathcal{X}$ and distributive functors $\Span(\xF)
  \to \mathcal{X}$.
\end{thm}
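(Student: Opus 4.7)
The plan is to regard Theorem \ref{thm:BispanSet} as the bispan analogue of the span universal property recalled just above, and to prove it by a suitable iteration of Macpherson's theorem. The guiding observation is that the passage $\Span(\xF) \rightsquigarrow \BISPAN(\xF)$ parallels $\xF^{\op} \rightsquigarrow \SPAN(\xF)$: the additional 1-morphism data in $\BISPAN(\xF)$---the additive pushforward $q_\oplus$ encoded by the rightmost leg of a bispan---together with its accompanying 2-morphisms should be interpreted as freely adjoining left adjoints to the pullback morphisms in $\Span(\xF)$, modulo base change and distributivity. In particular, a ``distributive functor'' $\Phi_0 \colon \Span(\xF) \to \mathcal{X}$ should mean one where each $q^{\ostar} := \Phi_0(q)$ admits a left adjoint $q_\oplus$, Beck--Chevalley holds for cartesian squares, and the canonical 2-morphism attached to each distributivity diagram \cref{eq:distdiagF} is invertible.

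In the forward direction, given such a $\Phi_0$, we send the bispan $I \xleftarrow{p} X \xto{f} Y \xto{q} J$ to the composite $q_\oplus \circ \Phi_0(I \xleftarrow{p} X \xto{f} Y)$. The composition law \cref{eq:bispancomp} for two bispans is built out of precisely one cartesian square and one distributivity diagram, so the hypotheses on $\Phi_0$ deliver functoriality on the nose. Conversely, a functor $\Phi \colon \BISPAN(\xF) \to \mathcal{X}$ restricts to $\Span(\xF)$, and the required left adjoints $q_\oplus$, together with their units, counits, and distributivity witnesses, appear as the images of the evident 2-morphisms of the form \cref{eq:bispanmor} in $\BISPAN(\xF)$ built from diagonals and fold maps. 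The two constructions are then visibly mutually inverse on the level of 1-morphism data, and the bulk of the work is to promote this to an equivalence of $(\infty,2)$-categorical mapping objects.

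The principal obstacle is exactly this coherence: upgrading the 1-categorical recipe to a fully coherent equivalence, handling all higher 2-morphism data in $\mathcal{X}$, and verifying that invertibility of the Beck--Chevalley and distributivity 2-morphisms \emph{suffices} to generate all the coherences imposed by bispan composition (rather than requiring additional, iterated distributivity conditions). The natural strategy, following the pattern of the span universal property, is to present $\BISPAN(\xF)$ via an explicit simplicial or $\Theta_2$-shaped model so that functoriality can be checked on a small collection of generating cells, and to apply Macpherson's span theorem twice: first to extract $\Phi_0$ from $\Phi$ on the $\Span(\xF)$ layer, and then to extract the left adjoints $q_\oplus$ and their higher coherences from the remaining third-leg data. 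Matching the two universal properties is then most naturally carried out by presenting each side as a localization of a free construction at an explicit class of 2-morphisms (Beck--Chevalley and distributivity) and checking that the restriction functor identifies the localizing classes.
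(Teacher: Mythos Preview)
Your guiding observation---that the passage $\Span(\xF) \rightsquigarrow \BISPAN(\xF)$ should parallel $\xF^{\op} \rightsquigarrow \SPAN(\xF)$---is exactly the paper's, but you have not isolated the statement that makes it precise, and as a result your proposal drifts toward explicit simplicial models and localization arguments that the paper entirely avoids.

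The missing step (an observation the paper attributes to Street) is that a bispan $I \xfrom{p} X \xto{f} Y \xto{q} J$ is \emph{literally} a span
\[
  I \longleftarrow Y \longrightarrow J
\]
in $\Span(\xF)^{\op}$, whose backward leg is the span $I \xfrom{p} X \xto{f} Y$ and whose forward leg is the morphism $[q]_{F}$ coming from the inclusion $\xF \hookrightarrow \Span(\xF)^{\op}$. The paper then proves two concrete lemmas about pullbacks in $\Span(\xF)^{\op}$ along such forward morphisms: the pullback of $[q]_{F}$ along another $[g]_{F}$ exists iff the ordinary pullback in $\xF$ exists (and is given by it), while the pullback of $[q]_{F}$ along a backward morphism $[f]_{B}$ exists iff the distributivity diagram \cref{eq:distdiagF} for $(q,f)$ exists (and is given by it). From this one reads off directly that $(\Span(\xF)^{\op}, \xF)$ is a span pair and, crucially, that a functor $\Span(\xF) \to \mathcal{X}$ is distributive \emph{if and only if} it is left adjointable for this span pair: the mate conditions for the two kinds of pullback square just computed are precisely the ordinary Beck--Chevalley condition and the invertibility of the distributivity transformation.

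One then \emph{defines} $\BISPAN(\xF) := \SPAN(\Span(\xF)^{\op})$, and the theorem becomes a \emph{single} application of Macpherson's span universal property to this new span pair---not two separate applications woven together as you suggest. All of the coherence worries in your final paragraph dissolve: there is no need for an explicit $\Theta_{2}$- or simplicial presentation of $\BISPAN(\xF)$, no iterated distributivity relations to verify, and no localization argument. The composition law \cref{eq:bispancomp} and the shape \cref{eq:bispanmor} of the 2-morphisms are recovered \emph{a posteriori} by unwinding the span-of-spans description, rather than being input one must match by hand.
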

Here a functor $\Phi \colon \Span(\xF) \to \mathcal{X}$ is
\emph{distributive} if
\begin{itemize}
\item for every morphism $f \colon I \to J$ in $\xF$, the morphism
  $f^{\ostar} := \Phi(J \xfrom{f} I \longequal I)$ in $\mathcal{X}$
  has a left adjoint $f_{\oplus}$,
\item for every pullback square \cref{eq:setpb} in $\xF$, the
  Beck--Chevalley transformation $g_{\oplus}i^{\ostar} \to
  j^{\ostar}f_{\oplus}$ is an equivalence,
\item for every distributivity diagram \cref{eq:distdiagF}, the
  \emph{distributivity transformation}
  \[  h_{\oplus}\tilde{v}_{\otimes}\epsilon^{\ostar} \to
    v_{\otimes}u_{\oplus}, \]
  which is defined as a certain composite of units and counits, is an
  equivalence in $\mathcal{X}$.
\end{itemize}
Note that the only property of $\xF$ we have used in the definition of
distributive functors is the existence of distributivity
diagrams. These exist in any locally cartesian closed \icat{}, and
more generally we can consider triples $(\mathcal{C}, \mathcal{C}_{F},
\mathcal{C}_{L})$ consisting of an \icat{} $\mathcal{C}$ with a pair
of subcategories $\mathcal{C}_{F}$ and $\mathcal{C}_{L}$ such that
\begin{itemize}
\item pullbacks along morphisms in $\mathcal{C}_{F}$ and
  $\mathcal{C}_{L}$ exist in $\mathcal{C}$, and both subcategories are
  preserved under base change,
\item there exist suitable distributivity diagrams in $\mathcal{C}$ for any
  composable pair of morphisms $l \colon x \to y$ in
  $\mathcal{C}_{L}$, $f \colon y \to z$ in $\mathcal{C}_{F}$.
\end{itemize}
We can then generalize the notion of distributive functors above to
that of \emph{$L$-distributive functors} $\Span_{F}(\mathcal{C}) \to \mathcal{X}$, where
$\Span_{F}(\mathcal{C})$ is the \icat{} of spans in $\mathcal{C}$
whose forward legs are required to lie in $\mathcal{C}_{F}$. Our main
result in this paper is then the following generalization of
\cref{thm:BispanSet}:
\begin{thm}\label{thm:main}
  For $(\mathcal{C}, \mathcal{C}_{F}, \mathcal{C}_{L})$ as above,
  there exists an \itcat{} \[\BISPAN_{F,L}(\mathcal{C})\] such that
  \begin{itemize}
  \item objects are objects of $\mathcal{C}$,
  \item morphisms are bispans
    \[ x \xfrom{p} e \xto{f} b \xto{l} y \]
    where $f$ is in $\mathcal{C}_{F}$ and $l$ is in $\mathcal{C}_{L}$,
  \item 2-morphisms are diagrams of the form  \cref{eq:bispanmor},
  \item morphisms compose as in \cref{eq:bispancomp}.
  \end{itemize}
  The \itcat{} $\BISPAN_{F,L}(\mathcal{C})$ has the universal property
  that restricting to the subcategory $\Span_{F}(\mathcal{C})$ gives
  for any \itcat{} $\mathcal{X}$ an equivalence between functors
  $\BISPAN_{F,L}(\mathcal{C}) \to \mathcal{X}$ and $L$-distributive
  functors $\Span_{F}(\mathcal{C}) \to \mathcal{X}$.
\end{thm}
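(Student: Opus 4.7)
The plan is to prove Theorem~\ref{thm:main} by iterating the span construction and reducing the universal property to Macpherson's theorem (cited above) applied twice. First, I would give an explicit model for $\BISPAN_{F,L}(\mathcal{C})$ as an \itcat{}---for example, as a 2-fold complete Segal space whose bisimplices encode $m$ composable bispans together with $n$ layers of 2-morphisms of the form \cref{eq:bispanmor}. The existence of distributivity diagrams in $\mathcal{C}$ guarantees that composition is well-defined: the rewrite \cref{eq:distdiagF} says that for any composable pair consisting of $l \in \mathcal{C}_{L}$ followed by $f \in \mathcal{C}_{F}$ there is an essentially unique replacement by a pair $\tilde{f} \in \mathcal{C}_{F}$ followed by some $\tilde{l} \in \mathcal{C}_{L}$, and this is precisely the combinatorial input needed to express iterated bispan composition as a Segal-type condition.

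Once the construction is in place, I would prove the universal property in two stages. Given an $L$-distributive functor $\Phi \colon \Span_{F}(\mathcal{C}) \to \mathcal{X}$, the left adjoints $l_{\oplus} \dashv l^{\ostar}$ for $l \in \mathcal{C}_{L}$, together with the Beck--Chevalley and distributivity equivalences, provide exactly the data needed to extend $\Phi$ to a functor $\tilde{\Phi} \colon \BISPAN_{F,L}(\mathcal{C}) \to \mathcal{X}$. Abstractly, $\BISPAN_{F,L}(\mathcal{C})$ should be regarded as obtained from $\Span_{F}(\mathcal{C})$ by freely adjoining left adjoints to pullbacks along morphisms in $\mathcal{C}_{L}$ and then inverting the resulting distributivity 2-cells, so the extension is forced up to contractible choice. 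Conversely, restricting along $\Span_{F}(\mathcal{C}) \hookrightarrow \BISPAN_{F,L}(\mathcal{C})$ recovers $\Phi$: inside $\BISPAN_{F,L}(\mathcal{C})$ itself one can exhibit the pushforward bispan $l_{\oplus}$ as a left adjoint to the pullback bispan $l^{\ostar}$, with unit and counit the evident 2-morphisms of the form \cref{eq:bispanmor}, and one verifies directly that the Beck--Chevalley and distributivity 2-cells are honest equivalences.

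The main obstacle will be coherently controlling the distributivity data at the \itcatl{} level. In the ordinary categorical case of Gambino--Kock, a single distributivity pentagon suffices, but to construct a functor of \itcats{} we must specify homotopy-coherent data for every $n$-fold composition, and the distributivity transformations are themselves built out of iterated units and counits of adjunctions, each only defined up to further higher equivalence. My plan to tame this would be to reduce to a \emph{double} application of Macpherson's universal property: the $\mathcal{C}_{F}$-direction is handled by $\Span_{F}(\mathcal{C})$ on the first pass, and the $\mathcal{C}_{L}$-direction is handled by a relative span construction performed inside the \itcat{} $\Span_{F}(\mathcal{C})$. The key claim is that $L$-distributivity is exactly the Beck--Chevalley-type condition for this second, relative span construction, so that the universal property of $\BISPAN_{F,L}(\mathcal{C})$ becomes a relative version of Macpherson's theorem. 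A localization argument for inverting 2-cells in \itcats{} would then close the argument, letting the distributivity equivalences be imposed \emph{a posteriori} rather than verified simplex-by-simplex.
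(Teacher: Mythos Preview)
Your core intuition in the final paragraph is exactly right, and it is essentially the paper's approach: $L$-distributivity is precisely the Beck--Chevalley condition for a second span construction, so the universal property reduces to Macpherson's theorem applied twice. However, you overcomplicate the execution in two ways, and miss the key simplification that makes the paper's proof short.

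First, you propose to build $\BISPAN_{F,L}(\mathcal{C})$ explicitly as a $2$-fold complete Segal space and only afterwards verify the universal property. The paper does the reverse: it \emph{defines} $\BISPAN_{F,L}(\mathcal{C}) := \SPAN_{L}(\Span_{F}(\mathcal{C})^{\op})$ and then reads off the explicit description of objects, morphisms, $2$-morphisms, and composition from the general structure theory of $\SPAN$ already developed. This sidesteps entirely the coherence difficulties you flag: no simplex-by-simplex verification of distributivity data is ever needed.

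Second, and more substantively, you speak of a ``relative span construction performed inside the \itcat{} $\Span_{F}(\mathcal{C})$'' and a ``localization argument for inverting $2$-cells''. Neither is needed. The crucial observation (the paper's Theorem on ``distributivity is adjointability'') is that $(\Span_{F}(\mathcal{C})^{\op},\mathcal{C}_{L})$ is an \emph{ordinary} span pair of \emph{ordinary} $\infty$-categories: one computes directly that pullbacks of $[l]_{F}$ along $[f]_{B}$ in $\Span_{F}(\mathcal{C})^{\op}$ exist precisely when distributivity diagrams exist in $\mathcal{C}$, and that the left-adjointability condition for this span pair coincides on the nose with $L$-distributivity. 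So the second pass is the \emph{ordinary} Macpherson theorem applied to the $\infty$-category $\Span_{F}(\mathcal{C})^{\op}$, not a relative or $2$-categorical variant, and no $2$-cells need to be inverted after the fact because the distributivity transformations are already the mate equivalences guaranteed by adjointability. Your proposed route could likely be made to work, but it replaces a two-line definition with substantial new machinery.
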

The analogue of this result for ordinary 2-categories (at least in the
case where $\mathcal{C} = \mathcal{C}_{F} = \mathcal{C}_{L}$) is due
to Walker~\cite{WalkerBispan}.

\subsection{Equivariant and algebro-geometric bispans}
We will now look briefly at some examples of \cref{thm:main} beyond
the case of finite sets, coming from equivariant and motivic homotopy
theory and derived algebraic geometry. These examples are
discussed in more detail in \S\ref{sec:ex}.

Let us first consider the equivariant setting, over a finite group
$G$. In all of our discussion above it is straightforward to replace
the category $\xF$ of finite sets with the category $\xF_{G}$ of
\emph{finite $G$-sets}. The analogue of a commutative monoid is then a
functor
\[ M \colon \Span(\xF_{G}) \to \Set \]
that preserves products. This is (essentially\footnote{Mackey functors
  are usually viewed as taking values in $\Ab$ rather than $\Set$;
  since the functor induces commutative monoid structures on its
  values, this amounts to asking for these monoid structures to be
  grouplike. The relation between Mackey functors in $\Ab$ and $\Set$
  is thus analogous to that between abelian groups and commutative monoids.})
the same thing as a
\emph{Mackey functor} \cite{DressMackey}, an algebraic structure where for a subgroup $H
\subseteq G$ we have restrictions $M^{G} \to M^{H}$ and transfers
$M^{H} \to M^{G}$ satisfying a base change property that can be
interpreted in terms of double cosets. Mackey functors play an
important role in group theory, and every genuine $G$-spectrum $E$ has
an underlying Mackey functor $\pi_{0}E$.

Similarly, the $G$-analogue of a commutative semiring is a
product-preserving functor $\Bispan(\xF_{G}) \to \Set$, which is
essentially\footnote{Again, the usual notion of a Tambara functor
  takes values in $\Ab$, which gives the equivariant version of a
  commutative ring rather than a semiring.} a \emph{Tambara functor} \cite{Tambara,StricklandTambara,blumberg-hill}.
This is a structure that has both an ``additive'' and a
``multiplicative'' transfer, satisfying a distributivity relation. If
$E$ is a genuine $G$-$E_{\infty}$-ring spectrum, then $\pi_{0}E$ has
the structure of a Tambara functor \cite{BrunTambara}.

If we replace the category of sets with the \icat{} of spaces, a
theorem of Nardin\footnote{Building on the description of $G$-spectra
  as ``spectral Mackey functors'', originally due to Guillou and May
  \cite{GuillouMaySpMack}.} \cite{NardinThesis}*{Corollary A.4.1}
shows that connective $G$-spectra are equivalently product-preserving
functors $\Span(\xF_{G}) \to \mathcal{S}$ that are grouplike,
generalizing the classical description of connective spectra as
grouplike commutative monoids in $\mathcal{S}$.\footnote{This can be
  seen as an \icatl{} version of more classical descriptions of
  equivariant infinite loop spaces,
cf.~\cite{Shimakawa,Ostermayr,MayMerlingOsorno,GMMOinfloops}.} 

The analogue for ring spectra is also expected to hold: connective
genuine $G$-$E_{\infty}$-ring spectra should be equivalent to
product-preserving functors $\Bispan(\xF_{G}) \to \mathcal{S}$.

Now we turn to the ``categorified'' versions of these structures: For
$H$ a subgroup of $G$, the (additive) transfer from $H$-spectra to
$G$-spectra is classical\footnote{See \eg{} \cite{LMMS}*{\S II.4}},
but the multiplicative transfer or \emph{norm} was only introduced
fairly recently by Hill--Hopkins--Ravenel as part of the foundational
setup for \cite{HHRKervaire}. This inspired a plethora of work on
equivariant symmetric monoidal structures
\cite{HopkinsHill,GMMOsymmonGcat,RubinNorm} and its relation to
equivariant homotopy-coherent commutativity (in particular
\cite{BlumbergHillNinfty} and subsequent work on
$N_{\infty}$-operads), culminating from our point of view in the
approach of
Barwick, Dotto, Glasman, Nardin, and Shah~\cite{BDGNS1}, where a
$G$-symmetric monoidal \icat{} can be viewed as a product-preserving
functor
\[ \Span(\xF_{G}) \to \CatI,\]
\ie{} a ``categorified Mackey functor''.

If the contravariant (restriction) functors have left adjoints that
satisfy base change and distributivity, \cref{thm:main} allows us to
upgrade such $G$-symmetric monoidal structures to functors from
$\BISPAN(\xF_{G})$, which encodes the distributive compatibility of
multiplicative and additive transfers. We will see that this applies
in particular to genuine $G$-spectra, giving a ``categorified Tambara
functor'' structure on $G$-spectra.

Next, we look at the motivic setting, where it is more instructive to
first work in the categorified context. By this we mean Ayoub's
construction of a functor from schemes to categories
$X \mapsto \SH(X)$ which satisfies a full six functors formalism
\cite{ayoub-thesis1}, vastly expanding Voevodsky's notes in
\cite{voevodsky-four}; we also refer the reader to the book of
Cisinski and D\'eglise \cite{cisinski-deglise} for another exposition,
\cite{hoyois-sixops} for an $\infty$-categorical enhancement of this
construction in the more general motivic-equivariant setting, as well
as the more recent \cite{drew-gallauer} for a universal property of
this construction. Here $\SH(X)$ denotes the \icat{} of motivic spectra
over a scheme $X$.

In this context, given a smooth morphism of schemes
$f \colon X \rightarrow Y$ over a base $S$, the pullback functor
$f^* \colon \SH(Y) \rightarrow \SH(X)$ admits a left adjoint,
$f_{\sharp} \colon \SH(X) \rightarrow \SH(Y)$. This is a categorified version
of the \emph{additive pushforward}: if $f$ is the fold map
$\nabla\colon Y^{\amalg I} \rightarrow Y$, then $\nabla_{\sharp}$ computes
the $I$-indexed direct sum. The compatibility of $f_{\sharp}$ with
pullbacks yields a functor
\begin{equation} \label{eq:bh1}
\SH \colon \Span_{\sm}(\Sch_S) \to \CatI.
\end{equation}

An important additional functoriality of $\SH$ was recently discovered
by Bachmann and Hoyois in \cite{norms}: given a finite \'etale
morphism $f \colon X \rightarrow Y$ we have the \emph{multiplicative
  pushforward} or \emph{norm}
$f_{\otimes} \colon \SH(X) \rightarrow \SH(Y)$, which in the case when
$f$ is the fold map computes the $I$-indexed tensor product. This
also satisfies base change, and so can
be encoded by a functor
\begin{equation} \label{eq:bh2}
  \SH \colon \Span_{\fet}(\Sch_{S}) \to \CatI,
\end{equation}
which leads to the correct notion of a coherent multiplicative structure in motivic
homotopy theory --- a \emph{normed motivic spectrum} --- as a
section of the unstraightening of~\eqref{eq:bh2} that is cocartesian
over the backwards maps in $\Span_{\fet}(\Sch_{S})$.

The motivic bispan category should combine these two structures, giving an additive pushforward
along smooth morphisms and a multiplicative pushforward along finite
\'etale morphisms. For technical reasons (due to the non-existence of
Weil restriction of schemes in general), for our motivic bispan
categories we either have to restrict to morphisms between schemes
that are smooth and quasiprojective or work with algebraic
spaces. Thus we consider 2-categories of the form
$\BISPAN_{\fet,\sm}(\AlgSpc_{S})$ where $\AlgSpc_{S}$ means the
category of algebraic spaces over $S$, and we promote $\SH$ to a functor
\begin{equation} 
\SH \colon\BISPAN_{\fet,\sm}(\AlgSpc_{S}) \to \CatI;
\end{equation}
see Theorem~\ref{thm:mot-bispans}.

The decategorification of the above structure has been studied by
Bachmann in \cite{mot-tambara}. Working over a field, and restricting
to a category of bispans between smooth schemes,
$\Bispan_{\fet,\sm}(\mathrm{Sm}_{k})$, Bachmann proved that the structure
of a normed algebra in the abelian category of homotopy modules (the
heart of the so-called homotopy $t$-structure on motivic spectra) is
encoded by certain functors out of this bispan category to abelian
groups (appropriately christened \emph{motivic Tambara functors}), at
least after inverting the exponential characteristic of $k$.

We also note that there is a discrepancy with the classical and
finite-equivariant story: finite \'etale transfers are \emph{a priori}
not sufficient to encode the structure of a motivic spectrum. Instead,
the correct kind of transfers are framed transfers in the sense of
\cite{EHKSY1}. In particular, the category of framed correspondences,
where the backward maps encode framed transfers, is manifestly an
$\infty$-category. In other words, the additive and multiplicative
transfers are rather different in the motivic story; for example we do
not know if the space of units of a normed motivic spectrum has framed
transfers (see \cite[Section 1.5]{norms} for a discussion). For us,
this means that a more robust theory of bispans in the motivic setting
which encodes framed transfers is open for future investigations.

Finally, we consider an example in the context of derived algebraic
geometry: If $\Perf(X)$ denotes the \icat{} of perfect quasicoherent
sheaves on a spectral Deligne--Mumford stack,
Barwick~\cite{BarwickMackey} has shown that the pullback and
pushforward functors extend to a functor
\[ \Perf \colon \Span_{\FP}(\SpDM) \to \CatI\]
where $\SpDM$ is the \icat{} of spectral Deligne--Mumford stacks and
$\FP$ is a certain class of morphisms (for which pushforwards preserve
perfect objects and base change is satisfied). We promote this to a
functor of \itcats{}
\[ \BISPAN_{\fet,\FP'}(\SpDM)^{\twop} \to \CATI,\]
using a multiplicative pushforward functor for finite \'etale maps
(which exists by results of Bachmann--Hoyois \cite{norms}), where $\FP'$ is a certain
subclass of $\FP$ for which Weil restrictions exist.

\subsection{Norms in algebraic $K$-theory}
A combination of the present work and \cite{polynomials} produces
concrete examples of Tambara functors valued in $\mathcal{S}$, the
$\infty$-category of spaces, via (connective) algebraic
$K$-theory, which we discuss in \S\ref{sec:tambara}. The motivation
for our results traces back to classical
representation theory: given a finite group $G$, the classical
representation ring of $G$ is a certain Grothendieck ring:
\begin{equation}\label{eq:rep}
\mathrm{Rep}(G,\mathbb{C}) \cong K_0(\Fun(BG, \Vect^{\mathrm{fd}}_{\mathbb{C}})).
\end{equation}
In particular, the tensor product of representations induces the
multiplicative structure on $\mathrm{Rep}(G,\mathbb{C})$. It is
natural to consider the formation of representation rings as a functor
in $G$, where the functoriality encodes various operations in
representation theory such as induction and restrictions. From this
viewpoint, one can enhance the multiplicative structure on
$\mathrm{Rep}(G,\mathbb{C})$ to one parametrized by cosets: if
$K \subset G$ is a subgroup, then we have a map given by the operation
of \emph{tensor induction}:
\[
\mathrm{Rep}(K,\mathbb{C}) \rightarrow \mathrm{Rep}(G,\mathbb{C}) \qquad V \mapsto \otimes_{G/K} V
\]
As reviewed in Example~\ref{ex:g-rep}, all of this functoriality can
be encoded as a functor out of a bispan category formed out of finite
$G$-sets.

Representation theory with ``fancy coefficients'' entails replacing the
category $\mathrm{Vect}^{\mathrm{fd}}_{\mathbb{C}}$ with a more
sophisticated symmetric monoidal ($\infty$-)category
$\mathcal{C}$. This line of investigation arguably began with the subject
of modular representation theory, which takes
$\mathcal{C}$ to be $\mathrm{Vect}^{\mathrm{fd}}_{\mathbb{F}_p}$. This is
especially subtle when $G$ is a $p$-group because of the failure of
the category $\Fun(BG, \Vect^{\mathrm{fd}}_{\mathbb{F}_p})$ to be
semisimple. More recently, Treumann has also considered replacing
vector spaces with $KU$-module spectra, thus taking $\mathcal{C}$ to be
$\Perf_{\mathrm{KU}}$ where $KU$ is the
$\mathbb{E}_{\infty}$-ring spectrum representing complex topological
$K$-theory \cite{treumann} and suggests that, up to $p$-completion, representation theory
over $KU$ is, in a precise way, a smooth deformation of representation
theory over the $p$-adic integers \cite[1.7]{treumann}.

In light of the last example, which is homotopical in nature, it is
natural to consider the $K$-theory \emph{space} $\Omega^{\infty}K(\Fun(BG, \mathcal{C}))$
for $\mathcal{C}$ a small stable $\infty$-category; its homotopy groups are the higher $K$-groups. Using the
universal property of bispans, coupled with the main result of
\cite{polynomials}, we offer the following result concerning its
functoriality in the variable $G$:
\begin{thm}\label{thm:rep-thy-main}
  Let $G$ be a finite group and
  $\mathcal{C}$ a symmetric monoidal $\infty$-category, then the
  presheaf on $G$-orbits:
\[
\Omega^{\infty}K_{G}(\mathcal{C})\colon\mathcal{O}_{G}^{\op} \rightarrow \mathcal{S}  \qquad G/H \mapsto \Omega^{\infty}K(\Fun(BH, \mathcal{C})),
\]
extends canonically as
\[
\begin{tikzcd}
\mathcal{O}_{G}^{\op}  \ar{rr}{\Omega^{\infty}K_{G}(\mathcal{C})} \ar{d}& &  \mathcal{S}\\
\Bispan(\xF_G) \ar[dashed,swap]{urr}{\Omega^{\infty}\widetilde{K}_{G}(\mathcal{C})}  & &.
\end{tikzcd}
\]
\end{thm}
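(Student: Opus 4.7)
The plan is to combine the universal property of bispans (\cref{thm:main}) with the polynomial $K$-theory functoriality of \cite{polynomials}, using as intermediary the $G$-symmetric monoidal \icat{} of $\mathcal{C}$-valued local systems. First, I would construct a functor
\[ \Phi_{\mathcal{C}}\colon \Span(\xF_G) \to \CATI \]
sending $G/H \mapsto \Fun(BH, \mathcal{C})$, with backward legs going to restriction along $BH \to BK$ and forward legs going to the tensor-induction (norm) $\bigotimes_{K/H}\colon \Fun(BH, \mathcal{C}) \to \Fun(BK, \mathcal{C})$. For a symmetric monoidal $\mathcal{C}$ this is exactly the $G$-symmetric monoidal category of $\mathcal{C}$-valued local systems, obtainable from \cite{BDGNS1} or constructed directly by coherent packaging of the norms.

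Next, I would verify that $\Phi_{\mathcal{C}}$ is $\all$-distributive so that \cref{thm:main} applies. The restrictions admit left adjoints given by the additive induction $\mathrm{ind}^K_H$ (fiberwise coproduct), assuming the requisite finite coproducts exist in $\mathcal{C}$; Beck--Chevalley for pullback squares in $\xF_G$ is the base-change property of left Kan extensions; and the distributivity transformation $h_{\oplus}\tilde{v}_{\otimes}\epsilon^{\ostar} \to v_{\otimes}u_{\oplus}$ unfolds at the categorified level into the bilinearity of tensor over finite coproducts in $\mathcal{C}$, which is the categorified analogue of the identity \cref{eq:disteqn}. Then \cref{thm:main} produces an extension $\widetilde{\Phi}_{\mathcal{C}}\colon \BISPAN(\xF_G) \to \CATI$, whose new final forward legs realize the fiberwise coproducts.

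Finally, I would post-compose with algebraic $K$-theory. The middle forward legs of $\widetilde{\Phi}_{\mathcal{C}}$ act by norms, which are polynomial rather than exact functors, so the classical additive $K$-theory functoriality does not suffice; the main theorem of \cite{polynomials} supplies exactly the needed polynomial functoriality, upgrading $\Omega^{\infty}K$ to a functor out of a suitable enhancement of stable \icats{} in which polynomial functors appear as morphisms. Composing yields a functor $\BISPAN(\xF_G) \to \mathcal{S}$; since $\mathcal{S}$ has only invertible 2-morphisms, this factors through the underlying $(\infty,1)$-category $\Bispan(\xF_G)$ to produce the desired $\Omega^{\infty}\widetilde{K}_G(\mathcal{C})$, whose restriction along $\mathcal{O}_G^{\op} \to \Bispan(\xF_G)$ recovers $\Omega^{\infty}K_G(\mathcal{C})$ by construction. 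The principal obstacle lies in the coherent verification of distributivity, matching the combinatorial diagrams \cref{eq:distdiagF} in $\xF_G$ with the $\infty$-categorical bilinearity of tensor over coproduct in $\Fun(B-, \mathcal{C})$, and then aligning the $(\infty,2)$-categorical output of \cref{thm:main} with the polynomial-functor formalism of \cite{polynomials} so that the composite lands correctly in spaces.
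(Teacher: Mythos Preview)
Your outline matches the paper's approach: construct the $G$-symmetric monoidal \icat{} $\Span(\xF_G) \to \CatI$ sending $G/H \mapsto \Fun(BH,\mathcal{C})$ (the paper does this in \cref{ex:g-smc} via the functor $\xF_G \to \mathcal{S}$, $X \mapsto X/\!/G$, together with \cref{prop:bh}), verify distributivity to extend to $\BISPAN(\xF_G)$, and then compose with the polynomial $K$-theory of \cite{polynomials}. The distributivity check you sketch is also essentially what the paper does in \cref{propn:SpanKdist} and \cref{propn:SpanFGdist}.

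However, there is a genuine gap. You assert that the norms ``are polynomial rather than exact functors'' and then invoke \cite{polynomials}, but you give no indication of \emph{why} $f_\otimes$ is polynomial in the sense of Goodwillie calculus. This is not formal: for a degree-$n$ map of $G$-sets the norm is an $n$-fold tensor product twisted by a group action, and one must actually show it is $n$-excisive. The paper devotes all of \S\ref{sec:poly-p} to this, introducing an abstract notion of \emph{degree structure} on an extensive span pair and proving by induction on degree (\cref{propn:polynom}) that $p_\otimes$ is polynomial of degree $\leq n$ whenever $p$ has degree $\leq n$. The inductive step uses distributivity for the diagram $x \amalg x \xrightarrow{\nabla} x \xrightarrow{p} y$ to compute $p_\otimes(E \oplus F)$ and identify the derivative $D_E(p_\otimes)$ with a functor built from norms along maps of strictly smaller degree (\cref{lem:computation}, \cref{lem:deg}). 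Without this argument you cannot land in $\CatI^{\poly}$ and the composition with $\Omega^\infty K^{\poly}$ is undefined.

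A secondary point: for $K$-theory and \cite{polynomials} to apply you need each $\Fun(BH,\mathcal{C})$ to be small, stable, and idempotent-complete, and for distributivity you need $\otimes$ compatible with finite coproducts. You note the coproduct hypothesis in passing, but the stability and idempotent-completeness hypotheses on $\mathcal{C}$ should be made explicit; the precise statement in the body of the paper (the corollary following \cref{genuinering}) includes them.
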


The proof of Theorem~\ref{thm:rep-thy-main} is quite simple given our
main theorem: we use this to deduce that $\Fun(BG,\mathcal{C})$ upgrades
to a functor out of bispans into the \icat{} of
$\infty$-categories. Using a general criterion which we detail in
\S\ref{sec:poly-p}, we prove that the multiplicative pushforward
enjoys the property of being a \emph{polynomial functor} in the sense
of Goodwillie calculus. A recent breakthrough of Barwick, Glasman,
Mathew and Nikolaus \cite{polynomials} proves that the formation of
algebraic $K$-theory spaces is functorial in polynomial functors,
which then gives Theorem~\ref{thm:rep-thy-main}.

We believe that Theorem~\ref{thm:rep-thy-main} could have
computational significance. Using an equivariant analog of
\cite[Example 7.25]{norms}, Theorem~\ref{thm:rep-thy-main} produces
genuine-equivariant \emph{power operations} on these
representation-theoretic gadgets. In particular, the coherence
afforded by the language of bispans results in relations between
these operations.

By exactly the same methods we also show that the algebraic
$K$-theory of a genuine $G$-$E_{\infty}$-ring spectrum is also a
Tambara functor valued in $\mathcal{S}$. The latter are expected to be
precisely the \emph{connective} $G$-$E_{\infty}$-ring spectra, and
given this we prove:
\begin{thm}[see Theorem~\ref{genuinering}] The formation of algebraic $K$-theory preserves $G$-$E_{\infty}$-rings.
\end{thm}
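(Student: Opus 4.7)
The plan is to mirror the proof of \cref{thm:rep-thy-main}: first categorify $R$, then apply the universal property of bispans, then descend via polynomial functoriality of algebraic $K$-theory. Starting from a genuine $G$-$E_{\infty}$-ring spectrum $R$, I would first construct a functor
\[
  \Perf_R\colon \Span(\xF_G) \longrightarrow \CATI, \qquad G/H\mapsto \Perf(\mathrm{Res}^G_H R),
\]
where the contravariant legs act by restriction of modules and the covariant legs by the induced additive transfer of perfect modules. The ring-theoretic norms of $R$ then furnish multiplicative pushforwards $f_{\otimes}$ along each finite cover of $G$-orbits, extending the Hill--Hopkins--Ravenel norm on the ring level to its perfect-module \icats{} (alternatively, via the symmetric-monoidal unstraightening technology of \cite{norms} applied to $R$).

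Next, I would invoke \cref{thm:main} to upgrade this data to a functor
\[
  \widetilde{\Perf}_R\colon \BISPAN(\xF_G) \longrightarrow \CATI.
\]
The three hypotheses of \cref{thm:main} to check are: (i) each $f^{\ostar}$ has a left adjoint in $\CATI$ (additive transfer of perfect modules), (ii) Beck--Chevalley invertibility for pullbacks of finite $G$-sets (standard base change for equivariant modules), and (iii) invertibility of the distributivity 2-morphism $h_{\oplus}\tilde v_{\otimes}\epsilon^{\ostar} \simeq v_{\otimes}u_{\oplus}$, which is the module-level reflection of the Tambara relations already enjoyed by the $G$-$E_{\infty}$-ring $R$. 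Once these are in place, the universal property produces $\widetilde{\Perf}_R$ with no additional coherence to verify.

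Finally I would post-compose with $\Omega^{\infty}K$ pointwise, using the $\infty$-functoriality of algebraic $K$-theory on polynomial functors established in \cite{polynomials}. The main technical point is to verify, by means of the general criterion developed in \S\ref{sec:poly-p}, that every multiplicative leg $f_{\otimes}$ is a polynomial functor of stable \icats{}; as in the proof of \cref{thm:rep-thy-main}, this reduces by base change along orbit decompositions to the fold-map case, where $f_{\otimes}$ is a finite tensor power and hence $n$-excisive. Post-composing $\widetilde{\Perf}_R$ with $\Omega^{\infty}K$ then yields a functor
\[
  \Omega^{\infty}K_G(R)\colon \BISPAN(\xF_G) \longrightarrow \mathcal{S}
\]
which is product-preserving, since $K$ sends products of stable \icats{} to products of spaces. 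Invoking the identification of connective genuine $G$-$E_{\infty}$-ring spectra with product-preserving functors $\Bispan(\xF_G) \to \mathcal{S}$ (the working hypothesis cited at the start of \S\ref{sec:tambara}), this is precisely a genuine $G$-$E_{\infty}$-ring refining $K(R)$, and restricting along $\mathcal{O}_G^{\op}\hookrightarrow \Bispan(\xF_G)$ recovers the expected presheaf $G/H \mapsto K(\mathrm{Res}^G_H R)$. The main obstacle will be the polynomiality check for the norm $f_{\otimes}$: the underlying idea is transparent (finite tensor powers are $n$-excisive), but coherently identifying the full norm as a polynomial functor requires the criterion of \S\ref{sec:poly-p} and careful tracking of base change; every other step is coherence supplied for free by \cref{thm:main} and \cite{polynomials}.
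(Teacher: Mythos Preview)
Your proposal is correct and follows essentially the same route as the paper: build the module functor on $\Span(\xF_G)$ (the paper cites \cite[Proposition~7.6(4)]{norms} for this), verify that it is a $G$-symmetric monoidal \icat{} compatible with additive transfers, apply the main theorem to extend to $\BISPAN(\xF_G)$, check polynomiality of the norms via the criterion of \S\ref{sec:poly-p}, and then compose with $\Omega^{\infty}K$ using \cite{polynomials}; the paper has simply packaged these steps into \cref{cor:tambara} and \cref{cor:polyext}.

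One small correction on mechanism: the polynomiality argument in \S\ref{sec:poly-p} does \emph{not} proceed by base-change reduction to the fold-map case --- a map of orbits $G/H \to G/K$ in $\xF_G$ is generally not a base change of a fold map, so there is no such reduction available here (that trick is used for the \'etale-local arguments in \cref{thm:perfbispan}, not for finite $G$-sets). Instead, \cref{propn:polynom} equips $(\xF_G,\xF_G)$ with a degree structure and argues by induction on degree: distributivity for $\nabla_x \colon x \amalg x \to x$ composed with $p$ yields a decomposition of $p_{\otimes}(E \oplus F)$ whose cross term involves norms along maps of strictly smaller degree. Since you already cite \S\ref{sec:poly-p} as the source of the criterion, this is a mischaracterization of the mechanism rather than a gap in the argument.
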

We believe this is a completely new structure on
algebraic $K$-theory in equivariant homotopy theory that significantly
extends several recent results in the literature; see
Remark~\ref{rem:g-stuff}. 

In the algebro-geometric context the same method also shows that, for
instance the algebraic $K$-theory of schemes (or more generally
spectral Deligne--Mumford stacks) has multiplicative transfers along
finite \'etale morphisms.

\subsection{Notation}
This paper is written in the language of \icats{}. We use the following reasonably standard notation:
\begin{itemize}
\item $\mathcal{S}$ is the \icat{} of spaces, \ie{} $\infty$-groupoids.
\item $\CatI$ is the \icat{} of \icats{}.
\item $\CATI$ is the $(\infty,2)$-category of \icats{}.
\item $\CatIT$ is the \icat{} of \itcats{}.
\item If $\mathcal{C}$ and $\mathcal{D}$ are \icats{} or
  \itcats{}, we write $\Fun(\mathcal{C},\mathcal{D})$
  for the \icat{} of functors from $\mathcal{C}$ to $\mathcal{D}$.
\item If $\mathcal{C}$ and $\mathcal{D}$ are $(\infty,2)$-categories,
  we write $\FUN(\mathcal{C},\mathcal{D})$ for the
  $(\infty,2)$-category of functors from $\mathcal{C}$ to
  $\mathcal{D}$.
\item We write $(\blank)^{(1)} \colon \Cat_{(\infty,2)} \to \CatI$ for the
  functor taking an \itcat{} to its underlying \icat{}. (Thus
  $(\blank)^{(1)}$ is right adjoint to the inclusion of $\CatI$ into
  $\CatIT$.)
\item We write $(\blank)^{\simeq}$
  for the functors $\CatI \to \mathcal{S}$
  and $\CatIT \to \mathcal{S}$ taking an \icat{} or \itcat{} to its
  underlying \igpd{}.
\item If $\mathcal{C}$ is an \icat{} and $x$ and $y$ are objects of
  $\mathcal{C}$, we write $\Map_{\mathcal{C}}(x,y)$ for the space of
  maps from $x$ to $y$ in $\mathcal{C}$.
\item If $\mathcal{C}$ is an \itcat{} and $x$ and $y$ are objects of
  $\mathcal{C}$, we write $\MAP_{\mathcal{C}}(x,y)$ for the \icat{} of
  maps from $x$ to $y$ in $\mathcal{C}$.
\item If $\mathcal{X}$ is an \itcat{} we write $\mathcal{X}^{\onop}$ for
  the \itcat{} obtained by reversing the morphisms in $\mathcal{X}$
  and $\mathcal{X}^{\twop}$ for that obtained by reversing the
  2-morphisms.
\end{itemize}
We also adopt the following standard notation for functors between slices of an \icat{} $\mathcal{C}$:
\begin{itemize}
\item If $f \colon x \to y$ is a morphism in $\mathcal{C}$, we have a
  functor \[f_!\colon \mathcal{C}_{/x} \rightarrow \mathcal{C}_{/y}\]
  such that $f_!(t \rightarrow x) = t \rightarrow x \rightarrow y$,
  \ie{} given by composition with $f$.
\item If pullbacks
along $f$ exist in $\mathcal{C}$ then
$f_{!}$ has a right adjoint
\[f^*\colon \mathcal{C}_{/y} \rightarrow \mathcal{C}_{/x}.\]
\item If $f^{*}$ has a further right adjoint, this will be denoted by
  \[ f_* \colon \mathcal{C}_{/x} \to \mathcal{C}_{/y}.\]
  (This right adjoint exists for all $f$ precisely when $\mathcal{C}$ is locally cartesian closed, for example if $\mathcal{C}$ is an $\infty$-topos.)
\end{itemize}

\subsection{Acknowledgments}
Much of this paper was written while the second author was employed by
the IBS Center for Geometry and Physics in a position funded by the
grant IBS-R003-D1 of the Institute for Basic Science, Republic of
Korea.  The first author would like to thank Clark Barwick for his
influence in thinking about things in terms of spans. We thank Andrew
Macpherson and Irakli Patchkoria for helpful discussions, Marc Hoyois for some corrections on an earlier draft, and the
Gorilla Brewery of Busan for providing the setting for the initial
inspiration for this paper. We also thank the anonymous referee for
suggesting a much simpler approach to our main result, by reducing it
to the universal property for spans.

\section{Bispans and distributive functors}

\subsection{$(\infty,2)$-categories and adjunctions}
Throughout this paper we work with \itcats{}, and in this section we
review some basic results we use from the theory of \itcats{},
particularly regarding adjunctions. There are several
equivalent ways to define these objects and their homotopy theory,
including Rezk's $\Theta_{2}$-spaces \cite{RezkThetaN} and Barwick's
2-fold Segal spaces \cite{BarwickThesis}. We can also view \itcats{}
as \icats{} enriched in \icats{}, which can be rigidified to
categories strictly enriched in quasicategories (see \cite{enrcomp}); the latter is the
model used in the papers of Riehl and Verity. We will not review the
details of any of these constructions here, as we do not need to refer
to any particular model of \itcats{} in this paper.

We will, however, use the Yoneda lemma for \itcats{}, which is a
special case of Hinich's Yoneda lemma for enriched \icats{}
\cite{HinichYoneda}:
\begin{thm}[Hinich] \label{yoneda}
  For any \itcat{} $\mathcal{X}$ there exists a fully faithful functor
  of \itcats{}
  \[ \Yo_{\mathcal{X}} \colon \mathcal{X} \to \FUN(\mathcal{X}^{\op},
    \CATI)\]
  such that for any functor $\Phi \colon \mathcal{X}^{\op} \to \CATI$
  there is a natural equivalence of $\infty$-categories \[\Phi(d) \simeq
  \MAP_{\FUN(\mathcal{X}^{\op},\CATI)}(\Yo_{\mathcal{X}}(d), \Phi).\]
\end{thm}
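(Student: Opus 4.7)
My plan is to reduce the statement to Hinich's Yoneda lemma for $\infty$-categories enriched in a presentably symmetric monoidal $\infty$-category, applied to the monoidal $\infty$-category $(\CatI, \times)$. The first step is therefore to invoke the comparison between $(\infty,2)$-categories and $\CatI$-enriched $\infty$-categories (e.g.\ along the lines of \cite{enrcomp}), so that $\mathcal{X}$ may be regarded as an object of the $\infty$-category $\Cat_{\infty}^{\CatI}$ of $\CatI$-enriched $\infty$-categories. Under this identification, for any two objects $x,y$ of $\mathcal{X}$, the hom-object $\MAP_{\mathcal{X}}(x,y)$ is precisely the enriched hom, and $\CATI$ corresponds to $\CatI$ regarded as self-enriched.

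Next I would invoke Hinich's enriched Yoneda lemma, which produces, for any $\CatI$-enriched $\infty$-category $\mathcal{X}$, a fully faithful enriched functor
\[
\Yo_{\mathcal{X}} \colon \mathcal{X} \to \FUN_{\CatI}(\mathcal{X}^{\op}, \CatI)
\]
satisfying the Yoneda formula $\Phi(d) \simeq \MAP(\Yo_{\mathcal{X}}(d), \Phi)$ naturally in $\Phi$. To conclude, I would then identify the enriched functor category $\FUN_{\CatI}(\mathcal{X}^{\op}, \CatI)$ with the $(\infty,2)$-category $\FUN(\mathcal{X}^{\op}, \CATI)$ under the equivalence between $(\infty,2)$-categories and $\CatI$-enriched $\infty$-categories; this identification is part of the package of the comparison results, since on both sides the hom $(\infty,1)$-category between $F$ and $G$ is computed as a suitable end $\int_x \MAP_{\CATI}(F(x), G(x))$.

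The main obstacle is this last identification: unpacking that the symmetric monoidal structure on $\CatI$ used by Hinich is compatible with the evident tensoring/cotensoring that underlies the $(\infty,2)$-categorical functor $\infty$-category and checking that the resulting Yoneda embedding lands in, and is fully faithful into, $\FUN(\mathcal{X}^{\op}, \CATI)$ as defined via the chosen model of $(\infty,2)$-categories. Once this naturality/compatibility is in place, both the fully faithfulness and the mapping space formula follow directly from the enriched statement, with no further computation.
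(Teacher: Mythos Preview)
Your proposal is correct and matches the paper's approach: the paper does not prove this statement but simply attributes it to Hinich's enriched Yoneda lemma \cite{HinichYoneda}, with the accompanying remark explaining (via the chain of comparisons \cite{MacphersonEnr}, \cite{enrcomp}, \cite{BarwickSchommerPriesUnicity}) why Hinich's result for $\CatI$-enriched $\infty$-categories transports to $(\infty,2)$-categories. The identification of functor $(\infty,2)$-categories you flag as the main obstacle is exactly what this chain of model comparisons is meant to handle.
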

\begin{remark}
  Hinich's work does use a specific model for \itcats{}, namely a
  certain definition of enriched \icats{} specialized to enrichment in
  $\CatI$. Hinich's definition has been compared to the
  original one of Gepner--Haugseng~\cite{enr} by
  Macpherson~\cite{MacphersonEnr}, and for enrichment in $\CatI$ the
  latter is equivalent to complete 2-fold Segal spaces \cite{enrcomp},
  which in turn is known by work of Barwick and
  Schommer-Pries~\cite{BarwickSchommerPriesUnicity} to be equivalent to
  most other approaches to $(\infty,2)$-categories (including the
  complicial sets of Verity by recent work of Gagna--Harpaz--Lanari~\cite{GagnaHarpazLanaryScale}).
\end{remark}

Recall that there exists a \emph{universal adjunction}. This is a
$2$-category $\ADJ$ with two objects $-$ and $+$ and generated by
$1$-morphisms
$L \colon \Delta^1 = \{ - \rightarrow + \} \rightarrow \ADJ$ and
$R \colon \Delta^1 = \{ + \rightarrow - \} \rightarrow \ADJ$ such that
$L$ is left adjoint to $R$; see \cite{RiehlVerityAdj} for an explicit
combinatorial definition of this 2-category. An adjunction in a
2-category can then
equivalently be described as a functor from $\ADJ$. This universal
property also holds in \itcats{}, where we can formulate it more
precisely as follows:
\begin{thm}[Riehl--Verity]\label{adjthm}
  Given an $(\infty,2)$-category $\mathcal{X}$, the induced maps of spaces
\[
L^*,\, R^*:\Map_{\CatIT}(\ADJ, \mathcal{X}) \rightarrow \Map_{\CatIT}(\Delta^1, \mathcal{X}),
\]
are both inclusions of components whose images are the subspaces
\[ \Map^L_{\CatIT}(\Delta^1, \mathcal{X}),\, \Map^R_{\CatIT}(\Delta^1, \mathcal{X}) \,\,\subset\,\, \Map_{\CatIT}(\Delta^1,\mathcal{X})\] spanned by those functors that are left and right
adjoints, respectively.  
\end{thm}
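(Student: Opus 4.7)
The plan is to import Riehl and Verity's original theorem \cite{RiehlVerityAdj}, which proves the analogous statement inside the $\infty$-cosmos of categories strictly enriched in quasi-categories, and then to transport it across the known equivalences between models for $(\infty,2)$-categories. I would first fix such a rigid model — say, complete 2-fold Segal spaces, which by \cite{enrcomp} and \cite{MacphersonEnr} compare with Hinich's enriched $\infty$-categories and with the strict quasi-categorical enrichment used by Riehl-Verity. In this model $\ADJ$ admits the explicit combinatorial presentation as the 2-category freely generated by an adjunction, and Riehl-Verity's theorem asserts precisely that $L^*$ and $R^*$ are inclusions of components whose images are the left- and right-adjointable arrows, respectively.

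Unpacking the content of their argument, there are two halves. For the image description, given a left-adjointable arrow $f \colon x \to y$ in $\mathcal{X}$, one builds an explicit functor $\ADJ \to \mathcal{X}$ by matching the generating cells of $\ADJ$ with unit, counit, and higher-coherence data for a chosen right adjoint of $f$; the rigid model makes this possible because adjunction data can be written down as coherent systems of simplices. For the component-inclusion property, one argues inductively along the skeletal filtration of $\ADJ$, showing that at each stage the space of extensions of a partial adjunction diagram to the next cell is contractible, so that the entire space of extensions of a given $f$ is either empty or contractible.

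Once the result is established in the chosen rigid model, I would invoke the unicity theorem of Barwick and Schommer-Pries \cite{BarwickSchommerPriesUnicity} to transfer it to the model-independent statement: both the mapping space functor $\Map_{\CatIT}(-,-)$ and the subspaces $\Map^{L}, \Map^{R} \subset \Map_{\CatIT}(\Delta^1,\mathcal{X})$ of left- and right-adjointable arrows are intrinsic, preserved under any equivalence of $(\infty,2)$-categories in either argument, so the conclusion propagates verbatim. The main obstacle in my view is not the combinatorial heart of Riehl-Verity's proof — which is already complete in their framework — but rather the careful bookkeeping needed to identify the combinatorial $\ADJ$ of their $\infty$-cosmos with the object of $\CatIT$ implicitly invoked in the theorem statement, and to verify that the notion of ``left-adjoint morphism'' transfers correctly across the comparison equivalences. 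This identification is essentially tautological once set up, but writing it out cleanly requires juggling several equivalent models of $(\infty,2)$-categories simultaneously.
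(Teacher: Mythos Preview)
Your proposal is correct and matches the paper's approach: the paper itself does not give an independent proof but simply cites \cite{RiehlVerityAdj}*{Theorem 4.3.11 and 4.4.18} (and notes the alternative proof in \cite{HarpazNuitenPrasmaInfty2}), so your plan to import their result and transport it across the model comparisons is exactly what is intended. Your discussion of the model-comparison bookkeeping is more explicit than anything the paper writes down, but this is the implicit content of the citation.
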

For details, see \cite{RiehlVerityAdj}*{Theorem 4.3.11 and
  4.4.18}. See also \cite{HarpazNuitenPrasmaInfty2} for an alternative
proof, using the cotangent complex of $(\infty,2)$-categories.

We need to upgrade this to a statement about \itcats{} rather than just
\igpds{}. This follows from the next observation, which
identifies the morphisms and 2-morphisms in the \itcat{} of
adjunctions using some results from \cite{adjmnd}; to state this we need
some terminology that will be important throughout the paper:
\begin{defn}
  Let $\mathcal{X}$ be an \itcat{} and consider a commutative square
  \[
    \begin{tikzcd}
      x' \arrow{r}{g'} \arrow{d}{\xi} & y'  \arrow{d}{\eta} \\
      x \arrow{r}{g} & y
    \end{tikzcd}
  \]
  in $\mathcal{X}$. If $g$ and $g'$ are left adjoints, with
  corresponding right adjoints $h$ and $h'$, then we can use the units
  and counits of the adjunctions to define a
  \emph{mate} (or \emph{Beck--Chevalley}) \emph{transformation} $\xi h' \to h \eta$ as the composite
  \[ \xi h' \to hg \xi h' \simeq h \eta g' h' \to h \eta.\]
  We say the square is \emph{right adjointable} if this mate
  transformation is an equivalence. Dually, if $g$ and $g'$ are right adjoints, with left adjoints $f$
  and $f'$, we say the square is \emph{left adjointable} if the mate
  transformation
  \[ f \eta \to f \eta g' f' \simeq f g \xi f' \to \xi f' \]
  is an equivalence.
\end{defn}

\begin{propn}\label{adjinarrows}
  Let $\mathcal{X}$ and $\mathcal{Y}$ be \itcats{}. A 1-morphism in
  the \itcat{} $\FUN(\mathcal{X}, \mathcal{Y})$, \ie{} a natural
  transformation $\eta \colon F \to G$ of functors $F,G \colon
  \mathcal{X} \to \mathcal{Y}$, is a right (left) adjoint \IFF{}
  \begin{enumerate}[(1)]
  \item for every object $x \in \mathcal{X}$, the morphism $\eta_{x}
    \colon F(x) \to G(x)$ is a right (left) adjoint in $\mathcal{Y}$,
  \item for every morphism $f \colon x \to x'$ in $\mathcal{X}$, the
    commutative square
    \[
      \begin{tikzcd}
        F(x) \arrow{r}{\eta_{x}} \arrow{d}{F(f)} & G(x)
        \arrow{d}{G(f)} \\
        F(x') \arrow{r}{\eta_{x'}} & G(x')
      \end{tikzcd}
    \]
    is left (right) adjointable.
  \end{enumerate}
\end{propn}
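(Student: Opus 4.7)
The plan is to deduce the proposition from the universal property of $\ADJ$ (\cref{adjthm}) combined with the cartesian closure of the \itcat{} $\CatIT$. First, applying \cref{adjthm} to the \itcat{} $\FUN(\mathcal{X}, \mathcal{Y})$ itself, I would observe that the natural transformation $\eta$ is a right adjoint in this \itcat{} exactly when the corresponding functor $\Delta^{1} \to \FUN(\mathcal{X}, \mathcal{Y})$ extends along $R \colon \Delta^{1} \to \ADJ$ to a functor $\ADJ \to \FUN(\mathcal{X}, \mathcal{Y})$. Using cartesian closure of $\CatIT$, such an extension is the same datum as an extension of the transposed functor $\hat{\eta} \colon \mathcal{X} \to \FUN(\Delta^{1}, \mathcal{Y})$ along the restriction $R^{*} \colon \FUN(\ADJ, \mathcal{Y}) \to \FUN(\Delta^{1}, \mathcal{Y})$.

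The main step is then to identify this restriction functor explicitly. Using the results of \cite{adjmnd}, I would show that $R^{*}$ is the inclusion of the sub-\itcat{} of $\FUN(\Delta^{1}, \mathcal{Y})$ whose objects are those morphisms of $\mathcal{Y}$ that are right adjoints, whose 1-morphisms are precisely the commutative squares that are \emph{left adjointable} in the sense of the definition preceding the proposition (i.e., where the mate transformation between the left adjoints of the vertical right adjoints is an equivalence), and which contains all 2-morphisms of $\FUN(\Delta^{1}, \mathcal{Y})$ between such squares. This is the \itcatl{} refinement of the classical fact that a morphism of adjunctions is determined by a mate-compatible pair of transformations on the right adjoints (equivalently on the left adjoints).

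Putting these two steps together, a functor $\hat{\eta}$ factors through $R^{*}$ precisely when it sends every object $x$ to a right adjoint $\eta_{x}$ in $\mathcal{Y}$ (giving condition (1)) and every morphism $f \colon x \to x'$ to a left adjointable square, which is exactly the naturality square of $\eta$ at $f$ (giving condition (2)); since the identified sub-\itcat{} is 2-full, no additional conditions arise at the level of 2-morphisms or higher cells. The statement for left adjoints follows by the symmetric argument with $L \colon \Delta^{1} \to \ADJ$ in place of $R$, or alternatively by passing to $\mathcal{Y}^{\onop}$ to swap the roles of left and right adjoints.

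The hard part is the second step: the precise identification of $\FUN(\ADJ, \mathcal{Y})$ inside $\FUN(\Delta^{1}, \mathcal{Y})$ at the level of 1-morphisms, where the cited results of \cite{adjmnd} do the real work (in particular, pinning down that morphisms of adjunctions in the \itcatl{} setting are determined, up to contractible choice, by a single mate-compatible square). Everything else is a formal manipulation with the universal property of $\ADJ$ and the internal hom of $\CatIT$.
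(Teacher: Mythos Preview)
Your strategy differs from the paper's. The paper embeds $\FUN(\mathcal{X},\mathcal{Y})$ into the \itcat{} $\FUN(\mathcal{X},\mathcal{Y})_{\txt{lax}}$ of functors and \emph{lax} natural transformations: by \cite{adjmnd}*{Theorem 4.6}, $\eta$ has a right adjoint $\rho$ on the lax side \IFF{} each $\eta_x$ does in $\mathcal{Y}$, with $\rho$ given on morphisms by taking mates; then \cite{adjmnd}*{Corollary 3.17} together with local full faithfulness of the embedding shows that $\rho$ (and the unit and counit) lie in the strict \itcat{} precisely when those mate squares commute, which is condition (2).

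Your transposed route is clean in outline, but your ``main step'' --- identifying the image of $R^{*} \colon \FUN(\ADJ,\mathcal{Y}) \to \FUN(\Delta^{1},\mathcal{Y})$ at the level of $1$- and $2$-morphisms --- is in this paper the corollary stated immediately \emph{after} \cref{adjinarrows}, and it is proved \emph{using} the proposition. Unwinding your own reduction via cartesian closure, knowing the image of $R^{*}$ on $1$-morphisms amounts to knowing which arrows of $\FUN([1],\mathcal{Y})$ are right adjoints, \ie{} precisely the case $\mathcal{X}=[1]$ of what you are trying to prove (and similarly $\mathcal{X}=\mathsf{C}_{2}$ for the $2$-morphism level). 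The inputs from \cite{adjmnd} that are actually available here (Theorem 4.6 and Corollary 3.17) are statements about lax transformations --- exactly the paper's tool --- so to establish your main step you would end up running the paper's argument for those special $\mathcal{X}$ anyway. The reformulation is valid, but it relocates the work rather than removing it; as written it is circular within the paper's logical structure.
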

\begin{proof}
  We consider the case of right adjoints; the left adjoint case can be
  proved similarly, and also follows by duality. Let 
  $\FUN(\mathcal{X},\mathcal{Y})_{\txt{lax}}$ denote the \itcat{} of
  functors from $\mathcal{X}$ to $\mathcal{Y}$ with lax natural
  transformations as morphisms (see \cite{adjmnd}*{\S 3} for a precise
  definition). We can view the natural transformation $\eta$ as a morphism in
  $\FUN(\mathcal{X},\mathcal{Y})_{\txt{lax}}$. By
  \cite{adjmnd}*{Theorem 4.6} it has a right adjoint here \IFF{}
  $\eta_{x}$ has a right adjoint in $\mathcal{Y}$ for
  every $x \in \mathcal{X}$; this right adjoint is given on
  morphisms by taking mates. If $\eta$ has a right adjoint $\rho$ in
  $\FUN(\mathcal{X}, \mathcal{Y})$ then by uniqueness this is also a
  right adjoint in $\FUN(\mathcal{X},\mathcal{Y})_{\txt{lax}}$; hence
  $\eta$ must be given objectwise by left adjoints and the naturality
  squares of $\rho$ are the corresponding mate squares --- in
  particular, these mate squares must commute, so conditions (1) and
  (2) hold. Conversely, if these conditions hold for $\eta$ then
  $\eta$ has a right adjoint $\rho$ in
  $\FUN(\mathcal{X},\mathcal{Y})_{\txt{lax}}$ and the lax naturality
  squares of $\rho$ actually commute. By \cite{adjmnd}*{Corollary
    3.17} this means that $\rho$ is in the image of the canonical functor $\FUN(\mathcal{X},
  \mathcal{Y}) \to \FUN(\mathcal{X},\mathcal{Y})_{\txt{lax}}$;
  moreover, this functor is
  locally fully faithful so the unit 
  and counit of the adjunction also lie in $\FUN(\mathcal{X},
  \mathcal{Y})$, and so $\eta$ has a right adjoint in $\FUN(\mathcal{X},
  \mathcal{Y})$, as required.
\end{proof}

\begin{notation}
  Let $\mathcal{X}$ be an \itcat{}. We write $\FUN(\Delta^{1},
  \mathcal{X})^{\ladj}$ for the locally full sub-\itcat{} of $\FUN(\Delta^{1},
  \mathcal{X})$ whose objects are the morphisms that are left adjoints
  and whose morphisms are the right adjointable squares. Similarly, we
  write $\FUN(\Delta^{1}, \mathcal{X})^{\radj}$ for the sub-\itcat{} of
  right adjoints and left adjointable squares.
\end{notation}

\begin{cor}
  The functors $L^{*}, R^{*} \colon \FUN(\ADJ, \mathcal{X}) \to
  \FUN(\Delta^{1}, \mathcal{X})$ identify the \itcat{} $\FUN(\ADJ, \mathcal{X})$
  with the sub-\itcats{} $\FUN(\Delta^{1},
  \mathcal{X})^{\ladj}$ and $\FUN(\Delta^{1},
  \mathcal{X})^{\radj}$, respectively.
\end{cor}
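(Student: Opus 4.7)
The plan is to deduce the corollary from the Riehl--Verity theorem (Theorem~\ref{adjthm}) applied to the functor \itcats{} $\FUN(\mathcal{K}, \mathcal{X})$, combined with Proposition~\ref{adjinarrows}, via the Yoneda lemma for the \icat{} $\CatIT$. I focus on the $L^{*}$-statement, as the $R^{*}$-case is dual (for instance by replacing $\mathcal{X}$ with $\mathcal{X}^{\twop}$).

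By Yoneda, it suffices to show that for every \itcat{} $\mathcal{K}$ the restriction functor $L^{*}$ induces an equivalence of spaces
\[
\Map_{\CatIT}(\mathcal{K}, \FUN(\ADJ, \mathcal{X})) \xrightarrow{\sim} \Map_{\CatIT}(\mathcal{K}, \FUN(\Delta^{1}, \mathcal{X})^{\ladj}).
\]
Applying the exponential adjunction in $\CatIT$ to both sides, this map is identified with the restriction
\[
L^{*}\colon \Map_{\CatIT}(\ADJ, \FUN(\mathcal{K}, \mathcal{X})) \longrightarrow \Map_{\CatIT}(\Delta^{1}, \FUN(\mathcal{K}, \mathcal{X})),
\]
followed by cutting down to the subspace of $\Map_{\CatIT}(\Delta^{1}, \FUN(\mathcal{K}, \mathcal{X})) \simeq \Map_{\CatIT}(\mathcal{K}, \FUN(\Delta^{1}, \mathcal{X}))$ corresponding to $\FUN(\Delta^{1}, \mathcal{X})^{\ladj}$ on the right. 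By Theorem~\ref{adjthm} applied to $\FUN(\mathcal{K}, \mathcal{X})$, the source is an inclusion of components whose image is the subspace of morphisms in $\FUN(\mathcal{K}, \mathcal{X})$ that are left adjoints.

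The remaining --- and key --- step is to identify this subspace of left adjoints with the subspace coming from $\FUN(\Delta^{1}, \mathcal{X})^{\ladj}$. By Proposition~\ref{adjinarrows}, a natural transformation $\eta \colon F \to G$ of functors $\mathcal{K} \to \mathcal{X}$ is a left adjoint in $\FUN(\mathcal{K}, \mathcal{X})$ if and only if (a) each component $\eta_{x}$ is a left adjoint in $\mathcal{X}$, and (b) every naturality square of $\eta$ is right adjointable. Unwinding the definition of $\FUN(\Delta^{1}, \mathcal{X})^{\ladj}$ as the locally full sub-\itcat{} spanned by the left adjoints with the right adjointable squares as morphisms, a functor $\mathcal{K} \to \FUN(\Delta^{1}, \mathcal{X})$ factors through $\FUN(\Delta^{1}, \mathcal{X})^{\ladj}$ precisely when (a) and (b) hold. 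The two subspaces therefore coincide; this both supplies the required equivalence of mapping spaces and shows a posteriori that $L^{*}$ originally lands in $\FUN(\Delta^{1}, \mathcal{X})^{\ladj}$.

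The main obstacle should be this last dictionary, matching the two-part characterization of left adjoints in a functor \itcat{} from Proposition~\ref{adjinarrows} with the defining conditions of the locally full sub-\itcat{} $\FUN(\Delta^{1}, \mathcal{X})^{\ladj}$. Once this bookkeeping is pinned down, everything else is a formal composition of the exponential adjunction, the Yoneda lemma, and the Riehl--Verity theorem.
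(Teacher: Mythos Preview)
Your proof is correct and follows essentially the same approach as the paper: both use the Yoneda lemma in $\CatIT$, the exponential adjunction to pass to $\FUN(\mathcal{K},\mathcal{X})$, Theorem~\ref{adjthm} to identify the image of $L^{*}$ with the left adjoints there, and Proposition~\ref{adjinarrows} to match these with functors landing in $\FUN(\Delta^{1},\mathcal{X})^{\ladj}$.
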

\begin{proof}
  We consider the case of left adjoints; the proof for right adjoints
  is the same. For any \itcat{} $\mathcal{Y}$ we have a natural commutative square
  \[
    \begin{tikzcd}
    \Map(\mathcal{Y}, \FUN(\ADJ, \mathcal{X})) \arrow{r}{\sim}
    \arrow{d}{L^{*}} & \Map(\ADJ, \FUN(\mathcal{Y}, \mathcal{X})) \arrow{d}{L^{*}}
    \\
    \Map(\mathcal{Y}, \FUN(\Delta^{1}, \mathcal{X})) \arrow{r}{\sim} &
    \Map(\Delta^{1}, \FUN(\mathcal{Y}, \mathcal{X})).      
    \end{tikzcd}
  \]
  Here \cref{adjthm} implies that the right vertical map is a
  monomorphism of \igpds{} with image the components of
  $\Map(\Delta^{1}, \FUN(\mathcal{Y}, \mathcal{X}))$ that correspond
  to left adjoints in $\FUN(\mathcal{Y}, \mathcal{X})$. Using 
  \cref{adjinarrows} we can identify these as precisely those in the
  image of the subspace $\Map(\mathcal{Y}, \FUN(\Delta^{1},
  \mathcal{X})^{\ladj})$ under the bottom horizontal equivalence. By
  the Yoneda Lemma it follows that $L^{*} \colon \FUN(\ADJ,
  \mathcal{X}) \to \FUN(\Delta^{1}, \mathcal{X})^{\ladj}$ is an equivalence.
\end{proof}

\subsection{Adjointable functors and spans}\label{sec:spans}
In this section we introduce (left and right) adjointable functors and
review the universal property of \itcats{} of spans in terms of these.

\begin{defn}
  A \emph{span pair} $(\mathcal{C},\mathcal{C}_{F})$ consists of an
  \icat{} $\mathcal{C}$ together with a wide subcategory $\mathcal{C}_{F}$
  (\ie{} one containing all objects and equivalences) such that given morphisms $x \xto{f} y$ in
  $\mathcal{C}_{F}$ and $z \xto{g} y$ in
  $\mathcal{C}$, the pullback
\[
  \begin{tikzcd}
    x \times_{y} z \arrow{r}{f'} \arrow{d}[swap]{g'} \drpullback & z \arrow{d}{g} \\
    x \arrow{r}{f} & y
  \end{tikzcd}
\]
exists in $\mathcal{C}$, and moreover $f'$ is also in
$\mathcal{C}_{F}$. If $(\mathcal{C}, \mathcal{C}_{F})$ and
$(\mathcal{C}',\mathcal{C}'_{F})$ are span pairs, then a morphism of
span pairs
$(\mathcal{C}, \mathcal{C}_{F}) \to (\mathcal{C}',\mathcal{C}'_{F})$
is a functor $\phi \colon \mathcal{C} \to \mathcal{C}'$ such that
$\phi(\mathcal{C}_{F}) \subseteq \mathcal{C}'_{F}$ and $\phi$
preserves pullbacks along morphisms in $\mathcal{C}_{F}$. We write
$\Pair$ for the \icat{} of span pairs, which can be defined as a
subcategory of $\Fun(\Delta^{1}, \CatI)$.
\end{defn}

Given a span pair $(\mathcal{C}, \mathcal{C}_{F})$ we can, as in
\cite[Section 3]{BarwickMackey}, define an \icat{}
$\Span_{F}(\mathcal{C})$ whose objects are the objects of
$\mathcal{C}$, with morphisms from $x$ to $y$ given by spans
\[
  \begin{tikzcd}
    {} & z \arrow{dl}[above left]{g} \arrow{dr}{f} \\
    x & & y,
  \end{tikzcd}
\]
where $f$ is in $\mathcal{C}_{F}$; we
compose spans by taking pullbacks. Following \cite[Section 5]{spans} we can
upgrade this to an \itcat{} $\SPAN_{F}(\mathcal{C})$ whose
2-morphisms are morphisms of spans, \ie{} diagrams
\[
  \begin{tikzcd}
    {} & z \arrow{dd} \arrow{dl} \arrow{dr} \\
    x & & y \\
    &  z' \arrow{ul} \arrow{ur}
  \end{tikzcd}
\]
in $\mathcal{C}$, where $z \to z'$ can be any morphism in
$\mathcal{C}$.

\begin{warning}
  In \cite{spans}, the \itcat{} of spans in $\mathcal{C}$ was denoted
  $\Span_{1}^{+}(\mathcal{C})$, while $\SPAN_{n}(\mathcal{C})$ was
  used for an $n$-uple \icat{} of spans.
\end{warning}

\begin{remark}
  In the \itcat{} $\SPAN_{F}(\mathcal{C})$, every morphism of the form
  \[ [f]_{B} :=  x \xfrom{f} z \xto{\id} z \] with $f$ in $\mathcal{C}_{F}$ has a left
  adjoint, namely the reversed span
  \[ [f]_{F} := z \xfrom{\id} z \xto{f} x.\]
  The counit is the 2-morphism
  \[
    \begin{tikzcd}
      {} & x \arrow{dl}[swap]{f} \arrow{dd}{f} \arrow{dr}{f} & & & {[f]_{F}\circ [f]_{B}} \arrow{dd} \\
      y & & y \\
       & y \arrow[equals]{ul}\arrow[equals]{ur} & & & \id_{y},
     \end{tikzcd}
   \]
   and the unit is
  \[
    \begin{tikzcd}
      {} & x \arrow[equals]{dl} \arrow{dd}{\Delta} \arrow[equals]{dr} & & & \id_{x} \arrow{dd} \\
      x & & x \\
       & x \times_{y}x \arrow{ul} \arrow{ur} & & & {[f]_{B} \circ [f]_{F},}
     \end{tikzcd}
   \]
   where the fibre product $x \times_{y}x$ is over two copies of $f$
   and $\Delta$ is the corresponding diagonal.
\end{remark}

The \itcat{} $\SPAN_{F}(\mathcal{C})$ enjoys a universal property:
roughly speaking, it is obtained from $\mathcal{C}^{\op}$ by freely
adding left adjoints for morphisms in $(\mathcal{C}_{F})^{\op}$. To
state this more precisely, we need some definitions:
\begin{defn}
  Let $(\mathcal{C}, \mathcal{C}_{F})$ be a span
  pair and $\mathcal{X}$ an \itcat{}. A functor $\Phi \colon \mathcal{C}^{\op}
  \to \mathcal{X}$ is \emph{left $F$-preadjointable}
  if for every morphism $f \colon x \to y$ in $\mathcal{C}_{F}$ the
  1-morphism $f^{\ostar} := \Phi(f) \colon \Phi(y) \to \Phi(x)$ in $\mathcal{X}$ has a left
  adjoint $f_{\oplus}$ in $\mathcal{X}$. 
\end{defn}

\begin{defn}
  We say that $\Phi$ is \emph{left
    $F$-adjointable} if it is left $F$-preadjointable and
  for every cartesian square
    \begin{equation} \label{eq:pull}
   \begin{tikzcd}
      x \times_{y} z \arrow{r}{f'} \arrow{d}{g'} & z \arrow{d}{g} \\
      x \arrow{r}{f} & y,
    \end{tikzcd}
  \end{equation}
  in $\mathcal{C}$ with $f$ in
  $\mathcal{C}_{F}$, the commutative square
  \[
    \begin{tikzcd}
      \Phi(y) \arrow{r}{f^{\ostar}} \arrow{d}{g^{\ostar}} & \Phi(x)
      \arrow{d}{g'^{\ostar}} \\
      \Phi(z) \arrow{r}{f'^{\ostar}} & \Phi(x \times_{y} z)
    \end{tikzcd}
  \]
  in $\mathcal{X}$ is left adjointable. We write
  $\Map_{\Fladj}(\mathcal{C}^{\op}, \mathcal{X})$ for the subspace of
  $\Map(\mathcal{C}^{\op},\mathcal{X})$ whose components are the left
  $F$-adjointable functors.
\end{defn}

\begin{remark}
  In other words, $\Phi$ is left $F$-adjointable if for every
  cartesian square \cref{eq:pull} with $f$ in $\mathcal{C}_{F}$, the
  Beck--Chevalley transformation
  \begin{equation}\label{eq:bcr}
f'_{\oplus}g'^{\ostar} \to   g^{\ostar} f_{\oplus}
  \end{equation}
  is an equivalence.
\end{remark}

\begin{thm}[Gaitsgory-Rozenblyum \cite{GaitsgoryRozenblyum1}, Macpherson \cite{MacphersonCorr}]\label{thm:spanuniv}
  Let $(\mathcal{C}, \mathcal{C}_{F})$ be a span
  pair, and let
  $\mathcal{X}$ be an \itcat{}. The inclusion of the backwards maps
  $\mathcal{C}^{\op} \to \SPAN_{F}(\mathcal{C})$ gives a
  monomorphism of $\infty$-groupoids
  \[ \Map(\SPAN_{F}(\mathcal{C}), \mathcal{X}) \to
    \Map(\mathcal{C}^{\op}, \mathcal{X}) \] with image $\Map_{\Fladj}(\mathcal{C}^{\op},\mathcal{X})$.
\end{thm}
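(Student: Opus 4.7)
The plan is to establish the two directions of the claimed bijection of $\infty$-groupoids separately, reducing the coherence core of the theorem to Macpherson's universal property of spans.

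For the easy direction, restriction along $\mathcal{C}^{\op} \to \SPAN_F(\mathcal{C})$ visibly factors through $\Map_{\Fladj}(\mathcal{C}^{\op}, \mathcal{X})$. The key input is already built into $\SPAN_F(\mathcal{C})$: for $f \colon x \to y$ in $\mathcal{C}_F$, the backwards morphism $[f]_B$ has the forwards span $[f]_F$ as a left adjoint, with explicit unit and counit given by the diagonal $\Delta \colon x \to x \times_y x$ and the 2-cells displayed in the remark above. Since any functor of \itcats{} preserves adjunctions (by Theorem \ref{adjthm}), for $\Phi \colon \SPAN_F(\mathcal{C}) \to \mathcal{X}$ the morphism $f^{\ostar} = \Phi([f]_B)$ acquires a left adjoint $\Phi([f]_F)$ in $\mathcal{X}$. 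Given a cartesian square in $\mathcal{C}$ with $f \in \mathcal{C}_F$, one produces a canonical 2-cell in $\SPAN_F(\mathcal{C})$ whose image under $\Phi$ is the Beck--Chevalley mate, and this 2-cell is an equivalence precisely because the square is cartesian; hence $\Phi|_{\mathcal{C}^{\op}}$ is left $F$-adjointable.

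For the converse, the informal recipe for an inverse extension $\tilde\Phi$ of a left $F$-adjointable $\Phi$ sends a span $x \xfrom{g} z \xto{f} y$ to the composite $f_{\oplus} g^{\ostar}$, with composition of spans handled by the Beck--Chevalley equivalences. The uniqueness of such an extension is comparatively formal: every morphism in $\SPAN_F(\mathcal{C})$ factors as $[f]_F \circ [g]_B$, and the uniqueness of adjoints (Theorem \ref{adjthm}), together with Proposition \ref{adjinarrows} applied to the various naturality squares, pins down any extension once its restriction to $\mathcal{C}^{\op}$ is fixed. This already yields the monomorphism assertion.

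The main obstacle is the existence of $\tilde\Phi$, which demands assembling the above recipe into a coherent functor of \itcats{}, verifying associativity of composition of spans, naturality in the 2-morphisms of spans, and all higher coherences simultaneously. Rather than construct this by hand, the strategy is to appeal directly to Macpherson~\cite{MacphersonCorr}, who proves a general universal property of \itcats{} of spans that encompasses both existence and uniqueness of the extension; our theorem is then an immediate specialization, the left $F$-adjointability hypothesis providing exactly the data that the universal property requires. An alternative route, using a more explicit simplicial model of spans, is sketched by Gaitsgory--Rozenblyum~\cite{GaitsgoryRozenblyum1}.
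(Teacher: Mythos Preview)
Your proposal is essentially a citation with heuristic motivation: you explain why restriction lands in $\Map_{\Fladj}$, sketch why uniqueness ``should'' hold, and then defer both existence and the coherent uniqueness to Macpherson~\cite{MacphersonCorr}. That is a legitimate way to discharge the theorem given its attribution, and it matches the spirit of the Remark immediately following the statement in the paper. However, the paper does not stop there: \S\ref{sec:infty-2-category} supplies a complete self-contained proof, and its architecture is quite different from yours. Rather than building an extension $\tilde\Phi$ by hand and arguing uniqueness from the factorization $[f]_F \circ [g]_B$, the paper first shows abstractly (via presentability of $\CatIT$) that the functor $\Map_{\Fladj}(\mathcal{C}^{\op},\blank)$ is corepresentable, calls the corepresenting object $\SPAN_F(\mathcal{C})$, and only then identifies it with the expected \itcat{} of spans by analyzing the representable presheaves $\mathcal{Y}_c$ using Hinich's Yoneda lemma, free cocartesian fibrations, and the explicit candidate $\mathcal{B}_c$. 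This approach sidesteps entirely the coherence problem you flag: corepresentability gives the equivalence for free, and all the work goes into recognizing the corepresenting object.

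One caution about your independent sketch of the monomorphism direction: the argument ``every morphism factors as $[f]_F \circ [g]_B$, and adjoints are unique'' operates at the level of the homotopy $2$-category and does not by itself yield a $(-1)$-truncated map of $\infty$-groupoids; you would still need to control the higher coherences among these factorizations. You implicitly acknowledge this by folding uniqueness back into the Macpherson citation, so there is no actual error, but the paragraph as written oversells what the heuristic establishes on its own.
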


\begin{remark} 
  In~\cite{GaitsgoryRozenblyum1}, Gaitsgory-Rozenblyum make use of this
  universal property of spans in order to encode the functoriality of 
  various \icats{} of coherent sheaves on derived schemes. They also
  sketch a proof of Theorem~\ref{thm:spanuniv} using a particular
  construction of $\SPAN_{F}(\mathcal{C})$.  Macpherson~\cite{MacphersonCorr} has recently
  given an alternative, model-independent (and complete) proof.   Roughly
  speaking, Macpherson's approach is to first show there exists an
  \itcat{} that represents left adjointable functors and then use the
  universal property to prove that this representing object has the
  expected description in terms of spans. The universal property has
  also been extended to higher categories of iterated spans by
  Stefanich~\cite{StefanichCorr}.
\end{remark}

\begin{variant}\label{var:radjble}
  If $\mathcal{X}$ is an \itcat{}, then we have an equivalence of
  underlying \icats{}
  \[ \mathcal{X}^{(1)} \simeq
  (\mathcal{X}^{\twop})^{(1)},\] while a 1-morphism in $\mathcal{X}$
  is a right adjoint \IFF{} it is a left adjoint in
  $\mathcal{X}^{\twop}$. We therefore say that a functor $\Phi \colon
  \mathcal{C}^{\op} \to \mathcal{X}$ is \emph{right
    $F$-adjointable} if the 2-opposite
  functor
  \[\mathcal{C}^{\op} \simeq (\mathcal{C}^{\op})^{\twop} \xto{\Phi^{\twop}} 
  \mathcal{X}^{\twop}\] is left $F$-adjointable.
  Theorem~\ref{thm:spanuniv} then tells us that
  the right $F$-adjointable functors correspond
  to functors $\SPAN_{F}(\mathcal{C})^{\twop} \to \mathcal{X}$.
\end{variant}

\begin{variant}\label{var:rcoadj}
  We say a functor $\Phi \colon \mathcal{C} \to \mathcal{X}$ is
  \emph{right $F$-coadjointable} if the 1-opposite functor
  \[ \Phi^{\op} \colon \mathcal{C}^{\op} \to \mathcal{X}^{\op} \]
  is left $F$-adjointable. Theorem~\ref{thm:spanuniv} then tells us that
  the right $F$-coadjointable functors correspond
  to functors $\SPAN_{F}(\mathcal{C})^{\op} \to \mathcal{X}$. We can
  also combine both variants, and say that $\Phi \colon \mathcal{C}
  \to \mathcal{X}$ is \emph{left $F$-coadjointable} if
  \[ \mathcal{C}^{\op} \simeq \mathcal{C}^{\op,\twop}
    \xto{\Phi^{\op,\twop}} \mathcal{X}^{\op,\twop} \] is left
  $F$-adjointable. The left $F$-coadjointable functors then correspond
  to functors $\SPAN_{F}(\mathcal{C})^{\op,\twop} \to \mathcal{X}$.
\end{variant}

\begin{remark}
  Unpacking the definition, and recalling that reversing the 1-morphisms
  in an \itcat{} swaps left and right adjoints, we see that a functor
  $\Phi \colon \mathcal{C} \to \mathcal{X}$ is right $F$-coadjointable
  if for every morphism $f \colon x \to y$ in $\mathcal{C}_{F}$ the
  1-morphism $f_{\ostar} := \Phi(f) \colon \Phi(x) \to \Phi(y)$ in
  $\mathcal{X}$ has a right
  adjoint $f^{\oplus}$, and for every pullback square \cref{eq:pull} 
  the square
  \[
    \begin{tikzcd}
      \Phi(x \times_{y} z) \arrow{r}{f'_{\ostar}}
      \arrow{d}{g'_{\ostar}} & \Phi(z) \arrow{d}{g_{\ostar}} \\
      \Phi(x) \arrow{r}{f_{\ostar}} & \Phi(y)
    \end{tikzcd}
  \]
  is right adjointable. Note that this is \emph{not} the same as
  $\Phi$ being right $F$-adjointable in the previous sense: this
  condition would involve adjointability for \emph{pushout} squares in
  $\mathcal{C}$ --- indeed, to even be defined this condition would
  require $(\mathcal{C}^{\op},\mathcal{C}^{\op}_{F})$ to be a span
  pair, which may well not be the case.
\end{remark}

\subsection{The $(\infty,2)$-category of spans}
\label{sec:infty-2-category}

For the sake of completeness, in this subsection we include a proof of
\cref{thm:spanuniv}. However, our argument is at most a minor
variation of the proof of Macpherson~\cite{MacphersonCorr} and we make
no claims to originality.  We start by observing that the
presentability of $\CatIT$ implies that left $F$-adjointable functors
are corepresented by some \itcat{}:
\begin{propn}\label{adjrepble}
  Let $(\mathcal{C}, \mathcal{C}_{F})$ be a span pair, with
  $\mathcal{C}$ a small \icat{}. Then the functor
  \[ \Map_{\Fladj}(\mathcal{C}, \blank) \colon \CatIT \to
    \mathcal{S}\]
  is corepresentable by a
  small \itcat{} $\SPAN_{F}(\mathcal{C})$, so that there is a natural
  equivalence
  \begin{equation}
    \label{eq:ladjeq}
  \Map_{\Fladj}(\mathcal{C}, \mathcal{X}) \simeq
    \Map_{\CatIT}(\SPAN_{F}(\mathcal{C}), \mathcal{X})    
  \end{equation}
  for any $\mathcal{X} \in \CatIT$.
\end{propn}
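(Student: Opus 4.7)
The plan is to apply the adjoint functor theorem in the presentable \icat{} $\CatIT$: a functor $\CatIT \to \mathcal{S}$ is corepresentable precisely when it preserves small limits and is accessible, and the accessibility rank will control the smallness of the representing object. So the strategy is to realize $F := \Map_{\Fladj}(\mathcal{C}, \blank)$ as a well-controlled pullback of such functors and read off the representing \itcat{} from the general machinery; the name $\SPAN_{F}(\mathcal{C})$ is then justified \emph{a posteriori} by matching the abstract object with the explicit construction of \itcats{} of spans.

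The starting point is that $\Map(\mathcal{C}^{\op}, \blank) \colon \CatIT \to \mathcal{S}$ is corepresentable by $\mathcal{C}^{\op}$ regarded as an \itcat{} with no nontrivial 2-cells, and is therefore limit-preserving and $\kappa$-accessible with $\kappa$ controlled by the size of the small \icat{} $\mathcal{C}$. The subfunctor $F$ is cut out by two families of conditions on $\Phi \colon \mathcal{C}^{\op} \to \mathcal{X}$: (i) for each $f \in \mathcal{C}_{F}$, the morphism $\Phi(f)$ is a left adjoint in $\mathcal{X}$; (ii) for each cartesian square \eqref{eq:pull} in $\mathcal{C}$ with $f$ along the top in $\mathcal{C}_{F}$, the induced square in $\mathcal{X}$ is left adjointable. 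By \cref{adjthm}, condition (i) for a fixed $f$ says that $\Phi(f) \in \Map(\Delta^{1},\mathcal{X})$ lies in the image of the component inclusion $\Map(\ADJ,\mathcal{X}) \hookrightarrow \Map(\Delta^{1},\mathcal{X})$, so it exhibits $\{\Phi : \Phi(f) \text{ is a left adjoint}\}$ as a pullback of corepresentable functors along evaluation at $f$. Combining \cref{adjinarrows} with \cref{adjthm}, left adjointability of the square in (ii) is likewise a component condition: once the two parallel edges are known to be left adjoints, the mate is a well-defined morphism in $\MAP_{\mathcal{X}}$ and its invertibility is a union-of-components condition, corepresentable by localizing $\Delta^{1}$ at its edge.

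Consequently, $F$ is a small iterated pullback — indexed by the small set of morphisms in $\mathcal{C}_{F}$ and cartesian squares with $f \in \mathcal{C}_{F}$ — of corepresentable functors built from $\Map(\mathcal{C}^{\op},\blank)$, $\Map(\Delta^{1},\blank)$, and $\Map(\ADJ,\blank)$. Since each building block preserves small limits and is $\kappa$-accessible, and small limits of such functors remain limit-preserving and accessible (for a possibly larger rank $\kappa'$), the adjoint functor theorem produces a small \itcat{} $\SPAN_{F}(\mathcal{C})$ with the natural equivalence \eqref{eq:ladjeq}; smallness is enforced by the $\kappa'$-compactness automatic from accessibility.

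The main technical obstacle is bookkeeping: turning the \emph{pointwise} conditions ``$\Phi(f)$ is a left adjoint'' and ``the mate of this particular square is invertible'' into a genuine \emph{diagram} of maps between corepresentable functors. Writing down the evaluation maps themselves is straightforward, but encoding the mate — which is defined \emph{in terms of} the chosen adjunctions — as a morphism in a fixed functor \itcat{} requires care. The natural remedy is to invoke \cref{adjinarrows} to reinterpret the mate as a 1-morphism in $\FUN(\Delta^{1},\mathcal{X})$ and then apply the Hinich--Yoneda lemma (\cref{yoneda}) to convert the component conditions on these functor \itcats{} into the desired statements about $F$ being an accessible limit of corepresentables.
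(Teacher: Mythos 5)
Your proposal is correct and takes essentially the same route as the paper's proof: present $\Map_{\Fladj}(\mathcal{C}^{\op},\blank)$ as a small iterated pullback (indexed by the equivalence classes of morphisms in $\mathcal{C}_{F}$ and of the relevant cartesian squares) of the corepresentable functors $\Map(\mathcal{C}^{\op},\blank)$, $\Map(\Delta^{1},\blank)$, $\Map(\ADJ,\blank)$, and $\Map(\ADJ\times\Delta^{1},\blank)$, verify limit-preservation and $\kappa$-accessibility of the resulting copresheaf, and conclude corepresentability from presentability of $\CatIT$ via \cite[Proposition 5.5.2.7]{HTT}. The one place your phrasing wobbles is the square condition: there is no need to ``localize $\Delta^{1}$ at its edge'' or to invoke the Hinich--Yoneda lemma, since by \cref{adjinarrows} left adjointability of a square is precisely the condition that it be an adjoint morphism in $\FUN(\Delta^{1},\mathcal{X})$, so by \cref{adjthm} it is membership in the image of the component inclusion $(R\times\id)^{*}\colon \Map(\ADJ\times\Delta^{1},\mathcal{X})\to\Map(\Delta^{1}\times\Delta^{1},\mathcal{X})$ --- which is exactly the pullback the paper uses.
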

\begin{proof}
  The \icat{} $\CatIT$ is presentable, for instance because it can be
  described as presheaves on $\Theta_{2}$ satisfying Segal and
  completeness conditions, which gives an explicit presentation as an
  accessible localization of an \icat{} of presheaves. To prove that a
  copresheaf on $\CatIT$ is corepresentable it therefore suffices by
  \cite[Proposition 5.5.2.7]{HTT} to
  show that it is accessible and preserves limits.

  We first show that this holds for the copresheaf
  $\Map_{\Flpadj}(\mathcal{C}^{\op}, \blank)$ of left
  $F$-preadjointable functors. By definition, a functor
  $\mathcal{C} \to \mathcal{X}$ is left $F$-preadjointable if it takes
  every morphism in $\mathcal{X}$ to a left adjoint in
  $\mathcal{X}$. We can therefore write
  $\Map_{\Flpadj}(\mathcal{C}^{\op}, \mathcal{X})$ as the pullback
  \[
    \begin{tikzcd}
      \Map_{\Flpadj}(\mathcal{C}^{\op}, \mathcal{X}) \arrow{r} \arrow{d} &
      \Map_{\CatIT}(\mathcal{C}^{\op}, \mathcal{X}) \arrow{d}{(f^{*})_{f \in
        S}} \\
      \prod_{f \in S} \Map_{\CatIT}(\ADJ, \mathcal{X}) \arrow{r}{R^{*}} &
      \prod_{f \in S} \Map_{\CatIT}(\Delta^{1}, \mathcal{X})
    \end{tikzcd}
    \]
   where the product is over the set $S$ of equivalence classes of
   morphisms in $\mathcal{C}_{F}$ and $R \colon \Delta^{1} \to \ADJ$
   is the inclusion of the
   right adjoint of the universal adjunction. From this description it is
   immediate that $\Map_{\Flpadj}(\mathcal{C}^{\op}, \mathcal{X})$ preserves
   limits in $\mathcal{X}$, since this is clear for the other three
   corners of the square. Moreover, since $\CatIT$ is presentable we
   can choose a regular cardinal $\kappa$ such that $S$ is
   $\kappa$-small and the objects $\mathcal{C}^{\op}$, $\ADJ$, and
   $\Delta^{1}$ are all $\kappa$-compact in $\CatIT$. Then we see that
   $\Map_{\Flpadj}(\mathcal{C}^{\op},\blank)$ preserves $\kappa$-filtered
   colimits, since the other corners of the pullback square do so (as
   $\kappa$-filtered colimits in $\mathcal{S}$ commute with
   $\kappa$-small limits, such as our product over $S$).

   For $\Map_{\Fladj}(\mathcal{C}^{\op}, \mathcal{X})$ we impose the
   additional requirement that every cartesian square \cref{eq:pull}
   in $\mathcal{C}$
   where the horizontal maps are in $\mathcal{C}_{F}$ is taken
   to a left adjointable square in $\mathcal{X}$. By
   \cref{adjinarrows} the left adjointable squares are the right
   adjoints in $\mathcal{X}^{\Delta^{1}}$, so if $S'$ denotes the set
   of equivalence classes of relevant cartesian squares in
   $\mathcal{C}$, we can write $\Map_{\Fladj}(\mathcal{C}^{\op},
   \mathcal{X})$ as a pullback
   \[
    \begin{tikzcd}
      \Map_{\Fladj}(\mathcal{C}^{\op}, \mathcal{X}) \arrow{r} \arrow{d} &
      \Map_{\Flpadj}(\mathcal{C}^{\op}, \mathcal{X}) \arrow{d} \\
      \prod_{S'} \Map_{\CatIT}(\ADJ \times \Delta^{1}, \mathcal{X})
      \arrow{r}{(R \times \id)^{*}} &
      \prod_{S'} \Map_{\CatIT}(\Delta^{1} \times \Delta^{1}, \mathcal{X}).
    \end{tikzcd}
  \]
  The same argument as for left $F$-preadjointable maps now implies
  that the presheaf $\Map_{\Fradj}(\mathcal{C}, \blank)$ is also
  accessible and preserves limits, and hence is corepresentable.
\end{proof}

\begin{remark}
  The identity of $\SPAN_{F}(\mathcal{C})$ corresponds under
  \cref{eq:ladjeq} to a left $F$-adjointable functor
  \[i \colon \mathcal{C}^{\op} \to \SPAN_{F}(\mathcal{C}),\] such that
  the equivalence \cref{eq:ladjeq} arises by restriction along $i$.
\end{remark}

\begin{remark}\label{rmk:SPANftr}
  From the universal property we immediately obtain a functor from
  span pairs to \itcats{}: For any \itcat{} $\mathcal{X}$, composition
  with a morphism of span pairs
  $\phi \colon (\mathcal{C}, \mathcal{C}_{F}) \to (\mathcal{C}',
  \mathcal{C}'_{F'})$ restricts to a morphism
  \[ \Map_{F'\dladj}(\mathcal{C}', \mathcal{X}) \to
    \Map_{\Fladj}(\mathcal{C}, \mathcal{X}),\]
  natural in $\mathcal{X} \in \CatIT$. 
  We obtain a functor
  \[\Map_{(\blank)\dladj}(\blank,\blank) \colon \Pair^{\op}
    \times \CatIT \to \mathcal{S}.\] \cref{adjrepble} says that the
  corresponding functor $\Pair^{\op} \to \Fun(\CatIT, \mathcal{S})$
  takes values in corepresentable copresheaves, and so by the Yoneda
  lemma factors through a canonical functor
  $ \SPAN \colon \Pair \to \CatIT$.
\end{remark}

We can upgrade the equivalence of \cref{adjrepble} to a statement
at the level of \itcats{}, rather than just \igpds{}. To state this we
first need some notation:
\begin{defn}
  Let $(\mathcal{C}, \mathcal{C}_{F})$ be a span pair and
  $\mathcal{X}$ an \itcat{}. We say that a natural transformation
  $\eta \colon \mathcal{C}^{\op} \times \Delta^{1} \to
  \mathcal{X}$ is \emph{left $F$-adjointable} if it
  corresponds to a left $F$-adjointable functor
  $\mathcal{C}^{\op} \to \FUN(\Delta^{1},
  \mathcal{X})$. From \cref{adjinarrows} it follows that $\eta$ is
  left $F$-adjointable \IFF{} the components $\eta_{0},\eta_{1}$ are
  both left $F$-adjointable, and for every morphism
  $f \colon x \to y$ in $\mathcal{C}_{F}$, the naturality
  square
  \[
    \begin{tikzcd}
          \eta_{0}(y) \arrow{d}{\eta_{y}} \arrow{r}{f^{\ostar}} &
          \eta_{0}(x) \arrow{d}{\eta_{x}} \\
          \eta_{1}(y) \arrow{r}{f^{\ostar}} & \eta_{1}(x)
    \end{tikzcd}
  \]
  is left adjointable.  Let
  $\Fun_{\Fladj}(\mathcal{C}^{\op}, \mathcal{X})$
  denote the subcategory of
  $\Fun(\mathcal{C}^{\op}, \mathcal{X})$ whose objects are
  the left $F$-adjointable functors and whose morphisms are the left
  $F$-adjointable transformations. From \cref{adjinarrows} we also
  know that a morphism
  $\mathcal{C} \times \mathsf{C}_{2} \to \mathcal{X}$ (where
  $\mathsf{C}_{2}$ is the 2-cell) corresponds to a left
  $F$-adjointable morphism
  $\mathcal{C} \to \FUN(\mathsf{C}_{2}, \mathcal{X})$ \IFF{} the
  component functors and natural transformations are left
  $F$-adjointable. We therefore write
  $\FUN_{\Fladj}(\mathcal{C}^{\op}, \mathcal{X})$ for the locally full
  sub-\itcat{} of $\FUN(\mathcal{C}^{\op}, \mathcal{X})$ whose
  underlying \icat{} is
  $\Fun_{\Fladj}(\mathcal{C}^{\op}, \mathcal{X})$.
\end{defn}

\begin{cor} \label{cor:improve}
  Let $(\mathcal{C}, \mathcal{C}_{F})$ be a span pair and
  $\mathcal{X}$ an \itcat{}. Composition with $i \colon
  \mathcal{C}^{\op} \to \SPAN_{F}(\mathcal{C})$ gives an
  equivalence of \itcats{}
  \[ \FUN(\SPAN_{F}(\mathcal{C}), \mathcal{X}) \isoto
    \FUN_{\Fladj}(\mathcal{C}^{\op}, \mathcal{X}).\]
\end{cor}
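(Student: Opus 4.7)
The plan is to reduce the statement to a Yoneda argument: by Hinich's Yoneda lemma (\cref{yoneda}) for \itcats{}, it suffices to show that the restriction map induces an equivalence of \igpds{} of morphisms from any \itcat{} $\mathcal{Y}$. Using the cartesian closure of $\CatIT$ on the left-hand side, this target becomes
\[
\Map_{\CatIT}(\mathcal{Y}, \FUN(\SPAN_{F}(\mathcal{C}), \mathcal{X})) \simeq \Map_{\CatIT}(\SPAN_{F}(\mathcal{C}), \FUN(\mathcal{Y}, \mathcal{X})),
\]
which by \cref{thm:spanuniv} (applied with target $\FUN(\mathcal{Y}, \mathcal{X})$) is naturally equivalent to $\Map_{\Fladj}(\mathcal{C}^{\op}, \FUN(\mathcal{Y}, \mathcal{X}))$. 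On the right-hand side, since $\FUN_{\Fladj}(\mathcal{C}^{\op}, \mathcal{X})$ is by definition the locally full sub-\itcat{} of $\FUN(\mathcal{C}^{\op}, \mathcal{X})$ on left $F$-adjointable functors and left $F$-adjointable transformations, $\Map_{\CatIT}(\mathcal{Y}, \FUN_{\Fladj}(\mathcal{C}^{\op}, \mathcal{X}))$ is the subspace of $\Map_{\CatIT}(\mathcal{Y}, \FUN(\mathcal{C}^{\op}, \mathcal{X}))$ picking out those functors $\tilde{\Phi}\colon \mathcal{Y} \to \FUN(\mathcal{C}^{\op}, \mathcal{X})$ sending objects to left $F$-adjointable functors and 1-morphisms to left $F$-adjointable transformations.

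Thus the task reduces to matching conditions on $\Phi\colon \mathcal{C}^{\op} \to \FUN(\mathcal{Y},\mathcal{X})$ with those on its mate $\tilde{\Phi}\colon \mathcal{Y} \to \FUN(\mathcal{C}^{\op}, \mathcal{X})$, and this is a straightforward unpacking via \cref{adjinarrows}. For each $f \in \mathcal{C}_{F}$, $\Phi$ being left $F$-preadjointable means $\Phi(f) = f^{\ostar}$ has a left adjoint in $\FUN(\mathcal{Y}, \mathcal{X})$, i.e.\ $\Phi(f)$ is a right adjoint there. By \cref{adjinarrows}, this happens \IFF{} (i) for every $y \in \mathcal{Y}$ the component $\tilde{\Phi}(y)(f)$ is a right adjoint in $\mathcal{X}$, and (ii) for every morphism $g\colon y \to y'$ in $\mathcal{Y}$ the naturality square of $\tilde{\Phi}(g)$ on $f$ is left adjointable. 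Condition (i), ranging over all $f$, says each $\tilde{\Phi}(y)$ is left $F$-preadjointable; condition (ii), ranging over all $f$, says each $\tilde{\Phi}(g)$ is a left $F$-adjointable transformation.

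It remains to handle the pullback-square condition. For a cartesian square in $\mathcal{C}$ with one leg in $\mathcal{C}_{F}$, the induced square in $\FUN(\mathcal{Y}, \mathcal{X})$ is left adjointable \IFF{} its Beck--Chevalley mate is an equivalence in $\FUN(\mathcal{Y}, \mathcal{X})$, and mate transformations in functor \itcats{} are computed objectwise; hence this is equivalent to asking that for every $y \in \mathcal{Y}$ the resulting square in $\mathcal{X}$ is left adjointable, i.e.\ that each $\tilde{\Phi}(y)$ is left $F$-adjointable. Combining, $\Phi$ is left $F$-adjointable precisely when $\tilde{\Phi}$ factors through $\FUN_{\Fladj}(\mathcal{C}^{\op}, \mathcal{X})$, giving the desired equivalence of mapping spaces naturally in $\mathcal{Y}$.

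The main obstacle is essentially bookkeeping: one must keep careful track of which direction of adjointability in \cref{adjinarrows} corresponds to which condition (since ``$f^{\ostar}$ has a left adjoint'' means $f^{\ostar}$ itself is a right adjoint), and verify that mate equivalences and adjoint existence can be checked pointwise in $\FUN(\mathcal{Y}, \mathcal{X})$. No new technical input beyond \cref{thm:spanuniv}, \cref{adjinarrows}, and the cartesian closure of $\CatIT$ is needed.
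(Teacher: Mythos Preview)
Your proposal is correct and follows essentially the same approach as the paper: the paper's proof is the same Yoneda/cartesian-closure reduction, using \cref{adjrepble} (the corepresentability statement equivalent to \cref{thm:spanuniv}) and then citing \cref{adjinarrows} for the last identification $\Map_{\Fladj}(\mathcal{C}^{\op}, \FUN(\mathcal{Y}, \mathcal{X})) \simeq \Map(\mathcal{Y}, \FUN_{\Fladj}(\mathcal{C}^{\op}, \mathcal{X}))$. You have simply unpacked that last step in more detail; note that in the paper this unpacking is effectively carried out in the definition of $\FUN_{\Fladj}$ immediately preceding the corollary, so the proof itself can be brief.
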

\begin{proof}
  For any \itcat{} $\mathcal{Y}$ we have a natural equivalence
  \[
    \begin{split}
      \Map(\mathcal{Y}, \FUN(\SPAN_{F}(\mathcal{C}), \mathcal{X}))  &
      \simeq \Map(\SPAN_{F}(\mathcal{C}), \FUN(\mathcal{Y},
      \mathcal{X})) \\
      & \simeq \Map_{\Fladj}(\mathcal{C}^{\op}, \FUN(\mathcal{Y},
      \mathcal{X})) \\
      & \simeq \Map(\mathcal{Y}, \FUN_{\Fladj}(\mathcal{C}^{\op}, \mathcal{X})),
    \end{split}
  \]
  where the last equivalence follows from the description of adjoints in
  functor \itcats{} in \cref{adjinarrows}.
\end{proof}

\begin{variant} \label{variant:rcoadj}
  Using analogous notation for right $F$-coadjointable functors, we have
  a natural equivalence
  \[\FUN_{\Frcoadj}(\mathcal{C},
  \mathcal{X}) \simeq \FUN(\SPAN_{F}(\mathcal{C})^{\op}, \mathcal{X}).\]
\end{variant}

As
a first step toward getting a handle on the \itcat{}
$\SPAN_{F}(\mathcal{C})$ we have the following observation:
\begin{lemma} \label{lem:ess}
  The functor $i \colon \mathcal{C}^{\op} \to
  \SPAN_{F}(\mathcal{C})$ (corresponding to the identity under
  \cref{eq:ladjeq}) is essentially surjective.
\end{lemma}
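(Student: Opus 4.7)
The strategy is to let $\mathcal{Y} \subseteq \SPAN_{F}(\mathcal{C})$ be the full sub-\itcat{} spanned by the essential image of $i$, then use the universal property of $\SPAN_{F}(\mathcal{C})$ (as corepresenting left $F$-adjointable functors from $\mathcal{C}^{\op}$, \cref{adjrepble}) to exhibit $\mathcal{Y}$ as a retract of $\SPAN_{F}(\mathcal{C})$. Since $\mathcal{Y}$ is by definition fully faithfully included, being a retract forces it to coincide with $\SPAN_{F}(\mathcal{C})$, which is exactly the essential surjectivity claim.

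Concretely, the functor $i$ factors as $\mathcal{C}^{\op} \xto{i'} \mathcal{Y} \xhookrightarrow{\iota} \SPAN_{F}(\mathcal{C})$, and the first step is to verify that $i'$ is again left $F$-adjointable. For a morphism $f$ in $\mathcal{C}_{F}$, the left adjoint $f_{\oplus}$ of $f^{\ostar} = i(f)$ produced by $i$ is a $1$-morphism $i(x) \to i(y)$ in $\SPAN_{F}(\mathcal{C})$, and both endpoints lie in $\mathcal{Y}$ by construction; since $\mathcal{Y}$ is locally full in $\SPAN_{F}(\mathcal{C})$, the adjoint, together with its unit and counit, lie in $\mathcal{Y}$. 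The mate equivalence witnessing left adjointability of the image of a cartesian square in $\mathcal{C}$ is a $2$-morphism, which likewise lies in $\mathcal{Y}$ by local fullness. Hence $i'$ is left $F$-adjointable.

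By the universal property in the form of \cref{adjrepble}, $i'$ is classified by a functor $r \colon \SPAN_{F}(\mathcal{C}) \to \mathcal{Y}$ satisfying $r \circ i \simeq i'$. Then the two functors $\iota \circ r,\ \id_{\SPAN_{F}(\mathcal{C})} \colon \SPAN_{F}(\mathcal{C}) \to \SPAN_{F}(\mathcal{C})$ both restrict along $i$ to $i$ itself, so the universal property forces $\iota \circ r \simeq \id_{\SPAN_{F}(\mathcal{C})}$. In particular, every object of $\SPAN_{F}(\mathcal{C})$ is equivalent to one of the form $\iota(r(x))$, which by definition of $\mathcal{Y}$ lies in the essential image of $i$.

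The only delicate point in the argument is the verification that $i'$ lands inside $\mathcal{Y}$ with its full $F$-adjointability structure — that is, that one does not need to enlarge $\mathcal{Y}$ to accommodate the adjoints, units, counits, and mate witnesses. This is immediate once one sets up $\mathcal{Y}$ as a \emph{locally full} sub-\itcat{} on a collection of objects that by construction contains the source and target of every $f_{\oplus}$; there is no genuine obstacle, only a small bookkeeping check that the relevant $2$-cells inherit equivalence status from $\SPAN_{F}(\mathcal{C})$.
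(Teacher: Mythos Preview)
Your proposal is correct and follows essentially the same approach as the paper's own proof: define the full sub-\itcat{} on the essential image of $i$, observe that the factored functor $i'$ remains left $F$-adjointable because all the relevant adjoints and $2$-cells live between objects of this subcategory, and then use the universal property to produce a retraction showing the inclusion is essentially surjective. The paper's version is terser but the logical structure is identical.
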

\begin{proof}
  Let $\mathcal{I}$ denote the full sub-$(\infty,2)$-category of
  $\SPAN_{F}(\mathcal{C})$ spanned by the objects in the image of
  $i$. Then $i$ factors through $i' \colon \mathcal{C}^{\op}
  \to \mathcal{I}$, and $i'$ is again left $F$-adjointable (since the
  relevant adjoints and 2-morphisms all live in $\mathcal{I}$). Hence $i'$
  corresponds to a functor $\SPAN_{F}(\mathcal{C}) \to
  \mathcal{I}$ such that the composite
  \[ \SPAN_{F}(\mathcal{C}) \to \mathcal{I} \to
    \SPAN_{F}(\mathcal{C})\] is the identity. It follows that the
  inclusion of $\mathcal{I}$ must be essentially surjective, which
  means that $i$ is also essentially surjective.
\end{proof}

To get a handle on the \icats{} of morphisms in
$\SPAN_{F}(\mathcal{C})$, we will use two further ingredients: (1) the
Yoneda Lemma for \itcats{} (\cref{yoneda}) due to Hinich, and (2) the
construction of the free (co)cartesian fibrations due to Gepner,
Nikolaus, and the second author \cite{freepres}. Applying the Yoneda
lemma to $\SPAN_{F}(\mathcal{C})$ we get a canonical family of left
$F$-adjointable functors to $\CATI$:
\begin{propn}\label{coadjyoneda}
  There is a left $F$-adjointable functor
  \[ \mathsf{Y} \colon \mathcal{C}^{\op} \to
    \FUN_{\Frcoadj}(\mathcal{C}, \CATI)\] such that for any right
  $F$-coadjointable functor (in the sense of \cref{var:rcoadj})
  $\Phi \colon \mathcal{C} \to \CATI$ there is a natural equivalence
  \[ \Phi(c) \simeq \MAP_{\Frcoadj}(\mathsf{Y}(c), \Phi),\]
  where the latter denotes the \icat{} of right $F$-coadjointable natural
  transformations.
\end{propn}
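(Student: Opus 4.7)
The plan is to reduce the statement to Hinich's Yoneda lemma for $(\infty,2)$-categories (\cref{yoneda}) applied to $\SPAN_F(\mathcal{C})$, combined with the right $F$-coadjointable variant of the universal property of spans (\cref{variant:rcoadj}). Hinich's Yoneda lemma gives a fully faithful functor
\[\Yo \colon \SPAN_F(\mathcal{C}) \to \FUN(\SPAN_F(\mathcal{C})^{\op}, \CATI)\]
with $\Psi(d) \simeq \MAP(\Yo(d), \Psi)$ naturally in $d$ and $\Psi$, while \cref{variant:rcoadj} yields an equivalence of \itcats{}
\[\FUN(\SPAN_F(\mathcal{C})^{\op}, \CATI) \xto{\sim} \FUN_{\Frcoadj}(\mathcal{C}, \CATI),\]
implemented by restriction along the canonical inclusion $\mathcal{C} \to \SPAN_F(\mathcal{C})^{\op}$.

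With these ingredients in hand, I would define $\mathsf{Y}$ as the composite
\[\mathcal{C}^{\op} \xto{i} \SPAN_F(\mathcal{C}) \xto{\Yo} \FUN(\SPAN_F(\mathcal{C})^{\op}, \CATI) \xto{\sim} \FUN_{\Frcoadj}(\mathcal{C}, \CATI),\]
where $i$ is the inclusion from \cref{thm:spanuniv}. Left $F$-adjointability of $\mathsf{Y}$ is then automatic: $i$ is left $F$-adjointable by construction (it corresponds to the identity under the universal property), and every functor of \itcats{} -- in particular $\Yo$ and the restriction equivalence -- preserves adjunctions and adjointable squares, so left $F$-adjointability propagates along the composite.

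For the universal property, given a right $F$-coadjointable $\Phi \colon \mathcal{C} \to \CATI$, I would let $\widetilde{\Phi} \colon \SPAN_F(\mathcal{C})^{\op} \to \CATI$ be the corresponding functor under \cref{variant:rcoadj}. Since that equivalence is implemented by restriction, $\widetilde{\Phi}(i(c)) \simeq \Phi(c)$ for each $c \in \mathcal{C}$. Applying Hinich's Yoneda lemma to $\widetilde{\Phi}$ at $i(c)$ then gives
\[\Phi(c) \simeq \widetilde{\Phi}(i(c)) \simeq \MAP_{\FUN(\SPAN_F(\mathcal{C})^{\op}, \CATI)}(\Yo(i(c)), \widetilde{\Phi}) \simeq \MAP_{\Frcoadj}(\mathsf{Y}(c), \Phi),\]
the last equivalence using that \cref{variant:rcoadj} is an equivalence of \itcats{}, hence matches mapping \icats{} on both sides. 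Naturality in $c$ and $\Phi$ is automatic from naturality at each step.

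The main delicate point -- and the place that requires care rather than being purely formal -- is that \cref{variant:rcoadj} must genuinely be an equivalence of \itcats{} (not merely of underlying \icats{} or mapping spaces), since the identification of mapping \icats{} in the final step hinges on this upgrade. This is the right $F$-coadjointable analogue of \cref{cor:improve} and is provable by the same argument, using \cref{adjinarrows} to recognize left adjointable natural transformations inside a functor \itcat{}; apart from this enhancement, the argument is a direct formal combination of the Yoneda lemma with the universal property of spans.
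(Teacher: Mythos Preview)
Your proposal is correct and follows essentially the same approach as the paper's proof: both apply Hinich's Yoneda embedding to $\SPAN_F(\mathcal{C})$, restrict along $i$ to obtain a left $F$-adjointable functor into $\FUN(\SPAN_F(\mathcal{C})^{\op},\CATI)$, and then transport through the equivalence of \cref{variant:rcoadj}. Your explicit remark that functors of \itcats{} preserve adjointable squares (hence left $F$-adjointability) and your identification of the $(\infty,2)$-categorical upgrade of \cref{variant:rcoadj} as the one subtle point are accurate elaborations of what the paper leaves implicit.
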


\begin{proof}
  Applying Hinich's Yoneda embedding (\cref{yoneda}) to
  $\SPAN_{F,L}(\mathcal{C})$
  we get a functor
  \[ \Yo \colon \SPAN_{F}(\mathcal{C}) \to
    \FUN(\SPAN_{F}(\mathcal{C})^{\op}, \CATI).\]
  By \cref{adjrepble} this corresponds to a left
  $F$-adjointable functor
  \[ \mathsf{Y} \colon \mathcal{C}^{\op} \to \FUN(\SPAN_{F}(\mathcal{C})^{\op}, \CATI)
    \simeq \FUN_{\Frcoadj}(\mathcal{C}, \CATI)
  \]
  via the equivalence of \cref{variant:rcoadj}. Translating the
  universal property of representable presheaves through the latter
  equivalence now gives the result.
\end{proof}

To proceed further, we will work unstraightened, \ie{} with the
cocartesian fibrations corresponding to $\mathsf{Y}(c) \colon
\mathcal{C} \to \CatI$. %

\begin{lemma}\label{lem:Fcoadjstr}
  The straightening equivalence
  \[\Fun(\mathcal{C}, \CatI) \simeq
    \Cat_{\infty/\mathcal{C}}^{\txt{cocart}}\] identifies the
  subcategory $\Fun_{\Frcoadj}(\mathcal{C}, \CATI)$ with a full
  subcategory
  \[ \Cat_{\infty/\mathcal{C}}^{\Frcoadj} \subseteq
    \Cat_{\infty/\mathcal{C}}^{\txt{cocart}+F\txt{-cart}} := \Cat_{\infty/\mathcal{C}}^{\txt{cocart}}
    \times_{\Cat_{\infty/\mathcal{C}_{F}}}
    \Cat_{\infty/\mathcal{C}_{F}}^{\txt{cart}},\]
  which is the \icat{} of
  cocartesian fibrations over $\mathcal{C}$ that have cartesian
  morphisms over $\mathcal{C}_{F}$, and whose morphisms are functors
  over $\mathcal{C}$ that preserve cocartesian
  morphisms as well as cartesian morphisms over
  $\mathcal{C}_{F}$.
\end{lemma}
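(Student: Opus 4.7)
The strategy is to trace the conditions defining right $F$-coadjointability across the standard straightening equivalence
\[ \Cat_{\infty/\mathcal{C}}^{\txt{cocart}} \simeq \Fun(\mathcal{C}, \CatI), \]
treating the object and morphism levels separately. For the object-level match, given $\Phi \colon \mathcal{C} \to \CatI$ corresponding to a cocartesian fibration $p \colon \mathcal{E}_\Phi \to \mathcal{C}$, a standard criterion of Lurie says that for $f \colon x \to y$ the pushforward $f_{\ostar} = \Phi(f)$ admits a right adjoint iff $p$ admits cartesian lifts over $f$. Thus right $F$-preadjointability of $\Phi$ translates into $p$ lying in $\Cat_{\infty/\mathcal{C}}^{\txt{cocart}+F\txt{-cart}}$, and the further Beck--Chevalley condition for pullback squares with legs in $\mathcal{C}_F$ cuts out a full sub-\icat{} of this ambient \icat{}, which I take as the definition of $\Cat_{\infty/\mathcal{C}}^{\Frcoadj}$.

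For the morphism-level comparison, a natural transformation $\eta \colon \Phi \to \Psi$ corresponds under straightening to a functor $\tilde{\eta} \colon \mathcal{E}_\Phi \to \mathcal{E}_\Psi$ over $\mathcal{C}$ that automatically preserves cocartesian lifts. Fix $f \colon x \to y$ in $\mathcal{C}_F$ and $e \in \mathcal{E}_{\Phi, y}$ together with a cartesian lift $\alpha \colon f^{\oplus}(e) \to e$ in $\mathcal{E}_\Phi$. Then $\tilde{\eta}(\alpha)$ lies over $f$, and factoring it through the cartesian lift of $f$ at $\eta_y(e)$ in $\mathcal{E}_\Psi$ yields a canonical morphism
\[ \eta_x(f^{\oplus}(e)) \longrightarrow f^{\oplus}(\eta_y(e)) \]
in $\mathcal{E}_{\Psi, x}$. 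Unwinding the definitions shows that this is the value at $e$ of the mate transformation of the naturality square of $\eta$ at $f$, so $\tilde{\eta}(\alpha)$ is cartesian over $f$ iff this map is an equivalence. Consequently $\tilde{\eta}$ preserves cartesian lifts over $\mathcal{C}_F$ iff every naturality square of $\eta$ at an arrow in $\mathcal{C}_F$ is right adjointable, i.e., by the dual of \cref{adjinarrows}, iff $\eta$ is right $F$-coadjointable.

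Combining the two comparisons yields the desired identification as \icats{}; in particular the fullness of $\Cat_{\infty/\mathcal{C}}^{\Frcoadj}$ inside $\Cat_{\infty/\mathcal{C}}^{\txt{cocart}+F\txt{-cart}}$ follows immediately from the morphism comparison. The principal technical step is therefore the morphism comparison, whose crux is identifying the factorization of $\tilde{\eta}(\alpha)$ through a cartesian lift with the component of the mate transformation. This is an \itcatl{} incarnation of a classical mate calculation, and I expect no serious obstacles beyond careful bookkeeping of the coherences built into the straightening equivalence and the universal properties of cartesian and cocartesian lifts.
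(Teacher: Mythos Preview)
Your proposal is correct and follows essentially the same approach as the paper: the object-level comparison via the criterion that right adjoints to pushforwards correspond to cartesian lifts (the paper cites \cite{HTT}*{Corollary 5.2.2.4}), and the morphism-level comparison identifying preservation of cartesian lifts with right adjointability of naturality squares. The only difference is that where you unwind the factorization of $\tilde{\eta}(\alpha)$ by hand, the paper simply cites \cite{HA}*{Proposition 4.7.4.17}, which packages exactly that mate identification.
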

\begin{proof}
  By \cite{HTT}*{Corollary 5.2.2.4} a cocartesian fibration to
  $\mathcal{C}$ has cartesian morphisms over $\mathcal{C}_{F}$ \IFF{}
  it has locally cartesian morphisms over $\mathcal{C}_{F}$, which is
  equivalent to the corresponding morphisms in $\CatI$ having right
  adjoints (\cf{} \cite{HTT}*{Definition 5.2.2.1}). Thus the
  unstraightening of a right $F$-coadjointable functor gives an object
  of $\Cat_{\infty/\mathcal{C}}^{\txt{cocart}+F\txt{-cart}}$.
  Moreover, by \cite{HA}*{Proposition 4.7.4.17} a morphism over
  $\mathcal{C}$ that preserves cocartesian morphisms and cartesian
  morphisms over $\mathcal{C}_{F}$ corresponds to a natural
  transformation whose naturality squares over $\mathcal{C}_{F}$ are
  right adjointable, which is precisely the requirement for
  coadjointable natural transformations.
\end{proof}

\begin{defn}
  For $c \in \mathcal{C}$, let $\mathcal{Y}_{c} \to \mathcal{C}$ denote the
  cocartesian fibration classified by the right $F$-coadjointable functor
  $\mathsf{Y}(c) \colon \mathcal{C} \rightarrow \CatI$ that
  corresponds via \cref{coadjyoneda} to the functor
  $\SPAN_{F}(\mathcal{C})^{\op} \to \CATI$
  represented by $i(c)$.
\end{defn}

The idea is now to construct a ``candidate'' for $\mathcal{Y}_{c}$ 
using a result from \cite{freepres} together with the following observation:
\begin{lemma}\label{freeFcart}
  If $p \colon \mathcal{E} \to \mathcal{C}$ is a functor such that
  $\mathcal{E}$ has $p$-cartesian morphisms over morphisms in
  $\mathcal{C}_{F}$, then the functor
  \[ \Fun_{/\mathcal{C}}^{\Fcart}(\mathcal{C}_{/x}^{F}, \mathcal{E})
    \to \Fun_{/\mathcal{C}}(\{x\}, \mathcal{E}) \simeq \mathcal{E}_{x}\]
  given by restriction along the inclusion
  \[ \{x\} \simeq \{\id_{x}\} \hookrightarrow
  \mathcal{C}_{/x}^{F},\] is an equivalence, where
$\mathcal{C}_{/x}^{F}$ denotes the full subcategory of
$\mathcal{C}_{/x}$ spanned by morphisms to $x$ in $\mathcal{C}_{F}$, and
  $\Fun_{/\mathcal{C}}^{\Fcart}(\mathcal{C}_{/x}^{F}, \mathcal{E})$
  is the full subcategory of
  $\Fun_{/\mathcal{C}}(\mathcal{C}_{/x}^{F}, \mathcal{E})$ spanned by
  functors that preserve cartesian morphisms over $\mathcal{C}_{F}$.
\end{lemma}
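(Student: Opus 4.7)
The plan is to identify $\mathcal{C}_{/x}^{F}\to\mathcal{C}$ as the \emph{free} object in the \icat{} $\Cat^{\Fcart}_{\infty/\mathcal{C}}$ of functors to $\mathcal{C}$ having cartesian lifts over $\mathcal{C}_{F}$ (with morphisms preserving such lifts) generated by the inclusion $\{x\}\hookrightarrow\mathcal{C}$. First, one checks that $\mathcal{C}_{/x}^{F}\to\mathcal{C}$ itself belongs to $\Cat^{\Fcart}_{\infty/\mathcal{C}}$: for a morphism $g\colon y\to y'$ in $\mathcal{C}_{F}$ and an object $(f'\colon y'\to x)$ in $\mathcal{C}_{/x}^{F}$, the composite $f'\circ g$ is again in $\mathcal{C}_{F}$ (since $\mathcal{C}_{F}$ is a subcategory), and the obvious triangle exhibits a morphism $(f'\circ g)\to (f')$ in $\mathcal{C}_{/x}^{F}$ lying over $g$ which is easily checked to be cartesian. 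In particular $\id_{x}$ is terminal in $\mathcal{C}_{/x}^{F}$, and every object $(f\colon y\to x)$ admits a cartesian lift of $f\in\mathcal{C}_{F}$ terminating at $\id_{x}$.

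Second, I invoke the free cartesian fibration formalism of Gepner--Haugseng--Nikolaus~\cite{freepres}, which produces a left adjoint $L$ to the forgetful functor $\Cat^{\Fcart}_{\infty/\mathcal{C}} \to \Cat_{\infty/\mathcal{C}}$. The inclusion $\{\id_{x}\}\hookrightarrow \mathcal{C}_{/x}^{F}$ then yields a canonical comparison morphism $L(\{x\}\hookrightarrow\mathcal{C})\to \mathcal{C}_{/x}^{F}$ in $\Cat^{\Fcart}_{\infty/\mathcal{C}}$, and the lemma reduces to showing this comparison is an equivalence: under the adjunction the asserted restriction map becomes the tautological identification
\[ \Fun_{/\mathcal{C}}(\{x\},\mathcal{E})\simeq \mathcal{E}_{x}. \]

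To verify $L(\{x\})\simeq \mathcal{C}_{/x}^{F}$, I argue directly that for any $\mathcal{E}\in\Cat^{\Fcart}_{\infty/\mathcal{C}}$ each $e\in\mathcal{E}_{x}$ determines an essentially unique $F$-cartesian-preserving functor $F_{e}\colon \mathcal{C}_{/x}^{F}\to\mathcal{E}$ over $\mathcal{C}$ with $F_{e}(\id_{x})\simeq e$: on objects, since $(f\colon y\to x)\to\id_{x}$ is cartesian, $F_{e}(f)$ is forced to be a cartesian lift $f^{*}e\to e$ of $f$; on a morphism $g\colon (f)\to (f')$ the value $F_{e}(g)\colon f^{*}e\to f'^{*}e$ is the essentially unique arrow over $g$ produced by the universal property of the cartesian morphism $f'^{*}e\to e$. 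The main obstacle is promoting this pointwise recipe to an honest functor of $\infty$-categories together with all higher coherences; this is precisely what the explicit model of the free cartesian fibration in \cite{freepres} accomplishes, and identifying their concrete construction applied to $\{x\}\hookrightarrow\mathcal{C}$ (in the $F$-variant) with the slice $\mathcal{C}_{/x}^{F}$ completes the proof.
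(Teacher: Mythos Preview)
Your proposal has a genuine gap. You correctly reformulate the lemma as saying that $\mathcal{C}_{/x}^{F}\to\mathcal{C}$ is the free object on $\{x\}\hookrightarrow\mathcal{C}$ in the \icat{} of functors over $\mathcal{C}$ admitting cartesian lifts over $\mathcal{C}_{F}$. But the reference \cite{freepres} only constructs the free \emph{full} (co)cartesian fibration: applied to $\{x\}\hookrightarrow\mathcal{C}$ its explicit model gives $\mathcal{C}_{/x}$, not $\mathcal{C}_{/x}^{F}$. There is no ``$F$-variant'' in \cite{freepres}, and you do not supply one. Your pointwise description of $F_{e}$ is, as you yourself say, not an \icatl{} argument; deferring the higher coherences to a result that is not actually in the cited source leaves the proof incomplete. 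Worse, establishing the $F$-variant you need (that the free $F$-cartesian object on $\{x\}$ is $\mathcal{C}_{/x}^{F}$) is precisely the content of the lemma, so the appeal is circular.

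The paper's proof sidesteps this by using relative right Kan extensions from \cite[\S 4.3.2]{HTT}. Since $\id_{x}$ is terminal in $\mathcal{C}_{/x}^{F}$, for each object $f\colon y\to x$ the comma \icat{} $\{x\}_{f/}$ is contractible, so a $p$-limit of shape $\{x\}_{f/}^{\triangleleft}\simeq\Delta^{1}$ is exactly a $p$-cartesian morphism. It follows that a functor $\mathcal{C}_{/x}^{F}\to\mathcal{E}$ over $\mathcal{C}$ is a $p$-right Kan extension from $\{x\}$ \IFF{} it sends each morphism $f\to\id_{x}$ to a cartesian edge, and by the 3-for-2 property for cartesian morphisms this is equivalent to preserving all cartesian morphisms over $\mathcal{C}_{F}$. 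Then \cite[Proposition 4.3.2.15]{HTT} yields the desired equivalence, with all coherences handled by the Kan-extension machinery.
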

\begin{proof}
  We use the results on relative Kan extensions from \cite{HTT}*{\S
    4.3.2}. For every object $f \colon y \to x$ of
  $\mathcal{C}_{/x}^{F}$ the \icat{}
  \[\{x\}_{f/} := \{x\} \times_{\mathcal{C}^{F}_{/x}}
    (\mathcal{C}^{F}_{/x})_{f/} \simeq \Map_{\mathcal{C}^{F}_{/x}}(f,
    \id_{x}) \] is contractible, since $\id_{x}$ is a terminal object
  in $\mathcal{C}^{F}_{/x}$. Hence a morphism
  \[\{x\}_{f/}^{\triangleleft} \simeq \Delta^{1} \to \mathcal{E}\] is a
  $p$-limit \IFF{} it's a cartesian morphism by \cite{HTT}*{Example
    4.3.1.4}. It follows that a functor
  $\Phi \colon \mathcal{C}_{/x}^{F} \to \mathcal{E}$ over
  $\mathcal{C}$ is a $p$-right Kan extension from $\{x\}$ \IFF{} for every
  $f \colon y \to x$ in $\mathcal{C}_{F}$ it takes the unique morphism
  $f \to \id_{x}$ (which is cartesian over $f$) to a cartesian
  morphism in $\mathcal{E}$. By the 3-for-2 property of cartesian
  morphisms this is equivalent to $\Phi$ preserving cartesian
  morphisms over $\mathcal{C}_{F}$. Hence \cite{HTT}*{Proposition
    4.3.2.15} implies that, since $\mathcal{E}$ has $p$-cartesian
  morphisms over $\mathcal{C}_{F}$, the functor
  $\Fun_{/\mathcal{C}}(\mathcal{C}_{/x}^{F}, \mathcal{E}) \to
  \Fun_{/\mathcal{C}}(\{x\}, \mathcal{E})$ restricts to an equivalence
  from the full subcategory
  $\Fun_{/\mathcal{C}}^{\Fcart}(\mathcal{C}_{/x}^{F}, \mathcal{E})$.
\end{proof}

\begin{construction}
  Since $\mathcal{Y}_{c}$ corresponds to a 
  right $F$-coadjointable functor, it has
  cartesian morphisms over $\mathcal{C}_{F}$. Applying
 \cref{freeFcart} to $\mathcal{Y}_{c}$ and $x = c$, 
  we see that there is a unique commutative square
  \[
    \begin{tikzcd}
      \mathcal{C}_{/c}^{F} \arrow{rr}{\alpha^{-}_{c}} \arrow{dr} & &  \mathcal{Y}_{c}
      \arrow{dl} \\
      & \mathcal{C}
    \end{tikzcd}
  \]
  such that the top horizontal functor preserves cartesian morphisms
  over $\mathcal{C}_{F}$ and takes $\id_{c}$ in $\mathcal{C}_{/c}^{F}$
  to the identity morphism $\id_{i(c)}$ in
  $\SPAN_{F}(\mathcal{C})(i(c),i(c)) \simeq \mathcal{Y}_{c,c}$.

  Now since $\mathcal{Y}_{c} \to \mathcal{C}$ is also a cocartesian fibration, we
  can extend this to a unique functor from the free cocartesian
  fibration \cite[Theorem 4.5]{freepres}
  \begin{equation} \label{eq:freecocart}
\mathcal{B}_{c}:=  \mathcal{C}^{F}_{/c} \times_{\mathcal{C}} \mathcal{C}^{[1]}
     \to \mathcal{C},
    \end{equation}
  giving a unique commutative triangle

\[    \begin{tikzcd}
      \mathcal{B}_{c} \arrow{rr}{\alpha_{c}} \arrow{dr} & &
      \mathcal{Y}_{c} \arrow{dl} \\
      {} & \mathcal{C}
    \end{tikzcd}
  \]
  where the horizontal functor preserves cocartesian morphisms and
  restricts to $\alpha_{c}^{-}$ on $\mathcal{C}_{/c}^{F}$.
\end{construction}

\begin{remark}\label{rem:bc}
  To understand the \itcat{} $\SPAN_{F}(\mathcal{C})$, we are
  going to show that the functor $\alpha_{c}$ is an equivalence. The
  explicit construction of the $\infty$-category $\mathcal{B}_{c}$ in \eqref{eq:freecocart}
  allows us to unpack it easily, revealing the expected definition of
  spans. We carry this out:
  \begin{itemize}
  \item   An object of $\mathcal{B}_{c}$ consists of an object $x \xto{f} c$ in
  $\mathcal{C}^{F}_{/c}$, \ie{} a morphism $f$ to $c$ in
  $\mathcal{C}_{F}$, together with a morphism from $x$ in
  $\mathcal{C}$; in other words, it is precisely a \emph{span}
  \[ y \xfrom{g} x \xto{f} c \]
  with $f$ in $\mathcal{C}_{F}$.
  The functor~\eqref{eq:freecocart} to $\mathcal{C}$ takes this
  to the object $y$. 
\item  A morphism from this object to another object
  \[ y \xfrom{g'} x' \xto{f'} c\]
  in the fibre $\mathcal{B}_{y,c}$ consists of a morphism
  \[
    \begin{tikzcd}
      x \arrow{dr}{f} \arrow{dd}{\xi} \\
      & c \\
      x' \arrow{ur}[swap]{f'}
    \end{tikzcd}
  \]
  in $\mathcal{C}_{/c}$ and a commutative triangle
  \[
    \begin{tikzcd}
      x \arrow{dr}{g} \arrow{dd}{\xi} \\
      & y \\
      x' \arrow{ur}[swap]{g'}
    \end{tikzcd}
    \]
    in $\mathcal{C}$, \ie{} precisely a morphism of spans
    \[
      \begin{tikzcd}
        {} & x \arrow{dl}[swap]{g} \arrow{dr}{f} \arrow{dd}{\xi} \\
        y & & c \\
        & x'. \arrow{ul}{g'} \arrow{ur}{f'}
      \end{tikzcd}
      \]
\item  Moreover, the cocartesian morphism
  over $y \xto{\eta} y'$ in $\mathcal{C}$ is given by composition in
  $\mathcal{C}^{[1]}$ and so takes the span
  \[ y \xfrom{g} x \xto{f} c \] to
  \[ y' \xfrom{\eta g} x \xto{f} c.\]
  \end{itemize}
\end{remark}

To prove that $\alpha_{c}$ is an equivalence we want to use the
universal property of $\mathcal{Y}_{c}$ (\ie{} the Yoneda lemma) to produce a functor $\beta_{c}
\colon \mathcal{Y}_{c} \to \mathcal{B}_{c}$, which will be the inverse
of $\alpha_{c}$. This requires knowing the
following:
\begin{propn}\label{Bccoadj}
  The functor $B_{c} \colon \mathcal{C} \to \CatI$
  classifying the cocartesian fibration $\mathcal{B}_{c} \to \mathcal{C}$
  is right $F$-coadjointable.
\end{propn}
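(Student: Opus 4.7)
By \cref{lem:Fcoadjstr}, it suffices to exhibit cartesian morphisms over $\mathcal{C}_F$ in the cocartesian fibration $p \colon \mathcal{B}_c \to \mathcal{C}$ and to check that they satisfy Beck--Chevalley against cocartesian morphisms over $\mathcal{C}_F$-pullback squares in the base. The plan is to construct the cartesian lifts directly from the pullback presentation $\mathcal{B}_c = \mathcal{C}^F_{/c} \times_\mathcal{C} \mathcal{C}^{[1]}$, in which the map in the pullback is $\ev_0$ while $p$ is induced by $\ev_1$, and then to deduce Beck--Chevalley from the pasting law for pullbacks in $\mathcal{C}$.

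Given $\eta \colon y \to y'$ in $\mathcal{C}_F$ and an object $(y' \xfrom{g} x' \xto{f} c)$ of $\mathcal{B}_{c,y'}$, the span-pair axiom supplies the pullback
\[
\begin{tikzcd}
y \times_{y'} x' \arrow{r}{\tilde\eta} \arrow{d}{g'} \drpullback & x' \arrow{d}{g} \\
y \arrow{r}{\eta} & y'
\end{tikzcd}
\]
in $\mathcal{C}$, with $\tilde\eta \in \mathcal{C}_F$ as the base change of $\eta$. Since $f \in \mathcal{C}_F$ as well, the composite $f\tilde\eta$ lies in $\mathcal{C}_F$, and so the span $y \xfrom{g'} y \times_{y'} x' \xto{f\tilde\eta} c$ is a well-defined object of $\mathcal{B}_{c,y}$. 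Reading $\tilde\eta$ simultaneously as a morphism in $\mathcal{C}^F_{/c}$ (via its compatibility with the new forward leg) and as the above pullback square in $\mathcal{C}^{[1]}$ assembles an arrow $\widetilde{\eta}$ in $\mathcal{B}_c$ lying over $\eta$; this will be our cartesian lift.

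The main technical step is verifying that $\widetilde{\eta}$ is $p$-cartesian. By the pullback description of $\mathcal{B}_c$, an arrow from $(w \xfrom{h} z \to c)$ to $(y' \xfrom{g} x' \xto{f} c)$ lying over a factorisation $w \xto{\alpha} y \xto{\eta} y'$ amounts to a map $\beta \colon z \to x'$ over $c$ together with a compatible commutative square in $\mathcal{C}$. The universal property of $y \times_{y'} x'$ in $\mathcal{C}$ furnishes a unique factoring arrow $z \to y \times_{y'} x'$, and this factorisation automatically stays within $\mathcal{B}_c$ because its forward leg $z \to y\times_{y'}x' \to c$ equals the original $z \to x' \to c$ in $\mathcal{C}^F_{/c}$. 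The $\infty$-categorical upgrade is immediate from applying the same argument fibrewise to mapping spaces: the pullback presentation identifies mapping spaces in $\mathcal{B}_c$ with fibre products of mapping spaces in the two factors, so the universal property of $y \times_{y'} x'$ in $\mathcal{C}$ transfers directly.

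The Beck--Chevalley property for a pullback square in $\mathcal{C}$ with legs $\eta, \eta' \in \mathcal{C}_F$ then follows from the pasting law: both composites $B_c(g') \circ \eta'^{\oplus}$ and $\eta^{\oplus} \circ B_c(g)$ evaluate on a span $(b \leftarrow x \to c)$ by taking an iterated pullback of $x \to b$, and pasting identifies these up to canonical equivalence, with forward legs remaining in $\mathcal{C}_F$ throughout. The principal technical point, beyond the universal property of pullbacks, is the bookkeeping that keeps all forward legs inside the subcategory $\mathcal{C}_F$; this is provided uniformly by the span-pair axiom's stability under base change and closure under composition.
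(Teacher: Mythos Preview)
Your proof is correct and follows essentially the same approach as the paper's: both construct the cartesian lift over $\eta \in \mathcal{C}_F$ by pulling back the backward leg of the span along $\eta$ (the paper phrases this as exhibiting a terminal object in the slice $\mathcal{B}_{y',c/\sigma}$, which is equivalent to your direct verification of the cartesian universal property), and both derive Beck--Chevalley from the pasting law for pullback squares in $\mathcal{C}$. The only cosmetic difference is that the paper works with the adjoint description $\phi_\ostar \dashv \phi^\oplus$ on fibres while you work directly with cartesian morphisms in the total category, but these are the same via the reference you cite.
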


\begin{proof}%
  For $f \colon x \to y$ in $\mathcal{C}$, let us denote the value of
  $B_{c}$ at $f$ by
  \[
  f_{\ostar} \colon \mathcal{B}_{x,c} \to \mathcal{B}_{y,c};
  \]
  here $\mathcal{B}_{x,c}$ can be identified with the fibre product
  $\mathcal{C}_{/x} \times_{\mathcal{C}} \mathcal{C}^{F}_{/c}$, and
  the functor $f_{\ostar}$ is given by composing with $f$ in the first
  factor, \ie{}
  \[
    x \xfrom{g} z \to c
    \quad\mapsto\quad y \xfrom{f \circ g} z \to c.
  \]
  We first prove that the functor $\mathcal{B}_{c} \to \mathcal{C}$ is
  cartesian over $\mathcal{C}_{F}$. In other words, we must show that
  for every morphism $\phi \colon y' \to y$ in $\mathcal{C}_{F}$, the
  functor
  \[ \phi_{\ostar} \colon \mathcal{B}_{y',c} \to \mathcal{B}_{y,c}, \]
  has a right adjoint, which we will denote by $\phi^{\oplus}$.  To
  see that $\phi_{\ostar}$ has a right adjoint it suffices (by a
  reformulation of \cite[Lemma 5.2.4.1]{HTT}) to show that for any
  span $\sigma = (y \xfrom{g} x \xto{f} c)$ ($f$ in
  $\mathcal{C}_{F}$), the \icat{}
  $\mathcal{B}_{y',c/\sigma} := \mathcal{B}_{y',c}
  \times_{\mathcal{B}_{y,c}} \mathcal{B}_{y,c/\sigma}$ has a terminal
  object.

  An object of $\mathcal{B}_{y',c/\sigma}$ is
a commutative diagram 
\begin{equation} \label{eq:ob-y'c}
  \begin{tikzcd}
    y' \arrow{dd}{\phi} & x' \arrow{l}{g'} \arrow{dd} \arrow{dr}{f'} \\
    & & c \\
    y & x. \arrow{l}{g} \arrow{ur}[swap]{f}
  \end{tikzcd}
\end{equation}
More formally, we can identify $\mathcal{B}_{y,c/\sigma}$ with the
  full subcategory $\mathcal{C}_{/x} \times_{\mathcal{C}_{/c}}
  \mathcal{C}^{F}_{/c}$ of $\mathcal{C}_{/x}$ spanned by morphisms $x'
  \to x$ such that $x' \to x \to c$ is in $\mathcal{C}_{F}$. The fibre
  product $\mathcal{B}_{y',c/\sigma}$ we can then identify with the
  full subcategory of $\mathcal{C}_{/p}$, where $p$ is the diagram $y'
  \to y \from x$, spanned by commutative squares
  \[
    \begin{tikzcd}
      x' \arrow{r} \arrow{d} & x \arrow{d} \\
      y' \arrow{r} & y
    \end{tikzcd}
    \]
    such that the composite $x' \to x \to c$ lies in
    $\mathcal{C}_{F}$. A terminal object in $\mathcal{C}_{/p}$ is
    precisely a fibre product $x \times_{y} y'$, which exists since 
    by assumption $\mathcal{C}$ admits all pullback along
    $\phi$. Moreover, this terminal object lies in the full
    subcategory $\mathcal{B}_{y',c/\sigma}$ since the projection $x
    \times_{y} y' \to x$ is a base change of $\phi$ and so lies in
    $\mathcal{C}_{F}$.

    To complete the proof we must show that given a pullback square
    \begin{equation}
      \label{eq:Bcpullback}
    \csquare{\tilde{y}'}{\tilde{y}}{y'}{y}{\tilde{\phi}}{\tilde{\gamma}}{\gamma}{\phi}      
    \end{equation}
  in $\mathcal{C}$ with $\phi$ in $\mathcal{C}_{F}$, the Beck-Chevalley transformation
  \[  \tilde{\gamma}_{\ostar}\tilde{\phi}^{\oplus}  \to \phi^{\oplus}\gamma_{\ostar} \]
  is an equivalence. Evaluating at a span $\tilde{y} \from x \to c$
  this transformation is given by the canonical dashed map in the diagram
  \[
    \begin{tikzcd}
      y' \arrow{dd}{\phi} & \tilde{y}' \arrow{dd}{\tilde{\phi}} \arrow{l}{\tilde{\gamma}} & \tilde{x}
      \arrow{dd} \arrow{l} \arrow{dr} \\
      & & & c \\
      y & \tilde{y} \arrow{l}{\gamma} &  x \arrow{l} \arrow{ur}
    \end{tikzcd}
  \]
    \[
    \begin{tikzcd}
      y' \arrow{dd}{\phi} & \tilde{y}' \arrow{dd}{\tilde{\phi}}
      \arrow{l}[swap]{\tilde{\gamma}} & & \bullet \arrow[dashed]{dl}
      \arrow[color=blue]{dd} \arrow[color=blue]{ll} \arrow{dr} \\
      & & \bullet \arrow[color=red]{dr} \arrow[color=red,crossing
      over]{ull} \arrow[crossing over]{rr} & & c \\
      y & \tilde{y} \arrow{l}{\gamma} & & x \arrow{ll} \arrow{ur}
    \end{tikzcd}
  \]
  where the two coloured squares are cartesian. Since the square
  \cref{eq:Bcpullback} is by assumption cartesian, it follows from the
  pasting lemma for pullback squares that this is indeed an equivalence.
\end{proof}

Translating \cref{coadjyoneda} through the equivalence of
\cref{lem:Fcoadjstr}, we see that \cref{Bccoadj} implies that any object
$X \in \mathcal{B}_{c}$ over $c' \in \mathcal{C}$
corresponds to a morphism $\mathcal{Y}_{c'} \to \mathcal{B}_{c}$. In
particular, we have:
\begin{cor}
  There is a canonical functor $\beta_{c} \colon \mathcal{Y}_{c} \to
  \mathcal{B}_{c}$ over $\mathcal{C}$ corresponding
  to the identity span of $c$; this preserves cocartesian morphisms
  and cartesian morphisms over $\mathcal{C}_{F}$. \qed
\end{cor}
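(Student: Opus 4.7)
The plan is to apply the Yoneda lemma in the form of \cref{coadjyoneda} to the functor $B_{c} : \mathcal{C} \to \CatI$ classifying $\mathcal{B}_{c} \to \mathcal{C}$. Since $B_{c}$ is right $F$-coadjointable by \cref{Bccoadj}, this Yoneda equivalence specializes to a natural equivalence of $\infty$-categories
\[
B_{c}(c) \simeq \MAP_{\Frcoadj}(\mathsf{Y}(c), B_{c}).
\]

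The next step is to identify a canonical object on the left-hand side. Using the explicit description of $\mathcal{B}_{c}$ recorded in \cref{rem:bc}, the identity span $c \xfrom{\id} c \xto{\id} c$ is manifestly an object of the fibre $\mathcal{B}_{c,c} = B_{c}(c)$, since both legs are the identity of $c$ and the right leg lies in $\mathcal{C}_{F}$. Transporting across the Yoneda equivalence above, this object picks out a canonical right $F$-coadjointable natural transformation $\mathsf{Y}(c) \to B_{c}$.

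Finally, I translate this through the straightening equivalence of \cref{lem:Fcoadjstr}. That lemma identifies right $F$-coadjointable natural transformations with morphisms of cocartesian fibrations over $\mathcal{C}$ that additionally preserve cartesian morphisms over $\mathcal{C}_{F}$, so our chosen transformation unstraightens to a functor $\beta_{c} : \mathcal{Y}_{c} \to \mathcal{B}_{c}$ over $\mathcal{C}$ with exactly the preservation properties required by the corollary. There is no real obstacle here beyond assembling the ingredients: the substantive work went into proving \cref{Bccoadj}, and the remaining content is a bookkeeping exercise combining the Yoneda lemma with the straightening-theoretic dictionary of \cref{lem:Fcoadjstr}.
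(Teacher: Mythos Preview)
Your proof is correct and follows exactly the approach the paper takes: the sentence immediately preceding the corollary in the paper says precisely that translating \cref{coadjyoneda} through \cref{lem:Fcoadjstr} shows (using \cref{Bccoadj}) that any object of $\mathcal{B}_{c}$ over $c'$ corresponds to a morphism $\mathcal{Y}_{c'} \to \mathcal{B}_{c}$, and the corollary then specializes this to the identity span over $c$. You have simply spelled out the details of this sentence.
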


We now need to prove that the functor $\alpha_{c}$ has the same
property:
\begin{propn}\label{alphaccart}
  The functor $\alpha_c \colon  \mathcal{B}_{c} \rightarrow
  \mathcal{Y}_{c}$ over $\mathcal{C}$ preserves cocartesian morphisms
  and cartesian
  morphisms over $\mathcal{C}_{F}$.
\end{propn}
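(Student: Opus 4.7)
By construction, $\alpha_{c}$ preserves cocartesian morphisms: it was defined as the unique extension of $\alpha^{-}_{c}$ dictated by the universal property of the free cocartesian fibration $\mathcal{B}_{c} \to \mathcal{C}$. What requires work is the preservation of cartesian morphisms over $\mathcal{C}_{F}$. Since $\alpha_{c}$ is a morphism of cocartesian fibrations over $\mathcal{C}$, it corresponds via straightening to a natural transformation $\tilde{\alpha}_{c} \colon B_{c} \to \mathsf{Y}(c)$ between the classifying functors $\mathcal{C} \to \CatI$. Both $B_{c}$ and $\mathsf{Y}(c)$ are right $F$-coadjointable---by \cref{Bccoadj} and by construction, respectively---so by \cref{lem:Fcoadjstr} combined with the characterization of adjoints in functor categories in \cref{adjinarrows}, preservation of cartesian morphisms over $\mathcal{C}_{F}$ is equivalent to requiring, for each $\phi \colon y' \to y$ in $\mathcal{C}_{F}$, that the Beck--Chevalley transformation
\[
\tilde{\alpha}_{c,y'} \circ \phi^{\oplus,B} \to \phi^{\oplus,Y} \circ \tilde{\alpha}_{c,y}
\]
be an equivalence, where $\phi^{\oplus,B}, \phi^{\oplus,Y}$ denote the right adjoints to $\phi_{\ostar}$ in $B_{c}$ and $\mathsf{Y}(c)$.

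My plan is to verify this pointwise. For $\tilde{e} \in \mathcal{B}_{y,c}$, the description from \cref{rem:bc} allows me to write $\tilde{e} = \tilde{g}_{\ostar}[\tilde{f}]$ where $\tilde{g} \colon \tilde{x} \to y$ is a morphism in $\mathcal{C}$, $\tilde{f} \colon \tilde{x} \to c$ is in $\mathcal{C}_{F}$, and $[\tilde{f}] := (\tilde{x} \xleftarrow{\id} \tilde{x} \xrightarrow{\tilde{f}} c)$ lies in the subcategory $\mathcal{C}^{F}_{/c} \hookrightarrow \mathcal{B}_{c}$. Forming the pullback $\tilde{x} \times_{y} y'$ with projections $\phi' \colon \tilde{x} \times_{y} y' \to \tilde{x}$ (in $\mathcal{C}_{F}$ by base change) and $\tilde{g}' \colon \tilde{x} \times_{y} y' \to y'$, the Beck--Chevalley equivalence for $B_{c}$ from \cref{Bccoadj} gives $\phi^{\oplus,B}\tilde{e} \simeq \tilde{g}'_{\ostar} \phi'^{\oplus,B}[\tilde{f}]$, and an unwinding of \cref{rem:bc} identifies $\phi'^{\oplus,B}[\tilde{f}]$ with $[\tilde{f}\phi']$, realised through the cartesian morphism $\phi' \colon [\tilde{f}\phi'] \to [\tilde{f}]$ already living inside $\mathcal{C}^{F}_{/c}$.

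Applying $\alpha_{c}$ and combining (i) preservation of cocartesian morphisms with (ii) the fact that $\alpha^{-}_{c}$ preserves cartesian morphisms over $\mathcal{C}_{F}$ by its defining property from \cref{freeFcart}, I obtain
\[
\alpha_{c}(\phi^{\oplus,B}\tilde{e}) \;\simeq\; \tilde{g}'^{Y}_{\ostar} \phi'^{\oplus,Y} \alpha^{-}_{c}([\tilde{f}]).
\]
Conversely, $\phi^{\oplus,Y}\tilde{\alpha}_{c}(\tilde{e}) = \phi^{\oplus,Y} \tilde{g}^{Y}_{\ostar} \alpha^{-}_{c}([\tilde{f}])$ by preservation of cocartesian morphisms, and the Beck--Chevalley equivalence in $\mathsf{Y}(c)$ (which holds by construction) rewrites this as the same expression $\tilde{g}'^{Y}_{\ostar} \phi'^{\oplus,Y} \alpha^{-}_{c}([\tilde{f}])$. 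Thus the source and target of the BC transformation are identified. The principal obstacle I anticipate lies in the final verification that this identification coincides with the canonical BC map as a composite of units and counits; this amounts to a diagram chase exploiting the naturality of mate transformations and the fact that the BC equivalences in $B_{c}$ and $\mathsf{Y}(c)$ are constructed in parallel fashion from the same pullback square in $\mathcal{C}$.
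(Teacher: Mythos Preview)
Your approach is essentially the same as the paper's: decompose an arbitrary object of $\mathcal{B}_{y,c}$ as $g_{\ostar}[f]_{F}$, use the Beck--Chevalley equivalences on both sides together with preservation of cocartesian morphisms and the already-known preservation of cartesian morphisms on $\mathcal{C}^{F}_{/c}$, and reduce to checking that the resulting identification agrees with the canonical mate transformation.

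The ``principal obstacle'' you flag is real and is precisely the content of the paper's argument. The paper does not leave it as an unspecified diagram chase: before the proof it establishes a general commutative diagram \cref{eq:BClaxdiag} governing how a Beck--Chevalley transformation interacts with an arbitrary functor $F$ between fibrations (one that need not preserve (co)cartesian morphisms). Instantiated with $F = \alpha_{c}$, this yields a commutative pentagon
\[
\begin{tikzcd}
  {} & g'_{\ostar}\alpha_{c}(\xi^{\oplus}[f]_{F}) \arrow{dl} \arrow{dr} \\
  g'_{\ostar}\xi^{\oplus}\alpha_{c}([f]_{F}) \arrow{d} & & \alpha_{c}(g'_{\ostar}\xi^{\oplus}[f]_{F}) \arrow{d}\\
  \phi^{\oplus}g_{\ostar}\alpha_{c}([f]_{F}) \arrow{dr} & & \alpha_{c}(\phi^{\oplus}g_{\ostar}[f]_{F}) \arrow{dl} \\
  & \phi^{\oplus}\alpha_{c}(g_{\ostar}[f]_{F})
\end{tikzcd}
\]
in which four of the five edges are already known to be equivalences (two by preservation of cocartesian morphisms, one by cartesian preservation on $\mathcal{C}^{F}_{/c}$, and two by the internal Beck--Chevalley equivalences), forcing the fifth---the comparison map you want---to be an equivalence as well. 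Without this diagram (or something equivalent) your argument only shows that source and target are abstractly equivalent, not that the specific mate transformation is invertible; so you should either derive \cref{eq:BClaxdiag} yourself from the unit/counit compatibilities in \cref{laxunitcounitdiag}, or cite it.
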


\begin{remark}
  For the proof we first need to discuss the naturality of
  Beck--Chevalley transformations in the following situation:
  Suppose we have a commutative triangle of \icats{}
  \[
    \begin{tikzcd}
      \mathcal{E} \arrow{rr}{F} \arrow{dr}[swap]{p} & & \mathcal{F}
      \arrow{dl}{q} \\
      & \mathcal{B},
    \end{tikzcd}
  \]
  where $p$ and $q$ have both cartesian and cocartesian morphisms
  over $f \colon a \to b$ in $\mathcal{B}$, but $F$ does not
  necessarily preserve these. We write $f_{!}$ for the cocartesian
  pushforward and $f^{*}$ for the cartesian pullback along $f$ for
  both $p$ and $q$ (so the functor $f_{!}$ is left adjoint to
  $f^{*}$). Then we can make the following commutative diagrams for $x
  \in \mathcal{E}_{a}$, $y \in \mathcal{E}_{b}$:
\begin{equation}\label{laxunitcounitdiag}
    \begin{tikzcd}
      Fx \arrow{r} \arrow{d} & Ff^{*}f_{!}x \arrow{d} \\
      f^{*}f_{!}Fx \arrow{r} \arrow{d} & f^{*}Ff_{!}x \arrow{d} \\
      f_{!}Fx \arrow{r} & Ff_{!}x,
    \end{tikzcd}
    \qquad
    \begin{tikzcd}
      Ff^{*}y \arrow{r} \arrow{d} & f^{*}Fy \arrow{d} \\
      f_{!}Ff^{*}y \arrow{r} \arrow{d} & f_{!}f^{*}Fy \arrow{d} \\
      Ff_{!}f^{*}y \arrow{r} & Fy.
    \end{tikzcd}    
  \end{equation}
  In particular, the top left and bottom right squares here encode the
  compatibility of $F$ with the units and counits of the two
  adjunctions $f_{!} \dashv f^{*}$. Now suppose we have a commutative
  square
  \[
    \begin{tikzcd}
      a' \arrow{r}{f'} \arrow{d}{g'} & b' \arrow{d}{g} \\
      a \arrow{r}{f} & b,
    \end{tikzcd}
  \]
  where $p$ and $q$ have cocartesian morphisms over $f,f'$ and both
  cartesian and cocartesian morphisms over $g,g'$. Then we claim that
  the two Beck--Chevalley transformations $f'_{!}g'^{*} \to
  g^{*}f_{!}$, intertwined by $F$, are related by a commutative diagram
  \begin{equation}
    \label{eq:BClaxdiag}
    \begin{tikzcd}
      {} & f'_{!}F(g'^{*}X) \arrow{dl} \arrow{dr} \\
      f'_{!}g'^{*}F(X) \arrow{d} & & F(f'_{!}g'^{*}X) \arrow{d}\\
      g^{*}f_{!}FX \arrow{dr} & & F(g^{*}f_{!}X) \arrow{dl} \\
      & g^{*}F(f_{!}X).      
    \end{tikzcd}
  \end{equation}
  This can be extracted from the following diagram, where we have used
  \cref{laxunitcounitdiag} together with naturality:
  \[
    \begin{tikzcd}[column sep=small]
      {} & {} &  f'_{!}Fg'^{*}X \arrow{dl} \arrow{dd} \arrow{dr}
      \\
      {} &  f'_{!}g'^{*}FX \arrow{d}  & & Ff'_{!}g'^{*}X \arrow{dd} \arrow{ddr} 
       \\
      {} & g^{*}g_{!}f'_{!}g'^{*}FX \arrow{dl}{\sim} &
      g^{*}g_{!}f'_{!}Fg'^{*}X \arrow{l} \arrow{dl}{\sim} \arrow{dr} \\
      g^{*}f_{!}g'_{!}g'^{*}FX \arrow{d} &  g^{*}f_{!}g'_{!}Fg'^{*}X
      \arrow{l} \arrow{d} & & 
      g^{*}g_{!}Ff'_{!}g'^{*}X \arrow{d} & F g^{*}g_{!}f'_{!}g'^{*}X
      \arrow{dl} \arrow{d}{\sim} \\
      g^{*}f_{!}FX \arrow[bend right]{ddrr} & 
      g^{*}f_{!}Fg'_{!}g'^{*}X \arrow{dr} \arrow{l} & &
      g^{*}Fg_{!}f'_{!}g'^{*}X \arrow{dl}[swap]{\sim} &
      Fg^{*}f_{!}g'_{!}g'^{*}X \arrow{dll} \arrow{d} \\
      & &  g^{*}Ff_{!}g'_{!}g'^{*}X \arrow{d} & & Fg^{*}f_{!}X \arrow{dll} \\
      & &  g^{*}Ff_{!}X.      
    \end{tikzcd}
    \]
\end{remark}

\begin{proof}[Proof of \cref{alphaccart}]
  The universal property we used to define $\alpha_{c}$ implies that
  it preserves cocartesian morphisms. Moreover, since $\alpha_{c}$ was
  extended from a
  functor $\mathcal{C}^{F}_{/c} \to \mathcal{Y}_{c}$ that preserved
  cartesian morphisms over $\mathcal{C}_{F}$, we know $\alpha_{c}$
  preserves cartesian morphisms in the image of
  $\mathcal{C}_{/c}^{F}$. In other words, for $f \colon x \to c$ in
  $\mathcal{C}_{F}$, the map $\alpha_{c}(\phi^{\oplus}[h]_{F}) \to
  \phi^{\oplus}\alpha_{c}([h]_{F})$ is an equivalence for all $\phi \colon
  x' \to x$ in $\mathcal{C}_{F}$.

  More generally, for a span $\sigma = (y \xfrom{g} x \xto{f} c)$, we
  need to show that $\alpha_c(\phi^{\oplus}\sigma) \simeq
  \phi^{\oplus}\alpha_c(\sigma)$ for any morphism $\phi \colon y' \to
  y$ in $\mathcal{C}_{F}$. To proceed, let us view
  $\sigma$ as $g_{\ostar}[f]_{F}$.

  Forming the pullback square
  \[
    \begin{tikzcd}
     x' \arrow{d}{g'} \arrow{r}{\xi} & x \arrow{d}{g}\\
      y' \arrow{r}{\phi} & y,
    \end{tikzcd}
  \]
  the Beck-Chevalley transformation yields an equivalence:
  \[
  g'_{\ostar}\xi^{\oplus}[f]_{F} \isoto
  \phi^{\oplus}g_{\ostar}[f]_{F}.
  \] Moreover, from
  \cref{eq:BClaxdiag} we get a natural commutative diagram
  \[
    \begin{tikzcd}
      {} & g'_{\ostar}\alpha_{c}(\xi^{\oplus}[f]_{F}) \arrow{dl}[swap]{(1)} \arrow{dr}{(2)} \\
      g'_{\ostar}\xi^{\oplus}\alpha_{c}([f]_{F}) \arrow{d}[swap]{(3)} & & \alpha_{c}(g'_{\ostar}\xi^{\oplus}[f]_{F}) \arrow{d}{(3)}\\
      \phi^{\oplus}g_{\ostar}\alpha_{c}([f]_{F}) \arrow{dr}[swap]{(2)} & & \alpha_{c}(\phi^{\oplus}g_{\ostar}[f]_{F}) \arrow{dl} \\
      & \phi^{\oplus}\alpha_{c}(g_{\ostar}[f]_{F}),
    \end{tikzcd}
    \]
    where the map labelled (1) is an equivalence since $\alpha_{c}$
    preserves cartesian morphisms from $\mathcal{C}^{F}_{/c}$, those
    labelled (2) are equivalences since $\alpha_{c}$ preserves
    cocartesian morphisms, and those labelled (3) are equivalences
    because the Beck--Chevalley transformations are invertible. 
    Hence the last morphism in the diagram is also an equivalence,
    which shows that $\alpha_{c}$ preserves the cartesian morphism
    $\phi^{\oplus}g_{\ostar}[f]_{F} \to g_{\ostar}[f]_{F}$.
\end{proof}

\begin{cor} \label{cor:bc}
 The functors $\beta_{c} \colon \mathcal{Y}_{c} \to \mathcal{B}_{c}$
 and $\alpha_{c} \colon \mathcal{B}_{c} \to \mathcal{Y}_{c}$ satisfy
 \[ \beta_{c}\alpha_{c} \simeq \id_{\mathcal{B}_{c}}, \qquad
  \alpha_{c}\beta_{c} \simeq \id_{\mathcal{Y}_{c}}.\] Thus
  $\alpha_{c}$ is an equivalence with inverse $\beta_{c}$.
\end{cor}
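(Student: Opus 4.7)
The plan is to show both composites are the identity by exhibiting them, together with the respective identity functors, as the same element under two different universal characterizations --- one suited to endofunctors of $\mathcal{B}_{c}$ and the other to endofunctors of $\mathcal{Y}_{c}$. A preliminary step is to verify that both $\alpha_{c}$ and $\beta_{c}$ are morphisms of cocartesian fibrations over $\mathcal{C}$ that additionally preserve cartesian morphisms over $\mathcal{C}_{F}$: for $\alpha_{c}$ this follows from its construction together with \cref{alphaccart}, while $\beta_{c}$ arose from a morphism of right $F$-coadjointable functors via \cref{coadjyoneda}, which under \cref{lem:Fcoadjstr} is exactly this class of morphisms of fibrations. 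Hence both composites share these preservation properties.

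For $\beta_{c}\alpha_{c} \simeq \id_{\mathcal{B}_{c}}$, I would use the free cocartesian fibration description $\mathcal{B}_{c} = \mathcal{C}^{F}_{/c} \times_{\mathcal{C}} \mathcal{C}^{[1]}$ from \cite{freepres}: endofunctors of $\mathcal{B}_{c}$ over $\mathcal{C}$ that preserve cocartesian morphisms correspond to functors $\mathcal{C}^{F}_{/c} \to \mathcal{B}_{c}$ over $\mathcal{C}$. Restricting to those that additionally preserve $F$-cartesian morphisms (which is legitimate since $\mathcal{B}_{c}$ has $F$-cartesian morphisms by \cref{Bccoadj}), \cref{freeFcart} further identifies these with the fibre $\mathcal{B}_{c,c}$ via evaluation at $\id_{c}$. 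Both $\id_{\mathcal{B}_{c}}$ and $\beta_{c}\alpha_{c}$ evaluate to the identity span $\id_{c} \in \mathcal{B}_{c,c}$: for the identity this is trivial, and for $\beta_{c}\alpha_{c}$ one has $\alpha_{c}(\id_{c}) = \id_{i(c)}$ by the defining property of $\alpha_{c}^{-}$, while $\beta_{c}(\id_{i(c)}) = \id_{c}$ because $\beta_{c}$ was defined as the morphism classifying $\id_{c} \in \mathcal{B}_{c,c}$ under the Yoneda correspondence of \cref{coadjyoneda}.

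For $\alpha_{c}\beta_{c} \simeq \id_{\mathcal{Y}_{c}}$, I would instead specialize \cref{coadjyoneda} to $\Phi = \mathsf{Y}(c)$ itself: via \cref{lem:Fcoadjstr}, this identifies endomorphisms of $\mathcal{Y}_{c}$ (as cocartesian fibrations with $F$-cartesian morphisms, with maps preserving both) with $\mathsf{Y}(c)(c) \simeq \MAP_{\SPAN_{F}(\mathcal{C})}(i(c), i(c))$, the correspondence being evaluation at $\id_{i(c)}$. Both $\id_{\mathcal{Y}_{c}}$ and $\alpha_{c}\beta_{c}$ thus correspond to $\id_{i(c)}$: clearly so for the identity, and for $\alpha_{c}\beta_{c}$ since $\alpha_{c}\beta_{c}(\id_{i(c)}) = \alpha_{c}(\id_{c}) = \id_{i(c)}$.

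The main technical hurdle is bookkeeping: one must carefully verify that the Yoneda-type correspondence of \cref{coadjyoneda}, translated through \cref{lem:Fcoadjstr}, really does give evaluation at the identity as the stated equivalence, and that the resulting universal properties interact compatibly with the free cocartesian fibration construction and \cref{freeFcart}, so that the various identity and composite endofunctors genuinely lie in the correct structure-preserving subspaces. Once this compatibility is pinned down, the identification of both sides via a single element in each case is essentially automatic.
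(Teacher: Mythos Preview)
Your proposal is correct and follows essentially the same approach as the paper: both composites are shown to be the identity by reducing, via the universal properties of the free cocartesian fibration together with \cref{freeFcart} on one side and the Yoneda description of \cref{coadjyoneda} (translated through \cref{lem:Fcoadjstr}) on the other, to checking that each composite sends the identity span of $c$ to itself. The paper's proof is slightly terser but uses exactly the same ingredients in the same way.
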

\begin{proof}
  By construction $\alpha_{c}$ takes the identity span of $c$ to
  \[\id_{c} \, \in \,
  \mathcal{Y}_{c,c} \simeq
  \MAP_{\SPAN_{F,L}(\mathcal{C})}(i(c),i(c)).\] The composite
  $\beta_{c}\alpha_{c}$ is a functor
  $\mathcal{B}_{c} \to \mathcal{B}_{c}$ that preserves cocartesian
  morphisms, hence it is determined by its restriction to
  $\mathcal{C}^{F}_{/c} \to \mathcal{B}_{c}$. This restriction preserves cartesian
  morphisms over $\mathcal{C}_{F}$ and so by \cref{freeFcart} it is
  determined by its value
  at $\id_{c}$, which is the identity span in $\mathcal{B}_{c}$. The
  same holds for the identity of $\mathcal{B}_{c}$ and so
  $\id_{\mathcal{B}_{c}} \simeq \beta_{c}\alpha_{c}$. Conversely,
  $\alpha_{c}\beta_{c}$ is a functor
  $\mathcal{Y}_{c} \to \mathcal{Y}_{c}$ that preserves cocartesian
  morphisms and cartesian morphisms over $\mathcal{C}_{F}$ by
  \cref{alphaccart}. By \cref{coadjyoneda}, interpreted in terms of
  fibrations, this functor is determined by where it sends the
  identity of $c$; since we know this is taken to itself, this functor
  must be the identity $\id_{\mathcal{Y}_{c}}$.
\end{proof}

The equivalence $\mathcal{Y}_{c}\simeq \mathcal{B}_{c}$ allows us to
identify morphisms in $\SPAN_{F}(\mathcal{C})$ with
spans, and 2-morphisms with morphisms of spans. We now check that
composition of spans works as expected:
\begin{propn}
  Composition of spans in $\SPAN_{F}(\mathcal{C})$ is given by taking
  pullbacks.
\end{propn}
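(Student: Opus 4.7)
The plan is to use the equivalence $\mathcal{Y}_{c} \simeq \mathcal{B}_{c}$ from \cref{cor:bc} to decompose every morphism in $\SPAN_{F}(\mathcal{C})$ as a composite of the form $[f]_{F} \circ i(g)$, and then combine such composites using the Beck--Chevalley equivalences that come from left $F$-adjointability of $i$.

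The first step is to establish the following dictionary: under the composite equivalence
\[ \mathcal{B}_{y,c} \simeq \mathcal{Y}_{y,c} \simeq \MAP_{\SPAN_{F}(\mathcal{C})}(i(y), i(c)), \]
a span $\sigma = (y \xfrom{g} x \xto{f} c)$ corresponds to the 1-morphism $[f]_{F} \circ i(g)$. To see this, one first observes that the object $(x \xfrom{\id} x \xto{f} c) \in \mathcal{B}_{x,c}$ is the image of $\id_{x} \in \mathcal{C}^{F}_{/c}$, and hence is sent to $[f]_{F}$ by $\alpha_{c}$ by construction of $\alpha_{c}^{-}$. By \cref{rem:bc}, our $\sigma$ is obtained from this object via the cocartesian pushforward $g_{\ostar}$. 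Unwinding the definition of $\mathsf{Y}(c)$ in \cref{coadjyoneda} as the representable functor $\MAP_{\SPAN_{F}(\mathcal{C})}(i(-), i(c))$ then shows that $g_{\ostar}$ corresponds to precomposition with $i(g) \colon i(y) \to i(x)$ in $\SPAN_{F}(\mathcal{C})$.

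Given this dictionary, the rest is a formal computation. For composable spans $\sigma_{1} = (x \xfrom{g_{1}} a \xto{f_{1}} y)$ and $\sigma_{2} = (y \xfrom{g_{2}} b \xto{f_{2}} z)$, form the pullback
\[
  \begin{tikzcd}
    a \times_{y} b \arrow{r}{f'_{1}} \arrow{d}[swap]{g'_{2}} \drpullback & b \arrow{d}{g_{2}} \\
    a \arrow{r}{f_{1}} & y.
  \end{tikzcd}
\]
Since $i \colon \mathcal{C}^{\op} \to \SPAN_{F}(\mathcal{C})$ is left $F$-adjointable (this is precisely the universal property it corepresents, \cref{adjrepble}), the Beck--Chevalley mate of this square gives an equivalence $[f'_{1}]_{F} \circ i(g'_{2}) \simeq i(g_{2}) \circ [f_{1}]_{F}$. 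Combining this with the contravariant functoriality of $i$ and the equivalence $[f_{2}]_{F} \circ [f'_{1}]_{F} \simeq [f_{2} \circ f'_{1}]_{F}$ (both sides are left adjoints of $i(f_{2} \circ f'_{1}) = i(f'_{1}) \circ i(f_{2})$), we obtain
\[
\sigma_{2} \circ \sigma_{1} \simeq [f_{2}]_{F} \circ i(g_{2}) \circ [f_{1}]_{F} \circ i(g_{1}) \simeq [f_{2} \circ f'_{1}]_{F} \circ i(g_{1} \circ g'_{2}),
\]
which under the dictionary is the outer span $(x \xfrom{g_{1} g'_{2}} a \times_{y} b \xto{f_{2} f'_{1}} z)$, as required.

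The principal obstacle is establishing the dictionary, specifically the identification of the cocartesian pushforward $g_{\ostar}$ on $\mathcal{Y}_{c}$ with precomposition by $i(g)$ in $\SPAN_{F}(\mathcal{C})$; once this is in place the rest is a short formal calculation using Beck--Chevalley and functoriality.
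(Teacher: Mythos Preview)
Your argument is correct and follows the same strategy as the paper's: establish that under the equivalence $\mathcal{B}_c \simeq \mathcal{Y}_c$ a span $(y \xfrom{g} x \xto{f} c)$ corresponds to $[f]_F \circ i(g)$ with $[f]_F$ the left adjoint of $i(f)$, and then reduce the composition of two spans to rewriting a middle composite $i(g_2) \circ [f_1]_F$. The only difference lies in how that rewriting is carried out. The paper writes $[g_2]_B \circ [f_1]_F \simeq f_1^{\oplus}([g_2]_B)$ and then invokes the explicit pullback description of the cartesian functor $f_1^{\oplus}$ on $\mathcal{B}_c$ established in \cref{Bccoadj}; you instead invoke the Beck--Chevalley equivalence coming directly from left $F$-adjointability of $i$. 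These are two packagings of the same fact, and your version has the minor advantage of not referring back to an earlier computation.

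Two small points of presentation. First, ``$\id_x \in \mathcal{C}^F_{/c}$'' is a slip: the object of $\mathcal{C}^F_{/c}$ whose image in $\mathcal{B}_{x,c}$ is the span $(x \xfrom{\id} x \xto{f} c)$ is $f \colon x \to c$, not $\id_x$. Second, the phrase ``hence is sent to $[f]_F$ by $\alpha_c$'' is where you are identifying the span $[f]_F$ with the morphism $i(f)^{\ell}$, and this identification is what makes your subsequent Beck--Chevalley step legitimate. It follows because $\alpha_c^-$ preserves cartesian morphisms and the cartesian pullback $f^{\oplus}$ on the representable $\mathcal{Y}_c$ is precomposition with $i(f)^{\ell}$; stating this explicitly would make your dictionary step self-contained rather than tautological in appearance.
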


\begin{proof}
  By construction, the cocartesian morphisms in $\mathcal{Y}_{c}$
  encode precomposition with the images of morphisms in
  $\mathcal{C}^{\op}$ under the functor $i$: Given a morphism $g
  \colon x \to y$ in $\mathcal{C}$ we have
  \[  \sigma \circ i(g) \simeq g_{\ostar}\sigma\]
  for any span $\sigma$. In particular, from our description of the
  right-hand side we have
  \[ i(g) \simeq  \id_{x} \circ i(g) \simeq g_{\ostar}(\id_{x})
    \simeq (y \xfrom{g} x \longequal x) \simeq [g]_{B},
  \]
  and more generally
  \[ x \xfrom{g} y \xto{f} z \simeq
    g_{\ostar}[f]_{F} \simeq [f]_{F} \circ [g]_{B}.\]
  This means that to describe an arbitrary composition in
  $\SPAN_{F}(\mathcal{C})$ it suffices (by associativity of
  composition) to understand compositions of the form $[g]_{B} \circ
  [f]_{F}$. Note that $[f]_{B}$ is in the image of
  $\mathcal{C}_{F}^{\op}$ under $i$, and therefore admits a left
  adjoint in $\SPAN_{F}(\mathcal{C})$ since $i$ is left $F$-adjointable; let us denote this by
  $[f]_{B}^{\ell}$. We claim that
  \[ [f]_{B}^{\ell} \simeq [f]_{F}.\]
  To see this, we note that precomposition with $[f]_{B}$ is the
  functor $f_{\ostar}$, which admits a right adjoint $f^{\oplus}$. Therefore, by uniqueness of adjoints we
  conclude that precomposition with $[f]^{\ell}_{B}$ must coincide
  with $f^{\oplus}$. Therefore the span $[f]^{\ell}_{B}$ is computed as
  \[
 [f]^{\ell}_{B} \simeq f^{\oplus}(\id) \simeq [f]_{F}.
  \]
  Furthermore,
\[ [g]_{B} \circ
  [f]_{F} \simeq [g]_{B} \circ [f]_{B}^{\ell} \simeq
  f^{\oplus}[g]_{B}, \]\
which we saw above is computed by taking the pullback of $g$ along
$f$, as required.
\end{proof}

\begin{propn}\label{cor:invblebispan}
  A span $\sigma = (y \xfrom{g} x \xto{f} z)$ is invertible as
  a morphism in $\SPAN_{F}(\mathcal{C})$ \IFF{} the components
  $f$ and $g$ are both invertible in $\mathcal{C}$.
\end{propn}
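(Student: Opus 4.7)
For the ``if'' direction, suppose $f$ and $g$ are equivalences in $\mathcal{C}$. Since $\mathcal{C}_F$ contains all equivalences, $g \in \mathcal{C}_F$, so the reversed span $\tau = (z \xfrom{f} x \xto{g} y)$ is a valid morphism in $\SPAN_F(\mathcal{C})$. The composite $\sigma \circ \tau$ has middle object $x \times_y x$, and since $g$ is an equivalence the diagonal $x \to x \times_y x$ is too; this reduces the composite span to $(z \xfrom{f} x \xto{f} z)$, which is equivalent to $\id_z$ via the equivalence $f$. An analogous computation shows $\tau \circ \sigma \simeq \id_y$.

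For the ``only if'' direction, suppose $\sigma$ has inverse $\tau = (z \xfrom{f'} x' \xto{g'} y)$ with $g' \in \mathcal{C}_F$. By the description of morphisms and 2-morphisms in $\SPAN_F(\mathcal{C})$ obtained via the equivalence $\mathcal{Y}_c \simeq \mathcal{B}_c$ in \cref{cor:bc}, the invertible 2-morphism $\sigma \tau \simeq \id_z$ is given by an equivalence $z \to x' \times_y x$ in $\mathcal{C}$, which by the universal property of the pullback corresponds to a pair $(t_1, t_2)$ with $t_1 \colon z \to x'$, $t_2 \colon z \to x$ satisfying $g t_2 \simeq g' t_1$, $f' t_1 \simeq \id_z$, and $f t_2 \simeq \id_z$. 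Symmetrically, $\tau \sigma \simeq \id_y$ yields an equivalence $(s_1, s_2) \colon y \to x \times_z x'$ with $g s_1 \simeq \id_y$, $g' s_2 \simeq \id_y$, and $f s_1 \simeq f' s_2$.

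To show $f$ is an equivalence it suffices to show $t_2$ is, since then $f \simeq t_2^{-1}$ via $f t_2 \simeq \id_z$. We already have $f t_2 \simeq \id_z$, exhibiting $t_2$ as split monic. For a section on the other side, the identity $g \circ \id_x \simeq g \simeq g' s_2 g$ shows that $(s_2 g, \id_x) \colon x \to x' \times_y x$ is a well-defined map by the pullback universal property, and composing with the inverse equivalence $(t_1, t_2)^{-1}$ yields $m \colon x \to z$ satisfying $t_2 m \simeq \id_x$. In any $\infty$-category, a morphism that is both split monic and split epic is an equivalence, since the two one-sided inverses are related by $f \simeq f t_2 m \simeq m$, giving a two-sided inverse. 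A symmetric argument, building $m' \colon x \to y$ via the pair $(\id_x, t_1 f) \colon x \to x \times_z x'$ and composing with $(s_1, s_2)^{-1}$, shows that $s_1$ and hence $g$ are equivalences. The main obstacle is the careful $\infty$-categorical bookkeeping needed to extract the correct pullback data and compatibilities from the invertibility of the 2-morphisms; once this is done, all constructions of the inverse maps are formal manipulations of pullbacks.
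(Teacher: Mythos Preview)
Your argument is correct. The paper's own proof is a one-line reference: it simply notes that composition has been identified with pullback and then cites \cite{spans}*{Lemma 8.2} for the actual argument. Your proposal spells out a direct proof of essentially the same flavor as that reference: in the ``if'' direction you explicitly exhibit the inverse span and compute the composites, and in the ``only if'' direction you extract from the invertible 2-morphisms (via the description of mapping $\infty$-categories through \cref{cor:bc}) the equivalences $z \simeq x' \times_y x$ and $y \simeq x \times_z x'$, then use a standard split-mono/split-epi argument to conclude that $f$ and $g$ are equivalences. The key ingredient you rely on---that invertible 2-morphisms are precisely equivalences of middle objects compatible with the legs---is available at this point in the paper from the identification $\mathcal{Y}_c \simeq \mathcal{B}_c$. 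So your approach is not genuinely different from the paper's; it simply makes explicit what the paper outsources to a citation.
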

\begin{proof}
  Since we now know composition is given by taking pullbacks, this
  follows as in the proof of \cite{spans}*{Lemma 8.2}.
\end{proof}

\begin{cor}\label{cor:ieqgpd}
  The functor $i \colon \mathcal{C}^{\op} \to
  \SPAN_{F}(\mathcal{C})$ gives an equivalence on underlying
  \igpds{}
  \[ \mathcal{C}^{\op,\simeq} 
    \isoto \SPAN_{F}(\mathcal{C})^{\simeq}.\]
\end{cor}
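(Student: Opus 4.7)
The strategy is to leverage the essential surjectivity already furnished by \cref{lem:ess} and reduce the claim to showing, for each pair of objects $y,z \in \mathcal{C}$, that the induced map on mapping spaces
\[
i_{*} \colon \Map_{\mathcal{C}^{\op,\simeq}}(y,z) \longrightarrow \Map_{\SPAN_{F}(\mathcal{C})^{\simeq}}(y,z)
\]
is an equivalence of $\infty$-groupoids. The target is, by definition of the underlying $\infty$-groupoid of an $(\infty,2)$-category, the union of those components of $\MAP_{\SPAN_{F}(\mathcal{C})}(y,z)^{\simeq}$ corresponding to invertible 1-morphisms, so the plan is to give a concrete model for this subspace and then match it with $\Map_{\mathcal{C}^{\simeq}}(z,y)$.

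First I would use \cref{cor:bc} together with the explicit description in \cref{rem:bc} to identify $\MAP_{\SPAN_{F}(\mathcal{C})}(y,z)$ with the pullback $\mathcal{C}^{F}_{/z} \times_{\mathcal{C}} \mathcal{C}_{/y}$ along source projections, whose objects are spans $y \xfrom{g} x \xto{f} z$ with $f \in \mathcal{C}_{F}$. By \cref{cor:invblebispan} the invertible 1-morphisms in $\SPAN_{F}(\mathcal{C})$ are precisely the spans whose two legs are both equivalences in $\mathcal{C}$. Since $(\blank)^{\simeq}$ is a right adjoint and so preserves limits, the target of $i_{*}$ becomes
\[
(\mathcal{C}^{F}_{/z})^{\simeq,\txt{inv}} \times_{\mathcal{C}^{\simeq}} (\mathcal{C}_{/y})^{\simeq,\txt{inv}},
\]
where the superscript $\txt{inv}$ denotes the union of components on arrows whose underlying morphism in $\mathcal{C}$ is invertible.

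The crux of the argument is then that both factors of this pullback are \emph{contractible}. For $(\mathcal{C}_{/y})^{\simeq,\txt{inv}}$: $\id_{y}$ is a terminal object of $\mathcal{C}_{/y}$, so the unique morphism from any $f \colon x \to y$ to $\id_{y}$ in $\mathcal{C}_{/y}$ is represented by $f$ itself; hence it is an equivalence exactly when $f$ is invertible. Consequently the component of $\id_{y}$ in $(\mathcal{C}_{/y})^{\simeq}$ is precisely $(\mathcal{C}_{/y})^{\simeq,\txt{inv}}$, and it is contractible because every object of that component has contractible mapping space to the terminal $\id_{y}$. The wideness of $\mathcal{C}_{F}$ (which forces it to contain every equivalence) yields the analogous statement for $(\mathcal{C}^{F}_{/z})^{\simeq,\txt{inv}}$. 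Under the source projections to $\mathcal{C}^{\simeq}$ these two contractible spaces select the points $z$ and $y$, so their fiber product reduces to the path space $\Map_{\mathcal{C}^{\simeq}}(z,y) = \Map_{\mathcal{C}^{\op,\simeq}}(y,z)$.

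Finally I would verify that the resulting equivalence agrees with $i_{*}$: an equivalence $h \colon z \to y$ is sent by $i$ to the span $y \xfrom{h} z \xto{\id_{z}} z$, which corresponds to the pair $(\id_{z}, h)$ in the pullback and represents $h$ in the path space under the above identification. The main obstacle is the contractibility of the invertible subspace of the slice $\infty$-category; once this short categorical observation is in hand, every remaining step is a direct translation through the identifications already secured by \cref{cor:bc}, \cref{rem:bc}, and \cref{cor:invblebispan}.
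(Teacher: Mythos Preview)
Your proposal is correct and follows essentially the same approach as the paper's proof: both use essential surjectivity from \cref{lem:ess}, reduce to comparing the subspaces of invertible morphisms in the mapping spaces, and then invoke \cref{cor:invblebispan} together with the explicit description of mapping \icats{} from \cref{cor:bc} and \cref{rem:bc}. The paper simply asserts this identification is ``immediate'', whereas you have unpacked it via the contractibility of the invertible-arrow component of a slice \icat{}; this is a reasonable and correct way to make the terse step explicit.
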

\begin{proof}
  We know the functor $i$ is essentially surjective on objects, so it
  is enough to show that for any objects $x,y$ the map
  \[ \Map_{\mathcal{C}}(x,y)^{\txt{eq}} \to
    \Map_{\Span_{F}(\mathcal{C})}(ix,iy)^{\txt{eq}} \]
  is an equivalence,
  where we are taking the components of the mapping spaces that
  correspond to equivalences. This is immediate from
  \cref{cor:invblebispan} and our description of the mapping spaces in
  $\Span_{F}(\mathcal{C})$.  
\end{proof}

Combining the results of this section, we have shown:
\begin{thm}\label{thm:SPANdesc}
  Let $(\mathcal{C}, \mathcal{C}_{F})$ be a span
  triple. Then left $F$-adjointable functors out of
  $\mathcal{C}^{\op}$ are corepresented by an \itcat{}
  $\SPAN_{F}(\mathcal{C})$ via a left $F$-adjointable functor $i
  \colon \mathcal{C}^{\op} \to
  \SPAN_{F}(\mathcal{C})$ with the following properties:
  \begin{enumerate}[(i)]
  \item $i$ gives an equivalence
    \[ \SPAN_{F}(\mathcal{C})^{\simeq} \simeq
      \mathcal{C}^{\simeq}\]
     on underlying $\infty$-groupoids,
   \item morphisms from $i(x)$ to $i(y)$ can be identified with  spans
    \[ y \xfrom{g} x \xto{f} z \]
    where $f$ is in $\mathcal{C}_{F}$,
  \item 2-morphisms correspond to morphisms of spans diagrams,
  \item and composition of morphisms is given by taking pullbacks. \qed
  \end{enumerate}
\end{thm}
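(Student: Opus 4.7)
The plan is to harvest the results already assembled in this subsection and simply repackage them into the four clauses of the statement. The representability assertion is Proposition~\ref{adjrepble}, which supplies the $(\infty,2)$-category $\SPAN_{F}(\mathcal{C})$ together with the canonical left $F$-adjointable functor $i \colon \mathcal{C}^{\op} \to \SPAN_{F}(\mathcal{C})$ corresponding to the identity. With this fixed, the remaining task is to identify the underlying $\infty$-groupoid, the mapping $\infty$-categories, and the composition law of $\SPAN_{F}(\mathcal{C})$ with the expected spans description.

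For clauses (ii) and (iii), I would use Proposition~\ref{coadjyoneda} to realize $\MAP_{\SPAN_{F}(\mathcal{C})}(i(y), i(c))$ as the fibre over $y \in \mathcal{C}$ of the cocartesian fibration $\mathcal{Y}_{c} \to \mathcal{C}$ classified by $\mathsf{Y}(c)$. The equivalence $\alpha_{c} \colon \mathcal{B}_{c} \xrightarrow{\sim} \mathcal{Y}_{c}$ of Corollary~\ref{cor:bc} then transports this fibre to $\mathcal{B}_{y,c} = \mathcal{C}_{/y} \times_{\mathcal{C}} \mathcal{C}^{F}_{/c}$, whose explicit unpacking in Remark~\ref{rem:bc} identifies its objects with spans $y \xfrom{g} x \xto{f} c$ (with $f$ in $\mathcal{C}_{F}$) and its morphisms with morphisms of spans. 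This gives (ii) and (iii) simultaneously.

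Clause (iv) is precisely the content of the proposition immediately preceding Corollary~\ref{cor:invblebispan}: using that $[f]_{F}$ is the left adjoint $[f]_{B}^{\ell}$ in $\SPAN_{F}(\mathcal{C})$, composition $[g]_{B} \circ [f]_{F}$ is computed by the right adjoint $f^{\oplus}$ applied to $[g]_{B}$, which by the proof of Proposition~\ref{Bccoadj} is the pullback of $g$ along $f$. Clause (i) is then Corollary~\ref{cor:ieqgpd}, which in turn relies on Proposition~\ref{cor:invblebispan} characterising invertible spans as those whose two legs are individually invertible in $\mathcal{C}$; combined with Lemma~\ref{lem:ess} for essential surjectivity, one concludes $\SPAN_{F}(\mathcal{C})^{\simeq} \simeq \mathcal{C}^{\simeq}$.

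Since every ingredient has already been proved, the main obstacle has in fact been overcome earlier, namely the construction and comparison $\alpha_{c} \leftrightarrows \beta_{c}$ leading to Corollary~\ref{cor:bc}; the present theorem is really a summary, and the only delicate point in writing it is verifying that the abstract fibre identification via Hinich's Yoneda lemma is compatible with the concrete spans description on both objects and morphisms, which is precisely what the proof of Proposition~\ref{alphaccart} (through the Beck--Chevalley diagram~\cref{eq:BClaxdiag}) accomplishes.
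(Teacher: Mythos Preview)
Your proposal is correct and matches the paper's approach exactly: the theorem is stated with a \qed{} and prefaced by ``Combining the results of this section, we have shown,'' so the paper likewise treats it as a summary of Proposition~\ref{adjrepble}, Corollary~\ref{cor:bc} with Remark~\ref{rem:bc}, the proposition on composition via pullbacks, and Corollary~\ref{cor:ieqgpd}. Your identification of the ingredients and their roles is accurate.
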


We end this section by deducing a description of the functor of
\itcats{} corresponding to a left $F$-adjointable functor:
\begin{propn}\label{propn:adjftrspandesc}
  For a left $F$-adjointable functor $\phi \colon \mathcal{C}^{\op}
  \to \mathcal{X}$, the corresponding functor $\Phi \colon
  \SPAN_{F}(\mathcal{C}) \to \mathcal{X}$ can be described as
  follows:
  \begin{enumerate}[(1)]
  \item On objects, $\Phi(c) \simeq \phi(c)$ for $c \in \mathcal{C}$.
  \item On morphisms, $\Phi$ takes a span
    \[ \sigma = (y \xfrom{g} x \xto{f} z) \]
    to the composite $f_{\oplus}g^{\ostar} \colon \phi(x)
    \to \phi(y)$, where $f_{\oplus}$ is the left adjoint to
    $f^{\ostar} := \phi(f)$.
  \item On 2-morphisms, $\Phi$ takes the 2-morphism $\beta \colon \sigma
    \to \sigma'$ given by the commutative diagram
    \[
      \begin{tikzcd}
        {} & x \arrow{dl}[swap]{g} \arrow{dr}{f} \arrow{dd}{h} \\
        y & & z \\
         & x' \arrow{ul}{g'} \arrow{ur}[swap]{f'}
      \end{tikzcd}
    \]
    to the composite
    \begin{equation}
      \label{eq:span2morimage}
     f_{\oplus}g^{\ostar} \simeq f_{\oplus}h^{\ostar}g'^{\ostar}
      \to f_{\oplus}h^{\ostar}f'^{\ostar}f'_{\oplus}g'^{\ostar}
      \simeq f_{\oplus}f^{\ostar}f'_{\oplus}g'^{\ostar} \to
      f'_{\oplus}g'^{\ostar},      
    \end{equation}
 where the first noninvertible arrow is
    an adjunction unit and the second noninvertible arrow is a counit.
  \end{enumerate}
\end{propn}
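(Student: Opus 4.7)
The plan is to handle the three parts in order, with (1) and (2) being formal consequences of the universal property and (3) requiring an explicit combinatorial decomposition of $\beta$. For (1), I would invoke \cref{thm:SPANdesc}(i), which says $i\colon \mathcal{C}^{\op}\to\SPAN_{F}(\mathcal{C})$ is essentially the identity on objects, together with $\Phi\circ i\simeq \phi$. For (2), I would factor every span as $\sigma = [f]_F\circ[g]_B = [f]_F\circ i(g)$. Since $\Phi$ is a functor of \itcats{}, it preserves adjunctions (an adjunction is a functor out of $\ADJ$ by \cref{adjthm}, and precomposing with $\Phi$ gives an adjunction in $\mathcal{X}$); hence $\Phi([f]_F)$ is a left adjoint of $f^{\ostar}= \Phi([f]_B)$, and by uniqueness of adjoints $\Phi([f]_F)\simeq f_{\oplus}$.

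The heart of the argument is (3). My plan is to express $\beta$ as the vertical composite in $\SPAN_{F}(\mathcal{C})$ of the following four 2-cells:
\begin{enumerate}[(a)]
\item the invertible 2-cell $[g]_B\simeq [h]_B\circ[g']_B$ coming from $g = g'h$ (functoriality of $i$), whiskered on the left by $[f]_F$;
\item the whiskered unit of $[f']_F\dashv [f']_B$, inserting $\eta_{f'}\colon \id_{x'}\to [f']_B\circ[f']_F$ between $[h]_B$ and $[g']_B$;
\item the invertible 2-cell $[h]_B\circ[f']_B\simeq [f]_B$ coming from $f = f'h$, appropriately whiskered;
\item the whiskered counit $\varepsilon_f\colon [f]_F\circ[f]_B\to \id_z$, post-composed with $[f']_F\circ[g']_B$.
\end{enumerate}
A key point is that this construction only uses $[h]_B$ and never $[h]_F$, so it goes through regardless of whether $h\in \mathcal{C}_F$. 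Granted this decomposition, applying $\Phi$ is formal: by $\Phi\circ i\simeq \phi$, the invertible 2-cells in (a) and (c) map to the functoriality identifications $g^{\ostar}\simeq h^{\ostar}g'^{\ostar}$ and $h^{\ostar}f'^{\ostar}\simeq f^{\ostar}$ for $\phi$; and since $\Phi$ preserves adjunctions together with their units and counits, $\eta_{f'}$ and $\varepsilon_f$ map to the unit of $f'_{\oplus}\dashv f'^{\ostar}$ and the counit of $f_{\oplus}\dashv f^{\ostar}$. Composing all four images yields exactly \cref{eq:span2morimage}.

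The main obstacle is verifying that the composite of (a)--(d) in $\SPAN_{F}(\mathcal{C})$ recovers $\beta$ itself. I would do this by tracing through the resulting morphism of spans using composition by pullback and the explicit formulas for the unit and counit recorded in the excerpt: the whiskered unit in (b) produces the morphism of spans whose middle map is $(h,\id_x)\colon x\to x'\times_z x$ (which exists because $f = f'h$), and the whiskered counit in (d) contributes the projection $\pi_{x'}\colon x'\times_z x\to x'$. Their composite is $\pi_{x'}\circ (h,\id_x) = h$, which is the middle component of $\beta$. Beyond this verification everything else in the argument is a routine functoriality unravelling.
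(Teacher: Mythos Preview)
Your proposal is correct and follows essentially the same approach as the paper. The only organizational difference is that the paper first strips off the whisker by $[g']_B$ (your step (a)) to reduce $\beta$ to a simpler 2-morphism $\lambda$ from $[f]_F[h]_B$ to $[f']_F$, and then decomposes $\lambda$ via the intermediate span $x'\times_z x$ into exactly your (b)--(d); the verification that the middle map is $h$ via $(h,\id_x)$ followed by the projection is the same.
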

\begin{proof}
  We know that $\Phi \circ i \simeq \phi$ and that $i$ is an
  equivalence on underlying \igpds{} by \cref{cor:ieqgpd}, which gives
  (1). To prove (2), observe that the bispan $\sigma$ is the composite
  $[f]_{F} \circ [g]_{B}$ in
  $\SPAN_{F}(\mathcal{C})$. Here $[g]_{B}$ is $i(g)$, and so
  \[ g^{\ostar} := \Phi([g]_{B}) \simeq \Phi(i(g)) \simeq \phi(g).\]
  Moreover, the span $[f]_{F}$ is left adjoint to $[f]_{B}$, hence its
  image $\Phi([f]_{F})$ is the left adjoint $f_{\oplus}$ to
  $f^{\ostar}$. In other words, we have
  $\Phi(\sigma) \simeq \Phi([f]_{F}) \circ \Phi([g]_{B}) \simeq
  f_{\oplus}g^{\ostar}$.  To prove (3), first observe that the
  2-morphism $\beta$ is the composite (``whiskering'') of the morphism
  $[g']_{B}$ with the 2-morphism
  $\lambda$ given by
  \[
    \begin{tikzcd}
      {} & x \arrow{dl}[swap]{h} \arrow{dd}{h} \arrow{dr}{f} \\
      x' & & z \\
      & x' \arrow[equals]{ul} \arrow{ur}[swap]{f'}
    \end{tikzcd}
  \]
  and this whiskering corresponds to the first equivalence in \cref{eq:span2morimage}.
  
  It thus suffices to show that $\Phi$ takes $\lambda$ to the composite
    \[ f_{\oplus}h^{\ostar} \to
      f_{\oplus}h^{\ostar}f'^{\ostar}f'_{\oplus} \simeq
      f_{\oplus}f^{\ostar}f'_{\oplus} \to f'_{\oplus} \]
    using the unit for $f'_{\oplus} \dashv f'^{\ostar}$ and the counit
    for $f_{\oplus} \dashv f^{\ostar}$. To show this we will check
    that the morphism $\lambda$ has the corresponding decomposition in
    $\SPAN_{F}(\mathcal{C})$. Indeed, we can decompose $\lambda$
    as the composite
  \[
    \begin{tikzcd}
      {} & x \arrow{dl}[swap]{h} \arrow{d}{\phi} \arrow{dr}{f} \\
      x' & x' \times_{z}x \arrow{l}[swap]{\pi'} \arrow{d}{\pi'}
      \arrow{r}{f\pi} & z \\
      & x', \arrow[equals]{ul} \arrow{ur}[swap]{f'}
    \end{tikzcd}
  \]
  where $\pi,\pi'$ are the projections from $x' \times_{z} x$ to $x$
  and $x'$, respectively, and $\phi$ is the unique morphism such that
  $\pi \phi \simeq \id_{x}$, $\pi'\phi \simeq h$. Now unpacking the
  description of
  units and counits in $\SPAN_{F}(\mathcal{C})$ implies that the top
  morphism in this decomposition is the composite of the unit for
  $[f']_{F} \dashv [f']_{B}$ with the morphism $x' \xfrom{h} x \xto{f} z$
  and the bottom is the composite of $[f']_{F}$ with the counit
  for $[f]_{F} \dashv [f]_{B}$.
\end{proof}

\subsection{Distributive functors}
We now start our discussion of distributivity.  In this section we
introduce the notion of a distributive functor, which we will prove in
the next subsection is corepresented by \itcats{} of bispans.  For the
definition we first need to introduce the notion of a distributivity
diagram, which dictates how the multiplicative and additive
pushforwards for a distributive functor should interact:
\begin{defn} \label{def:dis-square}
  Let $x \xto{l} y \xto{f} z$ be morphisms in an \icat{}
  $\mathcal{C}$. A \emph{distributivity diagram} for $l$ and $f$ is a
  commutative diagram
  \begin{equation} \label{eq:dis-sq}
    \begin{tikzcd}
      {} & w \times_{z} y \ddrpullback \arrow{dd}{\tilde{g}} \arrow{dl}[swap]{\epsilon} \arrow{r}{\tilde{f}} & w
      \arrow{dd}{g} \\
      x \arrow{dr}{l} \\
      {} & y \arrow{r}{f} & z,
    \end{tikzcd}
  \end{equation}
  where the square is cartesian, with the property that for any morphism
  $\phi \colon u
  \to z$, the composite map
  \begin{equation} \label{eq:uni-prop-dis} \Map_{/z}(\phi,g) \to
    \Map_{/y}(f^{*}\phi, \tilde{g}) \xto{\epsilon_*} \Map_{/y}(f^{*}\phi, l)
    \end{equation}
    is an equivalence. The distributivity diagram for $l$ and $f$ is
    necessarily unique if it exists.
\end{defn}

\begin{remark}\label{rmk:distdiagterminal}
  Consider the \icat{} of diagrams of shape \cref{eq:dis-sq} (with the
  square cartesian). If all pullbacks along $f$ exist in
  $\mathcal{C}$, then this is equivalent an object of the fibre product of \icats{} $$\mathcal{C}_{/z}
  \times_{\mathcal{C}_{/y}} \mathcal{C}_{/x},$$ with the functors in
  the pullback being $f^{*} \colon \mathcal{C}_{/z} \to
  \mathcal{C}_{/y}$ and $l_{!} \colon \mathcal{C}_{/x} \to \mathcal{C}_{/y}$.
  The universal property of the distributivity diagram can then be
  reformulated as that of being a terminal object in this \icat{}.
\end{remark}

\begin{defn} \label{def:bi-quad}
  A \emph{bispan triple} $(\mathcal{C}, \mathcal{C}_{F},
  \mathcal{C}_{L})$ consists of an \icat{}
  $\mathcal{C}$ together with two subcategories $\mathcal{C}_{F},
  \mathcal{C}_{L}$ such that the following
  assumptions hold:
  \begin{enumerate}[(a)]
  \item\label{item:Fspanpair} $(\mathcal{C}, \mathcal{C}_{F})$ is a span
    pair.
  \item\label{item:Lspanpair} $(\mathcal{C}, \mathcal{C}_{L})$ is a span
    pair.
  \item For $l \colon x \to y$ in $\mathcal{C}_{L}$ and $f \colon y
    \to z$ in $\mathcal{C}_{F}$ there exists a distributivity diagram
    \cref{eq:dis-sq} where $g$
    is in $\mathcal{C}_{L}$ (and hence $\tilde{f}$ is in
    $\mathcal{C}_{F}$ by \ref{item:Fspanpair} and $\tilde{g}$ is in
    $\mathcal{C}_{L}$ by \ref{item:Lspanpair}).
  \end{enumerate}
\end{defn}

\begin{notation}
  For $x \in \mathcal{C}$ we write $\mathcal{C}_{/x}^{L}$ for the full
  subcategory of $\mathcal{C}_{/x}$ spanned by morphisms $y \to x$ in $\mathcal{C}_{L}$.
\end{notation}

\begin{remark}\label{dist-adj}
  For a fixed $f\colon y \to z$ in $\mathcal{C}_{F}$, if the
  distributivity diagram \cref{eq:dis-sq} exists for all $l$ in
  $\mathcal{C}_{L}$ then the functor
  $f^{*} \colon \mathcal{C}^{L}_{/z} \to \mathcal{C}^{L}_{/y}$ given
  by pullback along $f$ has a right adjoint $f_{*}$. Indeed, from
  \cref{eq:uni-prop-dis} we see that we have
  \[(w \xto{g} z) \simeq f_*(x \xto{l} y)\] which determines the rest of
  the diagram. Note, however, that if the \icat{} $\mathcal{C}_{L}$
  is not all of $\mathcal{C}$ then the property of
  \cref{eq:uni-prop-dis} is slightly stronger than the existence of
  the right adjoint: for this to exist it suffices to consider maps
  from $\phi$ in $\mathcal{C}^{L}_{/z}$, while \cref{eq:uni-prop-dis}
  asks for an equivalence on maps from any $\phi$ in
  $\mathcal{C}_{/z}$. We can characterize the additional assumption on
  these right adjoints needed to have a bispan triple in terms of a
  base change property:
\end{remark}

\begin{lemma}\label{lem:bispantripmatecond}
  Suppose we have a triple $(\mathcal{C}, \mathcal{C}_{F},
  \mathcal{C}_{L})$ consisting of an \icat{}
  $\mathcal{C}$ together with two subcategories $\mathcal{C}_{F},
  \mathcal{C}_{L}$ such that
  \begin{enumerate}[(a)]
  \item $(\mathcal{C}, \mathcal{C}_{F})$ is a span
    pair,
  \item $(\mathcal{C}, \mathcal{C}_{L})$ is a span
    pair,
  \item for any $f \colon x \to y$ in $\mathcal{C}_{F}$ the functor
    $f^{*} \colon \mathcal{C}^{L}_{/y} \to \mathcal{C}^{L}_{/x}$ given
    by pullback along $f$ has a right adjoint $f_{*}$.
  \end{enumerate}
  Then $(\mathcal{C}, \mathcal{C}_{F},
  \mathcal{C}_{L})$ is a bispan triple \IFF{} for every cartesian
  square
  \[
    \begin{tikzcd}
      x' \arrow{r}{f'} \arrow{d}[swap]{\xi} & y' \arrow{d}{\eta} \\
      x \arrow{r}{f} & y
    \end{tikzcd}
  \]
  with $f$ in $\mathcal{C}_{F}$ the commutative square
  \[
    \begin{tikzcd}
      \mathcal{C}^{L}_{/y} \arrow{r}{f^{*}} \arrow{d}[swap]{\eta^{*}} &
      \mathcal{C}^{L}_{/x} \arrow{d}{\xi^{*}} \\
      \mathcal{C}^{L}_{/y'} \arrow{r}{f'^{*}} & \mathcal{C}^{L}_{/x'}
    \end{tikzcd}
  \]
  is right adjointable, \ie{} the mate transformation
  \[ \eta^{*}f_{*} \to f'_{*}\xi^{*} \]
  is invertible.
\end{lemma}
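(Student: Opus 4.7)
The plan is to derive each direction from the universal characterization of distributivity diagrams in \cref{def:dis-square}, using the assumed existence of the right adjoints $f_*$ on $\mathcal{C}^L$-slices.

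\emph{Beck--Chevalley implies bispan triple.} Given $l \colon x \to y$ in $\mathcal{C}_L$ and $f \colon y \to z$ in $\mathcal{C}_F$, I set $w \to z$ to be $f_*(l) \in \mathcal{C}^L_{/z}$, which lies in $\mathcal{C}_L$ by hypothesis, and complete \cref{eq:dis-sq} by forming the pullback along $f$ and taking $\epsilon$ to be the counit of $f^* \dashv f_*$. To verify the universal property \cref{eq:uni-prop-dis} for an arbitrary $\phi \colon u \to z$, I compute
\[ \Map_{\mathcal{C}_{/z}}(\phi, f_* l) \simeq \Map_{\mathcal{C}_{/u}}(\id_u, \phi^* f_* l) \]
using the universal property of pullback, then apply Beck--Chevalley to the cartesian square formed by $\phi$ and $f$ (permitted since $f \in \mathcal{C}_F$) to obtain $\phi^* f_* l \simeq \tilde{f}_* \tilde{\phi}^* l$, where $\tilde{f} \colon y \times_z u \to u$ and $\tilde{\phi} \colon y \times_z u \to y$ are the pullback projections. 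Since $\tilde{f}$ is again in $\mathcal{C}_F$ and both $\id_u$ and $\tilde{f}_* \tilde{\phi}^* l$ lie in the full subcategory $\mathcal{C}^L_{/u} \subseteq \mathcal{C}_{/u}$, the adjunction $\tilde{f}^* \dashv \tilde{f}_*$ on $\mathcal{C}^L$-slices yields
\[ \Map_{\mathcal{C}_{/u}}(\id_u, \tilde{f}_* \tilde{\phi}^* l) \simeq \Map_{\mathcal{C}_{/y \times_z u}}(\id, \tilde{\phi}^* l) \simeq \Map_{\mathcal{C}_{/y}}(\tilde{\phi}, l), \]
where the last step identifies sections of $\tilde{\phi}^* l$ with maps into $l$ over $y$, and $\tilde{\phi}$ is by construction $f^* \phi$.

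\emph{Bispan triple implies Beck--Chevalley.} Given a cartesian square as in the lemma and $l \in \mathcal{C}^L_{/x}$, I need to show the mate $\eta^* f_* l \to f'_* \xi^* l$ is invertible. Both sides live in $\mathcal{C}^L_{/y'}$, which is full in $\mathcal{C}_{/y'}$, so by Yoneda it suffices to check that the induced map on $\Map_{\mathcal{C}_{/y'}}(\phi', -)$ is an equivalence for every $\phi' \colon u \to y'$. I would chain together: the adjunction $\eta_! \dashv \eta^*$, giving $\Map_{/y'}(\phi', \eta^* f_* l) \simeq \Map_{/y}(\eta \phi', f_* l)$; the distributivity universal property for $l, f$, giving $\simeq \Map_{/x}(f^*(\eta \phi'), l)$; the pasting lemma for pullbacks, which identifies $f^*(\eta \phi')$ with $\xi_!(f'^* \phi')$ in $\mathcal{C}_{/x}$; the adjunction $\xi_! \dashv \xi^*$, giving $\simeq \Map_{/x'}(f'^* \phi', \xi^* l)$; and finally the distributivity universal property for $\xi^* l, f'$, landing in $\Map_{/y'}(\phi', f'_* \xi^* l)$.

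The main obstacle is verifying that the abstract equivalences constructed above really are induced by the prescribed natural transformations, namely the Beck--Chevalley mate in one direction and the $\epsilon_*$-composite of \cref{def:dis-square} in the other. Since every intermediate step is natural in the relevant variable, and since both the mate and the $\epsilon_*$-composite are themselves built from units and counits of precisely the adjunctions we invoke, this is a purely formal check: trace the identity morphism through the chain and appeal to the uniqueness of adjoint natural transformations. No new ingredients are needed beyond diagram chasing.
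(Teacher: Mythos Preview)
Your proof is correct and follows essentially the same route as the paper's: both directions are established by chaining the adjunctions $\eta_! \dashv \eta^*$ and $\xi_! \dashv \xi^*$ together with the universal property \cref{eq:uni-prop-dis} of distributivity diagrams, reducing everything to identifications of mapping spaces. The only cosmetic difference is that in the ``bispan triple $\Rightarrow$ Beck--Chevalley'' direction the paper tests against $l' \in \mathcal{C}^L_{/y'}$ (invoking distributivity once, for $(l,f)$), whereas you test against an arbitrary $\phi' \colon u \to y'$ (invoking distributivity twice, for $(l,f)$ and $(\xi^*l,f')$); both are valid since $\mathcal{C}^L_{/y'}$ is full in $\mathcal{C}_{/y'}$.
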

\begin{proof}
  First suppose we have a bispan triple. Then for $l \in
  \mathcal{C}^{L}_{/x}$ and $l' \in \mathcal{C}^{L}_{/y'}$ we have
  natural equivalences
  \[
    \begin{split}
      \Map_{/y'}(l', f'_{*}\xi^{*}l) & \simeq \Map_{/x'}(f'^{*}l',
      \xi^{*}l) \simeq \Map_{/x}(\xi_{!}f'^{*}l', l) \\
      & \simeq \Map_{/x}(f^{*}\eta_{!}l', l) \simeq \Map_{/y}(\eta_{!}l', f_{*}l) \\
      & \simeq \Map_{/y}(l', \eta^{*}f_{*}l),
    \end{split}
  \]
  using the functors $\eta_{!}$ and $\xi_{!}$ given by composition
  with $\eta$ and $\xi$, respectively, which act as left adjoints to
  $\eta^{*}$ and $\xi^{*}$ when pullbacks along $\eta$ and $\xi$ exist,
  and the full strength of condition \cref{eq:uni-prop-dis} for
  distributivity diagrams, which implies that we have the
  second-to-last equivalence even though $\eta_{!}l'$ is not necessarily in
  $\mathcal{C}_{L}$.

  Now suppose our triple satisfies the assumption on Beck--Chevalley
  transformations. To check that it is a bispan triple we must show
  that for $l \colon c \to x$ in $\mathcal{C}_{L}$ and $f \colon x \to
  y$ in $\mathcal{C}_{F}$ the pushforward $f_{*}l$ has the universal
  property \cref{eq:uni-prop-dis}, \ie{} that for every morphism
  $\eta \colon y' \to y$ we have a natural equivalence
  \[ \Map_{/y}(\eta, f_{*}l) \simeq \Map_{/x}(f^{*}\eta, l).\]
  Denoting the pullback square containing $\eta$ and $f$ as above, we
  have
  \[
    \begin{split}
      \Map_{/y}(\eta, f_{*}l) & \simeq \Map_{/y}(\eta_{!}\id_{y'},
      f_{*}l) 
       \simeq \Map_{/y'}(\id_{y'}, \eta^{*}f_{*}l) \\
      & \simeq \Map_{/y'}(\id_{y'}, f'_{*}\xi^{*}l)
       \simeq \Map_{/x'}(f'^{*}\id_{y'}, \xi^{*}l) \\
      & \simeq \Map_{/x'}(\id_{x'}, \xi^{*}l) 
       \simeq \Map_{/x}(\xi_{!}\id_{x'}, l) \\
      & \simeq \Map_{/x}(f^{*}\eta, l),
    \end{split}
  \]
  where the fourth equivalence holds because $\id_{y'}$ is in
  $\mathcal{C}_{L}$.
\end{proof}

\begin{remark}\label{rem:lcc}
  If $\mathcal{C}$ is locally cartesian closed, then
  all distributivity diagrams exist in $\mathcal{C}$ for any choice of
  $\mathcal{C}_{L}$; this is the case of $\mathcal{C}$ is an
  $\infty$-topos, for example.
\end{remark}

\begin{notation}
  We use the following notation for a functor $\Phi \Span_{F}(\mathcal{C}) \to
  \mathcal{X}$: 
  For any morphism $f \colon x \to y$ in $\mathcal{C}$, we write
    \[ f^{\ostar} := \Phi([f]_{B}) \colon 
    \colon \Phi(y) \to \Phi(x), \]
  and if $f$ lies in $\mathcal{C}_{F}$ we also write
\[  f_{\otimes} := \Phi([f]_{F}) \colon \Phi(x) \to \Phi(y).\]
\end{notation}

For the next definitions we fix a bispan triple $(\mathcal{C}, \mathcal{C}_{F},
\mathcal{C}_{L})$, an \itcat{} $\mathcal{X}$, and a functor
  \[\Phi \colon
    \Span_{F}(\mathcal{C}) \to \mathcal{X}.\]

\begin{defn} \label{def:L-dis}
  Given $l \colon x \to y$ in $\mathcal{C}_{L}$ and $f \colon y \to z$
  in $\mathcal{C}_{F}$, we have by assumption a distributivity diagram as in
  \cref{eq:dis-sq} in $\mathcal{C}$. If $\Phi|_{\mathcal{C}^{\op}}$ is
  left $L$-adjointable we define the
  \emph{distributivity transformation} for $l$ and $f$ as the composite
    \begin{equation} \label{eq:dis-tr}
    g_{\oplus}\tilde{f}_{\otimes}\epsilon^{\ostar} \to
    g_{\oplus}\tilde{f}_{\otimes}\epsilon^{\ostar}l^{\ostar}l_{\oplus} \simeq
    g_{\oplus}\tilde{f}_{\otimes}\tilde{g}^{\ostar}l_{\oplus} \simeq
    g_{\oplus}g^{\ostar}f_{\otimes}l_{\oplus} \to f_{\otimes}l_{\oplus},
    \end{equation}
  where the
  first map uses the unit for the adjunction
  $l_\oplus \dashv l^\ostar$, the second equivalence uses the
  functoriality of $\Phi$ for compositions of spans, and the last map
  uses the counit of the adjunction $g_\oplus \dashv g^\ostar$.
\end{defn}

\begin{defn} \label{def:dis}
  We say the functor $\Phi$ is \emph{$L$-distributive} if
  $\Phi|_{\mathcal{C}^{\op}}$  is left
  $L$-adjointable, and the distributivity transformation
  \cref{eq:dis-tr} is an 
  equivalence for all $l$ in $\mathcal{C}_{L}$ and $f$ in $\mathcal{C}_{F}$.  If the
  context is clear, we simply call $\Phi$ \emph{distributive}. We write 
  \[
  \Map_{\Ldist}(\Span_{F}(\mathcal{C}), \mathcal{X}) \subset
  \Map_{\CatIT}(\Span_{F}(\mathcal{C}), \mathcal{X}),
  \]
  for the subspace spanned by the $L$-distributive functors.
\end{defn}

\begin{remark}\label{rmk:dist=adj}
  The distributivity transformation \cref{eq:dis-tr} is precisely the
  mate transformation for the commutative square
  \begin{equation}\label{eq:dis-mate}    
    \begin{tikzcd}
      \Phi(y) \arrow{r}{l^{\ostar}} \arrow{dd}{f_{\otimes}} & \Phi(x)
      \arrow{d}{\epsilon^{\ostar}} \\
      & \Phi(w \times_{z} y) \arrow{d}[swap]{\tilde{f}_{\otimes}} \\
      \Phi(z) \arrow{r}{g^{\ostar}} & \Phi(w).
    \end{tikzcd}
  \end{equation}
  We can therefore reformulate the condition for the functor $\Phi$ to
  be $L$-distributive as: for every distributivity diagram
  \cref{eq:dis-sq}, the square \cref{eq:dis-mate} is left
  adjointable.
\end{remark}

\begin{variant}\label{var:rightdist}
  Let $(\mathcal{C}, \mathcal{C}_{F}, \mathcal{C}_{L})$ be a bispan
  triple and $\mathcal{X}$ an \itcat{}. We say that a functor
  $\Phi \colon \Span_{F}(\mathcal{C})^{\op} \to \mathcal{X}$ is
  \emph{$L$-codistributive} if the opposite functor
  \[ \Phi^{\op} \colon \Span_{F}(\mathcal{C}) \to \mathcal{X}^{\op} \]
  is $L$-distributive. %
  Similarly, we say that a functor
  $\Phi \colon \Span_{F}(\mathcal{C}) \to \mathcal{X}$ is \emph{right
    $L$-distributive} if the functor
  \[\Span_{F}(\mathcal{C}) \simeq \Span_{F}(\mathcal{C})^{\twop} \xto{\Phi^{\twop}}  \to \mathcal{X}^{\twop} \]
  is $L$-distributive. Since left adjoints in $\mathcal{X}^{\twop}$
  correspond to right adjoints in $\mathcal{X}$, this amounts to: for
  every morphism $l \in \mathcal{C}_{L}$ the morphism $l^{\ostar}$ has
  a \emph{right} adjoint $l_{\ostar}$, and the restriction of $\Phi$
  to $\mathcal{C}^{\op} \to \mathcal{X}$ is right
  $L$-adjointable. Moreover, given a distributivity diagram as
  in~\eqref{eq:dis-sq} with $l$ in $\mathcal{C}_{L}$ and $f$ in
  $\mathcal{C}_{F}$, we have a right distributivity transformation
  \[
    f_{\otimes}l_{\ostar} \to
    g_{\ostar}g^{\ostar}f_{\otimes}l_{\ostar} \simeq
    g_{\ostar}\tilde{f}_{\otimes}\epsilon^{\ostar}l^{\ostar}l_{\ostar}
    \to g_{\ostar}\tilde{f}_{\otimes}\epsilon^{\ostar},\] which is
  required to be an equivalence.
\end{variant}

Just as for adjointability, there is a natural notion of an
$L$-distributive transformation:
\begin{defn}
  By \cref{adjinarrows}, an $L$-distributive functor
  $\Span_{F}(\mathcal{C}) \to \mathcal{X}^{\Delta^{1}}$
  corresponds to a natural transformation
  \[ \eta \colon \Span_{F}(\mathcal{C}) \times \Delta^{1} \to
    \mathcal{X} \] such that both $\eta_{0}$ and $\eta_{1}$ are
  $L$-distributive functors, and the mate square for the required left
  adjoints commutes. We call such a natural transformation an
  \emph{$L$-distributive transformation}. We let
  $\Fun_{\Ldist}(\Span_{F}(\mathcal{C}), \mathcal{X})$ denote the
  subcategory of $\Fun(\Span_{F}(\mathcal{C}),\mathcal{X})$ consisting
  of $L$-distributive functors and $L$-distributive transformations.
  From \cref{adjinarrows} we also know that a functor
  $\Span_{F}(\mathcal{C}) \to \mathcal{X}^{\mathsf{C}_{2}}$ is
  $L$-distributive \IFF{} its underlying functors and natural
  transformations to $\mathcal{X}$ are $L$-distributive, \ie{} \IFF{}
  the adjoint morphism
  \[ \mathsf{C}_{2} \to \FUN(\Span_{F}(\mathcal{C}), \mathcal{X}) \] factors
  through $\Fun_{\Ldist}(\Span_{F}(\mathcal{C}), \mathcal{X})$ on
  underlying \icats{}. We therefore write
  $\FUN_{\Ldist}(\Span_{F}(\mathcal{C}), \mathcal{X})$ for the locally
  full subcategory of $\FUN(\Span_{F}(\mathcal{C}), \mathcal{X})$
  whose underlying \icat{} is
  $\Fun_{\Ldist}(\Span_{F}(\mathcal{C}), \mathcal{X})$.
\end{defn}

\begin{propn}\label{propn:LdistinFUN}
  For any \itcat{} $\mathcal{Y}$ there is a natural equivalence
  \[ \Map_{\CatIT}(\mathcal{Y}, \FUN_{\Ldist}(\Span_{F}(\mathcal{C}),
    \mathcal{X})) \simeq \Map_{\Ldist}(\Span_{F}(\mathcal{C}),
    \FUN(\mathcal{Y}, \mathcal{X})). \]
\end{propn}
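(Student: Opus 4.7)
The strategy closely mirrors \cref{cor:improve}: combine the tensor-hom adjunction of functor \itcats{} with the pointwise characterization of adjoints and adjointable squares supplied by \cref{adjinarrows}. First I would invoke the standard natural equivalence
\[ \Map_{\CatIT}(\mathcal{Y}, \FUN(\Span_{F}(\mathcal{C}), \mathcal{X})) \simeq \Map_{\CatIT}(\Span_{F}(\mathcal{C}), \FUN(\mathcal{Y}, \mathcal{X})). \]
Since $\FUN_{\Ldist}(\Span_{F}(\mathcal{C}), \mathcal{X})$ is a locally full sub-\itcat{}, a functor from $\mathcal{Y}$ lands there iff (a) each object $y \in \mathcal{Y}$ is sent to an $L$-distributive functor $\Phi_{y} := \mathrm{ev}_{y} \circ \Phi \colon \Span_{F}(\mathcal{C}) \to \mathcal{X}$, and (b) each 1-morphism $y \to y'$ is sent to an $L$-distributive natural transformation $\Phi_{y} \Rightarrow \Phi_{y'}$, with no further restriction on 2-morphisms. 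The task is then to show this pointwise condition matches the $L$-distributivity of the adjoint functor $\Phi \colon \Span_{F}(\mathcal{C}) \to \FUN(\mathcal{Y}, \mathcal{X})$.

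The bulk of the matching is handled by \cref{adjinarrows}. For each $l \in \mathcal{C}_{L}$, asking that the morphism $\Phi(l^{\ostar})$ in $\FUN(\mathcal{Y}, \mathcal{X})$ admit a left adjoint is equivalent to the conjunction of (i) for every $y \in \mathcal{Y}$, the morphism $\Phi_{y}(l^{\ostar})$ in $\mathcal{X}$ admits a left adjoint; and (ii) for every 1-morphism $y \to y'$ in $\mathcal{Y}$, the corresponding naturality square in $\mathcal{X}$ is right adjointable. Condition (i) is exactly the left $L$-preadjointability required of each $\Phi_{y}$ in (a), and condition (ii) is exactly the mate condition packaged into the definition of an $L$-distributive transformation in (b). The remaining clauses of $L$-distributivity --- that the Beck--Chevalley transformations and the distributivity transformations of \cref{eq:dis-tr} are equivalences in $\FUN(\mathcal{Y}, \mathcal{X})$ --- are detected objectwise, so they translate into the analogous invertibility assertions for each $\Phi_{y}$, completing clause (a). Naturality in $\mathcal{Y}$ is inherited from the tensor-hom adjunction. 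The main risk here is combinatorial rather than conceptual: one must carefully verify that the mate-square data produced by \cref{adjinarrows} coincides on the nose with the mate condition appearing in the definition of an $L$-distributive transformation; once this bookkeeping is done, the proposition drops out by the Yoneda lemma applied over $\mathcal{Y}$.
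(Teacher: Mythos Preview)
Your proposal is correct and follows essentially the same approach as the paper: start from the tensor--hom equivalence $\Map(\mathcal{Y}, \FUN(\Span_{F}(\mathcal{C}),\mathcal{X})) \simeq \Map(\Span_{F}(\mathcal{C}), \FUN(\mathcal{Y},\mathcal{X}))$, then use \cref{adjinarrows} together with the objectwise detection of equivalences in $\FUN(\mathcal{Y},\mathcal{X})$ to match the pointwise $L$-distributivity conditions on both sides. The paper's proof is more terse but structurally identical.
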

\begin{proof}
  We claim that the two sides are identified under the natural
  equivalence
  \[ \Map_{\CatIT}(\mathcal{Y}, \FUN(\Span_{F}(\mathcal{C}),
    \mathcal{X})) \simeq \Map(\Span_{F}(\mathcal{C}),
    \FUN(\mathcal{Y}, \mathcal{X})). \] Indeed, the subspace
  $\Map_{\CatIT}(\mathcal{Y}, \FUN_{\Ldist}(\Span_{F}(\mathcal{C}),
  \mathcal{X}))$ of the left-hand side consists of those functors
  $\Phi \colon \mathcal{Y} \to \FUN(\Span_{F}(\mathcal{C}),
  \mathcal{X})$ such that for every object $y \in \mathcal{Y}$ the
  image $\Phi(y)$ is an $L$-distributive functor and for every
  morphism $f \colon y \to y'$ the image $\Phi(f)$ is an
  $L$-distributive natural transformation. Since the distributivity
  transformation is given pointwise by distributivity transformations
  in $\mathcal{X}$, and equivalences in
  $\FUN(\mathcal{Y}, \mathcal{X})$ are detected by evaluation at all
  objects of $\mathcal{Y}$, these conditions precisely correspond to
  $L$-distributivity for the adjoint functor
  $\Span_{F}(\mathcal{C}) \to \FUN(\mathcal{Y}, \mathcal{X})$ by
  \cref{adjinarrows}.
\end{proof}

\subsection{The $(\infty,2)$-category of
  bispans}\label{subsec:Bispan2cat}
Our goal in this subsection is to prove our main theorem:
\begin{thm}\label{thm:main2}
  Suppose $(\mathcal{C}, \mathcal{C}_{F}, \mathcal{C}_{L})$ is a
  bispan triple. Then there exists an \itcat{}
  $\BISPAN_{F,L}(\mathcal{C})$ equipped with an $L$-distributive functor
  \[j \colon \Span_{F}(\mathcal{C}) \to \BISPAN_{F,L}(\mathcal{C})\]
  such that
  \begin{enumerate}[(i)]
  \item\label{it:bispanprop} Composition with $j$ gives an equivalence
    \[ \FUN(\BISPAN_{F,L}(\mathcal{C}), \mathcal{X}) \isoto
      \FUN_{\Ldist}(\Span_{F}(\mathcal{C}), \mathcal{X})\]
    for any \itcat{} $\mathcal{X}$.
  \item\label{it:bispangpd} On underlying \igpds{} $j$ gives an equivalence
    \[ \mathcal{C}^{\simeq} \simeq \Span_{F}(\mathcal{C})^{\simeq}
      \isoto \BISPAN_{F,L}(\mathcal{C})^{\simeq}.\]    
  \item\label{it:bispanmor} Morphisms in $\BISPAN_{F,L}(\mathcal{C})$ are bispans
    \[   \begin{tikzcd}
        {} & X \arrow{r}{f} \arrow{dl}[swap]{p} & Y \arrow{dr}{l} \\
        I & & & J
      \end{tikzcd}
    \]
    with $f$ in $\mathcal{C}_{F}$ and $l$ in $\mathcal{C}_{L}$, with
    composition given by \cref{eq:bispancomp}.
  \item\label{it:bispan2mor} 2-morphisms in $\BISPAN_{F,L}(\mathcal{C})$ are given by
    diagrams of the form \cref{eq:bispanmor}.
  \end{enumerate}
\end{thm}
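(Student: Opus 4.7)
The plan is to imitate the proof of the universal property for spans from \S\ref{sec:infty-2-category}, now in the $L$-distributive setting. Following the referee's suggestion, the universal property of spans (\cref{thm:SPANdesc}) will play a central role as a reduction device.

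\emph{Step 1 (Existence via corepresentability).} I would first establish existence of $\BISPAN_{F,L}(\mathcal{C})$ by showing that the copresheaf $\mathcal{X} \mapsto \Map_{\Ldist}(\Span_{F}(\mathcal{C}), \mathcal{X})$ on $\CatIT$ is accessible and preserves limits, hence corepresentable. This parallels \cref{adjrepble}: starting from $\Map_{\CatIT}(\Span_{F}(\mathcal{C}),-)$, one cuts out the $L$-distributive functors by the conditions that (a) mate squares for pullback squares with a leg in $\mathcal{C}_{L}$ be invertible, and (b) the distributivity mate square \eqref{eq:dis-mate} be left adjointable for every distributivity diagram. Using \cref{adjinarrows}, both conditions can be expressed as pullbacks involving $\Map_{\CatIT}(\ADJ,-)$, $\Map_{\CatIT}(\Delta^{1},-)$, and $\Map_{\CatIT}(\mathsf{C}_{2},-)$ indexed over small sets, and hence are accessible and limit-preserving. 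By presentability of $\CatIT$ this gives the corepresenting $\BISPAN_{F,L}(\mathcal{C})$ together with a universal $L$-distributive $j$. Upgrading this to an equivalence at the $(\infty,2)$-categorical level of functor \itcats{} via \cref{propn:LdistinFUN} establishes \eqref{it:bispanprop}, paralleling \cref{cor:improve}, while \eqref{it:bispangpd} follows from the essential-surjectivity argument of \cref{lem:ess}.

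\emph{Step 2 (Mapping \icats{} and composition).} For \eqref{it:bispanmor} and \eqref{it:bispan2mor}, I would apply the $(\infty,2)$-categorical Yoneda lemma (\cref{yoneda}) to $\BISPAN_{F,L}(\mathcal{C})$ and transport the representable copresheaf through the universal property to an $L$-distributive functor on $\Span_{F}(\mathcal{C})$. The next step is to construct, for each object $c \in \mathcal{C}$, an explicit candidate $\infty$-category of bispans with target $c$ --- the bispan analogue of $\mathcal{B}_{c}$ from \S\ref{sec:infty-2-category} --- realized as a fibration over $\mathcal{C}$ obtained by iterating the free fibration construction of \cref{freeFcart}: its fibre over $c' \in \mathcal{C}$ would be the $\infty$-groupoid of bispans $c' \leftarrow e \to b \to c$, with cocartesian pushforwards implementing composition with $[-]_{B}$ morphisms, cartesian pullbacks over $\mathcal{C}_{F}$-maps implementing base change of the $\Span_{F}$-part, and cartesian pullbacks over $\mathcal{C}_{L}$-maps implementing the distributivity-diagram base change of the $b \to c$ leg. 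Comparing this candidate with the universal representable via mutually inverse comparison maps, as in \cref{alphaccart,cor:bc}, yields the description of morphisms as bispans and of 2-morphisms as diagrams of the form \eqref{eq:bispanmor}, and composition can then be read off from the cocartesian-morphism structure, recovering the bispan composition formula \eqref{eq:bispancomp}.

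\emph{Main obstacle.} The hardest step is constructing the candidate bispan fibration and verifying it has the correct $L$-distributive structure (the bispan analogue of \cref{Bccoadj}). The Beck--Chevalley compatibility needed for the interplay of the $F$-leg and $L$-leg of a bispan is the main technical challenge, and the distributivity axiom of the triple $(\mathcal{C}, \mathcal{C}_{F}, \mathcal{C}_{L})$ is essential here: it ensures the requisite mate squares are invertible, and thus that the composite of a bispan with a pullback along a $\mathcal{C}_{L}$-morphism can be rewritten in the form \eqref{eq:bispancomp}.
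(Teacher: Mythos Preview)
Your plan is reasonable and would likely succeed, but it misses the paper's central reduction and consequently does far more work than necessary. You say the universal property of spans will be a ``reduction device,'' but then you only use it as a \emph{template}: you propose to redo the entire corepresentability argument and the entire mapping-\icat{} analysis in the bispan setting. The paper instead shows that bispans literally \emph{are} a special case of spans, so nothing needs to be redone.

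The key step you are missing is \cref{thm:distisadj}: one computes pullbacks in $\Span_{F,\all}(\mathcal{C}) := \Span_{F}(\mathcal{C})^{\op}$ directly (\cref{propn:ffpb,propn:bfpb}) and discovers that $(\Span_{F}(\mathcal{C})^{\op}, \mathcal{C}_{L})$ is a span pair \emph{if and only if} $(\mathcal{C},\mathcal{C}_{F},\mathcal{C}_{L})$ is a bispan triple, and that a functor $\Span_{F}(\mathcal{C}) \to \mathcal{X}$ is $L$-distributive \emph{if and only if} it is left $L$-adjointable in the sense already treated in \S\ref{sec:infty-2-category}. The distributivity mate square \eqref{eq:dis-mate} is precisely the adjointability square for the pullback of $[l]_{F}$ along $[f]_{B}$ in $\Span_{F,\all}(\mathcal{C})$, and that pullback is exactly a distributivity diagram (\cref{propn:bfpb}). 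This is Street's observation that a bispan is a span-of-spans.

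Given this, the paper simply \emph{defines}
\[
  \BISPAN_{F,L}(\mathcal{C}) := \SPAN_{L}(\Span_{F}(\mathcal{C})^{\op}),
\]
and part \ref{it:bispanprop} is then immediate from \cref{cor:improve}, while \ref{it:bispangpd}--\ref{it:bispan2mor} follow by unpacking what morphisms and 2-morphisms look like in $\SPAN_{L}(\Span_{F,\all}(\mathcal{C}))$ using the description from \cref{thm:SPANdesc} together with the explicit pullbacks computed in \cref{propn:ffpb,propn:bfpb}. No new candidate fibration, no new Yoneda argument, and no bispan analogue of \cref{Bccoadj} is required: all of that work was already done once for $\SPAN$.

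Your ``main obstacle'' --- constructing the candidate bispan fibration and verifying its distributive structure --- is thus entirely avoided. What your approach would buy is a self-contained argument that does not pass through the span-of-spans identification, but at the cost of substantially more bookkeeping; the paper's route is both shorter and more conceptual.
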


In order to prove this we will reinterpret the notion of
distributivity as a special case of left adjointability, as follows:
\begin{thm}\label{thm:distisadj}
  Suppose $(\mathcal{C}, \mathcal{C}_{F})$ is a span pair.
  \begin{enumerate}[(1)]
  \item\label{bispantripleispair} $(\mathcal{C}, \mathcal{C}_{F},\mathcal{C}_{L})$ is a bispan
    triple \IFF{} $(\Span_{F}(\mathcal{C})^{\op},\mathcal{C}_{L})$ is
    a span pair.
  \item\label{distftrisadj} In case \ref{bispantripleispair} holds, a functor
    $\Phi \colon \Span_{F}(\mathcal{C}) \to \mathcal{X}$ is
    $L$-distributive \IFF{} it is left $L$-adjointable.
  \end{enumerate}
\end{thm}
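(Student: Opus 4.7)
The plan is to leverage the decomposition of arbitrary morphisms in $\SPAN_F(\mathcal{C})^{\op}$. Denote by $\iota \colon \mathcal{C} \to \SPAN_F(\mathcal{C})^{\op}$ the 1-opposite of the inclusion $i \colon \mathcal{C}^{\op} \to \SPAN_F(\mathcal{C})$ from \cref{thm:SPANdesc}. For $v$ in $\mathcal{C}_F$, the morphism $[v]_F^{\op}$ in $\SPAN_F(\mathcal{C})^{\op}$ is the right adjoint of $\iota(v)$, and any morphism $\mu$ in $\SPAN_F(\mathcal{C})^{\op}$ decomposes as $\mu \simeq \iota(u) \circ [v]_F^{\op}$ with $u$ in $\mathcal{C}$ and $v$ in $\mathcal{C}_F$. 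Pullbacks of $\iota(l)$ for $l$ in $\mathcal{C}_L$ therefore reduce to two basic cases: pullback along $\iota(u)$, and pullback along $[v]_F^{\op}$.

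For part (1), I would first prove the forward direction. Assuming $(\mathcal{C},\mathcal{C}_F,\mathcal{C}_L)$ is a bispan triple, I construct the pullback of $\bar l := \iota(l)$ along $\mu \simeq \iota(u) \circ [v]_F^{\op}$ in two steps. Pulling back $\bar l$ along $\iota(u)$ is realized by the image under $\iota$ of the pullback of $l$ along $u$ in $\mathcal{C}$ (which exists since $(\mathcal{C}, \mathcal{C}_L)$ is a span pair); call the resulting map in $\mathcal{C}_L$ by $l'$. Pulling back $\iota(l')$ along $[v]_F^{\op}$ is realized by the distributivity diagram for $l'$ and $v$ supplied by condition (c), which yields a pullback whose $\mathcal{C}_L$-leg is $g$ from~\eqref{eq:dis-sq}. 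The content lies in verifying that each square is actually cartesian in the underlying \icat{} of $\SPAN_F(\mathcal{C})^{\op}$; for the distributivity step this is checked by unwinding via the Yoneda lemma, using the explicit description of mapping \icats{} and composition from \cref{thm:SPANdesc} together with the terminal property of distributivity diagrams recorded in \cref{rmk:distdiagterminal}. The reverse direction is obtained by running this argument backward: pullbacks of $\bar l$ along $\iota(u)$ supplied by the span-pair structure recover pullbacks in $\mathcal{C}$ (using the fully faithful embedding $\iota$ and testing the universal property against objects and morphisms in the image of $\iota$), while pullbacks along $[v]_F^{\op}$ furnish distributivity diagrams via the universal property in \cref{rmk:distdiagterminal}.

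For part (2), the span-pair structure on $(\SPAN_F(\mathcal{C})^{\op}, \mathcal{C}_L)$ established in (1) allows us to unravel $L$-adjointability. The existence of left adjoints to each $\Phi(\bar l) = l^{\ostar}$ for $l$ in $\mathcal{C}_L$ is precisely $L$-preadjointability of the restriction $\Phi|_{\mathcal{C}^{\op}}$. The mate condition must be checked on all pullback squares in $\SPAN_F(\mathcal{C})^{\op}$ with a $\mathcal{C}_L$-leg, which by the above decomposition splits into the two types. Applied to pullback squares along $\iota(u)$, the mate transformation is (by \cref{propn:adjftrspandesc}) the Beck--Chevalley transformation in $\mathcal{X}$, whose invertibility is the base-change axiom for $L$-distributivity. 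Applied to pullback squares arising from distributivity diagrams, $\Phi$ produces exactly the square \eqref{eq:dis-mate}, and by \cref{rmk:dist=adj} its mate is the distributivity transformation \eqref{eq:dis-tr}; thus its invertibility is the third axiom of $L$-distributivity. Together, these exhaust the axioms of both notions, giving the desired equivalence.

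The main obstacle lies in part (1): verifying that the distributivity diagram in $\mathcal{C}$ presents the pullback along $[v]_F^{\op}$ in the underlying \icat{} of $\SPAN_F(\mathcal{C})^{\op}$. Since $[v]_F^{\op}$ is the right adjoint of $\iota(v)$ and does not itself lie in the image of $\iota$, this is where the $F$-multiplicative and $L$-additive structures genuinely interact, and the verification reduces via Yoneda to matching the universal property of distributivity diagrams from \cref{rmk:distdiagterminal} with the pullback computed in mapping \icats{} of $\SPAN_F(\mathcal{C})$.
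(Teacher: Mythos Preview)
Your approach matches the paper's: decompose morphisms in $\Span_F(\mathcal{C})^{\op}$ into the two basic types (your $\iota(u)$ and $[v]_F^{\op}$, the paper's $[u]_F$ and $[v]_B$ in $\Span_{F,\all}(\mathcal{C})$), show that pullbacks of $\mathcal{C}_L$-morphisms along each type correspond respectively to pullbacks in $\mathcal{C}$ and to distributivity diagrams, and then match the resulting two adjointability conditions with left $L$-adjointability of $\Phi|_{\mathcal{C}^{\op}}$ and the distributivity axiom via \cref{rmk:dist=adj}. The paper packages the two pullback computations as separate results, \cref{propn:ffpb} and \cref{propn:bfpb}, proved by directly analysing terminal objects in the \icat{} $\Sq_{\phi,\psi}(\Span_{F,\all}(\mathcal{C}))$ of squares with prescribed lower-right corner; your sketch of ``unwinding via Yoneda'' and \cref{rmk:distdiagterminal} is the same computation.

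One point to correct: in the reverse direction of (1) you invoke a ``fully faithful embedding $\iota$'', but $\iota$ is \emph{not} fully faithful whenever $\mathcal{C}_F$ contains a non-equivalence, so testing only against morphisms in its image does not suffice to recover the pullback in $\mathcal{C}$. The argument that actually works (and which the paper gives in \cref{propn:ffpb}) first shows that any terminal object in $\Sq_{[u]_F,[l]_F}(\Span_{F,\all}(\mathcal{C}))$ must have an invertible backward leg at the apex, and then identifies the remaining data with a pullback cone in $\mathcal{C}$; this yields the biconditional directly and is what both directions require.
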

Given this, the universal property we want is precisely that of the
\itcat{} $\SPAN_{L}(\Span_{F}(\mathcal{C})^{\op})$ --- this amounts to
interpreting a bispan as a ``span in spans'':
\[
  \begin{tikzcd}
        {} & X \arrow{r}{f} \arrow{dl}[swap]{p} & Y \arrow{dr}{l} \\
        I & & & J
      \end{tikzcd}
      = 
  \begin{tikzcd}
    & X \arrow{dr}{f} \arrow{dl}[swap]{p} & & Y \arrow{dr}{l}
    \arrow[equals]{dl} \\
    I & & Y \arrow[dashed]{ll} \arrow[dashed]{rr} & & J.
  \end{tikzcd}
\]
The observation that the composition law for bispans can be
interpreted as a pullback in spans is due to
Street~\cite{StreetPoly}. To prove this in our \icatl{} setting we
must first describe certain pullback squares in $\Span_{F}(\mathcal{C})^{\op}$:

\begin{notation}
Let us write $\Sq(\mathcal{C}) := \Fun([1] \times [1], \mathcal{C})$
for the \icat{} of squares in an \icat{} $\mathcal{C}$, and given two
morphisms $f,g$ in $\mathcal{C}$ with common codomain we write
$\Sq_{f,g}(\mathcal{C})$ for the fibre of $\Sq(\mathcal{C})$ at $(f,g)
\in \Fun(\Lambda^{2}_{2}, \mathcal{C})$ (where we view $[1] \times
[1]$ as $(\Lambda^{2}_{2})^{\triangleleft}$). Then a pullback of $f$
and $g$ is precisely a terminal object in
$\Sq_{f,g}(\mathcal{C})$. Note also that evaluation at the initial object
in $[1] \times [1]$ gives a right fibration $\Sq_{f,g}(\mathcal{C})
\to \mathcal{C}$.
\end{notation}

\begin{notation}
  We can identify $\Span_{F}(\mathcal{C})^{\op}$ as the \icat{} of
  spans in $\mathcal{C}$ where the \emph{backwards} map must lie in
  $\mathcal{C}_{F}$, with composition given by pullbacks in
  $\mathcal{C}$ as usual. To emphasize this, we will use the notation
  \[ \Span_{F,\all}(\mathcal{C}) :=
    \Span_{F}(\mathcal{C})^{\op}.\]
  For $f \colon x \to y$ in $\mathcal{C}_{F}$, we then write $[f]_{B}$ for the
  morphism $y \xfrom{f} x \xto{=} x$ in $\Span_{F,\all}(\mathcal{C})$, and
  for any $f$ in $\mathcal{C}$ we write $[f]_{F}$ for the span $x
  \xfrom{=} x \xto{f} y$.
\end{notation}

\begin{remark}\label{rmk:generalsqsofspans}
 A square in $\Span_{F,\all}(\mathcal{C})$
  can then be identified with a diagram of shape
  \[
    \begin{tikzcd}
      \bullet & \bullet \arrow{r} \arrow{l} & \bullet \\
      \bullet \arrow{u} \arrow{d} & \bullet \arrow{r} \arrow{l}
      \arrow{u} \arrow{d} \urpullback \dlpullback & \bullet \arrow{u} \arrow{d} \\
            \bullet & \bullet \arrow{r} \arrow{l} & \bullet,
    \end{tikzcd}
  \]
  where the two indicated squares are cartesian and the backwards maps
  all lie in $\mathcal{C}_{F}$. Composing this with a span (labelled
  in red) we get the
  diagram %
  \begin{equation}
    \label{eq:sqgencart}
    \begin{tikzcd}
      \bullet \\
       & \bullet \arrow[color=red]{ul} \arrow[color=red]{dr} & \bullet
       \arrow[phantom]{d}[very near start,description]{\llcorner}   \arrow{l} \arrow{dr}
       \arrow{ull} \arrow{drr} \\
       & \bullet \arrow{u} \arrow{dr} \arrow{uul} \arrow{ddr}
       \arrow[phantom]{r}[very near start,description]{\urcorner}
       & \bullet & \bullet \arrow{r} \arrow{l} & \bullet \\
      & & \bullet \arrow{u} \arrow{d} & \bullet \arrow{r} \arrow{l}
      \arrow{u} \arrow{d} \urpullback \dlpullback & \bullet \arrow{u} \arrow{d} \\
      & &      \bullet & \bullet \arrow{r} \arrow{l} & \bullet,
    \end{tikzcd}
  \end{equation}
  which exhibits a cartesian morphism in
  $\Sq_{\phi,\psi}(\Span_{F,\all}(\mathcal{C}))$ where $\phi$ and
  $\psi$ are the bottom and right sides of the square.
\end{remark}

\begin{remark}
  In particular, we can identify objects
  of $\Sq_{[f]_{F},[g]_{F}}(\Span_{F,\all}(\mathcal{C}))$ with diagrams of
  the form
    \[
    \begin{tikzcd}
      \bullet & \bullet \arrow{r} \arrow{l} & \bullet \\
      \bullet \arrow{u} \arrow{d} & \bullet \arrow{r} \arrow[equals]{l}
      \arrow[equals]{u} \arrow{d} \urpullback \dlpullback & \bullet \arrow[equals]{u} \arrow{d}{g} \\
            \bullet & \bullet \arrow{r}{f} \arrow[equals]{l} & \bullet,
    \end{tikzcd}
  \]
  which we can simplify to
  \begin{equation}
    \label{eq:twofsq}
    \begin{tikzcd}
      \bullet & \bullet \arrow{l} \arrow{r} \arrow{d} & \bullet \arrow{d}{g} \\
      & \bullet \arrow{r}{f} & \bullet,
    \end{tikzcd}
  \end{equation}
  where the top left arrow is required to lie in $\mathcal{C}_{F}$.
  Simplifying \cref{eq:sqgencart} similarly, we see that the cartesian
  morphism in $\Sq_{[f]_{F},[g]_{F}}(\Span_{F,\all}(\mathcal{C}))$  over a span 
  is given by a diagram of the form
  \begin{equation}
    \label{eq:twofcart}
    \begin{tikzcd}
      \bullet & \bullet \arrow[color=red]{l} \arrow[color=red]{d}  & \bullet \arrow{l}
      \arrow{d} \dlpullback \arrow[color=blue]{dr} \arrow[bend right=30,color=blue]{dd}
      \arrow[bend right=30,color=blue]{ll}\\
      & \bullet & \bullet  \arrow[crossing over,color=orange]{l} \arrow[color=orange]{r} \arrow[color=orange]{d} & \bullet \arrow{d}{g} \\
       & & \bullet \arrow{r}{f}&  \bullet,
    \end{tikzcd}
  \end{equation}
  where the blue arrows indicate the source object, the orange ones
  indicate the target object, and the red ones
  indicate the span we compose with.
\end{remark}

\begin{propn}\label{propn:ffpb}
  Let $(\mathcal{C},\mathcal{C}_{F})$ be a span pair. Given morphisms
  $f \colon a \to c$ and $g \colon b \to c$ in $\mathcal{C}$, the
  fibre product of $[f]_{F}$ and $[g]_{F}$ exists in
  $\Span_{F,\all}(\mathcal{C})$ \IFF{} the fibre product
  $d := a \times_{c}b$ of $f$ and $g$ exists in $\mathcal{C}$, in
  which case it is given by the diagram
      \[
    \begin{tikzcd}
      d & d \arrow{r} \arrow[equals]{l} & b \\
      d \arrow[equals]{u} \arrow{d} & d \arrow{r} \arrow[equals]{l}
      \arrow[equals]{u} \arrow{d} \drpullback \urpullback \dlpullback & b \arrow[equals]{u} \arrow{d}{g} \\
            a  & a \arrow{r}{f} \arrow[equals]{l} & c.
    \end{tikzcd}
  \]
\end{propn}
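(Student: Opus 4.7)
The plan is to verify the universal property of the fibre product via mapping spaces, exploiting the description of squares in $\Span_{F,\all}(\mathcal{C})$ supplied by \cref{rmk:generalsqsofspans}.

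First I would specialise \cref{rmk:generalsqsofspans} to the cospan $([f]_{F},[g]_{F})$ and observe that an object of $\Sq_{[f]_{F},[g]_{F}}(\Span_{F,\all}(\mathcal{C}))$ is exactly a diagram of shape~\eqref{eq:twofsq}: objects $P$ and $e$ of $\mathcal{C}$, morphisms $e \to a$, $e \to b$ and $e \to P$ (the latter in $\mathcal{C}_{F}$), together with the witness that the inner square is cartesian, i.e.\ that $e$ is a pullback of $f$ and $g$ in $\mathcal{C}$. In particular, the existence of any such square already forces $a \times_{c} b$ to exist in $\mathcal{C}$. This gives the ``only if'' direction: if the fibre product of $[f]_{F}$ and $[g]_{F}$ exists in $\Span_{F,\all}(\mathcal{C})$, then $\Sq_{[f]_{F},[g]_{F}}(\Span_{F,\all}(\mathcal{C}))$ admits a terminal object, is in particular non-empty, and therefore $a \times_{c} b$ exists in $\mathcal{C}$.

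For the converse, assume $d := a \times_{c} b$ exists in $\mathcal{C}$. I would check the universal property of the pullback directly on mapping spaces, using the description from \cref{thm:SPANdesc}. The fibred product
\[ \Map_{\Span_{F,\all}(\mathcal{C})}(P, a) \times_{\Map_{\Span_{F,\all}(\mathcal{C})}(P, c)} \Map_{\Span_{F,\all}(\mathcal{C})}(P, b) \]
classifies pairs of spans $P \xfrom{\alpha_{a}} X_{a} \xto{\beta_{a}} a$ and $P \xfrom{\alpha_{b}} X_{b} \xto{\beta_{b}} b$ (with $\alpha_{a},\alpha_{b}$ in $\mathcal{C}_{F}$) equipped with an equivalence of the composites $P \xfrom{\alpha_{a}} X_{a} \xto{f\beta_{a}} c$ and $P \xfrom{\alpha_{b}} X_{b} \xto{g\beta_{b}} c$; such an equivalence identifies $X_{a} \simeq X_{b} =: X$ compatibly with the backward legs $\alpha_{a} \simeq \alpha_{b} =: \alpha$ and produces a homotopy $f\beta_{a} \simeq g\beta_{b}$. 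The data therefore repackages as a tuple $(X, \alpha \colon X \to P \text{ in } \mathcal{C}_{F}, \beta_{a} \colon X \to a, \beta_{b} \colon X \to b, f\beta_{a} \simeq g\beta_{b})$, and by the universal property of $d$ in $\mathcal{C}$ the last three bits assemble into a single morphism $X \to d$. Hence the fibred product is naturally equivalent to the space of spans $P \xfrom{\alpha} X \to d$ with $\alpha$ in $\mathcal{C}_{F}$, which by \cref{thm:SPANdesc} is $\Map_{\Span_{F,\all}(\mathcal{C})}(P, d)$; moreover this equivalence is visibly realised by composition with the spans $d \xfrom{=} d \to a$ and $d \xfrom{=} d \to b$ obtained from the pullback projections. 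Unwinding the composition rule in $\Span_{F,\all}(\mathcal{C})$ via \cref{rmk:generalsqsofspans} recovers the explicit diagram displayed in the statement.

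The point I expect to need the most care with is the unpacking of the fibred product of mapping spaces: an equivalence of spans in $\Span_{F,\all}(\mathcal{C})$ is a coherent datum in an $\infty$-groupoid rather than a mere property, and the identifications $X_{a} \simeq X_{b}$ and $\alpha_{a} \simeq \alpha_{b}$ must be made functorially in $P$. Once this naturality is in place, the remainder is a formal application of the Yoneda lemma and the universal property of $d$ in $\mathcal{C}$.
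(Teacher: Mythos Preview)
Your ``only if'' direction contains a genuine error. You claim that an object of $\Sq_{[f]_{F},[g]_{F}}(\Span_{F,\all}(\mathcal{C}))$, written in the shape~\eqref{eq:twofsq}, comes with ``the witness that the inner square is cartesian, i.e.\ that $e$ is a pullback of $f$ and $g$ in $\mathcal{C}$.'' This is not so: the two cartesian markings in \cref{rmk:generalsqsofspans} record the pullbacks used to \emph{compose} spans, and when both legs $[f]_{F}$ and $[g]_{F}$ have identity backward maps those conditions become vacuous. What remains in~\eqref{eq:twofsq} is only a \emph{commutative} square $e \to a \to c \simeq e \to b \to c$ together with a morphism $e \to P$ in $\mathcal{C}_{F}$; the apex $e$ need not be $a \times_{c} b$. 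Consequently the mere non-emptiness of $\Sq_{[f]_{F},[g]_{F}}(\Span_{F,\all}(\mathcal{C}))$ does not force the fibre product in $\mathcal{C}$ to exist, and your argument collapses. The paper handles this direction by first proving that any terminal object of $\Sq_{[f]_{F},[g]_{F}}(\Span_{F,\all}(\mathcal{C}))$ must have its backward leg $\phi$ invertible (via a retraction argument using the object~\eqref{eq:twofeqob}), and only then identifies terminality among such objects with the usual universal property of $a \times_{c} b$ in $\mathcal{C}$.

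Your ``if'' direction, by contrast, is correct and takes a mildly different route from the paper: rather than analysing $\Sq_{[f]_{F},[g]_{F}}$ directly, you verify the pullback property on mapping spaces via Yoneda. This is perfectly valid once \cref{thm:SPANdesc} is available, and your unpacking of the fibre product of mapping spaces is accurate. The paper's approach instead stays inside $\Sq_{[f]_{F},[g]_{F}}$ and uses the explicit description~\eqref{eq:twofcart} of cartesian morphisms there; this has the advantage that the same framework immediately handles the harder case of \cref{propn:bfpb} (pullbacks of $[f]_{B}$ against $[g]_{F}$), where the terminal object is a distributivity diagram and no simple mapping-space formula is available.
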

\begin{proof}
  We first check that if the diagram
  \[
    \begin{tikzcd}
      y & x \arrow{l}[swap]{\phi} \arrow{r}{u} \arrow{d}{v} & b \arrow{d}{g} \\
      & a \arrow{r}{f} & c
    \end{tikzcd}
  \]
  is a terminal object in
  $\Sq_{[f]_{F},[g]_{F}}(\Span_{F,\all}(\mathcal{C}))$, then $\phi$ is
  necessarily an equivalence. To see this, we first observe that since
  the diagram
  \begin{equation}
    \label{eq:twofeqob}
    \begin{tikzcd}
      x & x \arrow[equals]{l} \arrow{r}{u} \arrow{d}{v} & b \arrow{d}{g} \\
      & a \arrow{r}{f} & c
    \end{tikzcd}    
  \end{equation}
  is also an object of
  $\Sq_{[f]_{F},[g]_{F}}(\Span_{F,\all}(\mathcal{C}))$, there exists a
  unique span $x \xfrom{\alpha} y' \xto{\beta} y$ with $\alpha$ in
  $\mathcal{C}_{F}$, such that composing the terminal object with the
  span gives this object, \ie{}
  \[
    \begin{tikzcd}
      x & y' \arrow{l}{\alpha} \arrow{d}{\beta}  & x \arrow{l}{\phi'}
      \arrow{d}{\beta'} \dlpullback \arrow{dr}{u} \arrow[bend
      right=30]{dd}[near end,swap]{v}
      \arrow[bend right=30,equals]{ll}\\
      & y & x  \arrow[crossing over]{l}[swap]{\phi} \arrow{r}{u} \arrow{d}{v} & b \arrow{d}{g} \\
       & & a \arrow{r}{f} &  c.
    \end{tikzcd}
  \]
  But then composing the terminal object with the span $y
  \xfrom{\phi\alpha} y' \xto{\beta} y$ produces the terminal object, so
  by uniqueness this span is equivalent to the identity of $y$. Thus
  we may take $y' = y$ and under this identification $\beta$ and $\phi\alpha$
  are identified with $\id_{y}$. Then the pullbacks $\beta'$ and
  $\phi'$ may also be identified with $\id_{y}$ and $\phi$,
  respectively, which tells us that we have $\alpha \phi\simeq
  \id_{x}$ and $\phi \alpha \simeq \id_{y}$. Thus $\phi$ is an
  equivalence, as we wanted to show.

  Now observe that composing \cref{eq:twofeqob}
  with a span $z \xfrom{\alpha} y \xto{\beta} x$, we obtain the object
    \[
    \begin{tikzcd}
      z & y \arrow{l}[swap]{\alpha} \arrow{r}{u\beta} \arrow{d}{v\beta} & b \arrow{d}{g} \\
      & a \arrow{r}{f} & c.
    \end{tikzcd}
  \]
  Thus an object
    \[
    \begin{tikzcd}
      z & y \arrow{l}[swap]{\alpha} \arrow{r}{\gamma} \arrow{d}{\delta} & b \arrow{d}{g} \\
      & a \arrow{r}{f} & c
    \end{tikzcd}
  \]
  of $\Sq_{[f]_{F},[g]_{F}}(\Span_{F,\all}(\mathcal{C}))$
  has a unique map to \cref{eq:twofeqob} \IFF{} there is a unique
  diagram
  \[
    \begin{tikzcd}
      y \arrow{dr}{\beta}\arrow[bend left=20]{drr}{\gamma} \arrow[bend
      right=20]{ddr}{\delta} \\
      & x \arrow{r}{u} \arrow{d}{v} & b \arrow{d} g \\
      & a \arrow{r}{f} & c
    \end{tikzcd}
    \]
  in $\mathcal{C}$. This is true for all objects of
  $\Sq_{[f]_{F},[g]_{F}}(\Span_{F,\all}(\mathcal{C}))$ \IFF{} the square
  in \cref{eq:twofeqob} is cartesian in $\mathcal{C}$, which is what
  we wanted to prove.
\end{proof}

\begin{remark}
  Returning to \cref{rmk:generalsqsofspans}, an object of
  $\Sq_{[f]_{B},[g]_{F}}(\Span_{F,\all}(\mathcal{C}))$ can be identified
  with a diagram
    \[
    \begin{tikzcd}
      \bullet & \bullet \arrow{r} \arrow{l} & \bullet \\
      \bullet \arrow{u} \arrow{d} & \bullet \arrow{r} \arrow{l}
      \arrow[equals]{u} \arrow{d} \urpullback \dlpullback & \bullet \arrow[equals]{u} \arrow{d}{g} \\
            \bullet & \bullet \arrow[equals]{r}\arrow{l}{f} & \bullet,
    \end{tikzcd}
  \]
  which we can simplify to
  \begin{equation}
    \label{eq:bfsq}
    \begin{tikzcd}
      {} & \bullet \arrow{r} \arrow{dl} \arrow{dd}
      \arrow[phantom]{ddr}[very near start,description]{\lrcorner}& \bullet
      \arrow{dd} \arrow{r} & \bullet \\
      \bullet \arrow{dr}{g} \\
       & \bullet \arrow{r}{f} & \bullet.
    \end{tikzcd}
  \end{equation}
  Composing this with a span gives, by simplifying
  \cref{eq:sqgencart}, a diagram of the form
  \begin{equation}
    \label{eq:bfsqcart}
    \begin{tikzcd}
      {} & \bullet \arrow[color=blue]{r} \arrow[bend
      right=20,color=blue]{ddl} \arrow{d} \drpullback & \bullet
      \arrow{r} \arrow[bend left=20,color=blue]{ddd} \arrow[bend left=25,color=blue]{rr}
      \arrow{d} \drpullback& \bullet \arrow[color=red]{r} \arrow[color=red]{d} & \bullet \\
      {} & \bullet \arrow[color=orange]{r} \arrow[color=orange]{dl} \arrow{dd}
      \arrow[phantom]{ddr}[very near start,description]{\lrcorner}& \bullet
      \arrow[color=orange]{dd} \arrow[crossing over,color=orange]{r} & \bullet \\
      \bullet \arrow{dr}{g} \\
       & \bullet \arrow{r}{f} & \bullet,
    \end{tikzcd}
  \end{equation}
  with the blue arrows indicating the source object, the orange ones
  the target object, and the red ones
  the span we compose with.
\end{remark}

\begin{propn}\label{propn:bfpb}
  Let $(\mathcal{C},\mathcal{C}_{F})$ be a span pair. Given morphisms
  $g \colon a \to b$ in $\mathcal{C}$ and $f \colon b \to c$ in $\mathcal{C}_{F}$, the
  fibre product of $[f]_{B}$ and $[g]_{F}$ exists in
  $\Span_{F,\all}(\mathcal{C})$ \IFF{} there exists a distributivity
  diagram
  \[
    \begin{tikzcd}
      {} & d \arrow{r}{f'} \arrow{dl}{\epsilon} \arrow{dd}{h'}
      \arrow[phantom]{ddr}[very near start,description]{\lrcorner} &
      e \arrow{dd}{h} \\
      a \arrow{dr}{g} \\
      & b \arrow{r}{f} & c
    \end{tikzcd}
  \]
  in $\mathcal{C}$, in which case it is given by the diagram
  \[
    \begin{tikzcd}
      e & d \arrow{r}{\epsilon} \arrow{l}{f'} & a \\
      e \arrow[equals]{u} \arrow{d}{h} & d \arrow{r}{\epsilon} \arrow{l}{f'}
      \arrow[equals]{u} \arrow{d}{h'} \urpullback \dlpullback & a \arrow[equals]{u} \arrow{d}{g} \\
            c & b \arrow[equals]{r}\arrow{l}{f} & b,
    \end{tikzcd}
  \]
\end{propn}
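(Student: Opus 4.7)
The plan is to proceed in exact parallel with \cref{propn:ffpb}. Using the analysis of \cref{rmk:generalsqsofspans}, an object of $\Sq_{[f]_{B},[g]_{F}}(\Span_{F,\all}(\mathcal{C}))$ collapses to the shape of \cref{eq:bfsq}, namely: an object $e \in \mathcal{C}$ with maps $h \colon e \to c$ and $\psi \colon e \to Z$ (with $\psi$ in $\mathcal{C}_{F}$), a cartesian square identifying $d \simeq b \times_{c} e$ with projections $f' \colon d \to e$ and $h' \colon d \to b$, and a morphism $\epsilon \colon d \to a$ satisfying $g\epsilon \simeq h'$.

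The first real step is to show that at a terminal object $\Theta$ the map $\psi$ must be an equivalence. Imitating \cref{propn:ffpb}, I would consider the normalized companion $\Theta'$ obtained from $\Theta$ by replacing $Z$ with $e$ and $\psi$ with $\id_{e}$ while keeping $(e, h, \epsilon)$ fixed; this $\Theta'$ still lies in $\Sq_{[f]_{B},[g]_{F}}$. Terminality supplies a unique morphism $\Theta' \to \Theta$, whose component at the top-left corner must be a span $e \xfrom{\alpha} y' \xto{\beta} Z$ with $\alpha \in \mathcal{C}_{F}$. Composing $\Theta$ with the self-span $Z \xfrom{\psi\alpha} y' \xto{\beta} Z$ recovers $\Theta$ exactly as in the proof of \cref{propn:ffpb}, so uniqueness of the identity self-morphism forces this composite to be equivalent to $\id_{Z}$, giving $\psi\alpha \simeq \id_{Z}$ and $\beta\alpha \simeq \id_{e}$. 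Hence $\psi$ is invertible and I may assume $Z = e$ and $\psi = \id_{e}$.

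In the normalized setting the remaining data $(e, h, \epsilon)$ is exactly an object of the pullback \icat{}
\[ \mathcal{C}_{/c} \times_{\mathcal{C}_{/b}} \mathcal{C}_{/a}, \]
where the structure functors are $f^{*} \colon \mathcal{C}_{/c} \to \mathcal{C}_{/b}$ (pullback along $f$, which exists because $f \in \mathcal{C}_{F}$) and $g_{!} \colon \mathcal{C}_{/a} \to \mathcal{C}_{/b}$ (postcomposition with $g$). An analogous unpacking identifies morphisms between normalized objects of $\Sq_{[f]_{B},[g]_{F}}$ with morphisms in this pullback \icat{}, and by \cref{rmk:distdiagterminal} a terminal object there is exactly a distributivity diagram for $g$ and $f$. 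Reading the corresponding object of $\Sq_{[f]_{B},[g]_{F}}$ back through \cref{eq:bfsq} yields the diagram displayed in the statement, proving both directions of the equivalence.

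The main obstacle will be carrying out the normalization step rigorously: one must understand precisely how a morphism between two objects of $\Sq_{[f]_{B},[g]_{F}}$ with different top-left corners is encoded in the \icat{} $\Span_{F,\all}(\mathcal{C})$, including the compatibility it must satisfy with the pullback in the middle of the square and with the fixed cospan $([f]_{B}, [g]_{F})$. This is the same bookkeeping that powers \cref{propn:ffpb}; once imported, the identification with the slice \icat{} and the appeal to \cref{rmk:distdiagterminal} are routine.
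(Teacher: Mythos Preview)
Your approach is essentially identical to the paper's: first reduce to the normalized case $\psi = \id$ by the same argument as in \cref{propn:ffpb}, then identify normalized objects with the fibre product $\mathcal{C}_{/c} \times_{\mathcal{C}_{/b}} \mathcal{C}_{/a}$ and invoke \cref{rmk:distdiagterminal}. The paper does the second step by directly computing the cartesian morphism over a span via \cref{eq:bfsqcart} rather than naming the fibre-product \icat{}, but this is the same content. One small slip: in your normalization sketch the equation ``$\beta\alpha \simeq \id_{e}$'' does not type-check (both $\alpha$ and $\beta$ have source $y'$); what you actually extract, as in \cref{propn:ffpb}, is that the self-span is equivalent to the identity, hence $y' \simeq Z$ with $\psi\alpha$ and $\beta$ both identified with $\id_{Z}$, and then the pullback description of the cartesian morphism gives $\alpha\psi \simeq \id_{e}$.
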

\begin{proof}
  Suppose
  \[
    \begin{tikzcd}
      {} & x \arrow{r} \arrow{dl} \arrow{dd}
      \arrow[phantom]{ddr}[very near start,description]{\lrcorner}& y
      \arrow{dd} \arrow{r}{\phi} & z \\
      a \arrow{dr}{g} \\
       & b \arrow{r}{f} & c
    \end{tikzcd}
  \]
  is a terminal object in
  $\Sq_{[f]_{B},[g]_{F}}(\Span_{F,\all}(\mathcal{C}))$. By the same
  argument as in the proof of \cref{propn:ffpb} we see that $\phi$
  must be an equivalence. Moreover, if we compose an object of the form
  \begin{equation}\label{eq:bfsqeq}
    \begin{tikzcd}
      {} & x \arrow{r} \arrow{dl} \arrow{dd}
      \arrow[phantom]{ddr}[very near start,description]{\lrcorner}& y
      \arrow{dd} \arrow[equals]{r} & y \\
      a \arrow{dr}{g} \\
       & b \arrow{r}{f} & c
    \end{tikzcd}
  \end{equation}
  with a span $y \xfrom{\beta} z \xto{\gamma} w$ we see from
  \cref{eq:twofcart} that we get the the outer part of the diagram
  \[
    \begin{tikzcd}
      {} & z' \arrow{r} \drpullback \arrow{d} \arrow[bend
      right=20]{ddl} & z \arrow{d}{\beta} \arrow{r}{\gamma} &
      w
      \\
      {} & x \arrow{r} \arrow{dl} \arrow{dd}
      \arrow[phantom]{ddr}[very near start,description]{\lrcorner}& y
      \arrow{dd}  \\
      a \arrow{dr}{g} \\
      & b \arrow{r}{f} & c.
    \end{tikzcd}
  \]
  Thus from \cref{rmk:distdiagterminal} we see that every object  of
  $\Sq_{[f]_{B},[g]_{F}}(\Span_{F,\all}(\mathcal{C}))$ admits a unique
  morphism to an object \cref{eq:bfsqeq} \IFF{} this is given by a
  distributivity diagram in $\mathcal{C}$, as required.
\end{proof}

\begin{proof}[Proof of \cref{thm:distisadj}]
  Every morphism in $\Span_{F,\all}(\mathcal{C})$ is a composite of
  morphisms of the form $[f]_{B}$ (with $f$ in $\mathcal{C}_{F}$) and
  $[g]_{F}$. Thus $(\Span_{F,\all}(\mathcal{C}), \mathcal{C}_{L})$ is a
  span pair \IFF{} for all $l$ in $\mathcal{C}_{L}$ the pullbacks of
  $[l]_{F}$ along $[g]_{F}$ and $[f]_{B}$ exist for all $g$ in $\mathcal{C}$
  and $f$ in $\mathcal{C}_{F}$, and these pullbacks lie in
  $\mathcal{C}_{L}$. From \cref{propn:ffpb} and \cref{propn:bfpb} we
  see that these conditions are equivalent to
  $(\mathcal{C},\mathcal{C}_{F},\mathcal{C}_{L})$ being a bispan
  triple, which proves \ref{bispantripleispair}. To prove
  \ref{distftrisadj}, observe that since adjointable squares compose,
  the functor $\Phi$ is left $L$-adjointable \IFF{} we get left
  adjointable squares for both types of pullbacks along morphisms in
  $\mathcal{C}_{L}$ separately. From \cref{propn:ffpb} adjointability
  for the first type corresponds to $\Phi|_{\mathcal{C}^{\op}}$ being
  left $L$-adjointable, and adjointability for the second type
  then corresponds to $L$-distributivity by \cref{rmk:dist=adj}.
\end{proof}

\begin{defn}
  In light of \cref{thm:distisadj}, if
  $(\mathcal{C},\mathcal{C}_{F},\mathcal{C}_{L})$ is a bispan triple
  it makes sense to define
  \[ \BISPAN_{F,L}(\mathcal{C}) :=
    \SPAN_{L}(\Span_{F}(\mathcal{C})^{\op}).\]
\end{defn}

\begin{notation}
In keeping with our
conventions so far, we will denote the underlying \icat{} of $\BISPAN_{F,L}(\mathcal{C})$ by
\[
\Bispan_{F,L}(\mathcal{C}):= \BISPAN_{F,L}(\mathcal{C})^{(1)}.
\]
In examples we will often have $\mathcal{C}_{L} \simeq \mathcal{C}$,
in which case we abbreviate
$\BISPAN_{F}(\mathcal{C}) := \BISPAN_{F,L}(\mathcal{C})$. If we also
have $\mathcal{C}_{F} \simeq \mathcal{C}$, we write
$\BISPAN(\mathcal{C})$ for $\BISPAN_{F}(\mathcal{C})$. We also adopt
the same conventions for the \icat{} of bispans
$\Bispan_{F,L}(\mathcal{C})$.
\end{notation}

Applying \cref{cor:improve}, we get:
\begin{cor}
  Let $(\mathcal{C},\mathcal{C}_{F},\mathcal{C}_{L})$ be a bispan
  triple. The \icat{} $\BISPAN_{F,L}(\mathcal{C})$ satisfies
  \[ \FUN(\BISPAN_{F,L}(\mathcal{C}), \mathcal{X}) \simeq
    \FUN_{\Ldist}(\Span_{F}(\mathcal{C}), \mathcal{X})\]
  for any \itcat{} $\mathcal{X}$. \qed
\end{cor}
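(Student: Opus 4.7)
The plan is to combine Theorem~\ref{thm:distisadj} with the already-established $(\infty,2)$-categorical universal property of spans from Corollary~\ref{cor:improve}. By Theorem~\ref{thm:distisadj}\ref{bispantripleispair}, the hypothesis that $(\mathcal{C}, \mathcal{C}_{F}, \mathcal{C}_{L})$ is a bispan triple is equivalent to $(\Span_{F}(\mathcal{C})^{\op}, \mathcal{C}_{L})$ being a span pair, which is exactly the input needed to form $\SPAN_{L}(\Span_{F}(\mathcal{C})^{\op})$; this $(\infty,2)$-category is the definition of $\BISPAN_{F,L}(\mathcal{C})$. Applying Corollary~\ref{cor:improve} to this span pair (with ``$F$'' there playing the role of $L$, and ``$\mathcal{C}$'' replaced by $\Span_{F}(\mathcal{C})^{\op}$) then yields a natural equivalence of $(\infty,2)$-categories
\[ \FUN(\BISPAN_{F,L}(\mathcal{C}), \mathcal{X}) \simeq \FUN_{\Lladj}(\Span_{F}(\mathcal{C}), \mathcal{X}),\]
where on the right we have the locally full sub-$(\infty,2)$-category of left $L$-adjointable functors and left $L$-adjointable transformations.

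It remains to identify this with $\FUN_{\Ldist}(\Span_{F}(\mathcal{C}), \mathcal{X})$. On objects, this is exactly the content of Theorem~\ref{thm:distisadj}\ref{distftrisadj}. For 1-morphisms, I would use that (via Proposition~\ref{adjinarrows}, as in the definitions of the two notions) an $L$-distributive transformation, respectively a left $L$-adjointable transformation, is the same data as a functor $\Span_{F}(\mathcal{C}) \to \FUN(\Delta^{1}, \mathcal{X})$ which is $L$-distributive, respectively left $L$-adjointable; applying Theorem~\ref{thm:distisadj}\ref{distftrisadj} with $\FUN(\Delta^{1}, \mathcal{X})$ in place of $\mathcal{X}$ shows that these two classes coincide. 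The analogous argument with $\FUN(\mathsf{C}_{2}, \mathcal{X})$ in place of $\mathcal{X}$ handles 2-morphisms, so that the two locally full sub-$(\infty,2)$-categories of $\FUN(\Span_{F}(\mathcal{C}), \mathcal{X})$ agree.

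I do not expect any serious obstacle: once Theorem~\ref{thm:distisadj} and Corollary~\ref{cor:improve} are in hand, this corollary is a formal consequence. The only mild care needed is in propagating the ``$L$-distributive $=$ left $L$-adjointable'' identification from functors to natural transformations (and 2-morphisms), and this is handled by the standard trick of applying the functor-level statement in arrow and 2-cell $(\infty,2)$-categories.
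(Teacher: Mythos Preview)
Your proposal is correct and matches the paper's approach exactly: the paper simply writes ``Applying \cref{cor:improve}, we get:'' before stating the corollary with a \qed, leaving the identification of $\FUN_{\Lladj}$ with $\FUN_{\Ldist}$ implicit via \cref{thm:distisadj}. Your explicit unpacking of the morphism-level identification (by applying \cref{thm:distisadj}\ref{distftrisadj} with $\FUN(\Delta^{1},\mathcal{X})$ as the target) is precisely the argument the paper has in mind; note that since both sub-$(\infty,2)$-categories are \emph{locally full}, the agreement on objects and 1-morphisms already forces agreement on 2-morphisms, so the $\mathsf{C}_{2}$ step is not strictly needed.
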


This proves part \ref{it:bispanprop} of \cref{thm:main2}. We now prove
the remainder:
\begin{proof}[Proof of \cref{thm:main2}]
  Part \ref{it:bispangpd} is immediate from \cref{cor:ieqgpd}. A
  morphism in
  $\SPAN_{L}(\Span_{F,\all}(\mathcal{C}))$ is a ``span of spans''
  \[
    \begin{tikzcd}
      {} & X \arrow{dl}{p} \arrow{dr}{f} & & Y \arrow[equals]{dl}
      \arrow{dr}{l} \\
      I & & Y \arrow[dashed]{ll} \arrow[dashed]{rr} & & J,
    \end{tikzcd}
  \]
  with $f$ in $\mathcal{C}_{F}$ and $l$ in $\mathcal{C}_{L}$, which we
  can think of as a bispan by contracting the identity. To compose
  these we take pullbacks in $\Span_{F,\all}(\mathcal{C})$, which we
  can unpack to give the expected composition law for bispans using
  \cref{propn:ffpb} and \cref{propn:bfpb}. %
  Unpacking the definition of a 2-morphism in
  $\SPAN_{L}(\Span_{F,\all}(\mathcal{C}))$, we get a diagram
  \[
    \begin{tikzcd}
      {} & & \bullet & & \\
       & \bullet \arrow{ur} \arrow{dr} \arrow{dl} \arrow{dd} \arrow[phantom]{dddr}[very near
  start,description]{\lrcorner} &
       & \bullet \arrow[equals]{dl}  \arrow[equals]{ul} \arrow{dr}\arrow{dd} \arrow[phantom]{dddl}[very near
  start,description]{\llcorner} \\
       \bullet & & \bullet \arrow{uu}{\wr} \arrow{dd} & & \bullet \\
       & \bullet \arrow{ul} \arrow{dr} & & \bullet \arrow[equals]{dl} \arrow{ur} \\
        & & \bullet
    \end{tikzcd}
  \]
  where the upward-pointing map is necessarily an equivalence, as
  indicated. Contracting the invertible edges, we get a diagram of
  shape \cref{eq:bispanmor}, as required.
\end{proof}

The universal property also implies that our \itcats{} of bispans are
functorial for morphisms of bispan triples, in the following sense:
\begin{defn}
  A morphism of bispan triples $(\mathcal{C}, \mathcal{C}_{F},
  \mathcal{C}_{L}) \to (\mathcal{C}', \mathcal{C}'_{F'},
  \mathcal{C}'_{L'})$ is a functor $\phi \colon \mathcal{C} \to
  \mathcal{C}'$ such that $\phi(\mathcal{C}_{F}) \subseteq
  \mathcal{C}'_{F'}$, $\phi(\mathcal{C}_{L}) \subseteq
  \mathcal{C}'_{L'}$, and $\phi$ preserves pullbacks along
  $\mathcal{C}_{F}$ and $\mathcal{C}_{L}$ as well as distributivity
  diagrams. We define $\Trip$ to be the subcategory of
  $\Fun(\Lambda^{2}_{2}, \CatI)$ containing the bispan triples and the
  morphisms thereof.
\end{defn}

\begin{propn}
  There is a functor $\BISPAN \colon \Trip \to \CatIT$ that
  takes $(\mathcal{C}, \mathcal{C}_{F}, \mathcal{C}_{L})$ to $\BISPAN_{F,L}(\mathcal{C})$.
\end{propn}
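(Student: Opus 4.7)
The plan is to mimic the construction of $\SPAN \colon \Pair \to \CatIT$ in Remark~\ref{rmk:SPANftr}, using the universal property of $\BISPAN_{F,L}(\mathcal{C})$ provided by Theorem~\ref{thm:main2}. Given a morphism $\phi \colon (\mathcal{C}, \mathcal{C}_{F}, \mathcal{C}_{L}) \to (\mathcal{C}', \mathcal{C}'_{F'}, \mathcal{C}'_{L'})$ in $\Trip$, the underlying morphism of span pairs induces, via $\SPAN$, a functor $\Span_{F}(\phi)\colon \Span_{F}(\mathcal{C}) \to \Span_{F'}(\mathcal{C}')$. The first step is to show that for any \itcat{} $\mathcal{X}$, precomposition with $\Span_{F}(\phi)$ carries $L'$-distributive functors to $L$-distributive functors, and that this restriction is natural in both $\mathcal{X} \in \CatIT$ and $\phi \in \Trip$.

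To verify that the composite $\Psi \circ \Span_{F}(\phi)$ is $L$-distributive whenever $\Psi \colon \Span_{F'}(\mathcal{C}') \to \mathcal{X}$ is $L'$-distributive, I would check the three clauses of Definition~\ref{def:dis} in turn. Left $L$-preadjointability is immediate, since $\phi$ sends $\mathcal{C}_{L}$ into $\mathcal{C}'_{L'}$ and hence the image of any $l^{\ostar}$ with $l \in \mathcal{C}_{L}$ is $\phi(l)^{\ostar}$, which admits a left adjoint by assumption. The Beck--Chevalley clause holds because a morphism of bispan triples preserves pullbacks along $\mathcal{C}_{L}$, so cartesian squares in $\mathcal{C}$ map to cartesian squares in $\mathcal{C}'$ on which $\Psi$ is already left adjointable. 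Finally, the distributivity transformations are sent to invertible ones because $\phi$ is required by definition to carry distributivity diagrams in $\mathcal{C}$ to distributivity diagrams in $\mathcal{C}'$, so the distributivity condition transports directly from $\Psi$ to $\Psi \circ \Span_{F}(\phi)$.

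This data assembles into a functor $\Trip^{\op} \times \CatIT \to \mathcal{S}$, and therefore into a functor $\Trip^{\op} \to \Fun(\CatIT, \mathcal{S})$ sending $(\mathcal{C}, \mathcal{C}_{F}, \mathcal{C}_{L})$ to the copresheaf $\mathcal{X} \mapsto \Map_{\Ldist}(\Span_{F}(\mathcal{C}), \mathcal{X})$. By Theorem~\ref{thm:main2}\ref{it:bispanprop}, each of these copresheaves is corepresented by $\BISPAN_{F,L}(\mathcal{C})$. The full subcategory of $\Fun(\CatIT, \mathcal{S})$ spanned by corepresentable copresheaves is canonically equivalent to $\CatIT^{\op}$ via the Yoneda embedding, so the functor factors uniquely as $\Trip^{\op} \to \CatIT^{\op}$, equivalently as a functor $\BISPAN \colon \Trip \to \CatIT$ taking the asserted value on objects. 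The only substantive step is the first, and each of its three clauses follows directly from the definitions of $\Trip$ and of $L$-distributivity, so the principal work is in the bookkeeping rather than in overcoming any conceptual obstacle; the rest is a formal application of the Yoneda lemma as in Remark~\ref{rmk:SPANftr}.
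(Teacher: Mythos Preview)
Your proposal is correct and follows essentially the same approach as the paper's proof: both build the functor $\Trip^{\op}\times\CatIT\to\mathcal{S}$ sending $((\mathcal{C},\mathcal{C}_F,\mathcal{C}_L),\mathcal{X})$ to $\Map_{\Ldist}(\Span_F(\mathcal{C}),\mathcal{X})$, observe it lands in corepresentable copresheaves by Theorem~\ref{thm:main2}, and apply Yoneda. The only difference is that you spell out in more detail why precomposition with $\Span_F(\phi)$ preserves distributivity, whereas the paper simply asserts the restriction exists.
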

\begin{proof}
  Composition with a morphism of bispan triples $\phi \colon (\mathcal{C}, \mathcal{C}_{F},
  \mathcal{C}_{L}) \to (\mathcal{C}', \mathcal{C}'_{F'},
  \mathcal{C}'_{L'})$ restricts for any \itcat{} $\mathcal{X}$ to a morphism
  \[ \Map_{L'\ddist}(\Span_{F'}(\mathcal{C}'), \mathcal{X}) \to
    \Map_{\Ldist}(\Span_{F}(\mathcal{C}), \mathcal{X}), \]
  so we get a functor
  \[ \Map_{(\blank)\ddist}(\Span_{(\blank)}(\blank), \blank) \colon
    \Trip^{\op} \times \CatIT \to \mathcal{S}.\]
  By \cref{thm:main2} the associated functor
  $\Trip^{\op} \to \Fun(\CatIT, \mathcal{S})$ factors through the full
  subcategory of corepresentable copresheaves, and so by the Yoneda
  lemma arises from a functor $\BISPAN \colon \Trip \to \CatIT$, as
  required.  
\end{proof}

Applying \cref{propn:adjftrspandesc} to
$\SPAN_{L}(\Span_{F,\all}(\mathcal{C})$, we obtain the following description of the functor of
\itcats{} corresponding to a distributive functor:
\begin{cor}\label{propn:distftrbispandesc}
  For an $L$-distributive functor $\phi \colon \Span_{F}(\mathcal{C})
  \to \mathcal{X}$, the corresponding functor $\Phi \colon
  \BISPAN_{F,L}(\mathcal{C}) \to \mathcal{X}$ can be described as
  follows:
  \begin{enumerate}[(1)]
  \item On objects, $\Phi(c) \simeq \phi(c)$ for $c \in \mathcal{C}$.
  \item On morphisms, $\Phi$ takes a bispan
    \[ B = (x \xfrom{s} e \xto{f} b \xto{l} y) \]
    to the composite $l_{\oplus}f_{\otimes}s^{\ostar} \colon \phi(x)
    \to \phi(y)$, where $l_{\oplus}$ is the left adjoint to
    $l^{\ostar}$.
  \item On 2-morphisms, $\Phi$ takes the 2-morphism $\beta \colon B
    \to B'$ given by the commutative diagram
    \[
      \begin{tikzcd}
        {} & e \arrow{dl}[swap]{s} \arrow{r}{f} \arrow{dd}{g}\arrow[phantom]{ddr}[very near
        start,description]{\lrcorner} & b \arrow{dd}{h} \arrow{dr}{l} \\
        x & & & y \\
         & e' \arrow{ul}{s'} \arrow{r}[swap]{f'} & b' \arrow{ur}[swap]{l'}
      \end{tikzcd}
    \]
    to the composite
    \[ l_{\oplus}f_{\otimes}s^{\ostar} \simeq
      l_{\oplus}f_{\otimes}g^{\ostar}s'^{\ostar} \simeq
      l_{\oplus}h^{\ostar}f'_{\otimes}s'^{\ostar} \to
      l_{\oplus}h^{\ostar}l'^{\ostar}l'_{\oplus}f'_{\otimes}s'^{\ostar}
      \simeq  l_{\oplus}l^{\ostar}l'_{\oplus}f'_{\otimes}s'^{\ostar}
      \to l'_{\oplus}f'_{\otimes}s'^{\ostar},\]
      where the first noninvertible arrow is a unit and the second noninvertible arrow is a counit.
  \end{enumerate}
\end{cor}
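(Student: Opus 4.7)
The approach is a direct unpacking of \cref{propn:adjftrspandesc} through the definitional identification $\BISPAN_{F,L}(\mathcal{C}) = \SPAN_{L}(\Span_{F,\all}(\mathcal{C}))$. Under this identification, a bispan $B = (x \xfrom{s} e \xto{f} b \xto{l} y)$ presents the span $x \xfrom{\sigma_{B}} b \xto{[l]_{F}} y$ in $\Span_{F,\all}(\mathcal{C})$, where $\sigma_{B}$ is the morphism $b \to x$ given by the span $(b \xfrom{f} e \xto{s} x)$ (with $\mathcal{C}_{F}$-backward leg $f$), and $[l]_{F}$ is the image of $l \in \mathcal{C}_{L}$ in $\Span_{F,\all}(\mathcal{C})$. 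By \cref{thm:distisadj} an $L$-distributive $\phi$ is equivalently a left $L$-adjointable functor on $\Span_{F,\all}(\mathcal{C})^{\op} = \Span_{F}(\mathcal{C})$, so \cref{propn:adjftrspandesc} applies verbatim and it only remains to translate the output.

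Part (1) is immediate. For part (2), \cref{propn:adjftrspandesc}(2) gives $\Phi(B) \simeq [l]_{F,\oplus} \circ \sigma_{B}^{\ostar}$. Under the reversal $\Span_{F,\all}(\mathcal{C}) = \Span_{F}(\mathcal{C})^{\op}$, the morphism $[l]_{F}$ is sent to $[l]_{B}$, so $[l]_{F}^{\ostar} = l^{\ostar}$ with left adjoint $l_{\oplus}$ (existing by hypothesis); and $\sigma_{B}$ decomposes in $\Span_{F}(\mathcal{C})$ as $[f]_{F} \circ [s]_{B}$, giving $\sigma_{B}^{\ostar} \simeq f_{\otimes} s^{\ostar}$ by functoriality. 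Composing yields $l_{\oplus} f_{\otimes} s^{\ostar}$ as claimed.

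For part (3), I identify a 2-morphism $B \to B'$ of the form \cref{eq:bispanmor} with the morphism of spans in $\Span_{F,\all}(\mathcal{C})$ whose middle leg is $[h]_{F}$, where $h \colon b \to b'$ is the right vertical map of \cref{eq:bispanmor}. This identification is exactly what the proof of \cref{thm:main2} produces after contracting the invertible edge in its hexagonal picture. The required triangle commutativities $[l']_{F} \circ [h]_{F} = [l]_{F}$ and $\sigma_{B'} \circ [h]_{F} = \sigma_{B}$ then follow from $l = l'h$ and from the fact that the pullback of $h$ along $f'$ is $e$ (by the cartesian middle square) together with $s = s'g$. Plugging into \cref{propn:adjftrspandesc}(3) with $[h]_{F}^{\ostar} = h^{\ostar}$ produces the stated composite: the single ``compositional'' step $\sigma_{B}^{\ostar} \simeq h^{\ostar} \sigma_{B'}^{\ostar}$ of that proposition splits by functoriality of $\phi$ into the two displayed elementary equivalences $f_{\otimes} s^{\ostar} \simeq f_{\otimes} g^{\ostar} s'^{\ostar}$ (from $s = s' g$) and $f_{\otimes} g^{\ostar} \simeq h^{\ostar} f'_{\otimes}$ (Beck--Chevalley for the cartesian middle square, supplied by $L$-distributivity of $\phi$), while the unit step, the identity $h^{\ostar} l'^{\ostar} \simeq l^{\ostar}$ (from $l = l'h$), and the counit step carry over verbatim. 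The only piece of real content beyond the translation is the identification of the middle leg as $[h]_{F}$, already covered by the proof of \cref{thm:main2}; everything else is bookkeeping through the defined equivalences.
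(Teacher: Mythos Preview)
Your proof is correct and follows exactly the paper's approach, which simply states that the result follows by applying \cref{propn:adjftrspandesc} to $\SPAN_{L}(\Span_{F,\all}(\mathcal{C}))$; you have usefully spelled out the translation. One minor quibble: the equivalence $f_{\otimes}g^{\ostar} \simeq h^{\ostar}f'_{\otimes}$ is supplied by the functoriality of $\phi$ on $\Span_{F}(\mathcal{C})$ (span composition via the cartesian middle square), not by $L$-distributivity.
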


We end this section with some useful observations about distributivity
diagrams that follow easily from \cref{propn:bfpb}:
\begin{lemma}\label{lem:lcompdist}
  Let $(\mathcal{C}, \mathcal{C}_{F},\mathcal{C}_{L})$ be a bispan
  triple, and suppose given morphisms $l_{1} \colon x\to y$ and $l_{2}
  \colon y \to z$ in $L$ and $f \colon z \to w$ in $F$. Then we can
  make the following diagram:
  \
  \begin{equation}
    \label{eq:Lcompdistdiag}
    \begin{tikzcd}
      {} & {} & \bullet \arrow{dd}{g_{1}'} \arrow{dl}{\epsilon_{1}}
      \arrow{r}{f''} & \bullet \arrow{dd}{g_{1}} \\
      {} & \bullet \arrow{dl}{\epsilon_{2}'} \arrow{dr}{l_{1}'} \\
      x \arrow{dr}{l_{1}} & & \bullet \arrow{r}{f'} \arrow{dl}{\epsilon_{2}} \arrow{dd}{g_{2}'} &
      \bullet  \arrow{dd}{g_{2}} \\
      & y \arrow{dr}{l_{2}} \\
      & & z \arrow{r}{f} & w.
    \end{tikzcd}
  \end{equation}
  Here all three squares are cartesian and the two rightmost give
  distributivity diagrams (so $g_{2} = f_{*}l_{2}$, $g_{1} =
  f'_{*}l_{1}'$). Then the outer diagram is a distributivity diagram
  for $l_{2}l_{1}$ and $f$.
\end{lemma}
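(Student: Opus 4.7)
The plan is to recognize the claim as a statement about pasting of pullback squares via \cref{propn:bfpb}, which identifies distributivity diagrams for $l \in \mathcal{C}_{L}$ and $f \in \mathcal{C}_{F}$ with pullbacks of $[f]_{B}$ along $[l]_{F}$ in $\Span_{F,\all}(\mathcal{C})$. It thus suffices to exhibit the outer data of \cref{eq:Lcompdistdiag} as a pullback of $[f]_{B}$ along $[l_{2}l_{1}]_{F} \simeq [l_{2}]_{F} \circ [l_{1}]_{F}$, and by the pasting lemma for pullbacks this can be performed in stages.

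For the first stage, I would pull back $[f]_{B} \colon w \to z$ along $[l_{2}]_{F} \colon y \to z$; by \cref{propn:bfpb} applied to the rightmost distributivity diagram, this yields a pullback square in $\Span_{F,\all}(\mathcal{C})$ with new vertex $E$, one leg $[g_{2}]_{F} \colon E \to w$, and the other given by the span $E \xfrom{f'} D \xto{\epsilon_{2}} y$. For the second stage, I must pull this span back along $[l_{1}]_{F} \colon x \to y$, which I would split into two further pullbacks along its factorization $[\epsilon_{2}]_{F} \circ [f']_{B}$: first, pulling $[\epsilon_{2}]_{F}$ back along $[l_{1}]_{F}$ by \cref{propn:ffpb} and the cartesian square involving $C, x, D, y$, producing $C = x \times_{y} D$ with legs $[\epsilon_{2}']_{F}$ and $[l_{1}']_{F}$; then, pulling $[f']_{B}$ back along $[l_{1}']_{F}$ by \cref{propn:bfpb} applied to the middle distributivity diagram, producing $B$ with legs $[g_{1}]_{F} \colon B \to E$ and the span $B \xfrom{f''} A \xto{\epsilon_{1}} C$.

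Pasting all three pullback squares together, the outer pullback of $[f]_{B}$ and $[l_{2}l_{1}]_{F}$ has vertex $B$, with $B \to w$ given by $[g_{2}g_{1}]_{F}$ and $B \to x$ given by the span $B \xfrom{f''} A \xto{\epsilon_{2}'\epsilon_{1}} x$, the composite simplifying because $[\epsilon_{2}']_{F}$ has trivial backward leg. Unpacking this through \cref{propn:bfpb} in reverse, I read off exactly the outer diagram of \cref{eq:Lcompdistdiag}, together with the additional requirement that the outer square be cartesian in $\mathcal{C}$, i.e., that $A \simeq B \times_{w} z$. The latter follows from pasting the top cartesian square $A = B \times_{E} D$ with the rightmost cartesian square $D = E \times_{w} z$, and the induced map $A \to z$ is identified with $g_{2}'g_{1}'$ as desired.

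The main obstacle is merely the bookkeeping involved in tracking directions of spans and verifying that the iterated pullbacks in $\Span_{F,\all}(\mathcal{C})$, together with their composites, simplify into the claimed diagrammatic data in $\mathcal{C}$. Beyond this, no genuinely new argument is needed: all of the real work has been done in \cref{propn:ffpb} and \cref{propn:bfpb}.
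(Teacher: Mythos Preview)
Your proposal is correct and follows exactly the paper's approach: the paper's one-line proof invokes the pasting lemma for pullbacks in $\Span_{F,\all}(\mathcal{C})$ applied to $[l_{2}l_{1}]_{F} \simeq [l_{2}]_{F} \circ [l_{1}]_{F}$ and $[f]_{B}$, and you have simply unpacked this in detail using \cref{propn:ffpb} and \cref{propn:bfpb}. Your extra step of decomposing the intermediate span $E \xfrom{f'} D \xto{\epsilon_{2}} y$ as $[\epsilon_{2}]_{F} \circ [f']_{B}$ to identify the second-stage pullback is exactly what the paper's terse argument leaves implicit.
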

\begin{proof}
  This follows immediately from the pasting lemma for pullback squares
  applied to $[l_{2}l_{1}]_{F} \simeq [l_{2}]_{F}\circ [l_{1}]_{F}$ and $[f]_{B}$ in
    $\Span_{F,\all}(\mathcal{C})$.
  \end{proof}

\begin{lemma}
  Let $(\mathcal{C}, \mathcal{C}_{F},\mathcal{C}_{L})$ be a bispan
  triple, and suppose given morphisms $l \colon x\to y$ in $L$ and
  $f_{1} \colon y \to z, f_{2} \colon z \to w$ in $F$. Then we can
  make the following diagram:
  \begin{equation}
    \label{eq:Fcompdistdiag}
    \begin{tikzcd}
      {} & {} & \bullet \arrow{rr}{f_{1}''} \arrow{dl}[swap]{\epsilon_{2}'} \arrow{dddl}{g_{2}''}
      & & \bullet \arrow{r}{f_{2}'} \arrow{dl}[swap]{\epsilon_{2}} \arrow{dddl}{g'_{2}} & \bullet
      \arrow{ddd}{g_{2}} \\
      {} & \bullet \arrow[crossing over]{rr}{f_{1}'}  \arrow{dd}{g_{1}'}
      \arrow{dl}[swap]{\epsilon_{1}} & & \bullet \arrow{dd}{g_{1}} \\
      x \arrow{dr}{l} \\
       & y \arrow{rr}{f_{1}} & & z \arrow{rr}{f_{2}} & & w.
    \end{tikzcd}
  \end{equation}
  Here all three squares are cartesian and two give distributivity
  diagrams (exhibiting $g_{1} = f_{1,*}l$ and $g_{2} = f_{2,*}g_{1}$. 
 Then the outer diagram is a distributivity diagram
  for $l$ and $f_{2}f_{1}$. (Note that the composite square is indeed
  cartesian since the two left squares are cartesian and the middle
  triangle just exhibits $g_{2}'$ as a composite.)
\end{lemma}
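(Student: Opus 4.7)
The plan is to mimic the proof of \cref{lem:lcompdist}, but now using the decomposition of a composite in $F$ as a composite of backwards maps in $\Span_{F,\all}(\mathcal{C})$. Specifically, since $[\blank]_{B}$ is the image under $i \colon \mathcal{C}^{\op} \to \Span_{F,\all}(\mathcal{C})$, which is functorial, we have an equivalence $[f_{2}f_{1}]_{B} \simeq [f_{1}]_{B} \circ [f_{2}]_{B}$ in $\Span_{F,\all}(\mathcal{C})$. The strategy is then to compute the pullback of $[l]_{F}$ along $[f_{2}f_{1}]_{B}$ in two steps via the pasting lemma for pullbacks, and then translate the resulting cartesian square back through \cref{propn:bfpb}.

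Concretely, first pull $[l]_{F}$ back along $[f_{1}]_{B}$. By \cref{propn:bfpb}, such a pullback exists exactly because of the left distributivity diagram for $l$ and $f_{1}$, and the resulting vertical map is $[g_{1}]_{F}$, with the ``top'' span given by $b_{1} \xfrom{f_{1}'} e_{1} \xto{\epsilon_{1}} x$. Now pull $[g_{1}]_{F}$ back along $[f_{2}]_{B}$; by \cref{propn:bfpb} again this pullback exists because of the distributivity diagram for $g_{1}$ and $f_{2}$, with vertical map $[g_{2}]_{F}$ and top span $b_{2} \xfrom{f_{2}'} e_{2} \xto{\epsilon_{2}} b_{1}$. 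By the pasting lemma for pullbacks applied in $\Span_{F,\all}(\mathcal{C})$, the composed square gives a pullback of $[l]_{F}$ along $[f_{1}]_{B}\circ[f_{2}]_{B} \simeq [f_{2}f_{1}]_{B}$. Unpacking the composition of the two span morphisms in $\Span_{F,\all}(\mathcal{C})$ introduces exactly one additional pullback in $\mathcal{C}$, namely the pullback of $f_{1}' \colon e_{1} \to b_{1}$ along $\epsilon_{2} \colon e_{2} \to b_{1}$; this is precisely the top cartesian square of \cref{eq:Fcompdistdiag}, producing the apex $E$ together with the span $b_{2} \xfrom{f_{2}' f_{1}''} E \xto{\epsilon_{1}\epsilon_{2}'} x$.

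Finally, applying \cref{propn:bfpb} in the reverse direction to the composed pullback square of $[l]_{F}$ and $[f_{2}f_{1}]_{B}$ shows that this data must arise from a distributivity diagram for $l$ and $f_{2}f_{1}$, and a direct inspection identifies it with the outer diagram of \cref{eq:Fcompdistdiag}. The only real verification is that composition in $\Span_{F,\all}(\mathcal{C})$ genuinely recovers the third cartesian square drawn in \cref{eq:Fcompdistdiag} and that the resulting Weil-restriction-like outer object is $b_{2}$ with structure map $g_{2}$; I expect the main bookkeeping obstacle to be matching diagonals and composites (for instance verifying $g_{2}'' = f_{1} g_{1}' \epsilon_{2}' = g_{2}' f_{1}''$), but this is immediate from the commutativity already built into \cref{eq:Fcompdistdiag}.
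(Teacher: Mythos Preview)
Your proposal is correct and follows exactly the same approach as the paper: the paper's proof is the one-line statement that this is the pasting lemma for pullback squares applied to $[f_{2}f_{1}]_{B} \simeq [f_{1}]_{B}\circ [f_{2}]_{B}$ and $[l]_{F}$, and you have simply unpacked that one line in detail.
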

\begin{proof}
  This is the pasting lemma for pullback squares applied to
  $[f_{2}f_{1}]_{B} \simeq [f_{1}]_{B}\circ [f_{2}]_{B}$ and $[l]_{F}$.
\end{proof}

\begin{lemma}
  Let $(\mathcal{C}, \mathcal{C}_{F},\mathcal{C}_{L})$ be a bispan
  triple, and suppose we have morphisms $l \colon x \to y$ in
  $\mathcal{C}_{L}$, $f \colon y \to z$ in $\mathcal{C}_{F}$ and an arbitary
  morphism $\zeta \colon z' \to z$. Then we can form the following diagram
  \begin{equation}
    \label{eq:distpb}
    \begin{tikzcd}
      {} & w' \times_{z} y \arrow{ddl}{\epsilon'} \arrow{drr}[swap]{\omega
        \times_{\zeta} \eta} \arrow{rrr}{h'} \arrow{dddd}{f'^{*}g'} & & & w'
      \arrow{drr}{\omega} \arrow{dddd}{g'} \\
      & & & w \times_{z}y \arrow{ddl}{\epsilon} \arrow[crossing over]{rrr}{h} & & & w
      \arrow{dddd}{g} \\
      x' \arrow[crossing over]{drr}{\xi} \arrow{ddr}{l'}  \\
      & & x & & & \phantom{\bullet} \\
      & y' \arrow{rrr}[near start]{f'} \arrow{drr}{\eta} & & & z' \arrow{drr}{\zeta} \\
      & & & y \arrow{rrr}{f}  \arrow[leftarrow,crossing
      over]{uul}[near end]{l} \arrow[leftarrow,crossing over]{uuuu}{f^{*}g} & & & z
    \end{tikzcd}
  \end{equation}
  where the front face is a distributivity diagram for $l$ and $f$,
  and the rest of the diagram is obtained by pulling this back along
  $\zeta$. Then the back face in \cref{eq:distpb} is a distributivity
  diagram for $l'$ and $f'$.  
\end{lemma}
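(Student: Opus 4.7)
The plan is to apply Remark \ref{rmk:distdiagterminal}, which characterizes a distributivity diagram for $l$ and $f$ as an object $g \colon w \to z$ equipped with a map $\epsilon \colon f^{*}g \to l$ over $y$, such that for every $\phi \colon u \to z$ the map
\[ \Map_{/z}(\phi, g) \to \Map_{/y}(f^{*}\phi, l) \]
given by pulling back to $y$ and composing with $\epsilon$ is an equivalence. I will verify this universal property directly for the back face of \cref{eq:distpb}: namely that for every $\phi' \colon u \to z'$ the induced map $\Map_{/z'}(\phi', g') \to \Map_{/y'}(f'^{*}\phi', l')$ is an equivalence.

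The verification proceeds through a chain of canonical equivalences. First, since $w' = w \times_{z} z'$ we have $g' = \zeta^{*}g$, so maps $\phi' \to g'$ in $\mathcal{C}_{/z'}$ correspond to maps $\zeta \circ \phi' \to g$ in $\mathcal{C}_{/z}$. Applying the universal property of the front distributivity diagram to the object $\zeta \circ \phi'$ produces a further equivalence with $\Map_{/y}(f^{*}(\zeta \circ \phi'), l)$. Next, pullback pasting in the cartesian square defining $y' = y \times_{z} z'$ yields a canonical identification
\[ f^{*}(\zeta \circ \phi') \;\simeq\; u \times_{z} y \;\simeq\; u \times_{z'} y' \;=\; f'^{*}\phi', \]
under which the structure map to $y$ factors as $f'^{*}\phi' \to y' \xrightarrow{\eta} y$. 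Finally, since $l' = \eta^{*}l$ (the top-left face of \cref{eq:distpb} is cartesian by construction), maps $f'^{*}\phi' \to l$ in $\mathcal{C}_{/y}$ whose source maps to $y$ via $\eta$ are in canonical bijection with maps $f'^{*}\phi' \to l'$ in $\mathcal{C}_{/y'}$.

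Stringing these four bijections together gives the required universal property for the back face, so it is indeed a distributivity diagram for $l'$ and $f'$. The argument is entirely formal manipulation of universal properties of pullbacks and presents no serious obstacle. The only delicate bookkeeping — and the only place where a real check is needed in a full write-up — is confirming that the composite equivalence is precisely the map induced by composition with $\epsilon'$ on the back face. This follows from the naturality of all the pullback identifications, together with the fact that $\epsilon'$ is constructed as a base change of $\epsilon$ (so that the compatibility square for $\epsilon'$ is obtained by pulling back the compatibility square for $\epsilon$ along $\zeta$).
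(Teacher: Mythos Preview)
Your proof is correct but takes a different route from the paper. The paper's argument is a one-line application of the pasting lemma for pullback squares in $\Span_{F,\all}(\mathcal{C})$: by \cref{propn:bfpb} a distributivity diagram for $l$ and $f$ is precisely a pullback of $[l]_{F}$ along $[f]_{B}$ in the span \icat{}, and since $[f]_{B}\circ [\zeta]_{F} \simeq [\eta]_{F}\circ [f']_{B}$ (this equivalence being the cartesian square defining $y'$), the pullback of $[l]_{F}$ along this composite can be computed either by first pulling back along $[f]_{B}$ and then along $[\zeta]_{F}$---which gives the base change of the front face---or by first pulling back along $[f']_{B}$, which exhibits the result as a distributivity diagram for $l'$ and $f'$.

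Your argument instead unwinds the universal property \cref{eq:uni-prop-dis} directly, chaining the adjunction $\zeta_{!} \dashv \zeta^{*}$, the universal property of the front face, pullback pasting, and the adjunction $\eta_{!} \dashv \eta^{*}$. This is essentially the same manipulation that appears in the proof of \cref{lem:bispantripmatecond}, so it is in the spirit of the paper and entirely self-contained---it does not require having set up the span-category interpretation of distributivity diagrams. The paper's approach is slicker once \cref{propn:bfpb} is in hand and makes the three lemmas in this block (compositions in $L$, compositions in $F$, and base change) into uniform instances of a single principle; yours has the virtue of being readable without that machinery. Your closing remark about tracking that the composite equivalence agrees with composition by $\epsilon'$ is well-placed: this is the only point where one must actually check something beyond formal adjunction, and as you say it follows from naturality.
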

\begin{proof}
  This follows yet again from the pasting lemma for pullback squares,
  now applied to the pullback of $[l]_{F}$ along $[f]_{B}\circ
  [\zeta]_{F} \simeq [\eta]_{F} \circ [f']_{B}$.
\end{proof}

The following proposition can be interpreted as saying that
distributivity transformations are compatible with base change:
\begin{propn}\label{propn:pbdisttr}
  Let $(\mathcal{C}, \mathcal{C}_{F},\mathcal{C}_{L})$ be a bispan
  triple, and suppose we have the diagram \cref{eq:distpb}. Then
  the
  distributivity transformations for $(l,f)$ and $(l',f')$ are related
  by a commutative square
  \begin{equation}
    \label{eq:pbdisttr}
    \begin{tikzcd}
            g'_{\oplus}h'_{\otimes}\epsilon'^{\ostar}\xi^{\ostar}
            \arrow{r} \arrow{d}{\sim}
      & f'_{\otimes}l'_{\oplus}\xi^{\ostar} \arrow{d}{\sim} \\
      g'_{\oplus}\omega^{\ostar}h_{\otimes}\epsilon^{\ostar}
      \arrow{d}{\sim} &
      f'_{\otimes}\eta^{\ostar}l_{\oplus} \arrow{d}{\sim} \\
      \zeta^{\ostar}g_{\oplus}h_{\otimes}\epsilon^{\ostar} \arrow{r}
       & \zeta^{\ostar}f_{\otimes}l_{\oplus}
    \end{tikzcd}
  \end{equation}
\end{propn}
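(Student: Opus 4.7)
The strategy is to reinterpret every morphism appearing in \cref{eq:pbdisttr} as a mate transformation and then invoke the compatibility of mates with pasting of squares.

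By \cref{rmk:dist=adj}, the distributivity transformation for $(l,f)$ is the mate of the commutative square \cref{eq:dis-mate}---which itself factors as the tautology $\epsilon^{\ostar}l^{\ostar} \simeq \tilde{g}^{\ostar}$ pasted with the Beck--Chevalley equivalence for the cartesian square with sides $\tilde{f}, \tilde{g}, f, g$---taken with respect to the adjunctions $l_{\oplus} \dashv l^{\ostar}$ and $g_{\oplus} \dashv g^{\ostar}$. The same description applies to the distributivity transformation for $(l',f')$, built from the back face of \cref{eq:distpb}. Meanwhile, the vertical isomorphisms in \cref{eq:pbdisttr} are themselves Beck--Chevalley mate transformations associated to the four lateral cartesian faces of \cref{eq:distpb}: the equivalences $\eta^{\ostar}l_{\oplus} \simeq l'_{\oplus}\xi^{\ostar}$ and $\zeta^{\ostar}g_{\oplus} \simeq g'_{\oplus}\omega^{\ostar}$ come from the left $L$-adjointability of $\Phi|_{\mathcal{C}^{\op}}$ applied to the cartesian squares with sides $\xi,l',l,\eta$ and $\omega,g',g,\zeta$ respectively, while $\zeta^{\ostar}f_{\otimes} \simeq f'_{\otimes}\eta^{\ostar}$ and $\omega^{\ostar}h_{\otimes} \simeq h'_{\otimes}(\omega\times_{\zeta}\eta)^{\ostar}$ follow directly from the functoriality of $\Phi$ on $\Span_{F}(\mathcal{C})$, since the corresponding span compositions agree.

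Pasting the $(l,f)$-distributivity square with these lateral base-change squares yields a single large commutative diagram, the mate of which can be evaluated in two ways. Going across the top of \cref{eq:pbdisttr} and then down the right column corresponds to first applying the $(l',f')$-distributivity transformation and then translating via base change; going down the left column and then along the bottom corresponds to first translating via base change and then applying the pulled-back $(l,f)$-distributivity transformation. Both compute the mate of the same large pasted square, so they agree by the standard fact that the mate of a pasting of left-adjointable squares equals the pasting of the individual mates---the same pasting compatibility that was invoked implicitly in the proof of \cref{alphaccart} via the diagram \cref{eq:BClaxdiag}.

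The hard part of the argument will be purely organizational: one must carefully lay out the three-dimensional pasting diagram to keep track of orientations and to confirm that only the four listed Beck--Chevalley equivalences are needed (in particular, no additional distributivity hypothesis on $\Phi$ is required beyond left $L$-adjointability of $\Phi|_{\mathcal{C}^{\op}}$). Once this is done, the commutativity of \cref{eq:pbdisttr} reduces to a formal property of mate transformations under pasting.
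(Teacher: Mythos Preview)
Your proposal is correct and follows essentially the same approach as the paper: both arguments rest on \cref{rmk:dist=adj} to view the distributivity transformations as mates, and then use the compatibility of mates with vertical pasting of squares to identify the two routes around \cref{eq:pbdisttr}. The paper carries this out by explicitly drawing the commutative cube obtained by applying $\Phi$ to \cref{eq:distpb}, reading off two equal pasted rectangles (one factoring through the $(l,f)$-square followed by base change along $\zeta$, the other through base change along $\eta$ followed by the $(l',f')$-square), and then taking mates; this is exactly the ``three-dimensional pasting diagram'' you describe abstractly.
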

\begin{proof}
  We have the following commutative cube:
  \[
    \begin{tikzcd}
      \Phi(y) \arrow{dddd}{f_{\otimes}} \arrow{rr}{l^{\ostar}} \arrow{dr}{\eta^{\ostar}} & &
      \Phi(x) \arrow{dr}{\xi^{\ostar}} \arrow{dd}[near end]{\epsilon^{\ostar}}
      \\
       & \Phi(y') \arrow[crossing
       over]{rr}[near start]{l'^{\ostar}} & & \Phi(x')
       \arrow{dd}{\epsilon'^{\ostar}} \\
       & & \Phi(w \times_{z} y) \arrow{dr}{(\omega \times_{\zeta}
         \eta)^{\ostar}} \arrow{dd}{h_{\otimes}} \\
       & & & \Phi(w' \times_{z'} y') \arrow{dd}{h'_{\otimes}} \\
       \Phi(z) \arrow{rr}[near end]{g^{\ostar}} \arrow{dr}{\zeta^{\ostar}} & &
       \Phi(w) \arrow{dr}{\omega^{\ostar}} \\
        & \Phi(z') \arrow[leftarrow,crossing over]{uuuu}{f'_{\otimes}}
        \arrow{rr}{g'^{\ostar}} & & \Phi(w').
    \end{tikzcd}
  \]
  This means the following pair of diagrams are identified under the
  equivalences $f'_{\otimes}\eta^{\ostar} \simeq
  \zeta^{\ostar}f_{\otimes}$,
  $h'_{\otimes}\epsilon'^{\ostar}\xi^{\ostar} \simeq
  \omega^{\ostar}h_{\otimes}\epsilon^{\ostar}$ given by the left and
  right faces of the cube:
  \[
    \begin{tikzcd}
      \Phi(y) \arrow{r}{l^{\ostar}}  \arrow{dd}{f_{\otimes}} & \Phi(x)
      \arrow{d}{\epsilon^{\ostar}} \\
      & \Phi(w \times_{z} y) \arrow{d}{h_{\otimes}} \\
      \Phi(z) \arrow{d}{\zeta^{\ostar}} \arrow{r}{g^{\ostar}} &
      \Phi(w) \arrow{d}{\omega^{\ostar}} \\
      \Phi(z') \arrow{r}{g'^{\ostar}} & \Phi(w'),
    \end{tikzcd}
    \qquad\quad
    \begin{tikzcd}
      \Phi(y) \arrow{r}{l^{\ostar}}  \arrow{d}{\eta^{\ostar}} & \Phi(x)
      \arrow{d}{\xi^{\ostar}} \\
      \Phi(y') \arrow{r}{l'^{\ostar}}  \arrow{dd}{f'_{\otimes}} & \Phi(x')
      \arrow{d}{\epsilon'^{\ostar}} \\      
      & \Phi(w' \times_{z'} y') \arrow{d}{h'_{\otimes}} \\
      \Phi(z') \arrow{r}{g'^{\ostar}} & \Phi(w').
    \end{tikzcd}
  \]
  We get a corresponding identification of the mates of the outer
  squares in these two diagrams, which in turn decompose (since mate
  transformations are compatible with vertical pasting of squares)
  into the mates of the two smaller squares; according to
  \cref{rmk:dist=adj} these are the base change transformation
  $g'_{\oplus}\omega^{\ostar} \to \zeta^{\ostar}g_{\oplus}$ and the
  distributivity transformation for $(l,f)$ on the left, and the
  distributivity transformation for $(l',f')$ and the base change
  transformation $l'_{\oplus}\xi^{\ostar} \to \eta^{\ostar}l_{\oplus}$
  on the right. This identification gives precisely a commutative
  diagram
  \[
    \begin{tikzcd}
      g'_{\oplus}h'_{\otimes}\epsilon'^{\ostar}\xi^{\ostar} \arrow{d}{\sim} \arrow{r} &
      f'_{\otimes}l'_{\oplus}\xi^{\ostar} \arrow{r} &
      f'_{\otimes}\eta^{\ostar}l_{\oplus} \arrow{d}{\sim} \\
      g'_{\oplus}\omega^{\ostar}h_{\otimes}\epsilon^{\ostar} \arrow{r} &
      \zeta^{\ostar}g_{\oplus}h_{\otimes}\epsilon^{\ostar} \arrow{r} & \zeta^{\ostar}f_{\otimes}l_{\oplus},
    \end{tikzcd}
  \]
  which we can reorganize into the commutative diagram in the statement.
\end{proof}

\subsection{Symmetric monoidal structures}
In this subsection we will prove that the functor
$\SPAN \colon \Pair \to \CatIT$ preserves products. Applying this in
the special case of bispans, we will see that in some cases a
symmetric monoidal structure on $\mathcal{C}$ induces a symmetric
monoidal structure on $\BISPAN_{F,L}(\mathcal{C})$, and also show that
$\BISPAN(\blank)$ is a functor of \itcats{}. The construction of
symmetric monoidal structures on spans is also discussed in \cite[Part
III, Chapter 9]{GaitsgoryRozenblyum1} (where it is used to encode
Serre duality for Ind-coherent sheaves) and \cite{MacphersonCorr}*{\S
  3.2}.  An explicit construction (not relying on the universal
property) of symmetric monoidal structures on \icats{} of spans is
also given in \cite{BarwickMackey2}.

\begin{propn}
  The \icats{} $\Pair$ and $\Trip$ have finite products, given by
  \[ (\mathcal{C}, \mathcal{C}_{F}) \times (\mathcal{C}',
    \mathcal{C}'_{F'}) \simeq (\mathcal{C} \times \mathcal{C}',
    \mathcal{C}_{F} \times \mathcal{C}'_{F'}),\]
  \[ (\mathcal{C}, \mathcal{C}_{F}, \mathcal{C}_{L}) \times (\mathcal{C}',
    \mathcal{C}'_{F'}, \mathcal{C}'_{L'}) \simeq (\mathcal{C} \times \mathcal{C}',
    \mathcal{C}_{F} \times \mathcal{C}'_{F'}, \mathcal{C}_{L} \times \mathcal{C}'_{L'}).\]
\end{propn}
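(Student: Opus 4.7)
The plan is to reduce to the fact that $\Pair$ and $\Trip$ sit as (non-full) subcategories of $\Fun(\Delta^1, \CatI)$ and $\Fun(\Lambda^2_2, \CatI)$ respectively, in which products are computed pointwise. Since products in $\CatI$ are products of underlying $\infty$-categories, the candidate objects listed in the statement are precisely the products of the underlying diagrams in the ambient functor $\infty$-categories. It therefore suffices to check: (a) the candidate componentwise products lie in $\Pair$ (resp.\ $\Trip$); (b) the two projection functors are morphisms of span pairs (resp.\ bispan triples); and (c) for any span pair (resp.\ bispan triple) $\mathcal{D}$ equipped with morphisms to the two factors, the induced functor into the candidate product is a morphism in $\Pair$ (resp.\ $\Trip$).

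For (a) in the pair case, one notes that $\mathcal{C}_F \times \mathcal{C}'_{F'}$ is wide in $\mathcal{C} \times \mathcal{C}'$ because both factors are, and that a morphism $(f, f')$ lies in $\mathcal{C}_F \times \mathcal{C}'_{F'}$ iff each component does; since pullbacks in $\mathcal{C} \times \mathcal{C}'$ are computed componentwise, the existence of pullbacks along $\mathcal{C}_F \times \mathcal{C}'_{F'}$ and their stability under base change follow immediately from the same properties in each factor. Steps (b) and (c) then follow formally: the projections $\mathcal{C} \times \mathcal{C}' \to \mathcal{C}, \mathcal{C}'$ and universally induced functors $\mathcal{D} \to \mathcal{C} \times \mathcal{C}'$ respect the distinguished subcategories by construction, and preserve the relevant pullbacks because pullbacks in the product are componentwise.

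For the bispan triple case, the only extra point needing verification is the existence of distributivity diagrams in the product for composable pairs $(l_1, l_2) \colon (x,x') \to (y,y')$ in $\mathcal{C}_L \times \mathcal{C}'_{L'}$ and $(f_1, f_2) \colon (y,y') \to (z,z')$ in $\mathcal{C}_F \times \mathcal{C}'_{F'}$. Take the product of the distributivity diagrams from each factor; checking \cref{def:dis-square} amounts to verifying that for any $\phi = (\phi_1, \phi_2)$ the map $\Map_{/(z,z')}(\phi, g) \to \Map_{/(y,y')}(f^{*}\phi, l)$ is an equivalence, and this reduces to the corresponding componentwise statements because all mapping spaces in $\mathcal{C} \times \mathcal{C}'$ split as products. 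Preservation of distributivity diagrams by the projections and by universally induced functors is then automatic from the same factorwise reasoning. The main (and essentially only) nontrivial point is thus this componentwise construction of distributivity diagrams, and even it is immediate once one unwinds the definition.
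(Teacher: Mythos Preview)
Your proposal is correct and follows essentially the same approach as the paper: both reduce to the fact that pullbacks and distributivity diagrams in $\mathcal{C} \times \mathcal{C}'$ are computed componentwise. The paper's version is slightly more compressed, using the observation that $\Map_{\Pair}((\mathcal{D},\mathcal{D}_F),(\mathcal{C},\mathcal{C}_F)) \to \Map_{\CatI}(\mathcal{D},\mathcal{C})$ is a monomorphism to bypass your explicit step (a) and reduce directly to showing that a functor $\mathcal{D} \to \mathcal{C} \times \mathcal{C}'$ is a morphism of span pairs (resp.\ bispan triples) if and only if both components are.
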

\begin{proof}
  It follows from the definition of $\Pair$ that the morphism
  \[\Map_{\Pair}((\mathcal{C}, \mathcal{C}_{F}), (\mathcal{C}',
    \mathcal{C}'_{F'})) \to \Map_{\CatI}(\mathcal{C}, \mathcal{C}') \]
  is a monomorphism (and similarly for $\Trip$), so it suffices to
  show that for any span pair $(\mathcal{D}, \mathcal{D}_{F})$ (or
  bispan triple $(\mathcal{D}, \mathcal{D}_{F}, \mathcal{D}_{L})$) a functor $\mathcal{D} \to \mathcal{C} \times
  \mathcal{C}'$ is a morphism of span pairs (or bispan triples) \IFF{}
  the functors $\mathcal{D} \to \mathcal{C}$ and $\mathcal{D} \to
  \mathcal{C}'$ are both morphisms of span pairs (or bispan
  triples). This is clear, since a pair of cartesian squares in
  $\mathcal{C}$ and $\mathcal{C}'$ gives a cartesian square in
  $\mathcal{C} \times \mathcal{C}'$ and vice versa, and similarly for distributivity diagrams.
\end{proof}

\begin{propn}\label{propn:prodladjcond}
  Suppose $(\mathcal{C},\mathcal{C}_{F})$ and $(\mathcal{C}',
  \mathcal{C}'_{F'})$ are span pairs. Then a functor $\Phi \colon
  \mathcal{C}^{\op} \times \mathcal{C}'^{\op} \to \mathcal{X}$ is left
  $(F,F')$-adjointable \IFF{} it is left adjointable in each
  variable, \ie{}
  \begin{itemize}
  \item for every $c \in \mathcal{C}$, the functor $\Phi(c,\blank)$ is
    left $F'$-adjointable,
  \item for every $c' \in \mathcal{C}'$, the functor $\Phi(\blank,c')$ is
    left $F$-adjointable,
  \item for every morphism $c_{1} \to c_{2}$ in $\mathcal{C}$, the
    transformation $\Phi(c_{2},\blank) \to \Phi(c_{1},\blank)$ is
    left $F'$-adjointable,
  \item for every morphism $c'_{1} \to c'_{2}$ in $\mathcal{C}'$, the
    transformation $\Phi(\blank, c'_{2}) \to \Phi(\blank, c'_{1})$ is
    left $F$-adjointable.
  \end{itemize}
\end{propn}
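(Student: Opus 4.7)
The plan is to reduce everything to the observation that a morphism $(f, f')$ in $\mathcal{C}_{F} \times \mathcal{C}'_{F'}$ factors canonically as the commuting composite $(f, \id) \circ (\id, f') \simeq (\id, f') \circ (f, \id)$. Conditions (a) and (b) supply left adjoints $(\id, f')^{\oplus}$ and $(f, \id)^{\oplus}$ to the two ``pure'' factors, so composing these produces a left adjoint $(f, f')^{\oplus}$ to $(f, f')^{\ostar}$.

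Next I would decompose every cartesian square in $\mathcal{C} \times \mathcal{C}'$ with legs in $\mathcal{C}_{F} \times \mathcal{C}'_{F'}$ as a $3 \times 3$ pasting of four smaller squares, obtained by inserting the intermediate vertices that factor each leg through one coordinate at a time. Two of the four constituent squares are products of a pullback in one factor with an identity in the other (and their adjointability under $\Phi$ is precisely (a) applied at a fixed first coordinate, or (b) applied at a fixed second coordinate); the other two are ``cross'' squares whose opposite sides are of the form $(h, \id)$ and $(\id, h')$ for morphisms $h$ in $\mathcal{C}$ and $h'$ in $\mathcal{C}'$, and these are automatically cartesian since in each pair one leg is an identity. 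By the standard compatibility of mate transformations with horizontal and vertical pasting of squares, the Beck--Chevalley transformation of the outer square is the composite of the mates of the four small squares.

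Putting this together, $\Phi$ is left $(F, F')$-adjointable \IFF{} each of the four small squares becomes left adjointable under $\Phi$. For the ``pure'' squares this is exactly (a) and (b); for the ``cross'' squares, the corresponding mate square at an object $c \in \mathcal{C}$ with $f' \in \mathcal{C}'_{F'}$ and a morphism $c_{1} \to c_{2}$ is exactly the naturality square of the transformation $\Phi(c_{2}, \blank) \to \Phi(c_{1}, \blank)$ at $f'$, whose left adjointability is (c); the transposed cross square is governed by (d). The converse is immediate: restricting the adjointability hypothesis on $\Phi$ to product pullback squares of the form (pullback in $\mathcal{C}_{F}$) $\times \id$ and $\id \times$ (pullback in $\mathcal{C}'_{F'}$) recovers (a) and (b), while restricting to the cross squares (which really are cartesian in $\mathcal{C} \times \mathcal{C}'$) recovers (c) and (d).

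The only substantive ingredient is verifying the $3 \times 3$ decomposition and invoking the pasting law for mates; the main potential pitfall is bookkeeping the diagram correctly, but everything is forced once the canonical factorization $(f, f') \simeq (f, \id) \circ (\id, f')$ is in place.
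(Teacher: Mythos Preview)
Your proposal is correct and follows essentially the same approach as the paper: factor $(f,f')$ through the two coordinates, decompose the image of a product pullback square into a $2\times 2$ grid of four smaller squares (what you call the ``$3\times 3$ pasting''), identify two of them as the pointwise adjointability conditions and the other two as the adjointable-transformation conditions, and invoke the pasting compatibility of mates. The paper writes out the explicit $3\times 3$ diagram of objects and labels each of the four subsquares with the condition that controls it, but the argument is the same as yours.
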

\begin{proof}
  We first observe that $\Phi$ is left $(F,F')$-\emph{preadjointable}
  \IFF{} $(f,\id_{c'})^{\ostar}$ has a left adjoint for all $f$ in $F$
  and $c'$ in $\mathcal{C}'$, and $(\id_{c}, f')^{\ostar}$ has a left adjoint
  for all $c$ in $\mathcal{C}$ and $f'$ in $F'$, since
  $(f,f')^{\ostar} \simeq (f,\id)^{\ostar}(\id,f')^{\ostar}$ and left
  adjoints compose. Thus $\Phi$ is left $(F,F')$-preadjointable
  \IFF{} $\Phi(c,\blank)$ is left $F'$-preadjointable for all $c \in
  \mathcal{C}$ and $\Phi(\blank,c')$ is left $F$-preadjointable for
  all $c'$ in $\mathcal{C}'$.

  A pair of cartesian squares gives a cartesian square in $\mathcal{C}
  \times \mathcal{C}'$, so if $\Phi$ is left $(F,F')$-preadjointable
  then it is left $(F,F')$-adjointable \IFF{} for cartesian squares
  \[
    \begin{tikzcd}
      w \arrow{r}{v}  \arrow{d}{u} & z \arrow{d}{g} \\
      x \arrow{r}{f} & y,
    \end{tikzcd}
    \qquad
    \begin{tikzcd}
      w' \arrow{r}{v'}  \arrow{d}{u'} & z' \arrow{d}{g'} \\
      x' \arrow{r}{f'} & y',
    \end{tikzcd}
  \]
  with $f$ in $\mathcal{C}_{F}$ and $f'$ in $\mathcal{C}'_{F'}$, the square
  \begin{equation}\label{eq:prodladjsq}
    \begin{tikzcd}
      \Phi(y,y') \arrow{r}{(f,f')^{\ostar}} \arrow{d}{(g,g')^{\ostar}} &
    \Phi(x,x') \arrow{d}{(u,u')^{\ostar}} \\
    \Phi(z,z') \arrow{r}{(v,v')^{\ostar}} & \Phi(w,w')
    \end{tikzcd}
  \end{equation}
  is left adjointable. Taking $f = g = \id_{c}$ this implies that
  $\Phi(c,\blank)$ is left $F'$-adjointable, while taking $f =
  \id_{y}$ we see that the transformation $\Phi(g,\blank) \colon
  \Phi(y,\blank) \to \Phi(z,\blank)$ is left $F'$-adjointable. The
  same goes in the other variable, so if $\Phi$ is left
  $(F,F')$-adjointable then the four given conditions hold.

  Conversely, if these conditions hold then we want to show that the
  square \cref{eq:prodladjsq} is left adjointable. We can decompose
  this square into the commutative diagram
  \[
    \begin{tikzcd}
      \Phi(y,y') \arrow{r}{(f,\id)^{\ostar}}
      \arrow{d}{(g,\id)^{\ostar}} & \Phi(x,y')
      \arrow{r}{(\id,f')^{\ostar}} \arrow{d}{(u,\id)^{\ostar}} &
      \Phi(x,x') \arrow{d}{(u,\id)^{\ostar}} \\
      \Phi(z,y') \arrow{r}{(v,\id)^{\ostar}} \arrow{d}{(\id,
        g')^{\ostar}} & \Phi(w,y')
      \arrow{r}{(\id,f')^{\ostar}} \arrow{d}{(\id,g')^{\ostar}} &
      \Phi(w,x') \arrow{d}{(\id, u')^{\ostar}} \\
      \Phi(z,z') \arrow{r}{(v,\id)^{\ostar}} & \Phi(w,z')
      \arrow{r}{(\id,v')^{\ostar}} & \Phi(w,w').
    \end{tikzcd}
  \]
  Here all four squares are left adjointable:
  \begin{itemize}
  \item the top left square since
    $\Phi(\blank,y')$ is left $F$-adjointable,
  \item the top right square since $\Phi(u,\blank)$ is a left
    $F'$-adjointable transformation,
  \item the bottom left square since $\Phi(\blank,g')$ is a left
    $F$-adjointable transformation,
  \item the bottom right square since $\Phi(w,\blank)$ is left $F'$-adjointable.
  \end{itemize}
  Since mate transformations are compatible with horizontal and
  vertical compositions of squares, left adjointable squares are
  closed under both horizontal and vertical compositions. Thus the
  outer square \cref{eq:prodladjsq} is left adjointable, which
  completes the proof.
\end{proof}

\begin{cor}\label{cor:ladjprodptwise}
  Suppose $(\mathcal{C},\mathcal{C}_{F})$ and $(\mathcal{C}',
  \mathcal{C}'_{F'})$ are span pairs. Then there is a natural
  equivalence
  \[ \Map_{(F,F')\dladj}(\mathcal{C}^{\op} \times \mathcal{C}'^{\op}, \mathcal{X})
    \simeq \Map_{\Fladj}(\mathcal{C}^{\op}, \FUN_{F'\dladj}(\mathcal{C}'^{\op},
    \mathcal{X}))\]
  for $\mathcal{X} \in \CatIT$.
\end{cor}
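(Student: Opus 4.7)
The plan is to derive the equivalence from the standard tensor-hom adjunction
\[
\Map_{\CatIT}(\mathcal{C}^{\op} \times \mathcal{C}'^{\op}, \mathcal{X}) \simeq \Map_{\CatIT}(\mathcal{C}^{\op}, \FUN(\mathcal{C}'^{\op}, \mathcal{X})),
\]
by checking that it restricts to the two subspaces in the statement. First, I would use \cref{propn:prodladjcond} to decompose left $(F, F')$-adjointability of a functor $\Phi$ into its four pointwise conditions. Under the tensor-hom correspondence $\Phi \leftrightarrow \tilde{\Phi}$, conditions (1) and (3) of that proposition---pointwise left $F'$-adjointability of $\Phi(c, \blank)$ and left $F'$-adjointability of the transformation $\Phi(f, \blank)$ for each $f$ in $\mathcal{C}$---are precisely what is required for $\tilde{\Phi}$ to take objects and morphisms into the locally full sub-\itcat{} $\FUN_{F'\dladj}(\mathcal{C}'^{\op}, \mathcal{X})$. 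No further higher coherences need be checked, since $\mathcal{C}^{\op}$ has no nontrivial 2-morphisms.

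Next, I would match conditions (2) and (4) of \cref{propn:prodladjcond} with left $F$-adjointability of the resulting functor $\tilde{\Phi} \colon \mathcal{C}^{\op} \to \FUN_{F'\dladj}(\mathcal{C}'^{\op}, \mathcal{X})$. By \cref{adjinarrows}, existence of a left adjoint for $\tilde{\Phi}(f)$ (with $f$ in $\mathcal{C}_F$) inside the ambient \itcat{} $\FUN(\mathcal{C}'^{\op}, \mathcal{X})$ reduces to pointwise existence (the preadjointability clause of (2)) together with left adjointability of the naturality squares at every morphism of $\mathcal{C}'$ (the naturality-square clause of (4)). Since mate transformations in a functor \itcat{} are computed pointwise, the cartesian-square clause of left $F$-adjointability of $\tilde{\Phi}$ corresponds to the cartesian-square clause of (2) evaluated at each $c' \in \mathcal{C}'$.

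The main obstacle is to show that the adjoint $\tilde{\Phi}(f)^L$, which a priori lives only in $\FUN(\mathcal{C}'^{\op}, \mathcal{X})$, descends to a morphism of the locally full sub-\itcat{} $\FUN_{F'\dladj}$---that is, is itself a left $F'$-adjointable natural transformation. Unpacking the definition, this amounts to a mate condition on a commutative square in $\mathcal{X}$ whose horizontal edges are $g'^{\ostar}$ (for $g' \in \mathcal{C}'_{F'}$) and whose vertical edges are the pointwise left adjoints of the $\Phi(f, c'_i)$. Both pairs of parallel edges admit left adjoints, and by the standard symmetry of mate transformations in such a doubly-adjointable square the required adjointability is equivalent to the mate condition already provided by (4) at $g' \in \mathcal{C}'_{F'}$. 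Naturality in $\mathcal{X}$ of the resulting identification of subspaces then follows from naturality of the tensor-hom adjunction and of the mate construction.
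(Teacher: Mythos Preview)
Your argument is correct and uses the same underlying facts as the paper, but the organization differs in one interesting way. The paper isolates your ``main obstacle'' into a separate lemma (\cref{lem:ladjinFladj}) characterizing left adjoints and left adjointable squares in $\FUN_{F'\dladj}(\mathcal{C}'^{\op},\mathcal{X})$; its proof goes through the equivalence $\FUN_{F'\dladj}(\mathcal{C}'^{\op},\mathcal{X}) \simeq \FUN(\SPAN_{F'}(\mathcal{C}'),\mathcal{X})$ of \cref{cor:improve}, so that the question reduces to \cref{adjinarrows} for an ordinary functor \itcat{}. In that setting the extra adjointability condition (for the naturality squares involving the forward maps $f'_{\oplus}$ in $\SPAN_{F'}(\mathcal{C}')$) is dispatched by exactly your observation: the mate is obtained by passing to left adjoints on all four sides of a commuting square, so it is automatically invertible.

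What you do instead is stay inside the locally full subcategory and argue directly via the total-mate symmetry. This is a legitimate alternative and is slightly more elementary, since it does not invoke the universal property of spans. The paper's route has the advantage that once \cref{lem:ladjinFladj} is recorded, the remainder of the proof is pure bookkeeping (matching its conditions against those of \cref{propn:prodladjcond}), whereas your version interleaves the descent argument with the main matching. Either way, note that local fullness of $\FUN_{F'\dladj}$ in $\FUN$ is what guarantees the unit and counit automatically lie in the subcategory once the adjoint morphism does---you use this implicitly and it would be worth stating.
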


To prove this we also need the following lemma:
\begin{lemma}\label{lem:ladjinFladj}
  Suppose $(\mathcal{C}, \mathcal{C}_{F})$ is a span pair. A morphism
  $\eta \colon \phi \to \psi$ in the \itcat{} $\FUN_{\Fladj}(\mathcal{C}^{\op},
  \mathcal{X})$ has a left adjoint \IFF{}
  \begin{enumerate}[(1)]
  \item the morphism $\eta_{c} \colon \phi(c) \to \psi(c)$ has a left
    adjoint in $\mathcal{X}$ for every $c \in \mathcal{C}$,
  \item the commutative square
    \[
      \begin{tikzcd}
        \phi(c) \arrow{r}{\eta_{c}} \arrow{d} & \psi(c) \arrow{d} \\
        \phi(c') \arrow{r}{\eta_{c'}} & \psi(c')
      \end{tikzcd}
    \]
    is left adjointable for every morphism $c \to c'$ in $\mathcal{C}$.
  \end{enumerate}
  Moreover, a commutative square
  \[
    \begin{tikzcd}
      \phi  \arrow{d}{\lambda} \arrow{r}{\eta} & \psi \arrow{d}{\mu}
      \\
      \phi' \arrow{r}{\eta'} & \psi'
    \end{tikzcd}
  \]
  in $\FUN_{\Fladj}(\mathcal{C}^{\op}, \mathcal{X})$ is left adjointable
  \IFF{} the commutative square in $\mathcal{X}$ obtained by
  evaluation at $c$ is left adjointable in
  $\mathcal{X}$ for every $c \in \mathcal{C}$.
\end{lemma}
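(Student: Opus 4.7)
The plan is to reduce both statements to \cref{adjinarrows} applied in the ambient \itcat{} $\FUN(\mathcal{C}^{\op}, \mathcal{X})$, exploiting that $\FUN_{\Fladj}(\mathcal{C}^{\op}, \mathcal{X})$ sits inside it as a locally full sub-\itcat{}.

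For the first statement, the ``only if'' direction is immediate: an adjunction in $\FUN_{\Fladj}$ is an adjunction in $\FUN$, so \cref{adjinarrows} forces conditions (1) and (2). For the converse, assume (1) and (2). Then \cref{adjinarrows} produces a left adjoint $\zeta \colon \psi \to \phi$ in $\FUN(\mathcal{C}^{\op}, \mathcal{X})$ whose components $\zeta_c$ are left adjoints to $\eta_c$. By local fullness, the unit and counit of this adjunction will automatically be 2-morphisms in $\FUN_{\Fladj}$ as soon as $\zeta$ itself is a 1-morphism there. Since $\phi$ and $\psi$ are $F$-adjointable by hypothesis, the only remaining thing to verify is that $\zeta$ is an $F$-adjointable natural transformation: for every $f \colon x \to y$ in $\mathcal{C}_F$ the mate $f_{\oplus}\zeta_x \to \zeta_y f_{\oplus}$ (formed using the adjunctions $f_{\oplus} \dashv f^{\ostar}$ in $\phi$ and $\psi$) should be invertible.

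The analogous mate $f_{\oplus}\eta_x \to \eta_y f_{\oplus}$ is invertible because $\eta$ is already a morphism in $\FUN_{\Fladj}$. Transferring invertibility from the $\eta$-mate to the $\zeta$-mate is the main obstacle of the proof, and I will handle it by a ``double mate'' argument exploiting the objectwise adjunctions $\zeta_c \dashv \eta_c$. Concretely, one constructs a candidate inverse to the $\zeta$-mate as the composite
\[ \zeta_y f_{\oplus} \to \zeta_y f_{\oplus} \eta_x \zeta_x \simeq \zeta_y \eta_y f_{\oplus} \zeta_x \to f_{\oplus} \zeta_x, \]
where the first map is the unit for $\zeta_x \dashv \eta_x$, the middle equivalence uses the inverse of the $\eta$-mate together with the (honest) naturality square of $\zeta$ for $f$, and the last map is the counit for $\zeta_y \dashv \eta_y$. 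A triangle-identity chase then verifies this is a two-sided inverse. This is the standard compatibility of the mate construction under orthogonal passage to adjoints; in our \itcatl{} setting it can be extracted from the framework of~\cite{adjmnd}.

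For the second statement, a commutative square in $\FUN_{\Fladj}$ is by definition left adjointable iff its horizontal arrows admit left adjoints in $\FUN_{\Fladj}$ and the associated mate 2-morphism is invertible. By the first part these left adjoints exist iff they exist pointwise in $\mathcal{X}$; and by local fullness the mate 2-morphism in $\FUN_{\Fladj}$ coincides with its image in $\FUN$, whose invertibility is detected objectwise. Combining, the square is left adjointable in $\FUN_{\Fladj}$ iff its evaluation at every $c \in \mathcal{C}$ is left adjointable in $\mathcal{X}$.
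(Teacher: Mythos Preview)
Your argument is correct but takes a different route from the paper. The paper exploits the equivalence $\FUN_{\Fladj}(\mathcal{C}^{\op},\mathcal{X})\simeq\FUN(\SPAN_F(\mathcal{C}),\mathcal{X})$ from \cref{cor:improve} and applies \cref{adjinarrows} on the right-hand side; the adjointability condition then has to be checked for every morphism in $\Span_F(\mathcal{C})$, which decomposes into a backward leg (giving condition~(2)) and a forward leg $f_\oplus$, and the latter square is left adjointable for free because its mate arises by passing to left adjoints on all four edges of a commuting square. You instead stay inside $\FUN(\mathcal{C}^{\op},\mathcal{X})$, produce the left adjoint $\zeta$ there, and then verify by hand that $\zeta$ is an $F$-adjointable transformation via your double-mate computation. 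The two arguments hinge on the same underlying fact---compatibility of horizontal and vertical mates---but package it differently: the paper's version is shorter given that \cref{cor:improve} is already available, while yours is more self-contained and would work even before the universal property of spans has been established. Your treatment of the second statement (pointwise detection via local fullness) agrees with the paper's.
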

\begin{proof}
  By \cref{cor:improve} we have an equivalence
  \[\FUN_{\Fladj}(\mathcal{C}^{\op}, \mathcal{X})
    \simeq \FUN(\SPAN_{F}(\mathcal{C}), \mathcal{X}).\]
  Suppose $H \colon \Phi \to \Psi$ is the morphism corresponding to
  $\eta \colon \phi \to \psi$ under this equivalence. Then we know
  that $H$ has a left adjoint \IFF{}
  \begin{itemize}
  \item $H_{c} \colon \Phi(c) \to \Psi(c)$ has a
    left adjoint for every $c \in \mathcal{C}$,
  \item the square
    \[
      \begin{tikzcd}
        \Phi(c_{1}) \arrow{r}{H_{c_{1}}} \arrow{d} & \Psi(c_{1})
        \arrow{d} \\
        \Phi(c_{2}) \arrow{r}{H_{c_{2}}} & \Psi(c_{2})
      \end{tikzcd}
    \]
    is left adjointable for every morphism $c_{1} \to c_{2}$ in
    $\SPAN_{F}(\mathcal{C})$.
  \end{itemize}
  In terms of $\eta$, these conditions say that $\eta_{c}$ has a left
  adjoint for every $c \in \mathcal{C}$, and 
  for every span $c_{1} \xfrom{g} x \xto{f} c_{2}$ with $f$ in $F$, the outer square in the
  diagram
  \[
    \begin{tikzcd}
      \phi(c_{1}) \arrow{r}{\eta_{c_{1}}} \arrow{d}{g^{\ostar}} &
      \psi(c_{1}) \arrow{d}{g^{\ostar}} \\
      \phi(x)  \arrow{d}{f_{\oplus}} \arrow{r}{\eta_{x}} & \psi(x)
      \arrow{d}{f_{\oplus}} \\
      \phi(c_{2}) \arrow{r}{\eta_{c_{2}}} & \psi(c_{2})
    \end{tikzcd}
  \]
  is left adjointable. Since left adjointable squares compose, and
  the two squares here are those associated to spans where one leg is
  the identity, it is equivalent to require these two squares to be
  left adjointable. For the top square this is the condition we want,
  while the bottom square is automatically left adjointable since its
  mate is obtained by passing to left adjoints everywhere in the
  commutative square
  \[
    \begin{tikzcd}
    \phi(c_{2}) \arrow{d}{f^{\ostar}} \arrow{r}{\eta_{c_{2}}} &
      \psi(c_{2}) \arrow{d}{f^{\ostar}} \\
      \phi(x) \arrow{r}{\eta_{x}} & \psi(x).
    \end{tikzcd}
  \]
  Since the mate of a square of natural transformations is given by
  taking mates objectwise, the characterization of left adjointable
  squares is immediate.
\end{proof}

\begin{proof}[Proof of \cref{cor:ladjprodptwise}]
  Unpacking definitions, a functor $\Phi \colon \mathcal{C}^{\op} \times \mathcal{C}'^{\op}
  \to \mathcal{X}$ corresponds to a functor $\mathcal{C}^{\op} \to
  \Fun_{F'\dladj}(\mathcal{C}'^{\op}, \mathcal{X})$ \IFF{}
  $\Phi(c,\blank)$ is a left $F'$-adjointable functor for every $c
  \in \mathcal{C}$, and $\Phi(c_{2},\blank) \to \Phi(c_{1},\blank)$ is
  a left $F'$-adjointable transformation for every morphism $c_{1}
  \to c_{2}$ in $\mathcal{C}$.

  Moreover, it follows from \cref{lem:ladjinFladj} that such a functor \[\mathcal{C}^{\op} \to
  \Fun_{F'\dladj}(\mathcal{C}'^{\op},\mathcal{X}) \simeq
  \Fun(\SPAN_{F'}(\mathcal{C}'), \mathcal{X})\] is left
  $F$-adjointable precisely when the following conditions hold:
  \begin{itemize}
  \item for every morphism $f\colon c_{1} \to c_{2}$ in $F$ and every
    object $c' \in \mathcal{C}'$, the morphism $(f,\id)^{\ostar} \colon
    \Phi(c_{2},c') \to \Phi(c_{1},c')$ has a left adjoint,
  \item for every morphism $f\colon c_{1} \to c_{2}$ in $F$ and every
    morphism $c'_{1} \to c'_{2}$ in $\mathcal{C}'$, the commutative
    square
    \[
      \begin{tikzcd}
        \Phi(c_{2},c'_{2}) \arrow{r}{(f,\id)^{\ostar}}\arrow{d} &
        \Phi(c_{1},c'_{2}) \arrow{d} \\
        \Phi(c_{2},c'_{1}) \arrow{r}{(f,\id)^{\ostar}} & \Phi(c_{1},c'_{1})
      \end{tikzcd}
    \]
    is left adjointable,
  \item for every pullback square
    \[
      \begin{tikzcd}
        w \arrow{r}{v}  \arrow{d}{u} & z \arrow{d}{g} \\
        x \arrow{r}{f} & y
      \end{tikzcd}
    \]
    in $\mathcal{C}$ with $f$ in $F$, the commutative square
    \[
      \begin{tikzcd}
        \Phi(y,c') \arrow{d}{(g,\id)^{\ostar}}\arrow{r}{(f,\id)^{\ostar}} & \Phi(x,c') \arrow{d}{(u,\id)^{\ostar}} \\
        \Phi(z,c') \arrow{r}{(v,\id)^{\ostar}} & \Phi(w,c')
      \end{tikzcd}
    \]
    is left adjointable.
  \end{itemize}
  These conditions say precisely that $\Phi(\blank,c')$ is a left
  $F$-adjointable functor for every $c' \in \mathcal{C}'$ and
  $\Phi(\blank,c'_{2}) \to \Phi(\blank,c'_{1})$ is a left
  $F$-adjointable transformation for every morphism $c'_{1} \to
  c'_{2}$ in $\mathcal{C}'$. We have thus shown that a functor
  $\Phi \colon \mathcal{C}^{\op} \times \mathcal{C}'^{\op} \to \mathcal{X}$
  corresponds to a left $F$-adjointable functor $\mathcal{C}^{\op} \to
  \Fun_{F'\dladj}(\mathcal{C}'^{\op}, \mathcal{X})$ \IFF{} it satisfies the
  four conditions that we saw characterized left $(F,F')$-adjointable
  functors in \cref{propn:prodladjcond}.
\end{proof}

From Corollary~\ref{cor:ladjprodptwise} we can now deduce that spans preserve products:
\begin{cor}\label{cor:spanpresprod}
  Suppose $(\mathcal{C},\mathcal{C}_{F})$ and $(\mathcal{C}',
  \mathcal{C}'_{F'})$ are span pairs. Then the natural morphism
  \[ \SPAN_{(F,F')}(\mathcal{C} \times \mathcal{C}') \to
    \SPAN_{F}(\mathcal{C}) \times \SPAN_{F'}(\mathcal{C}') \]
  is an equivalence.
\end{cor}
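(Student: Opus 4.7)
The plan is to reduce the statement to \cref{cor:ladjprodptwise}, which is the essential input, via the Yoneda lemma in $\CatIT$: it suffices to exhibit, for every $\mathcal{X} \in \CatIT$, a natural equivalence between the mapping spaces out of source and target of the canonical morphism into $\mathcal{X}$, and to identify this equivalence as precomposition along the canonical morphism itself.

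First I would compute the mapping space out of the product by iterating the tensor--hom adjunction in $\CatIT$ with the universal property of spans. Applying \cref{cor:improve} to the inner factor gives $\FUN(\SPAN_{F'}(\mathcal{C}'), \mathcal{X}) \simeq \FUN_{F'\dladj}(\mathcal{C}'^{\op}, \mathcal{X})$, and then \cref{thm:spanuniv} applied to the outer factor yields
\[
\Map_{\CatIT}(\SPAN_{F}(\mathcal{C}) \times \SPAN_{F'}(\mathcal{C}'), \mathcal{X}) \simeq \Map_{\Fladj}(\mathcal{C}^{\op}, \FUN_{F'\dladj}(\mathcal{C}'^{\op}, \mathcal{X})).
\]
The crucial input \cref{cor:ladjprodptwise} then identifies this with $\Map_{(F,F')\dladj}(\mathcal{C}^{\op} \times \mathcal{C}'^{\op}, \mathcal{X})$, and a second application of \cref{thm:spanuniv} identifies the latter with $\Map_{\CatIT}(\SPAN_{(F,F')}(\mathcal{C} \times \mathcal{C}'), \mathcal{X})$.

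It remains to verify that this chain of natural equivalences agrees with precomposition by the canonical morphism $p \colon \SPAN_{(F,F')}(\mathcal{C} \times \mathcal{C}') \to \SPAN_{F}(\mathcal{C}) \times \SPAN_{F'}(\mathcal{C}')$. Unwinding definitions, each step in the chain is restriction along the canonical inclusion of an opposite \icat{}, so the composite sends a functor $\Phi$ to the unique left $(F,F')$-adjointable extension of its restriction $\Phi \circ (i_F \times i_{F'})$ along the inclusion $i \colon (\mathcal{C} \times \mathcal{C}')^{\op} \to \SPAN_{(F,F')}(\mathcal{C} \times \mathcal{C}')$. Since $p$ is constructed via the functoriality in \cref{rmk:SPANftr} applied to the two projections, it satisfies $p \circ i = i_F \times i_{F'}$ by construction, so the uniqueness clause in \cref{thm:spanuniv} identifies this extension with $\Phi \circ p$.

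The main obstacle is more bookkeeping than substance: one must keep track of the naturality in $\mathcal{X}$ at each step, and confirm compatibility of the various canonical inclusions. All of the genuine content is packaged into \cref{cor:ladjprodptwise}, so once that result is in hand the argument is purely formal.
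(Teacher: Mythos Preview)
Your proposal is correct and follows essentially the same approach as the paper: both arguments chain together the universal property of spans (\cref{thm:spanuniv}), the enhancement to functor \itcats{} (\cref{cor:improve}), and the key identification \cref{cor:ladjprodptwise}, together with the tensor--hom adjunction, to show that source and target corepresent the same functor. The only difference is that you traverse the chain in the opposite order and add an explicit check that the resulting equivalence is induced by the canonical comparison map; the paper leaves this implicit.
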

\begin{proof}
  For $\mathcal{X}$ an \itcat{} we have natural equivalences
  \[
    \begin{split}
\Map(\SPAN_{(F,F')}(\mathcal{C} \times \mathcal{C}'),
    \mathcal{X}) & \simeq \Map_{(F,F')\dladj}(\mathcal{C}^{\op} \times
    \mathcal{C}'^{\op}, \mathcal{X})\\
    & \simeq \Map_{\Fladj}(\mathcal{C}^{\op}, \FUN_{F'\dladj}(\mathcal{C}'^{\op},
    \mathcal{X})) \\
    & \simeq \Map_{\Fladj}(\mathcal{C}^{\op}, \FUN(\SPAN_{F'}(\mathcal{C}'),
    \mathcal{X})) \\
    & \simeq \Map(\SPAN_{F}(\mathcal{C}), \FUN(\SPAN_{F'}(\mathcal{C}'),
    \mathcal{X})) \\
    & \simeq \Map(\SPAN_{F}(\mathcal{C}) \times
    \SPAN_{F'}(\mathcal{C}'), \mathcal{X}).\qedhere
  \end{split}
  \]
\end{proof}

\begin{cor}\label{cor:Spansymmmon}
  Suppose $(\mathcal{C}, \mathcal{C}_{F})$ is a span pair and
  $\mathcal{C}$ has a (symmetric) monoidal structure such that the
  tensor product functor is a morphism of span pairs
  \[ \otimes \colon (\mathcal{C} \times \mathcal{C}, \mathcal{C}_{F}
    \times \mathcal{C}_{F}) \to (\mathcal{C}, \mathcal{C}_{F}),\]
  \ie{} given morphisms $f\colon x \to y$ and $f' \colon x' \to y'$ in
  $F$, the morphism $f \otimes f' \colon x \otimes x' \to y \otimes
  y'$ is also in $F$, and given a pair of pullback squares
  \[
    \begin{tikzcd}
      w \arrow{r}{v}  \arrow{d}{u} & z \arrow{d}{g} \\
      x \arrow{r}{f} & y,
    \end{tikzcd}
    \qquad
    \begin{tikzcd}
      w' \arrow{r}{v'}  \arrow{d}{u'} & z' \arrow{d}{g'} \\
      x' \arrow{r}{f'} & y',
    \end{tikzcd}
  \]
  with $f$ and $f'$ in $\mathcal{C}_{F}$, the commutative square
  \[
    \begin{tikzcd}
    w \otimes w' \arrow{r} \arrow{d} & z \otimes z' \arrow{d} \\
    x \otimes x' \arrow{r} & y \otimes y'
  \end{tikzcd}
\]
  is cartesian. Then $\SPAN_{F}(\mathcal{C})$ inherits a (symmetric)
  monoidal structure from that on $\mathcal{C}$.
\end{cor}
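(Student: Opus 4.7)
The plan is to deduce this as a formal consequence of the fact that the span construction is a product-preserving functor on span pairs. A (symmetric) monoidal \icat{} is precisely an $E_{1}$- (resp.\ $E_{\infty}$-)algebra object in $\CatI$, and this structure can be detected by appropriate product-preserving functors (e.g.\ from $\Delta^{\op}$ or $\Fin_{*}$) into $\CatI$. If we can lift such a functor to $\Pair$, landing in span pairs, then applying the functor $\SPAN \colon \Pair \to \CatIT$ of \cref{rmk:SPANftr} will produce an analogous algebra structure on $\SPAN_{F}(\mathcal{C})$ in $\CatIT$, which is precisely a (symmetric) monoidal \itcat{}.

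First I would observe that the hypothesis on $\otimes$ exhibits the tensor product as a morphism of span pairs
\[ \otimes \colon (\mathcal{C}, \mathcal{C}_{F}) \times (\mathcal{C}, \mathcal{C}_{F}) \to (\mathcal{C}, \mathcal{C}_{F}), \]
using the product formula in $\Pair$. The unit object $\mathbf{1}$ corresponds to a functor from the terminal span pair $(\ast,\ast)$, which is automatically a morphism of span pairs since its source has only identity morphisms. Likewise, the associativity (and, in the symmetric case, the symmetry) constraints are natural equivalences in $\mathcal{C}$; since equivalences are preserved under pullback and remain cartesian, the associators and braidings automatically refine to morphisms of span pairs. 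In total, the (symmetric) monoidal structure on $\mathcal{C}$ lifts to a (commutative) algebra object in $\Pair$ whose underlying object is $(\mathcal{C}, \mathcal{C}_{F})$.

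Next I would apply the functor $\SPAN \colon \Pair \to \CatIT$. By \cref{cor:spanpresprod}, $\SPAN$ preserves finite products; in particular it preserves the terminal object (the terminal pair $(\ast,\ast)$ is sent to the terminal \itcat{} $\ast$). Since any product-preserving functor sends (commutative) algebra objects to (commutative) algebra objects, the image of our algebra structure on $(\mathcal{C}, \mathcal{C}_{F})$ is a (commutative) algebra object of $\CatIT$ with underlying \itcat{} $\SPAN_{F}(\mathcal{C})$, i.e.\ a (symmetric) monoidal \itcat{} structure on $\SPAN_{F}(\mathcal{C})$.

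The only step requiring care is the verification that all of the coherence data for the (symmetric) monoidal structure on $\mathcal{C}$ refines to $\Pair$; this is where we use that $\otimes$, the unit, and the constraint isomorphisms all satisfy the conditions defining a morphism of span pairs. The multiplication requires the explicit hypothesis given in the statement, while the unit and higher coherences are automatic. Once this packaging is done, the conclusion is immediate from functoriality and product-preservation of $\SPAN$, with no further computation needed.
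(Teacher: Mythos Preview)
Your proposal is correct and follows essentially the same approach as the paper: the paper's proof is the single line ``Since the functor $\SPAN$ preserves products, it takes (commutative) algebras in span pairs to (commutative) algebras in \itcats{}.'' You have simply unpacked what it means for the (symmetric) monoidal structure on $\mathcal{C}$ to give a (commutative) algebra object in $\Pair$, which the paper leaves implicit.
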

\begin{proof}
  Since the functor $\SPAN$ preserves products, it takes
  (commutative) algebras in span pairs to (commutative) algebras in \itcats{}.
\end{proof}

\begin{ex}\label{ex:spancartmon}
  Suppose $(\mathcal{C},\mathcal{C}_{F})$ is a span pair where
  $\mathcal{C}$ has finite products and morphisms in $\mathcal{C}_{F}$
  are closed under products. Products of cartesian squares are always
  cartesian, so in this case \cref{cor:Spansymmmon} implies that the
  cartesian product induces a symmetric monoidal structure on
  $\SPAN_{F}(\mathcal{C})$. This recovers the discussion in
  \cite[Chapter 9, 2.1]{GaitsgoryRozenblyum1} and some cases of
  \cite[Theorem 2.15]{BarwickMackey2}.
\end{ex}

\begin{defn}
  We say an \icat{} $\mathcal{C}$ is \emph{extensive} if $\mathcal{C}$
  has finite coproducts and these satisfy \emph{descent} in the sense that the
  coproduct functor
  \[ \amalg \colon \prod_{i=1}^{n} \mathcal{C}_{/x_{i}} \to
    \mathcal{C}_{/\coprod_{i=1}^{n}x_{i}} \]
  is an equivalence. (Equivalently, pullbacks of the component
  inclusions in finite coproducts always exist, and coproducts are disjoint and
  stable under pullback.)
\end{defn}
  
\begin{ex}\label{ex:spancocart}    
  Suppose $(\mathcal{C}, \mathcal{C}_{F})$ is a span pair where
  $\mathcal{C}$ has finite coproducts and morphisms in
  $\mathcal{C}_{F}$ are closed under coproducts. If $\mathcal{C}$ is
  extensive,
  then coproducts of cartesian squares are again
    cartesian. Hence in this case the coproduct induces a symmetric
    monoidal structure on $\SPAN_{F}(\mathcal{C})$ by
    \cref{cor:Spansymmmon}. The descent condition is satisfied, for
    instance, if $\mathcal{C}$ is an $\infty$-topos, or in the
    category of sets. See also \cite{BarwickMackey}*{\S 4}, where
    $\mathcal{C}$ is called ``disjunctive'' if $\mathcal{C}$ is
    extensive and has
    pullbacks; in this case the coproduct in
    $\mathcal{C}$ gives both the product and coproduct in
    $\Span(\mathcal{C})$ by \cite{BarwickMackey}*{Proposition 4.3}.
\end{ex}

Specializing the preceding discussion to bispans, we get:

\begin{cor}\label{cor:bispan-prod} 
    Suppose $(\mathcal{C}, \mathcal{C}_{F},\mathcal{C}_{L})$ and
  $(\mathcal{C}', \mathcal{C}'_{F'}, \mathcal{C}'_{L'})$ are bispan
  triples. Then the natural morphism
  \[ \BISPAN_{(F,F'),(L,L')}(\mathcal{C} \times \mathcal{C}') \to
    \BISPAN_{F,L}(\mathcal{C}) \times \BISPAN_{F',L'}(\mathcal{C}')\]
  is an equivalence.
\end{cor}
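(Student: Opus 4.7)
The strategy is to invoke \cref{cor:spanpresprod} twice, exploiting the definition $\BISPAN_{F,L}(\mathcal{C}) := \SPAN_{L}(\Span_{F}(\mathcal{C})^{\op})$. The key point is that the construction of bispans is built from a construction of spans that was already shown to preserve products, iterated once with an intervening \op.

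First I would apply \cref{cor:spanpresprod} to the span pairs $(\mathcal{C},\mathcal{C}_{F})$ and $(\mathcal{C}',\mathcal{C}'_{F'})$, which are part of the given bispan triples, to obtain an equivalence of \itcats{}
\[
\SPAN_{(F,F')}(\mathcal{C} \times \mathcal{C}') \isoto \SPAN_{F}(\mathcal{C}) \times \SPAN_{F'}(\mathcal{C}').
\]
Passing to underlying \icats{} and taking opposites yields
\[
\Span_{(F,F'),\all}(\mathcal{C} \times \mathcal{C}') \simeq \Span_{F,\all}(\mathcal{C}) \times \Span_{F',\all}(\mathcal{C}'),
\]
where I am using the notation $\Span_{F,\all}(\mathcal{C}) = \Span_{F}(\mathcal{C})^{\op}$ from \S\ref{subsec:Bispan2cat}.

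Next I would promote this to an equivalence of span pairs. By \cref{thm:distisadj}\ref{bispantripleispair}, the three triples
\[
(\mathcal{C},\mathcal{C}_{F},\mathcal{C}_{L}),\quad (\mathcal{C}',\mathcal{C}'_{F'},\mathcal{C}'_{L'}),\quad (\mathcal{C}\times\mathcal{C}',\mathcal{C}_{F}\times\mathcal{C}'_{F'},\mathcal{C}_{L}\times\mathcal{C}'_{L'})
\]
being bispan triples is equivalent to $(\Span_{F,\all}(\mathcal{C}), \mathcal{C}_{L})$, $(\Span_{F',\all}(\mathcal{C}'), \mathcal{C}'_{L'})$, and $(\Span_{(F,F'),\all}(\mathcal{C}\times\mathcal{C}'),\mathcal{C}_{L}\times\mathcal{C}'_{L'})$ all being span pairs. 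A morphism $l$ in $\mathcal{C}_{L}$ is sent into $\Span_{F,\all}(\mathcal{C})$ as the span $[l]_{F} = (\id, l)$, and under the equivalence above the subcategory $\mathcal{C}_{L}\times\mathcal{C}'_{L'}$ on the left clearly corresponds to the product of the two subcategories on the right (since $[(l,l')]_{F}$ matches $([l]_{F},[l']_{F})$). Thus the equivalence is an equivalence of span pairs.

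Finally I would apply \cref{cor:spanpresprod} again, now to the span pairs $(\Span_{F,\all}(\mathcal{C}),\mathcal{C}_{L})$ and $(\Span_{F',\all}(\mathcal{C}'),\mathcal{C}'_{L'})$, which gives
\[
\SPAN_{(L,L')}\bigl(\Span_{F,\all}(\mathcal{C}) \times \Span_{F',\all}(\mathcal{C}')\bigr) \isoto \SPAN_{L}(\Span_{F,\all}(\mathcal{C})) \times \SPAN_{L'}(\Span_{F',\all}(\mathcal{C}')).
\]
Combining with the equivalence of span pairs from the previous step and unravelling the definition $\BISPAN_{F,L}(\mathcal{C}) = \SPAN_{L}(\Span_{F,\all}(\mathcal{C}))$ produces the desired equivalence. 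The only point requiring care is the identification of subcategories in the middle step, but this is immediate from the explicit description of $\Span_{(F,F')}$ in terms of pairs of spans; there is no substantial obstacle beyond bookkeeping.
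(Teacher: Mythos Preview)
Your proposal is correct and follows essentially the same approach as the paper's proof: apply \cref{cor:spanpresprod} once to identify $\Span_{(F,F'),\all}(\mathcal{C}\times\mathcal{C}')$ with the product, check this is an equivalence of span pairs (the paper phrases this as obtaining a product in $\Pair$), and then apply \cref{cor:spanpresprod} a second time. The only difference is that you spell out explicitly why the $L$-subcategories match and invoke \cref{thm:distisadj}\ref{bispantripleispair}, whereas the paper leaves this implicit.
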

\begin{proof}
  From \cref{cor:spanpresprod} we get a product of span pairs
  \[ (\Span_{F}(\mathcal{C})^{\op}, \mathcal{C}_{L}) \times
    (\Span_{F'}(\mathcal{C}')^{\op}, \mathcal{C}'_{L'}) \simeq
    (\Span_{(F,F')}(\mathcal{C} \times \mathcal{C}'), \mathcal{C}_{L}
    \times \mathcal{C}'_{L'}),\]
  and hence using \cref{cor:spanpresprod} again we have
  \[ \SPAN_{(L,L')}(\Span_{(F,F')}(\mathcal{C} \times
    \mathcal{C}')^{\op}) \simeq
    \SPAN_{L}(\Span_{F}(\mathcal{C})^{\op}) \times
    \SPAN_{L'}(\Span_{F'}(\mathcal{C}')^{\op}),\]
  as required.
\end{proof}

\begin{cor}\label{cor:bispanmon}
  Suppose $(\mathcal{C}, \mathcal{C}_{F}, \mathcal{C}_{L})$ is a bispan
  triple and $\mathcal{C}$ has a (symmetric) monoidal structure such
  that the tensor product is a morphism of bispan triples
  \[ \otimes \colon (\mathcal{C} \times \mathcal{C},
    \mathcal{C}_{F}\times \mathcal{C}_{F}, \mathcal{C}_{L}\times
    \mathcal{C}_{L}) \to (\mathcal{C},
    \mathcal{C}_{F},\mathcal{C}_{L}),\]
  \ie{}
  \begin{enumerate}[(1)]
  \item both $\mathcal{C}_{F}$ and $\mathcal{C}_{L}$ are closed under
    $\otimes$,
  \item given a pair of pullback squares
  \[
    \begin{tikzcd}
      w \arrow{r}{v}  \arrow{d}{u} & z \arrow{d}{g} \\
      x \arrow{r}{f} & y,
    \end{tikzcd}
    \qquad
    \begin{tikzcd}
      w' \arrow{r}{v'}  \arrow{d}{u'} & z' \arrow{d}{g'} \\
      x' \arrow{r}{f'} & y',
    \end{tikzcd}
  \]
  with $f$ and $f'$ either both in $F$ or both in $L$, the commutative
  square
    \[
    \begin{tikzcd}
    w \otimes w' \arrow{r} \arrow{d} & z \otimes z' \arrow{d} \\
    x \otimes x' \arrow{r} & y \otimes y'
  \end{tikzcd}
\]
is cartesian,
\item given morphisms $f \colon x \to y$, $f' \colon x' \to y'$ in
  $\mathcal{C}_{L}$ and $g \colon y \to z$, $g' \colon y' \to z'$, the
  diagram
  \[
  \begin{tikzcd}
    {} & \bullet \otimes \bullet \arrow{r} \arrow{dd} \arrow{dl} & \bullet
    \otimes \bullet \arrow{dd}{g_{*}f \otimes g'_{*}f'} \\
    x \otimes x' \arrow{dr}{f \otimes f'} \\
    {} & y \otimes y' \arrow{r}{g \otimes g'} & z \otimes z'
  \end{tikzcd}
  \]
  obtained as the tensor product of the distributivity diagrams for
  $(f,g)$ and $(f',g')$, is a distributivity diagram for $(f \otimes
  f', g \otimes g')$.\footnote{Note that the previous condition implies that
  the square in this diagram is cartesian; the condition can therefore
  be
  interpreted as asking for the natural map
  \[ g_{*}f \otimes g'_{*}f' \to (g \otimes g')_{*}(f \otimes f')\]
  arising from this cartesian square to be an equivalence.}
  \end{enumerate}
  Then $\BISPAN_{F,L}(\mathcal{C})$ inherits a
  (symmetric) monoidal structure from that on $\mathcal{C}$.  \qed
\end{cor}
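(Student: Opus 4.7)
The plan is to promote the given hypotheses on $\otimes$ to the statement that $(\mathcal{C}, \mathcal{C}_F, \mathcal{C}_L)$ carries the structure of a (commutative) algebra in the cartesian symmetric monoidal structure on $\Trip$, and then to apply the functoriality and product-preservation of $\BISPAN$. First I would observe that conditions (1)--(3) in the statement correspond precisely to the requirements for the tensor product functor
\[ \otimes \colon (\mathcal{C} \times \mathcal{C}, \mathcal{C}_F \times \mathcal{C}_F, \mathcal{C}_L \times \mathcal{C}_L) \to (\mathcal{C}, \mathcal{C}_F, \mathcal{C}_L) \]
to be a morphism of bispan triples in the sense of the definition preceding the functor $\BISPAN \colon \Trip \to \CatIT$: condition (1) says $\otimes$ takes $\mathcal{C}_F$ and $\mathcal{C}_L$ into themselves, condition (2) says $\otimes$ preserves pullbacks along morphisms in $\mathcal{C}_F$ and $\mathcal{C}_L$, and condition (3) says $\otimes$ preserves distributivity diagrams.

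Next I would upgrade this to an algebra structure on the object $(\mathcal{C}, \mathcal{C}_F, \mathcal{C}_L)$ of $\Trip$. The data of a (symmetric) monoidal structure on $\mathcal{C}$ in $\CatI$ is encoded by a (commutative) algebra structure in the cartesian symmetric monoidal structure on $\CatI$, given by functors $\mathcal{C}^{\times n} \to \mathcal{C}$ satisfying coherent associativity (and commutativity) conditions. The hypotheses of the corollary imply, by iterating conditions (1)--(3), that each of these multi-ary tensor product functors is also a morphism of bispan triples from $(\mathcal{C}, \mathcal{C}_F, \mathcal{C}_L)^{\times n}$ to $(\mathcal{C}, \mathcal{C}_F, \mathcal{C}_L)$, and that the coherence data lifts to $\Trip$. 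Thus $(\mathcal{C}, \mathcal{C}_F, \mathcal{C}_L)$ defines a (commutative) algebra in $\Trip$ with respect to its cartesian symmetric monoidal structure.

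Finally I would appeal to the fact that the functor $\BISPAN \colon \Trip \to \CatIT$ preserves finite products, which follows by combining \cref{cor:bispan-prod} with the obvious compatibility with the terminal object. Any product-preserving functor between \icats{} with finite products sends (commutative) algebras in the cartesian symmetric monoidal structure on the source to (commutative) algebras in the cartesian symmetric monoidal structure on the target. Applying this to our algebra $(\mathcal{C}, \mathcal{C}_F, \mathcal{C}_L)$ produces a (commutative) algebra in $\CatIT$ with respect to its cartesian symmetric monoidal structure, whose underlying \itcat{} is $\BISPAN_{F,L}(\mathcal{C})$. Since a (commutative) algebra in $\CatIT$ for the cartesian product is by definition a (symmetric) monoidal \itcat{}, this furnishes $\BISPAN_{F,L}(\mathcal{C})$ with the desired structure. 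The only nontrivial point in carrying this out is checking that the coherence conditions for the algebra structure on $\mathcal{C}$ in $\CatI$ lift to $\Trip$; since the forgetful functor $\Trip \to \CatI$ is faithful (indeed, $\Map_{\Trip}$ is a subspace of $\Map_{\CatI}$), this reduces to verifying the pointwise conditions (1)--(3), so no new data is required.
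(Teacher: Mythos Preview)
Your proposal is correct and matches the paper's intended (implicit) argument: the corollary carries a \qed{} because it is the bispan analogue of \cref{cor:Spansymmmon}, whose one-line proof is exactly that a product-preserving functor takes (commutative) algebras to (commutative) algebras. You have simply spelled out in more detail why the hypotheses make $(\mathcal{C},\mathcal{C}_F,\mathcal{C}_L)$ a (commutative) algebra in $\Trip$ and then invoked \cref{cor:bispan-prod}, which is precisely what the paper intends.
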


\begin{ex}\label{ex:bispancoprod}
  Suppose $(\mathcal{C}, \mathcal{C}_{F},\mathcal{C}_{L})$ is a bispan
  triple such that $\mathcal{C}$ is extensive and  morphisms in
  $\mathcal{C}_{F}$ and $\mathcal{C}_{L}$ are closed under coproducts.
  Then the coproduct satisfies the conditions
  of \cref{cor:bispanmon}: The descent condition implies that
  coproducts of cartesian squares are cartesian, and the condition on
  distributivity diagrams amounts to asking for the natural map
  \[ g_{*}f \amalg g'_{*}f' \to (g \amalg g')_{*}(f \amalg f')\]
  to be an equivalence; this is true because by descent we have
  \[
    \begin{split}
\Map_{/z \amalg z'}(u \amalg u', g_{*}f \amalg g'_{*}f') &  \simeq
\Map_{/z}(u, g_{*}f) \times \Map_{/z'}(u', g'_{*}f') \\ & \simeq
\Map_{/y}(g^{*}u, f) \times \Map_{/y'}(g'^{*}u',f') \\
 & \simeq \Map_{/y \amalg y'}(g^{*}u \amalg g'^{*}u', f \amalg f') \\
 & \simeq \Map_{/y \amalg y'}((g \amalg g')^{*}(u \amalg u'), f \amalg
 f') \\
 & \simeq \Map_{/y \amalg y'}(u \amalg u', (g \amalg g')_{*}(f \amalg
 f')).
    \end{split}
 \]
 for an object $u \amalg u'$ over $z \amalg z'$. The coproduct
 therefore induces a symmetric monoidal structure on
 $\BISPAN_{F,L}(\mathcal{C})$.\footnote{However, the cartesian
 product in $\mathcal{C}$ does not typically give a symmetric monoidal
 structure on bispans --- we do \emph{not} in general have an
 equivalence between $g_{*}f \times g'_{*}f'$ and
 $(g \times g')_{*}(f \times f')$.}
\end{ex}

\begin{remark}\label{rmk:coprodisprodinbispan}
  Suppose $\mathcal{C}$ is a locally cartesian closed and extensive
  \icat{}. Then the symmetric monoidal structure on
  $\Bispan(\mathcal{C})$ induced by the coproduct in $\mathcal{C}$ is
  a cartesian product: we have
  \[
    \begin{split}
      \Map_{\Bispan(\mathcal{C})}(c, x \amalg y) & \simeq \{c \from
      a \to b \to x \amalg y \} \\
      & \simeq \{c \from a_{x} \amalg a_{y} \to b_{x} \amalg b_{y} \to
      x \amalg y \} \\
      & \simeq \{c \from a_{x} \to b_{x} \to x\} \times \{c \from a_{y} \to
      b_{y} \to y \} \\
      & \simeq \Map_{\Bispan(\mathcal{C})}(c, x) \times \Map_{\Bispan(\mathcal{C})}(c, y).
    \end{split}
  \]
  Moreover, we have the same identification for \icats{} of
  morphisms, so this is actually an $(\infty,2)$-categorical product.
  However, this is \emph{not} a coproduct in bispans: in particular,
  $\emptyset$ is not an initial object, since we have
  \[ \Map_{\Bispan(\mathcal{C})}(\emptyset, x) \simeq \{\emptyset
    \from \emptyset \to b \to x\} \simeq \mathcal{C}_{/x}^{\simeq}\]
  which is not in general contractible.
\end{remark}

\begin{remark}
  For any \icat{} $\mathcal{C}$ we can consider the minimal bispan
  triple $\mathcal{C}^{\flat} := (\mathcal{C}, \mathcal{C}^{\simeq},
  \mathcal{C}^{\simeq})$ where the morphisms in $\mathcal{C}_{F}$ and
  $\mathcal{C}_{L}$ are just the equivalences. Any functor gives a
  morphism of minimial bispan triples, so we have a functor
  \[ (\blank)^{\flat} \colon \CatI \to \Trip \] that is moreover fully
  faithful. The \icat{} $\Trip$ is then a $\CatI$-module via cartesian
  products with $(\blank)^{\flat}$. We also have a natural equivalence
  $\BISPAN(\mathcal{C}^{\flat}) \simeq \mathcal{C}$ as all functors
  are distributive. This means the functor
  $\BISPAN \colon \Trip \to \CatIT$ is a morphism of $\CatI$-modules,
  which we can view as a functor of \itcats{} using the recent results
  of Heine~\cite{HeineEnrMod}. Moreover, the natural transformation
  $\Span_{F}(\mathcal{C}) \to \BISPAN_{F,L}(\mathcal{C})$ is a
  transformation of $\CatI$-modules, which means the universal
  property of $\BISPAN_{F,L}(\mathcal{C})$ is actually
  $\CatI$-natural.
\end{remark}

\section{Examples of distributivity}\label{sec:ex}
\subsection{Bispans in finite sets and symmetric monoidal
  $\infty$-categories}\label{subsec:BispanFin} 
In this subsection we consider the relationship between symmetric
monoidal \icats{} and bispans in finite sets. We first recall that symmetric monoidal
\icats{} can be described in terms of functors from spans of finite
sets, and then show that the resulting functor is distributive \IFF{}
the tensor product commutes with finite coproducts in each
variable. Our universal property then gives a (product-preserving)
functor from bispans in finite sets, which we can interpret as a
semiring structure with the coproduct as addition and the tensor
product as multiplication.

\begin{notation}
  We write $\xF$ for the category of finite sets and $\xF_{*}$ for
  the category of finite pointed sets and base-point preserving maps;
  every object of $\xF_{*}$ is isomorphic to one of the form
  $\angled{n} := (\{0,\ldots,n\},0)$. For $I \in \xF$ we write $I_{+}$
  for the pointed set $(I \amalg \{*\}, *)$ obtained by adding a
  disjoint base point to $I$.
\end{notation}

\begin{defn}
  If $\mathcal{C}$ is an \icat{} with finite products, a
  \emph{commutative monoid} in $\mathcal{C}$ is a functor $\Phi \colon
  \xF_{*} \to
  \mathcal{C}$ such that for every $n=0,1,\ldots$ the map
  \[ \Phi(\angled{n}) \xto{(\Phi(\rho_{i}))_{i=1,\ldots,n}}
    \prod_{i=1}^{n} \Phi(\angled{1}) \]
  is an equivalence, where $\rho_{i} \colon \angled{n} \to \angled{1}$
  is defined by 
  \[
  \rho_{i}(j) = \begin{cases}
  $1$ & \text{if $i = j$}\\
   $0$ & \text{otherwise}.
   \end{cases}
  \] We
  write $\CMon(\mathcal{C})$ for the full subcategory of
  $\Fun(\xF_{*}, \mathcal{C})$ spanned by the commutative monoids.
\end{defn}

\begin{notation}
  If $\mathcal{C}, \mathcal{D}$ are \icats{} with finite products, we
  write $\Fun^{\times}(\mathcal{C},\mathcal{D})$ for the full
  subcategory of $\Fun(\mathcal{C},\mathcal{D})$ spanned by the
  functors that preserve finite products.
\end{notation}

\begin{remark}
  The category $\xF_{*}$ can be identified with the subcategory of
  $\Span(\xF)$ whose morphisms are the spans $I \xfrom{f} J \xto{g} K$
  where $f$ is injective, with this corresponding to the morphism
  $I_{+} \to K_{+}$ given by
  \[ i \mapsto
    \begin{cases}
      g(j), & i = f(j),\\
      *, & \txt{otherwise};
    \end{cases}
  \]
  we write $j \colon \xF_{*} \to \Span(\xF)$ for this subcategory inclusion.
\end{remark}

The following description of commutative monoids in terms of spans
seems to have been first proved by
Cranch~\cite{CranchThesis,CranchSpan}; other proofs, as special cases
of various generalizations, can be found in
\cite{GlasmanStrat,HarpazAmbi,norms}.
\begin{propn}\label{lem:cranch}
  Let $\mathcal{C}$ be an \icat{} with finite
  products. Restriction along the inclusion
  $j \colon \xF_{*} \to \Span(\xF)$
  gives an equivalence
  \[ \Fun^{\times}(\Span(\xF), \mathcal{C}) \isoto \txt{CMon}(\mathcal{C}), \]
  with the inverse given by right Kan extension along the functor $j$.
\end{propn}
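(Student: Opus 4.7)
The restriction functor $j^* \colon \Fun^{\times}(\Span(\xF), \mathcal{C}) \to \Fun(\xF_*, \mathcal{C})$ is well-defined, and I would first verify that it lands in $\CMon(\mathcal{C})$. For this, observe that in $\Span(\xF)$ the finite set $\{1, \dots, n\}$ is canonically the product of $n$ copies of the singleton, with product-projection spans $\{1, \dots, n\} \xfrom{\iota_i} \{i\} \xto{\sim} \{1\}$. These are precisely the images under $j$ of the Segal projections $\rho_i \colon \angled{n} \to \angled{1}$. Hence, for $\Phi \in \Fun^{\times}(\Span(\xF), \mathcal{C})$, the composite $\Phi \circ j$ satisfies the Segal condition and so defines a commutative monoid.

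For the inverse, I would construct it as the pointwise right Kan extension $j_*$ along $j$. The essential step is to show, for $M \in \CMon(\mathcal{C})$ and $I$ a finite set, that $(j_*M)(I) \simeq M(\angled{1})^I$. The limit defining $(j_*M)(I)$ is indexed by the comma $\infty$-category whose objects are pairs $(\angled{n}, \sigma)$ with $\sigma$ a span from $I$ to $j(\angled{n})$. Applying the Segal splitting $M(\angled{n}) \simeq M(\angled{1})^n$, the comma diagram rewrites in terms of products of copies of $M(\angled{1})$ indexed by the total spaces of spans; the monoid structure of $M$ then forces the coherent grouping-by-fibres that identifies the limit with $M(\angled{1})^I$. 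Granted this formula, product preservation of $j_*M$ follows from $M(\angled{1})^{I \amalg J} \simeq M(\angled{1})^I \times M(\angled{1})^J$, the unit $M \to j^*j_*M$ evaluated at $\angled{n}$ is the Segal equivalence, and the counit $j_*j^*\Phi \to \Phi$ at a product-preserving $\Phi$ reduces at $I$ to the product-preservation equivalence $\Phi(\angled{1})^I \simeq \Phi(I)$.

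The hard part is the identification of the right Kan extension $(j_*M)(I)$ with $M(\angled{1})^I$: the naive guess that $(I_+, \id_I) \in (I \downarrow j)$ is initial fails, since general spans $I \xfrom{u} S \xto{v} j(\angled{n})$ with non-injective $u$ are not in the image of $j$. One must instead use the coherent commutative monoid structure of $M$ to realize the comma diagram, up to coherent homotopy, as essentially a constant diagram on $M(\angled{1})^I$. This is the technical heart of the proposition, handled by varying auxiliary constructions---operadic models, augmented span categories, intermediate Kan extensions---in the cited proofs of Cranch, Bachmann--Hoyois, Glasman, and Harpaz.
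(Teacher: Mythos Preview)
Your proposal is correct in its outline and, in particular, your identification of the key difficulty (the failure of $(I_+,\id_I)$ to be initial in the comma category, forcing one to genuinely use the monoid structure of $M$ to compute the right Kan extension) is exactly right. However, you do not actually carry out that computation: you defer it to the cited references of Cranch, Bachmann--Hoyois, Glasman, and Harpaz.

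The paper's own proof does precisely the same thing, only more tersely: it is the single sentence ``In the stated form, this is \cite[Proposition C.1]{norms}.'' So your approach and the paper's coincide---both treat this as a known result to be cited rather than reproved---with the difference that you have additionally sketched the shape of the argument and flagged where the real work lies. That extra context is accurate and useful, but since neither you nor the paper supplies the technical core, there is no substantive methodological difference to compare.
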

\begin{proof}
  In the stated form, this is \cite[Proposition C.1]{norms}.
\end{proof}

\begin{remark}\label{rmk:SpanFftrdesc}
  The functor $\Phi \colon \Span(\xF) \to \CatI$ corresponding to a
  symmetric monoidal \icat{} $\mathcal{C}$ admits the following
  description:
  \begin{itemize}
  \item $\Phi(I) \simeq \prod_{i \in I} \mathcal{C} \simeq
    \Fun(I,\mathcal{C})$,
  \item for $f \colon I \to J$, the functor $f^{\ostar} \colon
    \Fun(J,\mathcal{C}) \to \Fun(I, \mathcal{C})$ is that given by
    composition with $f$,
  \item for $f \colon I \to J$, the functor \[
      f_{\otimes} \colon
    \Fun(I, \mathcal{C}) \simeq \prod_{j \in J} \prod_{i \in I_{j}}
    \mathcal{C} \to \prod_{j \in J} \mathcal{C}\]
  is given by tensoring the components corresponding to the preimages
  of each $j \in J$.
\end{itemize}
In particular, for $q \colon I \to *$ the functor $q_{\otimes} \colon
\Fun(I, \mathcal{C}) \to \mathcal{C}$ takes $\phi \colon I \to \mathcal{C}$ to $\bigotimes_{i
  \in I} \phi(i)$, while $q^{\ostar} \colon \mathcal{C} \to \Fun(I, \mathcal{C})$ is the diagonal functor.
\end{remark}

\begin{propn}\label{prop:preserv}
  Suppose $\Phi \colon \Span(\xF) \to \CatI$ is a product-preserving
  functor, corresponding to a symmetric monoidal structure on
  $\mathcal{C} = \Phi(*)$. Then $\Phi$ is distributive \IFF{} $\mathcal{C}$ has finite
  coproducts and the symmetric monoidal structure is compatible with
  these, \ie{} the tensor product preserves finite coproducts in
  each variable.
\end{propn}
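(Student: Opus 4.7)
The plan is to translate the abstract distributivity condition into explicit statements using the concrete description of $\Phi$ in Remark~\ref{rmk:SpanFftrdesc}, and then reduce each of the three clauses of $L$-distributivity to familiar facts about coproducts and the tensor product.

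First, since $f^{\ostar}$ is precomposition with $f \colon I \to J$, its left adjoint is left Kan extension, which exists iff $\mathcal{C}$ admits colimits over each discrete fibre $f^{-1}(j)$. Ranging over all $f$ in $\xF$ (in particular $\emptyset \to \ast$ and $I \to \ast$ for arbitrary finite $I$), this is equivalent to $\mathcal{C}$ admitting finite coproducts, in which case $f_{\oplus}(\phi)(j) \simeq \coprod_{i \in f^{-1}(j)} \phi(i)$. Once these adjoints exist, the Beck--Chevalley condition for pullback squares in $\xF$ is automatic: $\xF$ is extensive, so any pullback square decomposes fibre-by-fibre into a disjoint union of squares, and both composites in the mate transformation compute the same fibrewise coproduct.

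The main step is the distributivity condition itself. For $u \colon I \to J$ and $v \colon J \to K$, I would unpack the distributivity diagram \eqref{eq:distdiagF} using Remark~\ref{rmk:SpanFftrdesc} and identify the distributivity transformation $h_{\oplus}\tilde{v}_{\otimes}\epsilon^{\ostar} \to v_{\otimes}u_{\oplus}$ evaluated at $\phi \in \Fun(I,\mathcal{C})$ and $k \in K$ with the canonical map
\[
\coprod_{(i_j) \in \prod_{j \in J_k} I_j} \bigotimes_{j \in J_k} \phi(i_j) \longrightarrow \bigotimes_{j \in J_k} \coprod_{i \in I_j} \phi(i).
\]
Granted this identification, both implications follow immediately. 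Specializing to $K = \ast$, $J = \{a,b\}$, $I = \{\ast\} \amalg A$ (with $\ast \mapsto a$ and $A \to b$), and $\phi(\ast) = c$, $\phi|_A = \psi$, the displayed map becomes the canonical morphism $\coprod_{\alpha \in A}(c \otimes \psi(\alpha)) \to c \otimes \coprod_{\alpha \in A}\psi(\alpha)$, whose invertibility for all $c, A, \psi$ is exactly preservation of finite coproducts by $c \otimes (\blank)$; by symmetry of the tensor product this gives preservation in each variable. Conversely, if $\otimes$ preserves finite coproducts in each variable, then iteratively pulling the coproducts out of the tensor on the right-hand side factors the above map as a composite of equivalences.

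The main obstacle is therefore the identification in the third step of the abstractly defined composite of units, counits, and functoriality equivalences from Definition~\ref{def:L-dis} with the concrete distribute map displayed above. This amounts to checking that, under the explicit formulas of Remark~\ref{rmk:SpanFftrdesc}, the unit for $u_{\oplus} \dashv u^{\ostar}$ restricts on each fibre to the coprojection into a coproduct while the counit for $h_{\oplus} \dashv h^{\ostar}$ restricts to evaluation at a single summand, so that their composite (after the intermediate $\Phi$-equivalence coming from the pullback square) realizes precisely the canonical distribute morphism.
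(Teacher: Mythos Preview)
Your proposal is correct and follows essentially the same approach as the paper: reduce left adjointability to the existence of finite coproducts via left Kan extension along discrete fibres, identify the distributivity transformation with the canonical ``distribute'' map $\coprod_{(i_j)}\bigotimes_j \phi(i_j) \to \bigotimes_j \coprod_i \phi(i)$, and specialize to a two-element $J$ to extract preservation of finite coproducts in one variable. Your test diagram ($K=\ast$, $J=\{a,b\}$, $I=\{\ast\}\amalg A$) is the same as the paper's (which writes it as $K\amalg\ast \to \ast\amalg\ast \to \ast$), and your explicit flagging of the identification of the abstract composite in Definition~\ref{def:L-dis} with the concrete distribute map is appropriate --- the paper states this identification without further comment.
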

\begin{proof}
  For $I \in \xF$, the functor $q^{\ostar} \colon \mathcal{C} \to
  \Fun(I, \mathcal{C})$ corresponding to the morphism $q \colon I \to
  *$ is the diagonal. This
  has a left adjoint \IFF{} $\mathcal{C}$ admits all $I$-indexed
  colimits, \ie{} $\mathcal{C}$ has  $I$-indexed coproducts. 
  Moreover, if $\mathcal{C}$ has finite coproducts, then the functor $f^{\ostar}
  \colon \Fun(J, \mathcal{C}) \to \Fun(I,\mathcal{C})$ given by
  composition with $f \colon I \to J$ has a left adjoint for any $f$, 
  since all pointwise left Kan extensions along $f$ exist in
  $\mathcal{C}$. Given a cartesian square
\[
  \begin{tikzcd}
    I' \arrow{r}{f'} \arrow{d}{g'} &  J' \arrow{d}{g} \\
    I \arrow{r}{f} & J
  \end{tikzcd}
\]
in $\xF$, the mate transformation
\[ g'_{\oplus}f'^{\ostar} \to f^{\ostar}g_{\oplus}\]
is then automatically an equivalence, since for $\phi \colon
J' \to \mathcal{C}$ this is given at $i \in I$ by the natural map
\[ \coprod_{x \in I'_{i}} \phi(f'x) \to \coprod_{y \in J'_{f(i)}}
  \phi(y),\]
which is an equivalence since these fibres are canonically
isomorphic. This proves that $\Phi$ is left adjointable \IFF{}
$\mathcal{C}$ admits finite coproducts.

Given morphisms $l \colon I \to J$ and $f \colon J \to K$ in $\xF$,
we have the distributivity square
\[
    \begin{tikzcd}
      {} & J' \arrow{dd} \arrow{dl}[swap]{\epsilon} \arrow{r}{f'} & K'
      \arrow{dd}{h=f_{*}l} \\
      I \arrow[swap]{dr}{l} \\
      {} & J \arrow{r}{f} & K ,
    \end{tikzcd}
\]
where $K'_{k} \cong \prod_{j \in J_{k}} I_{j}$. The distributivity
transformation $h_{\oplus}f'_{\otimes}\epsilon^{\ostar} \to
f_{\otimes}l_{\oplus}$ is given for $\phi \colon I \to \mathcal{C}$
at $k \in K$ by the canonical map
\[ \coprod_{(i_{j})_{j} \in \prod_{j \in J_{k}} I_{j}} \bigotimes_{j
    \in J_{k}} \phi(i_{j}) \to
  \bigotimes_{j \in J_{k}} \left( \coprod_{i \in I_{j}}
    \phi(i)\right). \]
This is an equivalence if $\otimes$ preserves finite coproducts in
each variable. Conversely, for $K \in \xF$ we have in particular the
distributivity diagram
  \begin{equation} \label{eq:dis-fin}
    \begin{tikzcd}
      {} & K \amalg K \arrow{dd}{q \amalg q} \arrow{dl}{\epsilon} \arrow{r}{\nabla} & K
      \arrow{dd}{q} \\
      K \amalg * \arrow[swap]{dr}{q \amalg \id} \\
      {} & * \amalg * \arrow{r}{\nabla} & * ,
    \end{tikzcd}
  \end{equation}
  where $q$ is the unique map $K \to *$ and $\nabla$ are fold
  maps. The corresponding distributivity transformation is given for
  $\alpha \colon K \amalg * \to \mathcal{C}$ by
  \[ \coprod_{k \in K} \left( \alpha(k) \otimes \alpha(*)\right) \to
    \left(\coprod_{k \in K} \alpha(k) \right) \otimes \alpha(*).\]
  If $\Phi$ is distributive then this is an equivalence for all $K$
  and $\alpha$, which is precisely the condition that $\otimes$
  preserves finite coproducts in each variable.
\end{proof}

\begin{cor}\label{cor:prod-pres}
  Product-preserving functors $\BISPAN(\xF) \to \CATI$ correspond to
  symmetric monoidal \icats{} that are compatible with finite coproducts.
\end{cor}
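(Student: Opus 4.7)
The plan is to chain together the universal property of bispans with the span-based description of symmetric monoidal \icats{}. Applying \cref{thm:main2} to the maximal bispan triple $(\xF, \xF, \xF)$ yields, via restriction along $j \colon \Span(\xF) \to \BISPAN(\xF)$, an equivalence between functors $\BISPAN(\xF) \to \CATI$ and distributive functors $\Span(\xF) \to \CATI$. Combining this with \cref{lem:cranch}, which identifies product-preserving functors $\Span(\xF) \to \CatI$ with symmetric monoidal \icats{}, and with \cref{prop:preserv}, which characterizes distributivity as the condition that the tensor product preserves finite coproducts in each variable, will produce the desired description.

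The only point requiring verification is that the equivalence of \cref{thm:main2} identifies product-preserving functors on the two sides. For this I would first observe that the cartesian products in both $\Span(\xF)$ and $\BISPAN(\xF)$ are computed by disjoint union in $\xF$: for $\Span(\xF)$ this is \cref{ex:spancocart}, and for $\BISPAN(\xF)$ it follows from \cref{ex:bispancoprod} together with \cref{rmk:coprodisprodinbispan} (since $\xF$ is extensive and locally cartesian closed). Because the inclusion $j$ preserves disjoint unions by construction, and is essentially surjective on underlying \igpds{} by \cref{thm:main2}, a functor $\Phi \colon \BISPAN(\xF) \to \CATI$ preserves finite products \IFF{} its restriction $\Phi \circ j$ does. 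Putting these matches of product-preservation together with the preceding two results then yields the corollary.

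There is no substantial obstacle — this is essentially a bookkeeping result, with all of the genuinely new content sitting in \cref{thm:main2} and in the earlier analysis of distributivity for spans of finite sets. The one place where one has to be even mildly careful is the identification of both cartesian products with disjoint union; once that is in hand, the essential surjectivity of $j$ on underlying \igpds{} ensures that product-preservation translates cleanly across the equivalence of \cref{thm:main2}.
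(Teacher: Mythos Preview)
Your proposal is correct and follows essentially the same argument as the paper: invoke \cref{thm:main2} for the equivalence with distributive functors, use \cref{rmk:coprodisprodinbispan} to see that products on both sides are given by disjoint union so that product-preservation transfers across $j$, and then appeal to \cref{prop:preserv} and \cref{lem:cranch}. The paper's proof is terser but makes exactly the same moves.
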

\begin{proof}
  By \cref{thm:main2}, functors $\BISPAN(\xF) \to
  \CATI$ correspond to distributive functors $\Span(\xF) \to
  \CATI$. Moreover, from \cref{rmk:coprodisprodinbispan} we know that
  the product in $\BISPAN(\xF)$ is given by the coproduct in $\xF$,
  just as in $\Span(\xF)$, so product-preserving functors from
  $\BISPAN(\xF)$ correspond to product-preserving distributive
  functors under this equivalence. By \cref{prop:preserv} and 
  \cref{lem:cranch}, the latter are equivalent to symmetric monoidal
  \icats{} that are compatible with finite coproducts.
\end{proof}

\subsection{Bispans in spaces and symmetric monoidal
  $\infty$-categories}\label{sec:spacesymmmon}
In this section we consider a variant of the results of the preceding
one: symmetric monoidal \icats{} can also be
described in terms of spans of spaces, and we will prove that the resulting
functor is distributive (with respect to all morphisms of spaces)
\IFF{} the tensor product commutes with colimits indexed by
\igpds{}. This applies in many examples, since most naturally
occurring tensor products are compatible with all colimits.

\begin{notation}
  We write $\mathcal{S}_{\fin}$ for the subcategory of $\mathcal{S}$
  containing only the morphisms whose fibres are equivalent to finite
  sets. Then $(\mathcal{S}, \mathcal{S}_{\fin})$ is a span pair.
\end{notation}

\begin{remark}
  If $f \colon X \to I$ is a morphism in $\mathcal{S}_{\fin}$ and $I$
  is a finite set, then the straightening equivalence
  \[ \colim_{I}\colon \Fun(I, \mathcal{S}) \isoto \mathcal{S}_{/I} \]
  implies that $X$ is an $I$-indexed coproduct of finite sets, and so
  is itself a finite set. It follows that the functor $\Span(\xF) \to
  \Span_{\fin}(\mathcal{S})$ induced by the morphism of span pairs
  $(\xF,\xF) \to (\mathcal{S},\mathcal{S}_{\fin})$ is fully faithful.
\end{remark}

\begin{propn}\label{prop:bh}
  Let $\mathcal{C}$ be a complete \icat{}.
  Right Kan extension along the fully faithful functor $\Span(\xF) \to
  \Span_{\fin}(\mathcal{S})$ identifies $\Fun^{\times}(\Span(\xF),
  \mathcal{C})$ with the full subcategory
  $\Fun^{\txt{RKE}}(\Span_{\fin}(\mathcal{S}), \mathcal{C})$ of
  $\Fun(\Span_{\fin}(\mathcal{S}), \mathcal{C})$ spanned by functors
  $\Phi$ such that $\Phi|_{\mathcal{S}^{\op}}$ is right Kan extended
  from $\{*\}$.
\end{propn}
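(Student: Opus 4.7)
The plan is to exploit that $j$ is fully faithful, so the right Kan extension $j_* \colon \Fun(\Span(\xF), \mathcal{C}) \to \Fun(\Span_{\fin}(\mathcal{S}), \mathcal{C})$ (which exists by completeness of $\mathcal{C}$) is itself fully faithful, with essential image characterized by the unit $\Phi \to j_* j^* \Phi$ being an equivalence. The proposition then reduces to three verifications: (a) restriction $j^*$ sends $\Fun^{\txt{RKE}}$ into $\Fun^{\times}(\Span(\xF), \mathcal{C})$; (b) $j_*$ sends $\Fun^{\times}$ into $\Fun^{\txt{RKE}}$; and (c) for $\Phi \in \Fun^{\txt{RKE}}$ the unit is an equivalence.

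Part (a) is immediate: if $\Phi|_{\mathcal{S}^{\op}}$ is right Kan extended from $\{*\}$, then $\Phi(I) \simeq \Phi(*)^I$ for $I \in \xF$, which is precisely product-preservation of $j^*\Phi$. For (b) and (c) the technical heart is to show that for a product-preserving $\Psi$ and $X \in \mathcal{S}$, the pointwise formula $j_*\Psi(X) = \lim_{(I,\alpha)\in j\downarrow X}\Psi(I)$ evaluates to the power $\Psi(*)^X = \lim_X \Psi(*)$. When $X$ is a finite set, the slice $j\downarrow X$ has a terminal object $(X, \id_X)$ and the limit equals $\Psi(X) \simeq \Psi(*)^X$ by product-preservation. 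For general $X$, one plans to write $X$ as a colimit of finite sets in $\mathcal{S}$ and argue that both sides convert this colimit into the same limit in $\mathcal{C}$: the power side by definition, and the Kan extension side via the functoriality of $j_*\Psi$ in $X$ combined with how the slice $j \downarrow (\blank)$ interacts with colimits.

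Granting this computation, (b) follows since $(j_*\Psi)|_{\mathcal{S}^{\op}}(X) \simeq \Psi(*)^X$ is precisely the condition of being right Kan extended from $\{*\}$. For (c), given $\Phi \in \Fun^{\txt{RKE}}$, the unit $\Phi(X) \to j_*j^*\Phi(X)$ on underlying objects is the equivalence $\Phi(*)^X \simeq (j^*\Phi)(*)^X = \Phi(*)^X$. To upgrade this to an equivalence of functors on $\Span_{\fin}(\mathcal{S})$, one uses that a functor on $\Span_{\fin}(\mathcal{S})$ is determined by its backward and forward actions, and that both $\Phi$ and $j_*j^*\Phi$ have matching backward action (being right Kan extended from $\{*\}$ with the same value) and matching forward action (any morphism of $\mathcal{S}_{\fin}$ is a finite-fibre map, so its pushforward is fibrewise given by the commutative monoid multiplication on $\Phi(*)$, which is encoded in $j^*\Phi$ by \cref{lem:cranch}).

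The main technical obstacle is the computation of $j_*\Psi(X)$ for general $X \in \mathcal{S}$. The naive inclusion $X \hookrightarrow j \downarrow X$ sending $x \in X$ to $(*, *\xto{x}X)$ is \emph{not} $\infty$-categorically final: its fibres over a general object $(I, I\from Y \to X)$ compute the finite sets $Y_x$, which need not be contractible. Circumventing this requires a more careful argument; one workable route is to reduce via transitivity of right Kan extensions through $\mathcal{S}^{\op}$, noting that $(j_*\Psi)|_{\mathcal{S}^{\op}}$ should coincide with the right Kan extension of $\Psi|_{\xF^{\op}}$ along $\xF^{\op} \hookrightarrow \mathcal{S}^{\op}$, which is easier to compute using that $\mathcal{S}$ is generated by $\xF$ under colimits and product-preservation of $\Psi|_{\xF^{\op}}$ converts these colimits into the desired powers.
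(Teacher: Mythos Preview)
The paper's proof is a one-line citation to \cite[Proposition C.18]{norms}, so there is no detailed argument to compare against. Your outline has the correct architecture (fully faithful $j_*$, characterize essential image via the unit), but you have not actually proved the proposition: the step you yourself flag as the ``main technical obstacle'' remains open, and this step is precisely the content of the cited result.

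Concretely, the assertion that $(j_*\Psi)|_{\mathcal{S}^{\op}}$ agrees with the right Kan extension of $\Psi|_{\xF^{\op}}$ along $\xF^{\op} \hookrightarrow \mathcal{S}^{\op}$ is a Beck--Chevalley statement for the square
\[
\begin{tikzcd}
\xF^{\op} \arrow{r} \arrow{d} & \mathcal{S}^{\op} \arrow{d} \\
\Span(\xF) \arrow{r}{j} & \Span_{\fin}(\mathcal{S}),
\end{tikzcd}
\]
and it is not formal: the vertical inclusions are not cofinal, and there is no factorization of $j$ through $\mathcal{S}^{\op}$, so ``transitivity of right Kan extensions'' does not apply directly. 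One has to analyze the slice $X/j$ in $\Span_{\fin}(\mathcal{S})$ (whose objects are spans $X \leftarrow Y \to I$ with $Y$ finite) and produce a cofinal comparison with $X$ itself, or argue by descent; either way this is real work that your sketch does not supply.

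Two smaller points: your pointwise formula for right Kan extension should use the slice $X/j$ (objects $(I, X \to j(I))$), in which $(X,\id_X)$ is \emph{initial}, not terminal, and limits simplify at initial objects. And in part (c), there is nothing to ``upgrade'': the unit $\Phi \to j_*j^*\Phi$ is already a natural transformation, so it suffices to check it is an equivalence at each object. What you do need (and have not shown) is that the unit \emph{map} realizes the abstract identification $\Phi(X) \simeq \Phi(*)^X \simeq (j_*j^*\Phi)(X)$, not merely that source and target are equivalent.
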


\begin{proof} 
  This is a special case of \cite[Proposition C.18]{norms}.%
\end{proof}

Combining this with \cref{lem:cranch}, we have:
\begin{cor}\label{cor:RKESpanfinCMon}
  Let $\mathcal{C}$ be a complete \icat{}. There is an equivalence
\[ \CMon(\mathcal{C}) \isoto
  \Fun^{\txt{RKE}}(\Span_{\fin}(\mathcal{S}), \mathcal{C}), \]
given by right Kan extension along $\xF_{*} \to \Span(\xF)
\hookrightarrow \Span_{\fin}(\mathcal{S})$.\qed
\end{cor}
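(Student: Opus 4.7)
The plan is to simply chain together the two equivalences that have just been established, using transitivity of right Kan extensions. Specifically, Lemma~\ref{lem:cranch} provides an equivalence
\[
  \Fun^{\times}(\Span(\xF), \mathcal{C}) \isoto \CMon(\mathcal{C})
\]
given by restriction along $j\colon \xF_{*} \hookrightarrow \Span(\xF)$, whose inverse is right Kan extension along $j$; and Proposition~\ref{prop:bh} provides an equivalence
\[
  \Fun^{\times}(\Span(\xF), \mathcal{C}) \isoto \Fun^{\txt{RKE}}(\Span_{\fin}(\mathcal{S}), \mathcal{C})
\]
given by right Kan extension along the fully faithful functor $i \colon \Span(\xF) \hookrightarrow \Span_{\fin}(\mathcal{S})$. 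Inverting the first equivalence and composing with the second therefore yields the desired equivalence
\[
  \CMon(\mathcal{C}) \isoto \Fun^{\txt{RKE}}(\Span_{\fin}(\mathcal{S}), \mathcal{C}).
\]

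The only thing left to verify is that this composite is given by right Kan extension along the composite functor $i \circ j \colon \xF_{*} \hookrightarrow \Span_{\fin}(\mathcal{S})$. This follows from the standard fact that right Kan extensions compose: if $F \colon \mathcal{A} \to \mathcal{B}$ and $G \colon \mathcal{B} \to \mathcal{D}$ are functors of \icats{}, then for any functor $\Phi \colon \mathcal{A} \to \mathcal{C}$ into a complete \icat{} $\mathcal{C}$, provided the relevant Kan extensions exist we have a canonical equivalence $(G \circ F)_{*}\Phi \simeq G_{*}(F_{*}\Phi)$ (see \cite{HTT}*{Proposition 4.3.2.8}). The completeness hypothesis on $\mathcal{C}$ ensures all the Kan extensions involved exist pointwise.

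There is essentially no obstacle in this argument, as it is purely formal once both inputs are available; the only mild subtlety is keeping track of which direction each equivalence goes (Lemma~\ref{lem:cranch} presents \emph{restriction} as the equivalence, so one must invert it to turn it into a right Kan extension functor before composing).
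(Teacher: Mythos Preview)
Your argument is correct and is exactly what the paper intends: the corollary is stated immediately after \cref{prop:bh} with only the phrase ``Combining this with \cref{lem:cranch}'' and a \qed, so the paper's proof is precisely the composition of the two equivalences you have written out. Your explicit invocation of transitivity of right Kan extensions (\cite{HTT}*{Proposition 4.3.2.8}) simply makes precise the one step the paper leaves implicit.
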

\begin{remark}
  In particular, symmetric monoidal \icats{} can be identified with
  functors $\Span_{\fin}(\mathcal{S}) \to \CatI$ whose restrictions to
  $\mathcal{S}^{\op}$ preserve limits. If
  $\mathcal{C}$ is a symmetric monoidal \icat{}, the corresponding
  functor $\Phi \colon \Span_{\fin}(\mathcal{S}) \to \CatI$ admits the
  following description (analogous to \cref{rmk:SpanFftrdesc}):
  \begin{itemize}
  \item for $X \in \mathcal{S}$, $\Phi(X) \simeq \lim_{x \in X}
    \Phi(\{x\}) \simeq \Fun(X, \mathcal{C})$,
  \item for $f \colon X \to Y$ in $\mathcal{S}$, the functor
    $f^{\ostar} \colon \Fun(Y, \mathcal{C}) \to \Fun(X, \mathcal{C})$
    is given by composition with $f$,
  \item for $g \colon E \to B$ in $\mathcal{S}_{\fin}$, the functor
    $g_{\otimes} \colon \Fun(E, \mathcal{C}) \to \Fun(B,\mathcal{C})$
    is given by tensoring fibrewise along $g$, \ie{} for $\phi \colon
    E \to B$ we have
    \[ (g_{\otimes} \phi)(b) \simeq \bigotimes_{e \in E_{b}} \phi(e).\]
  \end{itemize}
  In particular, for $q \colon X \to *$ the functor $q^{\ostar}$ is
  the diagonal functor, while if $X$ is a finite set the functor
  $q_{\otimes}$ is the $X$-indexed tensor product.
\end{remark}

We will identify when such functors from $\Span_{\fin}(\mathcal{S})$
are distributive with respect to bispan triples of the following form:
\begin{lemma}\label{prop:allow}
  Let $\mathcal{K}$ be a full subcategory of $\mathcal{S}$ with the
  following properties:
  \begin{itemize}
  \item if $p \colon E \to B$ is a morphism in $\mathcal{S}$ such that
    $B \in \mathcal{K}$ and $E_{b} \in \mathcal{K}$ for all $b \in B$,
    then $E \in \mathcal{K}$,
  \item $\mathcal{K}$ is closed under finite products.
  \end{itemize}
  This implies that morphisms in $\mathcal{S}$ whose fibres lie in $\mathcal{K}$
  are closed under composition, giving a subcategory
  $\mathcal{S}_{\mathcal{K}}$ of $\mathcal{S}$.
  Then $(\mathcal{S}, \mathcal{S}_{\fin}, \mathcal{S}_{\mathcal{K}})$
  is a bispan triple.
\end{lemma}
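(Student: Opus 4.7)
The plan is to verify the three conditions in Definition~\ref{def:bi-quad} in turn, relying on the fact that $\mathcal{S}$ is an $\infty$-topos (and in particular locally cartesian closed and presentable with pullbacks stable under base change).

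First I would check that $\mathcal{S}_{\mathcal{K}}$ is really a (wide) subcategory of $\mathcal{S}$. The terminal object $*$ lies in $\mathcal{K}$ as the empty product, so equivalences (having contractible fibres) are in $\mathcal{S}_{\mathcal{K}}$. For closure under composition, given $f \colon X \to Y$ and $g \colon Y \to Z$ with fibres in $\mathcal{K}$, the fibre $(gf)^{-1}(z)$ is the total space of the map $(gf)^{-1}(z) \to g^{-1}(z)$ whose base lies in $\mathcal{K}$ and whose fibres are fibres of $f$, hence in $\mathcal{K}$; by the first closure assumption on $\mathcal{K}$, $(gf)^{-1}(z) \in \mathcal{K}$. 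The same reasoning with $\mathcal{K}$ replaced by finite sets shows $\mathcal{S}_{\fin}$ is a subcategory.

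Next I would verify conditions (a) and (b) of \cref{def:bi-quad}: all pullbacks exist in $\mathcal{S}$, and a pullback of a morphism with fibres in $\mathcal{K}$ (resp.\ finite fibres) along any morphism has the same fibres, hence again lies in $\mathcal{S}_{\mathcal{K}}$ (resp.\ $\mathcal{S}_{\fin}$). Thus both $(\mathcal{S},\mathcal{S}_{\fin})$ and $(\mathcal{S},\mathcal{S}_{\mathcal{K}})$ are span pairs.

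The main step is condition (c). Since $\mathcal{S}$ is locally cartesian closed, by \cref{rem:lcc} every pair of composable morphisms $l \colon X \to Y$, $f \colon Y \to Z$ admits a distributivity diagram \cref{eq:dis-sq}, with $g = f_{*}l$ the right adjoint to $f^{*}$ applied to $l$. The one thing to check is that if $l$ is in $\mathcal{S}_{\mathcal{K}}$ and $f$ is in $\mathcal{S}_{\fin}$ then $g$ lies in $\mathcal{S}_{\mathcal{K}}$. To identify the fibres of $g$, I would use the universal property \cref{eq:uni-prop-dis}: for $z \in Z$, mapping $\{z\} \to Z$ into $g$ gives
\[ g^{-1}(z) \simeq \Map_{/Y}(f^{*}\{z\}, l) \simeq \Map_{/Y_{z}}(Y_{z}, l|_{Y_{z}}) \simeq \prod_{y \in Y_{z}} X_{y}, \]
where $Y_{z}$ is the (finite) fibre of $f$ over $z$ and $X_{y}$ is the fibre of $l$ over $y$, which lies in $\mathcal{K}$. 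Since $\mathcal{K}$ is closed under finite products by the second assumption, $g^{-1}(z) \in \mathcal{K}$, so $g$ is in $\mathcal{S}_{\mathcal{K}}$, as required.

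The only subtlety is the fibre computation for $g = f_{*}l$; everything else is formal closure-under-pullback bookkeeping. (Alternatively, one can verify the base-change condition of \cref{lem:bispantripmatecond} directly in $\mathcal{S}$, but the fibre computation above is cleaner since $\mathcal{S}$ is an $\infty$-topos.)
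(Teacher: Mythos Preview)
Your proposal is correct and follows essentially the same route as the paper: verify closure under composition for $\mathcal{S}_{\mathcal{K}}$ via the first assumption on $\mathcal{K}$, note stability under base change gives the span-pair conditions, invoke local cartesian closedness of $\mathcal{S}$ for existence of distributivity diagrams, and check $f_{*}l \in \mathcal{S}_{\mathcal{K}}$ by computing its fibres as finite products $\prod_{y \in Y_{z}} X_{y}$. You are slightly more explicit than the paper in noting that $* \in \mathcal{K}$ (as the empty product) so that equivalences lie in $\mathcal{S}_{\mathcal{K}}$, but otherwise the arguments coincide.
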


\begin{proof}
  Suppose $f \colon X \to Y$ and $g \colon Y \to Z$ are morphisms whose
  fibres lie in $\mathcal{K}$. We have a morphism $X_{z} \to Y_{z}$
  between the fibres of $gf$ and $g$ at $z \in Z$, whose fibre at $y
  \in Y_{z}$ is equivalent to $X_{y}$. Since $Y_{z}$ and $X_{y}$ lie
  in $\mathcal{K}$ for all $z \in Z, y \in Y$, it follows that $X_{z}$
  also lies in $\mathcal{K}$. Thus we do indeed have a subcategory
  $\mathcal{S}_{\mathcal{K}}$ of morphisms whose fibres lie in
  $\mathcal{K}$. Such morphisms are obviously preserved under base
  change, and so $(\mathcal{S},\mathcal{S}_{\mathcal{K}})$ is a span pair.

  All distributivity diagrams exist in $\mathcal{S}$ since this is a
  locally cartesian closed \icat{}; see Remark~\ref{rem:lcc}. To show that $(\mathcal{S},
  \mathcal{S}_{\fin}, \mathcal{S}_{\mathcal{K}})$ is a bispan triple
  it therefore only remains to check that if $l \colon X \to Y$ is a
  morphism in $\mathcal{S}_{\mathcal{K}}$ and $f \colon Y \to Z$ is a
  morphism in $\mathcal{S}_{\fin}$, then $f_{*}l$ is also a morphism
  in $\mathcal{S}_{\mathcal{K}}$. But we have
  \[ (f_{*}l)_{z} \simeq \prod_{y \in Y_{z}} X_{y}, \] which is a
  finite product of objects of $\mathcal{K}$, and so again lies in
  $\mathcal{K}$ by assumption.
\end{proof}

\begin{exs}
  We can take $\mathcal{K}$ in \cref{prop:allow} to consist of
  \begin{itemize}
  \item finite sets,
  \item all spaces,
 \item $\pi$-finite spaces\footnote{These are the spaces $X$ such that (1)
     $X$ is $n$-truncated for some $n$, (2) $\pi_0(X)$ is finite, and
     (3) for each $x \in X$, the homotopy group $\pi_k(X,x)$ is finite
     for each $k \geq 1$.}, as follows by examining the long
   exact sequence in homotopy groups associated to a fibre sequence, 
 \item $\kappa$-compact spaces\footnote{Meaning $\kappa$-compact
     objects of the \icat{} $\mathcal{S}$ of spaces.} for any regular
   cardinal $\kappa$, since  $\kappa$-filtered colimits in
   $\mathcal{S}$ commute with $\kappa$-small
   limits, and the $\kappa$-compact spaces are precisely the (retracts
   of) $\kappa$-small $\infty$-groupoids.
  \end{itemize}
\end{exs}

\begin{propn}\label{propn:SpanKdist}
  Suppose $\Phi \colon \Span_{\fin}(\mathcal{S}) \to \CatI$
  corresponds to a symmetric monoidal structure on
  $\mathcal{C} = \Phi(*)$, and let $\mathcal{K}$ be as in
  \cref{prop:allow}. Then $\Phi$ is $\mathcal{K}$-distributive \IFF{}
  $\mathcal{C}$ has $\mathcal{K}$-indexed colimits (meaning
  $K$-indexed colimits for all $K \in \mathcal{K}$),
  and the tensor product on $\mathcal{C}$ is compatible with such
  colimits (\ie{} preserves them in each variable).
\end{propn}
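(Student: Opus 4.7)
My plan is to mimic the proof of Proposition \ref{prop:preserv} closely, using the explicit description of $\Phi$ from the remark following Corollary \ref{cor:RKESpanfinCMon}. Throughout, I will use that for $f \colon X \to Y$ in $\mathcal{S}$ the functor $f^{\ostar} \colon \Fun(Y,\mathcal{C}) \to \Fun(X,\mathcal{C})$ is precomposition with $f$, and for $g \colon E \to B$ in $\mathcal{S}_{\fin}$ the functor $g_{\otimes}$ takes $\phi \in \Fun(E,\mathcal{C})$ to the fibrewise tensor product $b \mapsto \bigotimes_{e \in E_{b}} \phi(e)$.

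For the ``only if'' direction, I first extract the colimit hypothesis: for $K \in \mathcal{K}$, the morphism $q \colon K \to *$ lies in $\mathcal{S}_{\mathcal{K}}$, so left $\mathcal{K}$-adjointability forces $q^{\ostar} \colon \mathcal{C} \to \Fun(K,\mathcal{C})$ (the diagonal) to have a left adjoint, which is precisely the statement that $\mathcal{C}$ admits $K$-indexed colimits. Next, to obtain compatibility of $\otimes$ with $K$-indexed colimits, I consider the distributivity diagram
\[
\begin{tikzcd}
{} & K \amalg K \arrow{dd}{q \amalg q} \arrow{dl}[swap]{\epsilon} \arrow{r}{\nabla} & K \arrow{dd}{q} \\
K \amalg * \arrow{dr}[swap]{q \amalg \id} \\
{} & * \amalg * \arrow{r}{\nabla} & *,
\end{tikzcd}
\]
exactly as in the proof of \cref{prop:preserv}: here $q \in \mathcal{S}_{\mathcal{K}}$ by assumption, $\nabla \in \mathcal{S}_{\fin}$, and the distributivity transformation, evaluated on $\alpha \colon K \amalg * \to \mathcal{C}$, is the canonical map
\[ \coprod_{k \in K} \bigl(\alpha(k) \otimes \alpha(*)\bigr) \to \Bigl(\coprod_{k \in K} \alpha(k)\Bigr) \otimes \alpha(*),\]
so requiring this to be an equivalence for all $K \in \mathcal{K}$ and all $\alpha$ says exactly that $\otimes$ preserves $\mathcal{K}$-indexed colimits in each variable.

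For the ``if'' direction, I need to verify both left $\mathcal{K}$-adjointability of $\Phi|_{\mathcal{S}^{\op}}$ and the distributivity condition. For any $l \colon X \to Y$ in $\mathcal{S}_{\mathcal{K}}$, the fibres of $l$ lie in $\mathcal{K}$, so pointwise left Kan extension of any $\psi \colon X \to \mathcal{C}$ along $l$ exists and is given by $l_{\oplus}(\psi)(y) \simeq \colim_{x \in X_{y}} \psi(x)$; this produces a left adjoint $l_{\oplus}$ to $l^{\ostar}$. The Beck--Chevalley condition for a cartesian square is then immediate since a pullback in $\mathcal{S}$ identifies the fibres of $l'$ with those of $l$, so both sides of the mate transformation are computed by colimits over canonically equivalent $\infty$-groupoids. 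It remains to check that the distributivity transformation is invertible for an arbitrary distributivity diagram \eqref{eq:dis-sq} with $l \in \mathcal{S}_{\mathcal{K}}$, $f \in \mathcal{S}_{\fin}$. Unwinding the explicit descriptions of $f_{\otimes}$, $l_{\oplus}$, and (using that $Y_{z}$ is finite) the fibre $W_{z} \simeq \prod_{y \in Y_{z}} X_{y}$ of $g = f_{*}l$, this transformation is the canonical map
\[ \colim_{(x_{y}) \in \prod_{y \in Y_{z}} X_{y}} \bigotimes_{y \in Y_{z}} \phi(x_{y}) \longrightarrow \bigotimes_{y \in Y_{z}} \Bigl( \colim_{x \in X_{y}} \phi(x) \Bigr), \]
an iterated instance of tensor-colimit interchange over the $X_{y} \in \mathcal{K}$; it is therefore an equivalence under our assumptions.

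The main obstacle I anticipate is verifying the formulas I use for $l_{\oplus}$, the Beck--Chevalley map, and the distributivity transformation on the level of the functor $\Phi$ constructed by right Kan extension in \cref{cor:RKESpanfinCMon}, since these identifications rely on matching the universal-property description of the morphisms in $\BISPAN_{F,L}(\mathcal{C})$ from \cref{propn:distftrbispandesc} with the ``pointwise'' descriptions above; once those identifications are in hand, both directions reduce to the routine tensor-colimit manipulations just outlined.
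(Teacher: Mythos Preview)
Your proposal is correct and follows essentially the same approach as the paper's own proof: both arguments identify left $\mathcal{K}$-adjointability with the existence of $\mathcal{K}$-indexed colimits via pointwise left Kan extension (with Beck--Chevalley following from the fibrewise nature of these colimits), unwind the general distributivity transformation to the canonical colimit--tensor interchange map, and use the same fold-map distributivity diagram to extract compatibility of $\otimes$ with $\mathcal{K}$-indexed colimits in the converse direction. Your closing concern about matching the universal-property description with the pointwise formulas is unnecessary caution---these identifications are direct from the explicit description of $\Phi$ recorded after \cref{cor:RKESpanfinCMon}, and the paper treats them as such without further comment.
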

\begin{proof}
  For $K \in \mathcal{K}$, the functor $q^{\ostar} \colon \mathcal{C} \to
  \Fun(K, \mathcal{C})$ corresponding to the morphism $q \colon K \to
  *$ is the diagonal. This
  has a left adjoint \IFF{} $\mathcal{C}$ admits all $K$-indexed
  colimits. Moreover, if $\mathcal{C}$ has $\mathcal{K}$-indexed
  colimits then the functor $f^{\ostar}
  \colon \Fun(Y, \mathcal{C}) \to \Fun(X,\mathcal{C})$ given by
  composition with $f \colon X \to Y$ has a left adjoint for any $f$
  in $\mathcal{S}_{\mathcal{K}}$, 
  since all pointwise left Kan extensions along $f$ then exist in
  $\mathcal{C}$. Given a cartesian square
\[
  \begin{tikzcd}
    X' \arrow{r}{f'} \arrow{d}{g'} &  Y' \arrow{d}{g} \\
    X \arrow{r}{f} & Y
  \end{tikzcd}
\]
in $\mathcal{S}$ with $g$ in $\mathcal{S}_{\mathcal{K}}$, the mate transformation
\[ g'_{\oplus}f'^{\ostar} \to f^{\ostar}g_{\oplus}\]
is then automatically an equivalence, since for $\phi \colon
Y' \to \mathcal{C}$ this is given at $x \in X$ by the natural map
\[ \colim_{p \in X'_{x}} \phi(f'p) \to \colim_{y \in Y'_{f(y)}}
  \phi(y),\]
which is an equivalence since these fibres are canonically
equivalent. This proves that $\Phi$ is left adjointable \IFF{}
$\mathcal{C}$ admits $\mathcal{K}$-indexed colimits.

Given morphisms $l \colon X \to Y$ in $\mathcal{S}_{\mathcal{K}}$ and $f \colon Y \to Z$ in $\mathcal{S}_{\fin}$,
we have the distributivity diagram
\[
    \begin{tikzcd}
      {} & Y' \arrow{dd} \arrow{dl}[swap]{\epsilon} \arrow{r}{f'} & Z'
      \arrow{dd}{h=f_{*}l} \\
      X \arrow[swap]{dr}{l} \\
      {} & Y \arrow{r}{f} & Z ,
    \end{tikzcd}
\]
where $Z'_{z} \simeq \prod_{y \in Y_{z}} X_{y}$. The distributivity
transformation $h_{\oplus}f'_{\otimes}\epsilon^{\ostar} \to
f_{\otimes}l_{\oplus}$ is given for $\phi \colon X \to \mathcal{C}$
at $z \in Z$ by the canonical map
\[ \colim_{(x_{y})_{y} \in \prod_{y \in Y_{z}} X_{y}} \bigotimes_{y
    \in Y_{z}} \phi(x_{y}) \to
  \bigotimes_{y \in Y_{z}} \left( \colim_{x \in X_{y}}
    \phi(x)\right). \]
This is an equivalence if $\otimes$ preserves $\mathcal{K}$-indexed colimits in
each variable. Conversely, for $K \in \mathcal{K}$ we have in particular the
distributivity diagram
\[
    \begin{tikzcd}
      {} & K \amalg K \arrow{dd}{q \amalg q} \arrow{dl}[swap]{\epsilon} \arrow{r}{\nabla} & K
      \arrow{dd}{q} \\
      K \amalg * \arrow[swap]{dr}{q \amalg \id} \\
      {} & * \amalg * \arrow{r}{\nabla} & * ,
    \end{tikzcd}
\]
  where $q$ is the unique map $K \to *$ and $\nabla$ are fold
  maps. The corresponding distributivity transformation is given for
  $\phi \colon K \amalg * \to \mathcal{C}$ by
  \[ \colim_{k \in K} \left( \phi(k) \otimes \phi(*)\right) \to
    \left(\colim_{k \in K} \phi(k) \right) \otimes \phi(*).\]
  If $\Phi$ is distributive then this is an equivalence for all $K \in
  \mathcal{K}$
  and $\phi$, which is precisely the condition that $\otimes$
  preserves $\mathcal{K}$-indexed colimits in each variable.
\end{proof}

\begin{cor}\label{cor:BISPANKSsymmmon}
  Let $\mathcal{K}$ be as in \cref{prop:allow}. Then functors
  $\Phi \colon \BISPAN_{\fin,\mathcal{K}}(\mathcal{S}) \to \CATI$ such
  that the restriction to $\mathcal{S}^{\op}$ preserves limits
  correspond to symmetric monoidal \icats{} that are compatible with
  $\mathcal{K}$-indexed colimits.
\end{cor}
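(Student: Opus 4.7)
The plan is to chain together three earlier results, paralleling the proof of \cref{cor:prod-pres} in the finite-set setting. First, by \cref{thm:main2}, functors $\BISPAN_{\fin,\mathcal{K}}(\mathcal{S}) \to \CATI$ correspond to $\mathcal{K}$-distributive functors $\Phi \colon \Span_{\fin}(\mathcal{S}) \to \CATI$ via restriction along the canonical inclusion $\Span_{\fin}(\mathcal{S}) \to \BISPAN_{\fin,\mathcal{K}}(\mathcal{S})$. Since $\mathcal{S}^{\op} \hookrightarrow \BISPAN_{\fin,\mathcal{K}}(\mathcal{S})$ factors through $\Span_{\fin}(\mathcal{S})$, the hypothesis that $\Phi|_{\mathcal{S}^{\op}}$ preserves limits transfers unchanged between the two sides of this equivalence.

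Next, I would note that for a functor $F \colon \mathcal{S}^{\op} \to \CATI$, preserving limits is equivalent to being right Kan extended from the inclusion $\{*\} \hookrightarrow \mathcal{S}^{\op}$ in the sense used in \cref{prop:bh}: both conditions amount to the pointwise identification $F(X) \simeq \Fun(X, F(*))$, reflecting that $\mathcal{S}^{\op}$ is the free limit completion of the point. With this in hand, \cref{cor:RKESpanfinCMon} identifies those functors $\Phi \colon \Span_{\fin}(\mathcal{S}) \to \CATI$ whose restriction to $\mathcal{S}^{\op}$ preserves limits with commutative monoids in $\CATI$, i.e., symmetric monoidal \icats{} $\mathcal{C} = \Phi(*)$.

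It then remains to match the $\mathcal{K}$-distributivity condition with the compatibility of $\otimes$ on $\mathcal{C}$ with $\mathcal{K}$-indexed colimits, which is exactly the content of \cref{propn:SpanKdist}: such a $\Phi$ is $\mathcal{K}$-distributive if and only if $\mathcal{C}$ admits $\mathcal{K}$-indexed colimits and the tensor product preserves them in each variable. Composing these three equivalences delivers the corollary. There is no serious obstacle in the plan --- the only mild verification required is the equivalence between limit-preservation and Kan extension from $\{*\}$, which is a routine consequence of the universal property of $\mathcal{S}$ as the free cocompletion of the point.
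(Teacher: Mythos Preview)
Your proposal is correct and follows essentially the same route as the paper: invoke \cref{thm:main2} to pass to $\mathcal{K}$-distributive functors on $\Span_{\fin}(\mathcal{S})$, then use \cref{cor:RKESpanfinCMon} and \cref{propn:SpanKdist} to identify the limit-preservation condition with symmetric monoidal structures and $\mathcal{K}$-distributivity with compatibility with $\mathcal{K}$-indexed colimits. The only difference is that you make explicit the identification of limit-preserving functors $\mathcal{S}^{\op} \to \CATI$ with those right Kan extended from $\{*\}$, which the paper leaves implicit.
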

\begin{proof}
  By \cref{thm:main2}, functors $\BISPAN_{\fin,\mathcal{K}} \to
  \CATI$ correspond to $\mathcal{K}$-distributive functors $\Span_{\fin}(\mathcal{S}) \to
  \CATI$. On the other hand, we know from \cref{cor:RKESpanfinCMon}
  that such functors whose restriction to $\mathcal{S}^{\op}$ is right
  Kan extended from $\{*\}$ correspond to symmetric monoidal \icats{},
  and in this case the functor is $\mathcal{K}$-distributive \IFF{}
  the tensor product is compatible with $\mathcal{K}$-indexed colimits
  by \cref{propn:SpanKdist}.
\end{proof}

\subsection{Bispans in spaces and analytic monads}\label{subsec:anmnd}
Our goal in this section is to relate bispans in the \icat{} of spaces
to the polynomial and analytic functors studied in \cite{polynomial},
where it is shown that analytic monads are equivalent to
$\infty$-operads. Using the results of the previous
section we will see that there is a canonical action of every
analytic monad on any symmetric monoidal \icat{} compatible with
\igpd{}-indexed colimits.

We first give a general construction of a functor from bispans to
\icats{} using slice \icats{}:
\begin{propn}\label{propn:bispanslice}
  Suppose $(\mathcal{C}, \mathcal{C}_{F},
  \mathcal{C}_{L})$ is a bispan triple. Then there is a functor of \itcats{}
  $\txt{Sl} \colon \BISPAN_{F,L}(\mathcal{C}) \to \CATI$ such that
  \begin{itemize}
  \item $\txt{Sl}(c) \simeq \mathcal{C}_{/c}^{L}$, the full
    subcategory of $\mathcal{C}_{/c}$ spanned by the morphisms to $c$
    that lie in $\mathcal{C}_{L}$.
  \item for $f \colon c \to c'$ in $\mathcal{C}$, the functor
    $f^{\ostar}$ is the functor $f^{*} \colon \mathcal{C}_{/c'}^{L} \to
    \mathcal{C}_{/c}^{L}$ given by pullback along $f$,
  \item for $f \colon c \to c'$ in $\mathcal{C}_{L}$, the functor
    $f_{\oplus}$ is the functor $f_{!} \colon \mathcal{C}_{/c}^{L} \to
    \mathcal{C}_{/c'}^{L}$ given by composition with $f$,
  \item for $f \colon c \to c'$ in $\mathcal{C}_{F}$, the functor
    $f_{\otimes}$ is the functor $f_{*}\colon \mathcal{C}_{/c}^{L} \to
    \mathcal{C}_{/c'}^{L}$ given by the partial right adjoint to $f^{*}
    \colon \mathcal{C}_{/c'} \to \mathcal{C}_{/c}$. 
  \end{itemize}
\end{propn}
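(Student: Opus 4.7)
The plan is to invoke the universal property of bispans (Theorem~\ref{thm:main2}), which reduces the problem to producing an $L$-distributive functor $\phi \colon \Span_{F}(\mathcal{C}) \to \CATI$ whose restriction to $\mathcal{C}^{\op}$ is $c \mapsto \mathcal{C}_{/c}^{L}$, $f \mapsto f^{*}$, and whose value on the forward span $[f]_{F}$ (for $f \in \mathcal{C}_{F}$) is $f_{*}$. Once $\phi$ is constructed, Corollary~\ref{propn:distftrbispandesc} automatically gives the described behavior of the induced $\txt{Sl} \colon \BISPAN_{F,L}(\mathcal{C}) \to \CATI$ on 1- and 2-morphisms.

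The basic input is the functor $P \colon \mathcal{C}^{\op} \to \CATI$ with $P(c) = \mathcal{C}_{/c}^{L}$ and $P(f) = f^{*}$. This is classified by the cartesian fibration obtained from the codomain projection $\mathcal{C}^{[1]} \to \mathcal{C}$ by restricting the total space to the full subcategory on morphisms in $\mathcal{C}_{L}$; the span pair property of $(\mathcal{C},\mathcal{C}_{L})$ ensures this restriction is still cartesian. I would then verify two pieces of adjointability for $P$. First, $P$ is left $L$-adjointable: for $l \in \mathcal{C}_{L}$ the left adjoint of $l^{*}$ is postcomposition $l_{!}$, which lands in $\mathcal{C}_{/c'}^{L}$ because $\mathcal{C}_{L}$ is closed under composition, and Beck--Chevalley is the pasting lemma. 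Second, $P$ is right $F$-adjointable in the sense of Variant~\ref{var:radjble}: the partial right adjoint $f_{*}$ exists on $L$-slices by Remark~\ref{dist-adj}, and the required mate $\eta^{*}f_{*} \to f'_{*}\xi^{*}$ is invertible by Lemma~\ref{lem:bispantripmatecond}, which is exactly the formulation of the bispan triple condition in terms of base change for right adjoints.

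Combining these two pieces of adjointability produces the desired $\phi$. Concretely, the right $F$-adjointability applied through Variant~\ref{var:radjble} and Theorem~\ref{thm:spanuniv} gives a functor $\Span_{F}(\mathcal{C}) \to \CATI$ (in the appropriate 2-cell convention for the target) with $[f]_{F} \mapsto f_{*}$, while the left $L$-adjointability continues to hold on the restriction $\mathcal{C}^{\op} \to \CATI$. It then remains to verify $L$-distributivity, which by Remark~\ref{rmk:dist=adj} amounts to showing that for every distributivity diagram \cref{eq:dis-sq}, the square \cref{eq:dis-mate} is left adjointable; equivalently, the distributivity transformation
\[
g_{!}\tilde{f}_{*}\epsilon^{*} \longrightarrow f_{*}l_{!}
\]
is an equivalence in $\CATI$. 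Unwinding the mate in terms of the units and counits of $l_{!} \dashv l^{*}$ and $g_{!} \dashv g^{*}$, this reduces to the universal property of the distributivity diagram (Definition~\ref{def:dis-square}, or equivalently Proposition~\ref{propn:bfpb} read inside $\CATI$ via Remark~\ref{rmk:distdiagterminal}).

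The main obstacle is the $L$-distributivity verification: one has to trace carefully through the definition of the mate transformation, insert a zig-zag of unit/counit cells for $l_{!} \dashv l^{*}$ and $g_{!} \dashv g^{*}$, and match the result against the adjoint characterization of $f_{*}l$ coming from the distributivity square; the bookkeeping is routine but the variance conventions for the $F$-part (which assigns the right adjoint $f_{*}$ rather than a left adjoint) must be tracked consistently to ensure one lands in the correct 2-cell direction. All remaining properties in the statement then fall out formally from Theorem~\ref{thm:main2} and Corollary~\ref{propn:distftrbispandesc}.
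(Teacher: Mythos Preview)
Your proposal is correct and follows the same route as the paper: build the slice functor on $\mathcal{C}^{\op}$, extend to $\Span_F(\mathcal{C})$ via right $F$-adjointability (Remark~\ref{dist-adj} and Lemma~\ref{lem:bispantripmatecond}), then verify left $L$-adjointability and $L$-distributivity and invoke Theorem~\ref{thm:main2}. For the distributivity step the paper is a bit crisper than your sketch: evaluating $g_{!}\tilde{f}_{*}\epsilon^{*} \to f_{*}l_{!}$ at an object $l' \in \mathcal{C}^{L}_{/x}$ gives the comparison map $g \circ \tilde{f}_{*}(\epsilon^{*}l') \to f_{*}(l \circ l')$, and this is an equivalence by Lemma~\ref{lem:lcompdist} (stacking of distributivity diagrams), which is the precise tool you want rather than Proposition~\ref{propn:bfpb} or Remark~\ref{rmk:distdiagterminal}.
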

\begin{proof}
  Let $\mathcal{X}$ be the full subcategory of
  $\mathcal{C}^{\Delta^{1}}$ spanned by the morphisms in
  $\mathcal{C}_{L}$. Then the restriction of
  $\txt{ev}_{1} \colon \mathcal{C}^{\Delta^{1}} \to \mathcal{C}$ to a
  functor $\mathcal{X} \to \mathcal{C}$ is a cartesian fibration, with
  cocartesian morphisms over morphisms in
  $\mathcal{C}_{L} \subseteq \mathcal{C}$. This corresponds to a
  functor $\lambda \colon \mathcal{C}^{\op} \to \CatI$, which takes
  $c \in \mathcal{C}$ to the \icat{} $\mathcal{C}^{L}_{/c}$ and
  $f \colon x \to y$ to the functor
  $f^{*} \colon \mathcal{C}^{L}_{/y} \to \mathcal{C}^{L}_{/x}$ given
  by pullback along $f$. The functor $\lambda$ is right
  $F$-adjointable: since $(\mathcal{C}, \mathcal{C}_{F},
  \mathcal{C}_{L})$ is a bispan triple, the functor $f^{*}$ for $f
  \colon x \to y$ in
  $\mathcal{C}_{F}$ has a right adjoint $f_{*} \colon
  \mathcal{C}^{L}_{/x}\to \mathcal{C}^{L}_{/y}$ by \cref{dist-adj},
  and given a cartesian square
  \[
    \begin{tikzcd}
      x' \arrow{r}{f'} \arrow{d}{\xi} & y' \arrow{d}{\eta} \\
      x \arrow{r}{f} & y
    \end{tikzcd}
  \]
  with $f$ and $f'$ in $\mathcal{C}_{F}$, the mate transformation
  \[ \eta^{*}f_{*} \to f'_{*}\xi^{*} \] is an equivalence by
  \cref{lem:bispantripmatecond}.  The functor $\lambda$ therefore
  extends canonically to a functor
  \[ \Lambda \colon \SPAN_{F}(\mathcal{C})^{\twop} \to \CATI \]
  by \cref{thm:spanuniv}. We claim the underlying functor of \icats{} $\lambda'\colon
  \Span_{F}(\mathcal{C}) \to \CatI$ is
  $L$-distributive. Certainly if $l \colon x \to y$ is a morphism in
  $\mathcal{C}_{L}$, then the pullback functor $l^{*} \colon
  \mathcal{C}^{L}_{/y}\to \mathcal{C}^{L}_{/x}$ has a left adjoint
  $l_{!}$ given by composition with $l$; to see that the restriction
  of $\lambda'$ to $\mathcal{C}^{\op}$ is left $L$-adjointable
  it remains to observe that for any cartesian square
  \[
    \begin{tikzcd}
      x' \arrow{r}{l'} \arrow{d}[swap]{\xi} & y' \arrow{d}{\eta} \\
      x \arrow{r}{l} & y
    \end{tikzcd}
  \]
  with $l$ and $l'$ in $\mathcal{C}_{L}$, the natural transformation
  $l'_{!}\xi^{*} \to \eta^{*}l_{!}$ is an equivalence, since for $g
  \colon z \to x$ in $\mathcal{C}^{L}_{/x}$ in the diagram
  \[
    \begin{tikzcd}
      z' \arrow{r}{g'} \arrow{d}{\zeta} & x' \arrow{r}{l'}
      \arrow{d}{\xi} & y' \arrow{d}{\eta} \\
      z \arrow{r}{g} & y \arrow{r}{l} & x, 
    \end{tikzcd}
  \]
  the left square is cartesian \IFF{} the composite square is
  cartesian. To see that $\lambda'$ is also $L$-distributive, consider
  $l \colon x \to y$ in $\mathcal{C}_{L}$ and $f \colon y \to z$ in
  $\mathcal{C}_{F}$ and form a distributivity diagram
  \cref{eq:dis-sq}. The distributivity transformation
  \[ g_{!}\tilde{f}_{*}\epsilon^{*} \to f_{*}l_{!}\]
  evaluated at $l' \colon c \to x$ is a canonical map
  \[ f_{*}l \circ \tilde{f}_{*}\epsilon^{*}l \to f_{*}(l \circ l');\]
  this is an equivalence by \cref{lem:lcompdist}. It follows by
  \cref{thm:main2} that $\lambda'$ extends uniquely to a
  functor $\txt{Sl} \colon \BISPAN_{F,L}(\mathcal{C}) \to \CATI$,
  which by construction has the required properties.
\end{proof}

Applying this to the bispan triple
$(\mathcal{S},\mathcal{S},\mathcal{S})$ we get in particular:
\begin{cor}
  There is a functor $\txt{Sl} \colon \BISPAN(\mathcal{S}) \to \CATI$ taking
  $X \in \mathcal{S}$ to $\mathcal{S}_{/X}$ and a bispan
  \[ X \xfrom{s} E \xto{p} B \xto{t} Y\]
  to the functor $t_{!}p_{*}s^{*} \colon \mathcal{S}_{/X} \to
  \mathcal{S}_{/Y}$, where $s^{*}$ is given by pullback along $s$,
  $p_{*}$ is the right adjoint to $p^{*}$, and $t_{!}$ is given by
  composition with $t$. \qed
\end{cor}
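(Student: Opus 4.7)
The plan is to simply specialize Proposition~\ref{propn:bispanslice} to the bispan triple $(\mathcal{S}, \mathcal{S}, \mathcal{S})$ in which every morphism is both $F$-type and $L$-type. First I will verify that this triple satisfies Definition~\ref{def:bi-quad}. The pair $(\mathcal{S},\mathcal{S})$ is trivially a span pair since $\mathcal{S}$ admits all pullbacks, so conditions (a) and (b) are immediate. For condition (c), I appeal to Remark~\ref{rem:lcc}: the \icat{} $\mathcal{S}$ is an $\infty$-topos, hence locally cartesian closed, so distributivity diagrams exist for every composable pair of morphisms. Since $\mathcal{C}_{L} = \mathcal{S}$ contains all morphisms, the morphism $g = f_{*}l$ in the distributivity diagram automatically lies in $\mathcal{C}_{L}$.

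With the triple verified, Proposition~\ref{propn:bispanslice} directly produces an \itcatl{} functor
\[ \txt{Sl} \colon \BISPAN(\mathcal{S}) \to \CATI \]
together with the required descriptions. On objects we have $\txt{Sl}(X) = \mathcal{S}_{/X}^{L} = \mathcal{S}_{/X}$, where the second equality is because $\mathcal{C}_{L} = \mathcal{S}$ places no restriction on the slice. On generating 1-morphisms, Proposition~\ref{propn:bispanslice} identifies $s^{\ostar}$ with pullback $s^{*}$, $p_{\otimes}$ with the right adjoint $p_{*}$ (which exists by local cartesian closure), and $t_{\oplus}$ with postcomposition $t_{!}$.

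To conclude, I would invoke Corollary~\ref{propn:distftrbispandesc} to read off the value of $\txt{Sl}$ on a general bispan as the composite of these three functors: the image of $X \xfrom{s} E \xto{p} B \xto{t} Y$ is $t_{!} p_{*} s^{*} \colon \mathcal{S}_{/X} \to \mathcal{S}_{/Y}$, as claimed. There is no real obstacle here beyond unwinding definitions; the substantive content is already encapsulated in Proposition~\ref{propn:bispanslice} and the observation that $\mathcal{S}$ is locally cartesian closed.
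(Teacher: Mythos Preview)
Your proposal is correct and matches the paper's approach exactly: the paper introduces this corollary with ``Applying this to the bispan triple $(\mathcal{S},\mathcal{S},\mathcal{S})$ we get in particular'' and marks it \qedsymbol, so the intended argument is precisely the specialization of Proposition~\ref{propn:bispanslice} you carry out. Your added verification that $(\mathcal{S},\mathcal{S},\mathcal{S})$ is a bispan triple via Remark~\ref{rem:lcc} and your appeal to Corollary~\ref{propn:distftrbispandesc} for the morphism-level description simply make explicit what the paper leaves implicit.
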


\begin{defn}\label{defn:Spoly}
  A \emph{polynomial functor}
  $F \colon \mathcal{S}_{/X} \to \mathcal{S}_{/Y}$ is an accessible
  functor that preserves weakly contractible limits. By \cite[Theorem
  2.2.3]{polynomial} the polynomial functors are equivalently those
  functors obtained as composites $t_{!}p_{*}s^{*}$ for some bispan of
  spaces
  \[ X \xfrom{s} E \xto{p} B \xto{t} Y.\] Let
  $\PolyFun(\mathcal{S})$ be the sub-$(\infty,2)$-category of
  $\CATI$ whose objects are the slices $\mathcal{S}_{/X}$ for
  $X \in \mathcal{S}$, whose 1-morphisms are the polynomial functors,
  and whose 2-morphisms are the cartesian natural transformations.
\end{defn}
\begin{remark}
  The \itcat{} $\PolyFun(\mathcal{S})$ is the underlying \itcat{}
  of the double \icat{} of polynomial functors considered in
  \cite{polynomial}.
\end{remark}

\begin{cor}\label{cor:BispanPoly}
  The functor $\txt{Sl} \colon \BISPAN(\mathcal{S}) \to \CATI$
  restricts to an
  equivalence
  \[\BISPAN(\mathcal{S}) \isoto \PolyFun(\mathcal{S}).\]
\end{cor}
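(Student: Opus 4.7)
The plan is to verify three things: that $\txt{Sl}$ lands in the sub-\itcat{} $\PolyFun(\mathcal{S})$ of $\CATI$, that it is essentially surjective onto $\PolyFun(\mathcal{S})$, and that it is fully faithful on morphism \icats{}. Essential surjectivity at the object level is immediate since $\txt{Sl}(X) = \mathcal{S}_{/X}$. Essential surjectivity at the 1-morphism level, together with the fact that the image of every bispan is a polynomial functor, is exactly the content of \cite[Theorem 2.2.3]{polynomial}, which identifies polynomial functors $\mathcal{S}_{/X}\to\mathcal{S}_{/Y}$ with composites $t_!p_*s^*$ arising from bispans $X \xfrom{s} E \xto{p} B \xto{t} Y$.

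Next I would use \cref{propn:distftrbispandesc} to write out the image under $\txt{Sl}$ of a 2-morphism given by a diagram of the form \cref{eq:bispanmor} with cartesian middle square $E \simeq E' \times_{B'} B$, producing the explicit composite
\[ t_!p_*s^* \simeq t_!p_*g^*s'^* \simeq t_!h^*p'_*s'^* \to t_!h^*t'^*t'_!p'_*s'^* \simeq t_!t^*t'_!p'_*s'^* \to t'_!p'_*s'^*, \]
where the identification $p_*g^* \simeq h^*p'_*$ is the base change equivalence provided by the cartesian middle square. A direct check using the pullback hypothesis shows that each constituent of this composite is a cartesian natural transformation (the base change equivalence is cartesian for formal reasons, and the unit/counit pieces are cartesian because the relevant squares are cartesian); hence so is the composite. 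Thus $\txt{Sl}$ factors through $\PolyFun(\mathcal{S})$.

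Finally, for full faithfulness of
\[ \MAP_{\BISPAN(\mathcal{S})}(X, Y) \to \MAP_{\PolyFun(\mathcal{S})}(\mathcal{S}_{/X}, \mathcal{S}_{/Y}), \]
I would combine \cref{thm:main2}\ref{it:bispanmor}--\ref{it:bispan2mor}, which describes the left-hand side as the \icat{} whose objects are bispans from $X$ to $Y$ and whose morphisms are diagrams of shape \cref{eq:bispanmor} with cartesian middle square, with the dictionary of \cite[Theorem 2.2.3]{polynomial}, which identifies this \icat{} precisely with the \icat{} of polynomial functors and cartesian natural transformations between them. The main obstacle is this last identification at the level of morphism \icats{} rather than just equivalence classes of 1-morphisms: one has to ensure that the map sending a morphism of bispans to the composite natural transformation displayed above is an equivalence, not merely a bijection on components. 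This reduces to unpacking the construction in \cite{polynomial}, whose double-\icat{} of polynomials is set up so that the vertical morphisms match up in exactly this way.
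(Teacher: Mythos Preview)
Your approach is essentially the same as the paper's: both use \cref{propn:distftrbispandesc} to describe $\txt{Sl}$ on objects, morphisms, and 2-morphisms, invoke \cite[Theorem 2.2.3]{polynomial} to identify polynomial functors with composites $t_!p_*s^*$, check that 2-morphisms land in cartesian transformations, and then reduce to an equivalence on mapping \icats{}.

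The difference lies in precision at two points. First, for cartesianness of the image 2-morphisms you propose a direct check, whereas the paper simply cites \cite[Lemma 2.1.5]{polynomial}, which is exactly the statement that the relevant (co)unit transformations are cartesian. Second, and more importantly, your full faithfulness argument is where you correctly flag the obstacle but remain vague: \cite[Theorem 2.2.3]{polynomial} is an object-level statement and does not by itself give an equivalence of morphism \icats{}. The paper resolves this cleanly by invoking \cref{cor:bc}, which gives an explicit model $\mathcal{B}_c$ for the mapping \icat{} in $\BISPAN(\mathcal{S})$ (via its description as spans-in-spans), and then identifying the induced functor on mapping \icats{} with the one shown to be an equivalence in \cite[Proposition 2.4.13]{polynomial}. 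Your reference to ``unpacking the construction in \cite{polynomial}'' is pointing in the right direction, but the specific result you want is Proposition 2.4.13 there, not Theorem 2.2.3, and you need \cref{cor:bc} on this side to make the comparison precise rather than relying only on the object-and-morphism description from \cref{thm:main2}.
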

\begin{proof}
  We apply the description from \cref{propn:distftrbispandesc} to
  understand $\txt{Sl}$: On objects, $\txt{Sl}$ takes $X \in \mathcal{S}$ to
  the \icat{} $\mathcal{S}_{/X}$, and on morphisms it takes the bispan
  \[ X \xfrom{s} E \xto{p} B \xto{t} Y\]
  to the functor $t_{!}p_{*}s^{*}$. The functors of this form are
  precisely the polynomial functors, by \cite[Theorem
  2.2.3]{polynomial}. The description of $\txt{Sl}$ on 2-morphisms
  from \cref{propn:distftrbispandesc}(3) implies that they are sent to
  composites of equivalences and (co)unit transformations that are
  cartesian by \cite[Lemma 2.1.5]{polynomial}. Hence $\txt{Sl}$
  factors through $\PolyFun(\mathcal{S})$, and is essentially
  surjective on objects and morphisms. To show that $\txt{Sl}$ factors
  through an
  equivalence it then suffices to show it gives an equivalence
  \[ \MAP_{\BISPAN(\mathcal{S})}(X,Y) \to
    \MAP_{\PolyFun(\mathcal{S})}(X,Y) \]
  on mapping \icats{} for all $X,Y$ in $\mathcal{S}$. Using
  \cref{cor:bc} we can identify this with the functor shown to be an
  equivalence in \cite[Proposition 2.4.13]{polynomial}.
\end{proof}

\begin{remark}
  We expect that there is an analogue of Corollary~\ref{cor:BispanPoly}  for
  bispans in any $\infty$-topos $\mathcal{X}$, but this requires
  working with \emph{internal} \icats{} in $\mathcal{X}$ (or
  equivalently sheaves of \icats{} on $\mathcal{X}$): A key step in
  the identification of polynomial functors with bispans is the
  description of colimit-preserving functors between slices of
  $\mathcal{S}$ as spans, \ie{} the equivalence
  \[ \Fun^{L}(\mathcal{S}_{/X}, \mathcal{S}_{/Y}) \simeq \Fun(X,
    \mathcal{S}_{/Y}) \simeq \Fun(X \times Y, \mathcal{S}) \simeq
    \mathcal{S}_{/X \times Y}. \]
  This certainly fails for any other $\infty$-topos $\mathcal{X}$, but
  an analogous statement should hold if we view $\mathcal{X}$ instead
  as an \icat{} internal to itself.
\end{remark}

\begin{defn}
  An \emph{analytic functor}
  $F \colon \mathcal{S}_{/X} \to \mathcal{S}_{/Y}$ is a functor that
  preserves sifted colimits and weakly contractible limits. By
  \cite[Proposition 3.1.9]{polynomial} the analytic functors are
  equivalently those functors
  obtained as composites $t_{!}p_{*}s^{*}$ for some bispan of spaces
  \[ X \xfrom{s} E \xto{p} B \xto{t} Y,\]
  where $p$ has finite discrete fibres.
\end{defn}

As a consequence, we get:
\begin{cor}\label{cor:BispanAnal}
  The functor $\txt{Sl} \colon \BISPAN(\mathcal{S}) \to \CATI$ factors through an
  equivalence
  \begin{equation}
    \label{eq:AnFuneq}
  \BISPAN_{\fin}(\mathcal{S}) \isoto \AnFun(\mathcal{S}),    
  \end{equation}
  where $\AnFun(\mathcal{S})$ is the locally full sub-\itcat{} of
  $\PolyFun(\mathcal{S})$ containing all objects, with the analytic
  functors as morphisms, as well as all 2-morphisms between these. \qed
\end{cor}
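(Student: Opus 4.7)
The plan is to deduce this directly from the preceding Corollary~\ref{cor:BispanPoly} by checking that the equivalence $\txt{Sl}\colon\BISPAN(\mathcal{S}) \isoto \PolyFun(\mathcal{S})$ restricts appropriately on locally full sub-$(\infty,2)$-categories. First I would observe that the forgetful morphism of bispan triples $(\mathcal{S},\mathcal{S}_{\fin},\mathcal{S}) \to (\mathcal{S},\mathcal{S},\mathcal{S})$ induces a canonical functor $\BISPAN_{\fin}(\mathcal{S}) \to \BISPAN(\mathcal{S})$ that is the identity on objects and identifies $\BISPAN_{\fin}(\mathcal{S})$ with a locally full sub-\itcat{}. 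Indeed, from the description of morphisms and 2-morphisms in \cref{thm:main2}\ref{it:bispanmor}--\ref{it:bispan2mor}, a 2-morphism between two bispans whose middle legs have finite discrete fibres consists of a diagram of shape \cref{eq:bispanmor} whose central square is cartesian; since finite-fibre maps are stable under base change, any such 2-morphism in $\BISPAN(\mathcal{S})$ already lies in $\BISPAN_{\fin}(\mathcal{S})$.

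Next I would note that $\AnFun(\mathcal{S}) \hookrightarrow \PolyFun(\mathcal{S})$ is locally full by the very definition of $\AnFun(\mathcal{S})$. The proof thus reduces to verifying that the equivalence of \cref{cor:BispanPoly} matches the 1-morphisms of $\BISPAN_{\fin}(\mathcal{S})$ with the 1-morphisms of $\AnFun(\mathcal{S})$, \ie{} with the analytic functors. By the description of $\txt{Sl}$ from \cref{propn:distftrbispandesc}, a bispan $X \xfrom{s} E \xto{p} B \xto{t} Y$ with $p$ in $\mathcal{S}_{\fin}$ is sent to $t_{!}p_{*}s^{*}$; by \cite[Proposition 3.1.9]{polynomial} these are precisely the analytic functors from $\mathcal{S}_{/X}$ to $\mathcal{S}_{/Y}$.

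Combining these observations yields a commutative square of \itcats{}
\[
\begin{tikzcd}
\BISPAN_{\fin}(\mathcal{S}) \arrow{r} \arrow[hook]{d} & \AnFun(\mathcal{S}) \arrow[hook]{d} \\
\BISPAN(\mathcal{S}) \arrow{r}{\sim} & \PolyFun(\mathcal{S})
\end{tikzcd}
\]
in which the vertical arrows are locally full inclusions, the bottom horizontal arrow is the equivalence of \cref{cor:BispanPoly}, and the top horizontal arrow is essentially surjective on objects (trivially) and on 1-morphisms (by the characterization of analytic functors). Since the bottom equivalence is fully faithful on 2-morphisms and both vertical inclusions are locally full, the top arrow is also fully faithful on mapping \icats{}, hence an equivalence.

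The only potential subtlety is the verification that $\BISPAN_{\fin}(\mathcal{S}) \hookrightarrow \BISPAN(\mathcal{S})$ really is locally full --- one must rule out that a map of the middle objects $B \to B'$ with finite fibres could admit extra 2-morphisms in $\BISPAN$ that do not come from $\BISPAN_{\fin}$. Since 2-morphisms are given by commutative diagrams in $\mathcal{S}$ with a single cartesian square constraint, and the only 1-categorical data constraining $\mathcal{C}_{F}$ is on the $p$-leg which is automatically preserved under pullback, this is immediate; so I do not anticipate this step being a real obstacle.
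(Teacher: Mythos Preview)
Your proposal is correct and takes essentially the same approach as the paper, which treats the corollary as immediate (marked with a \qed{} after ``As a consequence, we get''): restrict the equivalence of \cref{cor:BispanPoly} using the characterization of analytic functors from \cite[Proposition 3.1.9]{polynomial}. You have simply made explicit the verification that $\BISPAN_{\fin}(\mathcal{S}) \hookrightarrow \BISPAN(\mathcal{S})$ is a locally full inclusion, which the paper leaves to the reader.
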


\begin{defn}
  An \emph{analytic monad} is a monad in the \itcat{}
  $\AnFun(\mathcal{S})$, \ie{} an associative algebra in the monoidal
  \icat{} $\MAP_{\AnFun(\mathcal{S})}(X,X)$ of endomorphisms of some
  object $X$, or a functor $\fmnd \to \AnFun(\mathcal{S})$, where
  $\fmnd$ is the universal 2-category containing a monad. In other
  words, it is a monad on the \icat{} $\mathcal{S}_{/X}$ whose
  underlying endofunctor is analytic and whose unit and multiplication
  transformations are cartesian. From the equivalence
  \cref{eq:AnFuneq} we see that analytic monads are equivalently
  monads in the \itcat{} $\BISPAN_{\fin}(\mathcal{S})$.
\end{defn}

\begin{cor}
  Suppose $T$ is an analytic monad on $\mathcal{S}_{/X}$ and
  $\mathcal{V}$ is a symmetric monoidal \icat{} compatible with
  \igpd{}-indexed colimits. Then $T$ induces a canonical monad
  $T_{\mathcal{V}}$ on $\Fun(X, \mathcal{V})$.
\end{cor}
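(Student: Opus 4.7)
The plan is to recognize this as a formal consequence of two universal properties we have already established, by phrasing both $T$ and $\mathcal{V}$ as functors into and out of the \itcat{} $\BISPAN_{\fin}(\mathcal{S})$ and composing.

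First, I would transport the analytic monad $T$ to a monad in $\BISPAN_{\fin}(\mathcal{S})$. By the equivalence $\txt{Sl} \colon \BISPAN_{\fin}(\mathcal{S}) \isoto \AnFun(\mathcal{S})$ of \cref{cor:BispanAnal}, the monad $T$ on $\mathcal{S}_{/X}$ (i.e.\ a monad in $\AnFun(\mathcal{S})$ at $X$) transports to a monad in $\BISPAN_{\fin}(\mathcal{S})$ at $X \in \mathcal{S}$. By the universal property of the walking monad $\fmnd$, this datum is equivalent to a functor of \itcats{}
\[ \widetilde{T} \colon \fmnd \to \BISPAN_{\fin}(\mathcal{S}) \]
sending the unique object of $\fmnd$ to $X$.

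Second, I would upgrade $\mathcal{V}$ to a functor out of bispans. Applying \cref{cor:BISPANKSsymmmon} with $\mathcal{K} = \mathcal{S}$, the symmetric monoidal structure on $\mathcal{V}$, together with the hypothesis that $\otimes$ preserves \igpd{}-indexed colimits, corresponds to a functor of \itcats{}
\[ \Phi_{\mathcal{V}} \colon \BISPAN_{\fin}(\mathcal{S}) \to \CATI \]
whose restriction to $\mathcal{S}^{\op}$ preserves limits. From the explicit description following \cref{cor:RKESpanfinCMon}, one reads off $\Phi_{\mathcal{V}}(X) \simeq \Fun(X,\mathcal{V})$; the backward map $f^{\ostar}$ along $f \colon X \to Y$ is precomposition with $f$, and the multiplicative pushforward $p_{\otimes}$ along $p \colon E \to B$ in $\mathcal{S}_{\fin}$ is the fibrewise tensor product.

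The desired monad $T_{\mathcal{V}}$ is then the composite
\[ \fmnd \xrightarrow{\widetilde{T}} \BISPAN_{\fin}(\mathcal{S}) \xrightarrow{\Phi_{\mathcal{V}}} \CATI, \]
which by construction is a monad in $\CATI$ on the object $\Phi_{\mathcal{V}}(X) \simeq \Fun(X,\mathcal{V})$. If $T$ is encoded by the bispan $X \xfrom{s} E \xto{p} B \xto{t} X$, then by \cref{propn:distftrbispandesc} the underlying endofunctor of $T_{\mathcal{V}}$ is the composite $t_{\oplus} p_{\otimes} s^{\ostar}$, which is exactly the expected categorification of the polynomial endofunctor $t_! p_* s^*$ on $\mathcal{S}_{/X}$ defining $T$. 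There is no genuine obstacle: everything is a formal consequence of applying the $(\infty,2)$-categorical universal property of bispans twice, and the only verification needed is that the identifications of values and structure morphisms match up as described, which is routine via \cref{propn:distftrbispandesc}.
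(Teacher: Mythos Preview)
Your proof is correct and follows essentially the same approach as the paper: transport $T$ to a monad in $\BISPAN_{\fin}(\mathcal{S})$ via \cref{cor:BispanAnal}, produce the functor $\Phi_{\mathcal{V}}$ from \cref{cor:BISPANKSsymmmon}, and compose using the description of monads as functors out of $\fmnd$. Your additional unpacking of the underlying endofunctor via \cref{propn:distftrbispandesc} matches the content of the remark immediately following the paper's proof.
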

\begin{proof}
  We can identify $T$ with a monad on $X \in
  \BISPAN_{\fin}(\mathcal{S})$. By \cref{cor:BISPANKSsymmmon}
  $\mathcal{V}$ induces a functor $\BISPAN_{\fin}(\mathcal{S}) \to
  \CATI$ that takes $Y \in \mathcal{S}$ to $\Fun(Y, \mathcal{V})$.
  Any functor of \itcats{} preserves monads, since they can be
  described as simply functors of \itcats{} from $\fmnd$. Hence under the
  functor induced by $\mathcal{V}$ the monad $T$ maps to a monad in
  $\CATI$ which indeed acts on $\Fun(X, \mathcal{V})$.
\end{proof}

\begin{remark}
  Suppose the underlying bispan of the monad $T$ is
  \[ X \xfrom{s} E \xto{p} B \xto{t} X.\]
  Then the underlying endofunctor of the monad $T_{\mathcal{V}}$ is
  given by
  \[ (T_{\mathcal{V}}\phi)(x) \simeq \colim_{b \in B_{x}}
    \bigotimes_{e \in E_{b}} \phi(s(e)).\]
  This has the same form as the formula for the free algebra monad of
  an \iopd{}, and the main result of \cite{polynomial} is that
  analytic monads are equivalent to \iopds{} in the form of dendroidal
  Segal spaces. We therefore expect that if $\mathcal{O}$ is the
  \iopd{} corresponding to $T$, then the monad $T_{\mathcal{V}}$
  is the free $\mathcal{O}$-algebra monad for $\mathcal{O}$-algebras
  in $\mathcal{V}$.
\end{remark}

\subsection{Equivariant bispans and $G$-symmetric monoidal
  $\infty$-categories}\label{subsec:equivt}
In this section we look at the $G$-equivariant version of our results
from \S\ref{subsec:BispanFin} on symmetric monoidal \icats{}
compatible with finite coproducts, where $G$ is a finite group: we
replace the category of finite sets by the category $\xF_{G}$ of
finite $G$-sets, and consider when a $G$-symmetric monoidal \icat{},
defined as a product-preserving functor $\Span(\xF_{G}) \to \CatI$, is
distributive, and so extends to a functor $\BISPAN(\xF_{G}) \to
\CATI$.

\begin{defn}
  Let $G$ be a finite group, and $BG$ the corresponding 1-object groupoid.
  We write $\xF_{G}$ for the category $\Fun(BG, \xF)$ of finite $G$-sets, and
  $\mathcal{O}_{G}$ for the full subcategory of \emph{orbits}, \ie{}
  finite $G$-sets of the form $G/H$ where $H$ is a subgroup of
  $G$. Then $\xF_{G}$ is obtained from $\mathcal{O}_{G}$ by freely
  adding finite coproducts, so that for any \icat{} $\mathcal{C}$ with
  finite products, restriction along the inclusion $\mathcal{O}_{G}
  \hookrightarrow \xF_{G}$ gives an equivalence
  \[ \Fun^{\times}(\xF_{G}^{\op}, \mathcal{C}) \isoto
    \Fun(\mathcal{O}_{G}^{\op}, \mathcal{C}). \] If $\mathcal{C}$ is
  the \icat{} of spaces, this says that the \icat{}
  $\mathcal{S}_{G} := \mathcal{P}(\mathcal{O}_{G})$ of
  \emph{$G$-spaces}\footnote{By Elmendorf's theorem
    \cite{ElmendorfGSpace} the \icat{} $\mathcal{S}_{G}$ is equivalent
    to that obtained from the category of topological spaces with
    $G$-action by inverting the maps that give weak homotopy
    equivalences on all spaces of fixed points.} is equivalent to
  $\Fun^{\times}(\xF_{G}^{\op}, \mathcal{S})$. By analogy with the
  case of $G$-spaces, we can think of a functor
  $\mathcal{F} \colon \mathcal{O}_{G}^{\op} \to \mathcal{C}$ as an
  ``object of $\mathcal{C}$ with $G$-action'', with
  $\mathcal{F}^{H}:= \mathcal{F}(G/H)$ the object of ``$H$-fixed
  points'' of $\mathcal{F}$. We will in particular apply this notation
  for functors $\mathcal{O}_{G}^{\op} \to \CatI$, which we will call
  \emph{$G$-\icats{}}.
\end{defn}

\begin{remark}
  The category $\xF_{G}$ is extensive, and
  so by \cite{BarwickMackey}*{Proposition 4.3} the coproduct in
  $\xF_{G}$ gives both the product and coproduct in $\Span(\xF_{G})$.
\end{remark}

\begin{defn}
  Let $\mathcal{C}$ be  an \icat{} with finite products. A
  \emph{$G$-commutative monoid} in $\mathcal{C}$ is a
  product-preserving functor $\Span(\xF_{G}) \to \mathcal{C}$. We
  write
  \[\CMon_{G}(\mathcal{C}) := \Fun^{\times}(\Span(\xF_{G}),
    \mathcal{C})\] for the \icat{} of $G$-commutative monoids in
  $\mathcal{C}$. A \emph{$G$-symmetric monoidal \icat{}} is a
  $G$-commutative monoid in $\CatI$.
\end{defn}

\begin{remark}
  When $G$ is the trivial group this is equivalent to the usual
  definition of commutative monoids (in terms of functors from
  $\xF_{*}$ satisfying a Segal condition) by \cref{lem:cranch}. More
  generally, see \cite{NardinThesis}*{Theorem 6.5} for an alternative
  description of $G$-commutative monoids in terms of ``finite pointed
  $G$-sets'' (where this must be read in a non-trivial parametrized
  sense).
\end{remark}

\begin{remark}\label{rem:tr-mult}
  A functor $\mathcal{F} \colon \Span(\xF_{G}) \to \mathcal{C}$ preserves products \IFF{}
  the restriction to $\xF_{G}^{\op} \to \mathcal{C}$ preserves
  products, and so is determined by its restriction to
  $\mathcal{O}_{G}^{\op} \to \mathcal{C}$. The additional structure
  given by the forwards maps in $\Span(\xF_{G})$ can be decomposed
  into
  \begin{itemize}
  \item multiplication maps $\mathcal{F}^{H} \times \mathcal{F}^{H}
    \to \mathcal{F}^{H}$ for each subgroup $H$ of $G$, coming from the
    fold map $G/H \amalg G/H \to G/H$, 
  \item multiplicative \emph{transfer} maps $\mathcal{F}^{H} \to
    \mathcal{F}^{K}$ for each inclusion $H \subseteq K$ of subgroups, coming from the
    quotient map $G/H \to G/K$,
  \end{itemize}
  together with various homotopy-coherent compatibilities that in
  particular make each $\mathcal{F}^{H}$ a commutative monoid.
\end{remark}

\begin{remark}
  Grouplike $G$-commutative monoids in $\mathcal{S}$ can be identified with
  connective genuine $G$-spectra, by \cite{NardinThesis}*{Corollary
    A.4.1}. Applying $\pi_{0}$, such a grouplike $G$-commutative
  monoid induces a $G$-commutative monoid in $\Set$, which factors
  through a product-preserving functor
  \[ \Span(\xF_{G}) \to \Ab \]
  because it is grouplike --- this is precisely a \emph{Mackey
    functor}, which is well-known as the structure appearing as
  $\pi_{0}$ of a genuine $G$-spectrum.
\end{remark}

For a functor $\Span(\xF_{G}) \to \CatI$ we can simplify the condition
that it is left adjointable as follows:
\begin{propn}\label{Gcatleftadjble}
  Suppose $\mathcal{F} \colon \xF_{G}^{\op} \to \CatI$ is a
  product-preserving functor. Then $\mathcal{F}$ is left adjointable \IFF{}
  \begin{enumerate}[(1)]
  \item for every subgroup $H \subseteq G$, the \icat{} $\mathcal{F}^{H}$ has
    finite coproducts,
  \item for every inclusion of subgroups $H \subseteq K$ the functor
    $(q_{H}^{K})^{\ostar} \colon \mathcal{F}^{K} \to \mathcal{F}^{H}$,
    corresponding to the quotient map $q_{H}^{K} \colon G/H \to G/K$,
    has a left adjoint $(q_{H}^{K})_{\oplus}$,
  \item for every inclusion of subgroups $H \subseteq K$ the functor
    $(q_{H}^{K})^{\ostar}$ preserves finite coproducts,
  \item for subgroups $H,K \subseteq L$, let $X$ be defined by the pullback
    \[
      \begin{tikzcd}
        X \arrow{r}{f_{K}} \arrow{d}{f_{H}} & G/K \arrow{d}{q_{K}^{L}} \\
        G/H \arrow{r}{q_{H}^{L}} & G/L;
      \end{tikzcd}
    \]
   then the square
   \[
     \begin{tikzcd}
       \mathcal{F}^{L} \arrow{r}{(q_{H}^{L})^{\ostar}}
      \arrow{d}{(q_{K}^{L})^{\ostar}} & \mathcal{F}^{H}\arrow{d}{f_{H}^{\ostar}} \\
      \mathcal{F}^{K} \arrow{r}{f_{K}^{\ostar}} & \mathcal{F}(X)
     \end{tikzcd}
   \]
   is left adjointable, \ie{} the mate transformation
   \[ f_{K,\oplus}f_{H}^{\ostar} \to (q^{L}_{K})^{\ostar}(q^{L}_{H})_{\oplus}\]
   is an equivalence.
  \end{enumerate}
\end{propn}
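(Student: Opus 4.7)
The plan is to use the extensive structure of $\xF_G$ to reduce general left adjointability to the conditions stated, which only involve orbits. The forward implication is straightforward: given left adjointability of $\mathcal{F}$, conditions (2) and (4) are immediate specializations (the pullback in (4) is a pullback in $\xF_G$). Condition (1) follows by applying left adjointability to the fold map $\nabla \colon G/H \amalg G/H \to G/H$ and the inclusion $\emptyset \to G/H$: since $\mathcal{F}$ preserves products, $\nabla^{\ostar}$ is the diagonal $\mathcal{F}^{H} \to \mathcal{F}^{H} \times \mathcal{F}^{H}$, whose left adjoint is binary coproduct, and similarly for the initial object. Condition (3) is BC applied to the cartesian square whose bottom edge is $q_H^K$ and whose top edge is $q_H^K \amalg q_H^K$, with vertical fold maps; under the product-preservation identification this precisely says $(q_H^K)^{\ostar}$ commutes with binary coproducts, and similarly for the initial object.

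For the backward direction, I would first show that every morphism $f^{\ostar}$ has a left adjoint. Every morphism $f \colon I \to J$ in $\xF_G$ with $I = \coprod_i G/H_i$ and $J = \coprod_j G/K_j$ decomposes as a pair consisting of a partition $I = \coprod_j I_j$ with $I_j := f^{-1}(G/K_j)$ together with orbit morphisms $G/H_i \to G/K_j$, each of which (after choosing a representative) factors as an isomorphism followed by a quotient $q_{H}^{K_j}$ for some subgroup $H \subseteq K_j$ conjugate to $H_i$. Since $\mathcal{F}$ is product-preserving, $\mathcal{F}(J) \simeq \prod_j \mathcal{F}^{K_j}$ and $\mathcal{F}(I) \simeq \prod_i \mathcal{F}^{H_i}$, and $f^{\ostar}$ becomes the product over $j$ of the composite $\mathcal{F}^{K_j} \to \prod_{i \in I_j} \mathcal{F}^{H_i}$ obtained by applying the relevant $(q_H^{K_j})^{\ostar}$ on each factor. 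A left adjoint is built componentwise using the adjoints $(q_H^{K_j})_{\oplus}$ supplied by (2) together with finite coproducts in $\mathcal{F}^{K_j}$ supplied by (1).

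For the BC condition, I would verify that any cartesian square in $\xF_G$ decomposes, via extensivity, into a finite collection of atomic cartesian squares each of which is either (a) a ``coproduct'' square where one side is a fold map, or (b) a square of morphisms between orbits. Square (a) yields BC compatible with products of the ambient functor $\mathcal{F}$; condition (3) supplies the key content that $(q_H^K)^{\ostar}$ commutes with finite coproducts, so that the BC mate agrees with the one obtained by taking left adjoints of folds. Square (b) reduces further, via the double-coset decomposition of the pullback $G/H \times_{G/L} G/K$ into orbits $G/(H \cap gKg^{-1})$, to a finite coproduct of squares of the form considered in (4). Using that BC transformations are compatible with both horizontal and vertical pasting of squares (and with coproduct decomposition of squares, which corresponds to product decomposition after applying $\mathcal{F}$), the atomic cases assemble into BC for the original square.

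The main obstacle is the bookkeeping of decomposing a general cartesian square into atomic pieces and verifying that BC pastes correctly through the product identifications coming from $\mathcal{F}$ being product-preserving. In particular, one must check that the ``coproduct'' atomic squares genuinely contribute no new adjointability constraint beyond (1) and (3), which amounts to observing that the BC mate associated to a cartesian square of the form $(I_1 \amalg I_2) \times_{J} K \simeq (I_1 \times_J K) \amalg (I_2 \times_J K)$ is an equivalence by product-preservation combined with (3) applied pointwise; once this is set up cleanly, the remaining reductions are mechanical.
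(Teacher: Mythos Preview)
Your proposal is correct and follows essentially the same approach as the paper: both use extensivity of $\xF_G$ to reduce morphisms to fold maps and quotient maps between orbits (giving (1) and (2)), and reduce cartesian squares to a pasting of fold-map squares (giving (3)) together with orbit-over-orbit pullbacks (giving (4)). One minor point: your step of reducing squares of type (b) ``further, via the double-coset decomposition'' is unnecessary, since condition (4) is stated for the full pullback $X$ (which need not be an orbit) and so applies directly; the paper likewise keeps (4) in this form and relegates the double-coset description to a remark.
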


\begin{remark}\label{rmk:doublecosets}
  The pullback $X$ in condition (4) can be decomposed into a sum of
  orbits indexed by double cosets:
  \[ X \cong \coprod_{[g] \in H \setminus L / K} H \cap K_{g}, \]
  where $K_{g}$ denotes the conjugate $g K g^{-1}$.
  The left adjointability in (4) then amounts to the following 
  \emph{double coset formula}:
  \[ (q^{L}_{K})^{\ostar}(q^{L}_{H})_{\oplus} \simeq \coprod_{[g] \in
      H \setminus L / K} c_{g,\oplus} (q_{H \cap
      K_{g}}^{K_{g}})_{\oplus}  (q_{H \cap K_{g}}^{H})^{\ostar}, \]
  where $c_{g}$ is the isomorphism $G/K \cong G/K_{g}$.
\end{remark}

\begin{proof}[Proof of \cref{Gcatleftadjble}]
  Since $\xF_{G}$ is extensive, a morphism
  $\phi \colon X \to Y$ in $\xF_{G}$ where
  $Y \cong \coprod_{i} G/H_{i}$ decomposes as a coproduct
  $\coprod_{i} \phi_{i}$ for $\phi_{i} \colon X_{i} \to G/H_{i}$. 
  Since $\mathcal{F}$ is product-preserving, to show that
  $\phi^{\ostar}$ has a left adjoint it suffices to consider the case
  where $Y$ is an orbit $G/H$. Moreover, for $\phi \colon X \to G/H$
  where $X \cong \coprod_{j} G/K_{j}$ we can decompose $\phi$ as
  \[ \coprod_{j} G/K_{j} \xto{\coprod_{j} \phi|_{G/K_{j}}} \coprod_{j}
    G/H \xto{\nabla} G/H \] where $\nabla$ denotes the fold map. Since
  adjunctions compose, to prove that left adjoints exist it is enough
  to consider fold maps and morphisms between orbits. In the first
  case, the functor
  $\nabla^{\ostar}$ induced by the fold map
  $\nabla  \colon \coprod_{j \in J} G/H \to G/H$ can be identified with the
  diagonal functor $\mathcal{F}^{H} \to \Fun(J, \mathcal{F}^{H})$ and
  so has a left adjoint for all finite sets $J$ \IFF{}
  $\mathcal{F}^{H}$ admits finite coproducts, \ie{} \IFF{} assumption
  (1) holds. In the second case, a morphism $\phi \colon G/K \to G/H$ 
  can be decomposed as  $G/K \isoto G/gKg^{-1} \xto{q_{gKg^{-1}}^{H}}
  G/H$ and so $\phi^{\ostar}$ has a left adjoint for all such maps
  $\phi$ \IFF{} assumption (2) holds.

  Now we consider the adjointability condition. Again using that
  $\xF_{G}$ is extensive, a pullback square
  \[
    \begin{tikzcd}
      X \arrow{r} \arrow{d} & Y \arrow{d} \\
      Z \arrow{r} & W
    \end{tikzcd}
  \]
  where $W \cong \coprod_{i} G/H_{i}$ decomposes as a coproduct of
  pullback squares
  \[
    \begin{tikzcd}
      X_{i} \arrow{r} \arrow{d} & Y_{i} \arrow{d} \\
      Z_{i} \arrow{r} & G/H_{i}.
    \end{tikzcd}
  \]
  Since $\mathcal{F}$ preserves products and taking mate squares
  commutes with products, we see that $\mathcal{F}$ is left
  adjointable \IFF{} it is left adjointable for pullback squares
  \[
      \begin{tikzcd}
        X \arrow{r} \arrow{d} & Y \arrow{d} \\
        Z \arrow{r} & G/H
      \end{tikzcd}
  \]
  over
  an orbit. If $Y \cong \coprod_{i} G/K_{i}$ and $Z \cong \coprod_{j}
  G/L_{j}$ then we can decompose our pullback square into the diagram
  \[
    \begin{tikzcd}
      X \arrow{r} \arrow{d} & \coprod_{i,j} G/K_{i} \arrow{r}
      \arrow{d} & \coprod_{i} G/K_{i} \arrow{d} \\
      \coprod_{i,j} G/L_{j} \arrow{r} \arrow{d} & \coprod_{i,j} G/H
      \arrow{r} \arrow{d} & \coprod_{i} G/H \arrow{d} \\
      \coprod_{j} G/L_{j} \arrow{r} & \coprod_{j} G/H \arrow{r} & G/H.
    \end{tikzcd}
    \]
  where the top left square decomposes as a coproduct of pullback squares
    \[
      \begin{tikzcd}
        X_{i,j} \arrow{r} \arrow{d} & G/K_{i} \arrow{d} \\
        G/L_{j} \arrow{r} & G/H
      \end{tikzcd}
  \]
  of the form considered in (4) and the other squares are defined
  using fold maps. Since mate squares are compatible with both
  vertical and horizontal composition of squares, the functor
  $\mathcal{F}$ will be left adjointable if the images of the four
  squares in such decompositions are left adjointable. Using again the
  assumption that $\mathcal{F}$ preserves finite products, we see that
  this holds \IFF{} (4) holds and we have left adjointability for
  squares of the form
  \[
    \begin{tikzcd}
      \coprod_{i \in I} G/K \arrow{d}{\coprod_{i \in I} \phi}
      \arrow{r}{\nabla_{K}} & G/K \arrow{d}{\phi} \\
      \coprod_{i \in I} G/H \arrow{r}{\nabla_{H}} & G/H.
    \end{tikzcd}
  \]
  The latter means the canonical map $\nabla_{K,\oplus}(\prod_{i}
  \phi^{\ostar}) \to \phi^{\ostar}\nabla_{H,\oplus}$ is an
  equivalence, \ie{} the functor $\phi^{\ostar}$ preserves $I$-indexed
  coproducts where $\phi$ is a map between orbits in $\xF_{G}$. Since
  such maps are composites of isomorphisms and maps coming from
  subgroup inclusions, this is equivalent to condition (3).
\end{proof}

\begin{defn}
  We say a $G$-\icat{} $\mathcal{F}$ has \emph{additive transfers} if
  it satisfies the conditions of \cref{Gcatleftadjble} when viewed as
  a product-preserving functor $\xF_{G}^{\op} \to \CatI$.
\end{defn}

\begin{remark}
  In the terminology of \cite{ShahThesis,NardinThesis} a $G$-\icat{}
  has \emph{finite $G$-coproducts} \IFF{} it has additive transfers in
  our sense, cf.~\cite[Proposition 2.11]{NardinThesis}.
\end{remark}

\begin{propn}\label{propn:SpanFGdist}
  Suppose $\mathcal{F} \colon \Span(\xF_{G}) \to \CatI$ is a
  $G$-symmetric monoidal \icat{} whose underlying $G$-\icat{} has
  additive transfers. Then $\mathcal{F}$ is distributive
  \IFF{} for all morphisms $\phi \colon X \to Y, \psi \colon Y \to
  G/H$ in $\xF_{G}$, the distributivity transformation
\[ g_{\oplus}\tilde{\psi}_{\otimes}\epsilon^{\ostar} \to
  \psi_{\otimes}\phi_{\oplus} \]
  from the distributivity square
  \[
    \begin{tikzcd}
      {} & W \arrow{dd}{\tilde{g}} \arrow{dl}[swap]{\epsilon} \arrow{r}{\tilde{\psi}} & Z
      \arrow{dd}{g} \\
      X \arrow{dr}{\phi} \\
      {} & Y \arrow{r}{\psi} & G/H,
    \end{tikzcd}
\]
    is an equivalence.
\end{propn}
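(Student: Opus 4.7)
The ``only if'' direction is immediate from the definition of distributivity, since all the squares in question arise as distributivity diagrams in $\xF_G$. For the ``if'' direction, the plan is to use the extensivity of $\xF_G$ to reduce any distributivity diagram to a disjoint union of distributivity diagrams whose target is an orbit, in much the same spirit as the proof of \cref{Gcatleftadjble}.

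Concretely, given an arbitrary distributivity diagram
\[
\begin{tikzcd}
{} & W \arrow{dd}{\tilde g} \arrow{dl}[swap]{\epsilon} \arrow{r}{\tilde\psi} & Z \arrow{dd}{g} \\
X \arrow{dr}{\phi} \\
{} & Y \arrow{r}{\psi} & V
\end{tikzcd}
\]
in $\xF_G$, I would decompose the target as $V \cong \coprod_{i} G/H_i$. By the extensivity of $\xF_G$, every other object in the diagram decomposes compatibly as a coproduct indexed by the same set (since pullback along the coproduct inclusion $G/H_i \hookrightarrow V$ exists and coproducts are disjoint and stable under pullback). The lemma preceding \cref{propn:pbdisttr} then guarantees that each piece is itself a distributivity diagram in $\xF_G$, now with target the orbit $G/H_i$.

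The core computational step is to show that under these identifications the distributivity transformation for the whole diagram factors as a product of the distributivity transformations of the pieces. Here I would use that $\mathcal{F}$ is product-preserving, so that $\mathcal{F}(\coprod_i A_i) \simeq \prod_i \mathcal{F}(A_i)$ and the functors $\phi^\ostar$, $\phi_\oplus$, $\psi_\otimes$, $g^\ostar$, $g_\oplus$, etc.\ all decompose as the corresponding products of functors on the pieces. Since the distributivity transformation is itself a mate transformation (\cref{rmk:dist=adj}) built from units, counits, and base change isomorphisms, and since all of these constructions commute with products of adjunctions and product-preserving functors, the distributivity transformation for the full diagram is equivalent to the product of the distributivity transformations for the component diagrams with orbit targets.

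The main potential obstacle is bookkeeping: verifying rigorously that every ingredient in the definition of the distributivity transformation (each unit, counit, and base change equivalence) is compatible with the product decomposition coming from extensivity. The cleanest way to package this is probably to appeal to \cref{propn:pbdisttr} applied to the inclusions $G/H_i \hookrightarrow V$, which furnishes the naturality square relating the distributivity transformations for the original diagram and for each piece, and to combine this with the equivalence $\mathcal{F}(V) \simeq \prod_i \mathcal{F}(G/H_i)$ to identify the full transformation as the product of its components. Once this is done, an equivalence on each factor gives an equivalence overall, which is exactly the assumption in the statement.
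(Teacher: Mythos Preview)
Your approach is correct and essentially the same as the paper's: decompose the target as a coproduct of orbits using extensivity of $\xF_G$, and use that $\mathcal{F}$ preserves products to see the distributivity transformation is a product of the component transformations. The paper cites \cref{ex:bispancoprod} (coproducts of distributivity diagrams are distributivity diagrams) rather than the pullback lemma you invoke, but in an extensive category these are two ways of saying the same thing; your suggestion to use \cref{propn:pbdisttr} for the bookkeeping is a clean way to justify what the paper leaves implicit.
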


\begin{remark}
  Decomposing $Z = \psi_{*}X$ in such a distributivity diagram as a
  coproduct of orbits is often a non-trivial problem in finite
  group theory, so we do not expect that this distributivity condition
  can be simplified further in general.
\end{remark}

\begin{proof}[Proof of \cref{propn:SpanFGdist}]
  Since $\xF_{G}$ is extensive, we know
  from \cref{ex:bispancoprod} that finite coproducts of distributivity
  diagrams are again distributivity diagrams. Since $\mathcal{F}$
  preserves products, distributivity transformations associated to
  such coproducts decompose as products, hence the distributivity
  condition reduces to the case where the target of the second map is
  an orbit.
\end{proof}

\begin{defn}\label{def:comp-add}
  We say a $G$-symmetric monoidal \icat{} $\mathcal{F} \colon
  \Span(\xF_{G}) \to \CatI$ is \emph{compatible with additive
    transfers} if it satisfies the condition of \cref{propn:SpanFGdist}.
\end{defn}

\begin{cor}
  Product-preserving functors $\BISPAN(\xF_{G}) \to \CatI$ correspond
  to $G$-symmetric monoidal \icats{} that are compatible with additive
  transfers.
\end{cor}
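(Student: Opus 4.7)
The plan is to assemble the proof directly from the preceding machinery by reducing to the universal property of bispans and then decoding the distributivity conditions in terms of the combinatorics of $\xF_{G}$. By Theorem~\ref{thm:main2} (applied to the bispan triple $(\xF_G, \xF_G, \xF_G)$), functors $\BISPAN(\xF_G) \to \CATI$ are the same data as distributive functors $\Span(\xF_G) \to \CATI$. So the task is to match up the product-preserving objects on both sides with $G$-symmetric monoidal \icats{} that are compatible with additive transfers.

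The first substantive step is to check that the product in $\BISPAN(\xF_G)$ is computed by the coproduct in $\xF_G$. Since $\xF_G \simeq \Fun(BG,\xF)$ is an extensive and locally cartesian closed ordinary category (slices over orbits $G/H$ are equivalent to $\xF_H$), Remark~\ref{rmk:coprodisprodinbispan} applies, giving that $\amalg$ realises the cartesian product in $\BISPAN(\xF_G)$. The same coproduct also gives the cartesian product in $\Span(\xF_G)$ (indeed in both the product and coproduct, since $\xF_G$ is disjunctive, as noted in Example~\ref{ex:spancocart}). Under the equivalence from Theorem~\ref{thm:main2}, restriction along $\Span(\xF_G) \hookrightarrow \BISPAN(\xF_G)$ is the identity on underlying objects and commutes with these coproducts, so product-preserving functors $\BISPAN(\xF_G) \to \CATI$ correspond precisely to product-preserving distributive functors $\Span(\xF_G) \to \CATI$.

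By definition, product-preserving functors $\Span(\xF_G) \to \CATI$ are exactly $G$-symmetric monoidal \icats{}. It then remains to identify the distributivity condition on such a functor $\mathcal{F}$. The condition splits into left $\xF_G$-adjointability of $\mathcal{F}|_{\xF_G^{\op}}$ and invertibility of the distributivity transformations. The first of these is handled by Proposition~\ref{Gcatleftadjble}, which shows, using extensivity of $\xF_G$ and the product-preservation of $\mathcal{F}$, that left adjointability reduces to the four orbit-level conditions characterising additive transfers on the underlying $G$-\icat{} $\mathcal{F}|_{\mathcal{O}_G^{\op}}$. The second is handled by Proposition~\ref{propn:SpanFGdist}: again using that $\xF_G$ is extensive and that $\mathcal{F}$ preserves products, distributivity diagrams reduce (via coproduct decomposition of the target) to the case of distributivity diagrams whose base is an orbit $G/H$, and invertibility of these distributivity transformations is precisely Definition~\ref{def:comp-add} of compatibility with additive transfers.

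Stringing these equivalences together yields the claim; no step is a genuine obstacle, as every ingredient has been prepared. The only item requiring any care is the identification of the product in $\BISPAN(\xF_G)$ with $\amalg$, which is why we invoked local cartesian closedness of $\xF_G$ rather than merely Example~\ref{ex:bispancoprod} (the latter gives a symmetric monoidal structure on bispans, but Remark~\ref{rmk:coprodisprodinbispan} is needed to recognise it as the cartesian product).
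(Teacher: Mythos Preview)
Your proof is correct and follows essentially the same route as the paper: invoke Theorem~\ref{thm:main2} to pass to distributive functors on $\Span(\xF_G)$, use Remark~\ref{rmk:coprodisprodinbispan} to identify products in $\BISPAN(\xF_G)$ with coproducts in $\xF_G$, and then apply Propositions~\ref{Gcatleftadjble} and~\ref{propn:SpanFGdist} to unpack distributivity as compatibility with additive transfers. Your additional remarks (that $\xF_G$ is locally cartesian closed, so that Remark~\ref{rmk:coprodisprodinbispan} applies, and the explicit separation of the adjointability and distributivity-transformation halves) are helpful elaborations but do not diverge from the paper's argument.
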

\begin{proof}
  By \cref{thm:main2}, functors $\BISPAN(\xF_{G}) \to \CATI$
  correspond to distributive functors $\Span(\xF_{G}) \to
  \CATI$. Moreover, from \cref{rmk:coprodisprodinbispan} we know that
  the product in $\BISPAN(\xF_{G})$ is given by the coproduct in
  $\xF_{G}$, just as in $\Span(\xF_{G})$, so product-preserving
  functors from $\BISPAN(\xF_{G})$ correspond to product-preserving
  distributive functors under this equivalence. By
  \cref{Gcatleftadjble} and \cref{propn:SpanFGdist}, the latter are
  equivalent to $G$-symmetric monoidal \icats{} that are compatible
  with additive transfers.
\end{proof}

We now consider some examples of $G$-symmetric monoidal \icats{}
compatible with additive transfers:
\begin{ex}[Finite $G$-sets]\label{ex:finGset}
  As a special case of \cref{propn:bispanslice} we get a functor
  \[ \BISPAN(\xF_{G}) \to \CAT \]
  taking $X \in \xF_{G}$ to the slice $(\xF_{G})_{/X}$. Here we can
  identify $(\xF_{G})_{/(G/H)}$ with $\xF_{H}$, so the underlying
  $G$-category is given by $(G/H) \mapsto \xF_{H}$. Since $\xF_{G}$ is extensive, this is a product-preserving
  functor; the underlying $G$-symmetric monoidal category encodes the
  cartesian products of finite $G$-sets and their compatibility with
  the left and right adjoints to the restriction functor $\xF_{G} \to
  \xF_{H}$ for $H$ a subgroup of $G$.
\end{ex}

\begin{ex}[$G$-spaces]
  As a variant of the previous example, we can consider the
  $G$-\icat{} of $G$-spaces. Since $\mathcal{S}_{G}$ is locally
  cartesian closed (being a (presheaf) $\infty$-topos), we can apply
  \cref{propn:bispanslice} to it and then restrict to bispans in
  $\xF_{G}$ to get a functor
  \[ \BISPAN(\xF_{G}) \to \CATI \] that takes $X \in \xF_{G}$ to
  $(\mathcal{S}_{G})_{/X}$; here we can identify
  $(\mathcal{S}_{G})_{/(G/H)}$ with $\mathcal{S}_{H}$.  Since
  $\mathcal{S}_{G}$ is extensive, this is a product-preserving
  functor. The underlying $G$-symmetric monoidal \icat{} (compatible
  with additive transfers) encodes the cartesian products of
  $H$-spaces for all subgroups $H$ of $G$ and their compatibility with
  the left and right adjoints to the restriction functor
  $\mathcal{S}_{G} \to \mathcal{S}_{H}$.
\end{ex}

\begin{ex}[$G$-actions in a symmetric monoidal \icat{}] \label{ex:g-smc}
  Let $\mathcal{C}$ be a symmetric monoidal \icat{}. By \cref{prop:bh}
  this determines a functor $\Span_{\fin}(\mathcal{S}) \to \CatI$
  taking $X \in \mathcal{S}$ to $\Fun(X, \mathcal{C})$. For a finite
  group $G$ we have a functor $\mathcal{G} \colon \xF_{G} \to \mathcal{S}$ by restricting
  the colimit functor $\Fun(BG, \mathcal{S}) \to \mathcal{S}$; this
  takes a finite $G$-set $X$ to the groupoid $X/\!/G = X_{hG}$. The functor
  $\mathcal{G}$ preserves pullbacks since the colimit functor factors
  as the straightening equivalence $\Fun(BG, \mathcal{S}) \isoto
  \mathcal{S}_{/BG}$ followed by the forgetful functor to
  $\mathcal{S}$, which preserves all weakly contractible
  limits. Moreover, $\mathcal{G}$ takes values in
  $\mathcal{S}_{\fin}$, and so yields a functor
  $\Span(\mathcal{G})\colon \Span(\xF_{G}) \to
  \Span_{\fin}(\mathcal{S})$. This functor preserves products, since
  $\mathcal{G}$ preserves coproducts. It follows that we can restrict
  along $\Span(\mathcal{G})$ and obtain for any symmetric monoidal
  \icat{} $\mathcal{C}$ a $G$-symmetric monoidal structure on
  $\Fun(BG, \mathcal{C})$. Moreover, if the tensor product in
  $\mathcal{C}$ is compatible with finite coproducts, this
  $G$-symmetric monoidal structure will be compatible with additive
  transfers.
\end{ex}

\begin{ex}[$G$-representations]\label{ex:g-rep}
  As a special case of the previous example, we can take $\mathcal{C}$
  to be the category $\Vect_{k}$ of $k$-vector spaces with the tensor
  product as symmetric monoidal structure. We then obtain a functor
  \[ \rho_{k}\colon \BISPAN(\xF_{G}) \to \CAT \]
  such that $\rho_{k}(G/H)$ is the category
  $\Rep_{H}(k) := \Fun(BH, \Vect_{k})$ of $H$-representations and for
  for subgroups $H \subseteq K \subseteq G$,
  \begin{itemize}      
  \item $(q_{H}^{K})^{\ostar}$ is the restriction functor
    $\txt{Res}_{H}^{K} \colon \Rep_{K}(k)
    \to \Rep_{H}(k)$,
  \item $(q_{H}^{K})_{\oplus}$ is the induced representation functor
    $\txt{Ind}_{H}^{K} \colon \Rep_{H}(k) \to \Rep_{K}(k)$, left
    adjoint to $\txt{Res}_{H}^{K}$, and given on objects by taking an
    $H$-representation $V$ to $\bigoplus_{K/H} V$ with induced action
    of $K$,
  \item $(q_{H}^{K})_{\otimes}$ is the \emph{tensor-induction functor}
    $\Rep_{H}(k) \to \Rep_{H}(k)$, given on objects
    by taking an
    $H$-representation $V$ to $\bigotimes_{K/H} V$ with induced action
    of $K$.
  \end{itemize}
  As a more sophisticated version of this construction, we can instead
  consider the \icat{} $\Perf_{R}$ of perfect (\ie{} dualizable)
  modules over an $\mathbb{E}_{\infty}$-ring spectrum $R$. Plugging
  this into the previous example we see that the \icats{}
  $\dRep_{H}(R):=\Fun(BH, \Perf_{R})$ fit together into a functor
  \[ \rho^{\infty}_R\colon \BISPAN(\xF_{G}) \to \CAT \]
  such that $\rho_R^{\infty}(G)(G/H) = \dRep_{H}(R)$. This example will be
  important when we discuss Tambara functors arising from the
  algebraic $K$-theory of group actions in the next section. 
\end{ex}

Our final example of a $G$-symmetric monoidal \icat{} compatible with
additive transfers is the \icat{} of genuine $G$-spectra. This is less
formal than our previous examples, but the input we need is already in
the literature:
\begin{propn}\label{prop:genuine-g}
  The \icat{} $\Sp^{G}$ of genuine $G$-spectra is a $G$-symmetric
  monoidal \icat{} compatible with additive transfers.
\end{propn}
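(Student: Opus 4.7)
The plan is to verify the three ingredients needed for the proposition: (i) existence of a product-preserving functor $\underline{\Sp}^G \colon \Span(\xF_G) \to \CatI$ with $\underline{\Sp}^G(G/H) \simeq \Sp^H$; (ii) that its underlying $G$-$\infty$-category has additive transfers in the sense of \cref{Gcatleftadjble}; and (iii) that the associated distributivity transformations are equivalences, so as to satisfy \cref{def:comp-add} in light of \cref{propn:SpanFGdist}.

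For (i), I would not construct $\underline{\Sp}^G$ from scratch but rather invoke existing work. The spectral Mackey-functor formalism of Guillou--May \cite{GuillouMaySpMack} provides the restriction-transfer structure on the backwards maps, while the Hill--Hopkins--Ravenel norms \cite{HHRKervaire} provide the forwards maps; these have been assembled into a product-preserving functor $\Span(\xF_G) \to \CatI$ by Barwick--Dotto--Glasman--Nardin--Shah \cite{BDGNS1}, and equivalently in the normed $\infty$-category framework of Bachmann--Hoyois \cite{norms}. Either construction supplies the required functor with the expected identifications of restrictions and norms.

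For (ii), I apply \cref{Gcatleftadjble}. Each $\Sp^H$ is stable presentable, hence has all finite coproducts. The restriction functor $\txt{Res}^K_H \colon \Sp^K \to \Sp^H$ preserves all (co)limits since it is exact and colimit-preserving, and it admits a left adjoint $\txt{Ind}^K_H$ given by the classical additive induction. Finally, the Beck--Chevalley mate transformation for a pullback square of orbits is precisely the Mackey double-coset decomposition of \cref{rmk:doublecosets}, which is a well-known equivalence in $\Sp^G$.

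The main obstacle is (iii), the distributivity compatibility. By \cref{propn:SpanFGdist} it suffices, for morphisms $\phi \colon X \to Y$ in $\xF_G$ and $\psi \colon Y \to G/H$, to check that the distributivity transformation $g_\oplus \tilde{\psi}_\otimes \epsilon^\ostar \to \psi_\otimes \phi_\oplus$ is an equivalence. My approach is to reduce to compact generators: the norm $\psi_\otimes$ preserves sifted colimits by \cite{HHRKervaire}, while $\phi_\oplus$ is a left adjoint and $\epsilon^\ostar$ is exact and colimit-preserving, so both sides preserve sifted colimits in each factor. Since $\Sp^H$ is compactly generated by suspension spectra $\Sigma^\infty_+ (H/L)$ of orbits, it is enough to verify the transformation on such generators. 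On such inputs, applying $\Sigma^\infty_+$ identifies both sides with indexed smash products of suspension spectra of finite $G$-sets, reducing the claim to the corresponding distributivity identity in $\xF_G$, which holds by \cref{ex:finGset} and \cref{ex:bispancoprod} (using extensivity of $\xF_G$). This is the same distributive identity underlying the classical fact \cite{BrunTambara} that the $\pi_0$-Mackey functor of a genuine $G$-$E_\infty$-ring spectrum is a Tambara functor, and it is the substantive input behind the entire statement.
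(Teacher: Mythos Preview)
Your overall strategy coincides with the paper's: obtain the functor $\Span(\xF_G)\to\CatI$ from \cite{norms}, verify the four conditions of \cref{Gcatleftadjble} (stability handles (1) and (3), induction gives (2), the double-coset formula gives (4)), and then check distributivity by reducing to suspension spectra of finite $G$-sets and appealing to \cref{ex:finGset}. The paper follows exactly this route, with the minor difference that it proves condition (4) by the same reduction-to-generators argument rather than citing the double-coset formula as known.

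There is, however, a genuine gap in your reduction step for (iii). You argue that both sides preserve sifted colimits and that $\Sp^H$ is compactly generated by the $\Sigma^\infty_+(H/L)$, and conclude that it suffices to check the transformation on these. But this inference is invalid: not every $H$-spectrum is a \emph{sifted} colimit of such suspension spectra (sifted colimits of connective objects remain connective), and the norm does not preserve arbitrary colimits, so compact generation alone does not buy you the reduction. The paper closes this gap by observing that every $H$-spectrum is a sifted colimit of \emph{desuspensions} of $\Sigma^\infty_+ X$ for $X$ a finite $H$-set, and that all functors in play intertwine desuspension with smashing by an invertible object (since $\psi_\otimes$ is symmetric monoidal, $\psi_\otimes(\Sigma^{-1}E)\simeq \psi_\otimes(S^{-1})\wedge\psi_\otimes(E)$ with $\psi_\otimes(S^{-1})$ invertible, and similarly for the other pieces via the projection formula). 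Once you add this desuspension step, your argument is complete and agrees with the paper's.
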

\begin{proof}
  Taking fixed point spectra for subgroups of $G$ gives a functor
  $\mathcal{O}_{G}^{\op} \to \CatI$ that takes $G/H$ to the \icat{}
  $\Sp^{H}$ of genuine $H$-spectra; this is the $G$-\icat{} of
  $G$-spectra. The corresponding product-preserving functor
  $\xF_{G}^{\op} \to \CatI$ extends to a functor
  $\sigma_{G} \colon \Span(\xF_{G}) \to \CatI$ such that for
  $H \subseteq K \subseteq G$ the functor
  $(q_{H}^{K})_{\otimes} \colon \Sp^{H} \to \Sp^{K}$ is the
  multiplicative norm of \cite{HHRKervaire}; this follows from the
  results of \cite[\S 9]{norms} by restricting the functor from spans
  of profinite groupoids defined there. For $\phi \colon X \to Y$ in
  $\xF_{G}$, the functor
  $\phi^{\ostar} \colon \sigma_{G}(Y) \to \sigma_{G}(X)$ has a left
  adjoint by \cite[Lemma 9.7(2)]{norms} --- for
  $H \subseteq K \subseteq G$ the left adjoint $(q_{H}^{K})_{\oplus}$
  is the classical (additive) \emph{transfer} or induction functor. To
  see that the functor $\sigma_{G}$ is left adjointable we check the 3
  remaining conditions in \cref{Gcatleftadjble}: conditions (1) and
  (3) hold since the \icats{} of $G$-spectra are stable (hence any
  right adjoint functor between them automatically preserves finite
  colimits). To check condition (4) we use that any $H$-spectrum is a
  sifted colimit of desuspensions of suspension spectra $\Sigma^{\infty}_{+}X$ with $X$
  a finite $H$-set, and the functors involved preserve (sifted)
  colimits and desuspensions. Hence it suffices to check that the
  canonical map
  \[ f_{K,\oplus}f_{H}^{\ostar}\Sigma^{\infty}_{+}X \to
    (q^{L}_{K})^{\ostar}(q^{L}_{H})_{\oplus}\Sigma^{\infty}_{+}X \] is
  an equivalence for $X \in \xF_{H}$. But here all the functors are
  given on suspension spectra of finite $H$-sets by the suspension spectra on the
  corresponding functors for finite $G$-sets, so this follows from
  \cref{ex:finGset}. The same argument works for distributivity, since
  we also have $f_{\otimes}\Sigma^{\infty}_{+}X \simeq \Sigma^{\infty}_{+}f_{*}X$.
\end{proof}

\begin{remark}
  The adjointability condition here reduces by \cref{rmk:doublecosets}
  to a double coset formula for additive transfers. This is a basic
  fact in equivariant stable homotopy theory that has surely long been
  well-known to the experts, but the only explicit references we could
  find are \cite{HHRKervaire}*{Proposition A.30} (applied to the
  direct sum in orthogonal spectra) and
  \cite{PatchkoriaRigid}*{Corollary 5.2}. The distributivity condition
  also appears (in terms of orthogonal spectra) as
  \cite{HHRKervaire}*{Proposition A.37}.
\end{remark}

\begin{variant}
  Following Blumberg and Hill \cite{blumberg-hill} we can consider
  subcategories $\xF_{G,\mathcal{I}}$ where $\mathcal{I}$ is an
  \emph{indexing system} as in \cite{blumberg-hill}*{Definition 1.2};
  by \cite{blumberg-hill}*{Theorem 1.4} these are precisely the
  subcategories of $\xF_{G}$ such that $(\xF_{G},\xF_{G,\mathcal{I}})$
  is a span pair. Since $\xF_{G}$ is locally cartesian closed, we then
  have a bispan triple $(\xF_{G}, \xF_{G,\mathcal{I}},
  \xF_{G})$. Product-preserving functors out of
  $\Span_{\mathcal{I}}(\xF_{G})$ are $G$-symmetric monoidal \icats{}
  where only some subclass of multiplicative norms exist, and we can
  characterize distributivity for such functors by the analogue of
  \cref{propn:SpanFGdist} with the map $\psi$ restricted to lie in
  $\xF_{G,\mathcal{I}}$.
\end{variant}

\begin{variant}
  We can also consider a $G$-equivariant analogue of
  \S\ref{sec:spacesymmmon}: Using \cite[Proposition C.18]{norms} a
  $G$-symmetric monoidal \icat{} determines by right Kan extension a
  functor $\Span_{\fin}(\mathcal{S}_{G}) \to \CatI$, where
  $\mathcal{S}_{G,\fin}$ denotes the subcategory of $\mathcal{S}_{G}$
  containing the maps $\phi \colon X \to Y$ such that for every map
  $G/H \to Y$, the pullback $X \times_{Y} G/H$ is a finite $G$-set.
  Here $(\mathcal{S}_{G}, \mathcal{S}_{G,\fin}, \mathcal{S}_{G})$ is a
  bispan triple, and we might say that the $G$-symmetric monoidal
  \icat{} is ``compatible with $G$-space-indexed $G$-colimits'' if
  this is distributive. We expect that this should hold for the
  $G$-symmetric monoidal \icat{} of genuine $G$-spectra and, by
  analogy with \cite{polynomial}, that monads in the \itcat{}
  $\BISPAN_{\fin}(\mathcal{S}_{G})$ should be related to a notion of
  \emph{$G$-\iopds{}} \cite{BDGNS1}.
\end{variant}

\begin{variant}
  In \cite[Chapter
  9]{norms}, Bachmann and Hoyois define \icats{} of equivariant
  spectra for \emph{profinite groupoids}, and we can also consider
  distributivity in this setting. We can take the $(2,1)$-category of finite groupoids
  $\mathrm{FinGpd} \subset \mathcal{S}$ to be the full subcategory of
  spaces spanned by $1$-truncated spaces with finite $\pi_0,
  \pi_1$. We then form the $(2,1)$-category of profinite groupoids
  by taking pro-objects:
  $\mathrm{ProfGpd}:=\mathrm{Pro}(\mathrm{FinGpd})$. Let
  $\mathrm{ProfGpd}_{\txt{fp}}$ be the subcategory containing only the
  \emph{finitely presented} maps as in \cite{norms}*{\S 9.1}. It then
  follows from \cite{norms}*{Lemmas 9.3 and 9.5} and
  \cref{lem:bispantripmatecond} that we have a bispan triple
  \[(\mathrm{ProfGpd}, \mathrm{ProfGpd}_{\txt{fp}},
    \mathrm{ProfGpd}).\]
  In \cite[Chapter 9]{norms} equivariant
  spectra are defined as a functor
  \[ \Span_{\txt{fp}}(\mathrm{ProfGpd}) \to \CatI;\]
  we expect that this is distributive, giving a functor of \itcats{}
  \[ \BISPAN_{\txt{fp}}(\mathrm{ProfGpd}) \to \CATI.\]
\end{variant}

\subsection{Motivic bispans and normed $\infty$-categories}
In this section we will relate the \emph{normed $\infty$-categories}
of Bachmann--Hoyois to functors from certain bispans in schemes (and
more generally algebraic spaces) to $\CATI$. As an example of this, we will
see that the results of \cite{norms} imply that \icats{} of motivic
spectra give such a functor. We begin by describing some bispan
triples on schemes.

\begin{warning}
  Throughout this section, schemes and algebraic spaces are always
  assumed to be quasi-compact and quasi-separated (qcqs).\footnote{Note that
    every morphism between qcqs schemes is automatically a qcqs
    morphism (see \cite[Tags 01KV and 03GI]{StacksProject}); this
    means we do not need to distinguish between morphisms of finite
    presentation and locally of finite presentation, since the
    additional qcqs assumption is automatic.}
\end{warning}

\begin{notation}
  If $S$ is a (qcqs) scheme, we write $\Sch_{S}$ for the category of (qcqs) schemes
  over $S$. (This has pullbacks since qcqs morphisms are closed under
  base change, see \cite[Tags 01KU and 01K5]{StacksProject}.)
\end{notation}

\begin{propn}
  The following are bispan triples:
  \begin{enumerate}[(i)]
  \item $(\Sch_{S}, \Sch_{S}^{\flf}, \Sch_{S}^{\qp})$ for any scheme
    $S$, where $\Sch_{S}^{\flf}$ consists of finite locally free
    (meaning finite, flat, and of finite presentation) morphisms of
    $S$-schemes and $\Sch_{S}^{\qp}$ of quasiprojective morphisms of
    $S$-schemes,
  \item $(\Sch_{S}, \Sch_{S}^{\fet}, \Sch_{S}^{\qp})$ for any scheme
    $S$, where $\Sch_{S}^{\fet}$ consists of finite \'etale morphisms
    of $S$-schemes,
  \item $(\Sch_{S}, \Sch_{S}^{\flf}, \Sch_{S}^{\smqp})$ for any scheme
    $S$, where $\Sch_{S}^{\smqp}$ consists of smooth and
    quasiprojective morphisms of $S$-schemes,
  \item\label{fetsmqp} $(\Sch_{S}, \Sch_{S}^{\fet}, \Sch_{S}^{\smqp})$ for any scheme
    $S$.
  \item $(\Sch_{S}, \Sch_{S}^{\fet}, \Sch_{S}^{\proj})$ for any scheme
    $S$, where $\Sch_{S}^{\proj}$ consists of projective morphisms of
    $S$-schemes.
  \end{enumerate}
\end{propn}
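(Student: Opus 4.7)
The plan is to verify the three conditions in Definition~\ref{def:bi-quad} for each triple by reducing them to classical statements about Weil restriction of schemes. For parts \ref{item:Fspanpair} and \ref{item:Lspanpair} of that definition, one needs only to check that each of the classes $\Sch_S^\flf$, $\Sch_S^\fet$, $\Sch_S^\qp$, $\Sch_S^\smqp$, and $\Sch_S^\proj$ is stable under base change and closed under composition; this is standard (finite locally free and finite \'etale are stable under base change by faithfully flat descent, and quasiprojective, smooth-quasiprojective, and projective morphisms are stable under base change by \cite[Tags~01VR, 01VB, and~01W7]{StacksProject}).

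For the core condition (c) I would apply \cref{lem:bispantripmatecond}: it suffices to show that for every $f\colon y\to z$ in the $F$-class, the pullback $f^*\colon \mathcal{C}^L_{/z}\to \mathcal{C}^L_{/y}$ admits a right adjoint $f_*$ and that the mate transformation is an equivalence for every cartesian square with an $F$-morphism at the bottom. The right adjoint is precisely the Weil restriction $R_{y/z}$. For $f$ finite locally free (hence \emph{a fortiori} for $f$ finite \'etale), $R_{y/z}(X)$ is representable by a scheme whenever $X\to y$ is quasiprojective; this is the classical result of \cite[\S7.6, Theorem~4]{blr} (see also \cite[\S A.5]{norms}). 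The Beck--Chevalley condition is then automatic from the universal property of $f_*$.

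It remains to show that the Weil restriction of an $L$-morphism along an $F$-morphism is again an $L$-morphism in each case:
\begin{itemize}
\item For (i) and (ii), $R_{y/z}$ of a quasiprojective morphism is quasiprojective; this is in \cite[\S7.6, Proposition~5(g)]{blr}.
\item For (iii) and (iv), $R_{y/z}$ preserves smooth quasiprojective morphisms by the preceding bullet together with \cite[\S7.6, Proposition~5(h)]{blr} (smoothness of the Weil restriction along a finite locally free morphism).
\item For (v), projectivity is preserved under Weil restriction along finite \'etale maps; this can be deduced by constructing an explicit projective embedding, or from the fact that a Weil restriction of projective schemes along a finite \'etale map is proper (\cite[\S7.6, Proposition~5(f)]{blr}) and quasiprojective by the first bullet, hence projective.
\end{itemize}

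The main obstacle here is essentially purely notational: one must assemble these classical ingredients into a coherent statement that fits the bispan-triple framework, and take some care with the fact that projectivity is not preserved under Weil restriction along general finite locally free maps (which is why (v) is stated only for finite \'etale), while smoothness requires flatness of $f$ (which is why we avoid $\Sch_S^\proj$ as the $L$-class combined with non-\'etale $F$). The rest follows mechanically from the cited Weil restriction theory.
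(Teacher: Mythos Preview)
Your proposal is correct and follows essentially the same strategy as the paper: reduce the distributivity condition to the classical theory of Weil restriction along finite locally free morphisms, then check that Weil restriction preserves the relevant $L$-classes (quasiprojective, smooth quasiprojective, projective). The references you cite from \cite{neron} match those in the paper, and your argument for (v) via proper $+$ quasiprojective $=$ projective is exactly what the paper does.

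There is one minor but unnecessary detour: you invoke \cref{lem:bispantripmatecond} and then have to verify the Beck--Chevalley condition separately (which you dismiss as ``automatic from the universal property''). The paper avoids this by observing directly that the defining universal property of Weil restriction,
\[
  \Hom_{/S}(T, R_f X) \cong \Hom_{/S'}(T \times_S S', X) \quad \text{for all } T \in \Sch_S,
\]
is \emph{literally} the condition \cref{eq:uni-prop-dis} for a distributivity diagram, so there is nothing further to check. Your route through \cref{lem:bispantripmatecond} works (the Beck--Chevalley condition does follow, since Weil restriction commutes with base change, which is a consequence of the universal property holding over all of $\Sch_S$ and not just $\Sch_S^L$), but it is a slight detour that the direct observation makes unnecessary. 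One small citation note: the paper attributes quasiprojectivity of $R_f X$ to \cite[Lemma 2.13]{norms} rather than to \cite{neron} directly.
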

\begin{proof}
  The classes of morphisms of schemes that are finite locally free,
  quasiprojective, smooth, finite, and \'etale are all closed
  under base change by \cite[Tags 02KD,0B3G,01VB,01WL,02GO]{StacksProject},
  respectively. Hence the subcategories
  $\Sch_{S}^{\flf}$, $\Sch_{S}^{\qp}$, $\Sch_{S}^{\smqp}$, and
  $\Sch_{S}^{\fet}$ of $\Sch_{S}$ all give span pairs.

  Now the main point is the existence of \emph{Weil restrictions} for
  schemes: if $f \colon S' \to S$ is a morphism of schemes and $X$ is
  an $S'$-scheme, the Weil restriction $R_{f}X$, if it exists, is an
  $S$-scheme that represents the functor
  \[ \Hom_{/S'}((\blank)\times_{S'} S, X) \colon \Sch_{S}^{\op} \to
    \Set;\]
  note that this is exactly the requirement \cref{eq:uni-prop-dis} for
  a distributivity diagram for $X \to S'$ and $f$.

  By \cite[Theorem 7.6.4]{neron}, the Weil restriction $R_{f}X$ exists
  if $f$ is a finite locally free morphism and $X$ is
  quasiprojective. Moreover, $R_{f}X$ is quasiprojective over $S$ by
  \cite[Lemma 2.13]{norms}. This gives the bispan triple (i), from
  which (ii) is trivially obtained by restricting from finite locally
  free morphisms to the subclass of finite \'etale ones. For (iii) and
  (iv) the only additional input needed is that $R_{f}$ takes smooth
  morphisms to smooth morphisms, which holds by \cite[Proposition
  7.6.5(h)]{neron}, while for (v) we use that $R_{f}$ preserves proper
  morphisms if $f$ is finite \'etale by \cite[Proposition
  7.6.5(f)]{neron} and that a morphism is projective \IFF{} it is
  proper and quasiprojective by \cite[Tag 0BCL]{StacksProject}.
\end{proof}

We now review the construction of a distributive functor for the
bispan triple \ref{fetsmqp} from motivic spectra, due to Bachmann and
Hoyois.

\begin{notation}
  We write $\SH(S)$ for the \icat{} of motivic spectra over a base
  scheme $S$ and $\HH(S)$ for that of motivic spaces over $S$. For any
  morphism of schemes $f \colon S \to S'$ we have a pullback functor
  $f^{*} \colon \SH(S') \to \SH(S)$, and similarly in the unstable
  case. This gives functors $\SH,\HH \colon \Sch^{\op} \to \CatI$.
\end{notation}

In \cite{norms}, Bachmann and Hoyois promoted the contravariant
functor $X \mapsto \SH(X)$ to include a \emph{multiplicative
  pushforward} for finite \'etale morphisms, encoded as a functor out
of a span category
\begin{equation} \label{eq:bh}
\SH\colon \Span_{\fet}(\Sch) \rightarrow \CatI, \qquad X \xleftarrow{f} Z \xrightarrow{g} Y \,\mapsto\, g_{\otimes}f^*.
\end{equation}
Given a finite \'etale morphism $g\colon X \rightarrow Y$, the functor 
\[
g_{\otimes}\colon \SH(X) \rightarrow \SH(Y)
\]
is first constructed unstably as a functor on the level of the
pointed unstable motivic homotopy $\infty$-category,
\[
g_{\otimes}\colon \mathrm{H}(X)_{*} \rightarrow \mathrm{H}(Y)_{*}.
\]
This functor is in turn induced by the functor of Weil restriction \cite[\S 7.6]{neron},
$R_g\colon \mathrm{SmQP}_X \rightarrow \mathrm{SmQP}_Y$, where
$\mathrm{SmQP}_X$ denotes the full subcategory of $\Sch_{X}$ spanned
by smooth and quasiprojective $X$-schemes, using the fact
that the inclusion $\mathrm{SmQP}_X \subset \mathrm{Sm}_X$ into the
full subcategory of smooth $X$-schemes induces
equivalent motivic unstable categories (since every smooth $X$-scheme
is Zariski-locally also quasiprojective). We refer to \cite[\S
1.6]{norms} for a summary of the construction and \cite[\S 6.1]{norms}
for a detailed construction of~\eqref{eq:bh}.

Given a smooth morphism $f\colon X \rightarrow Y$ of schemes, the
pullback functor $f^*$ admits a left
adjoint
\[
f_{\sharp} \colon \SH(X) \rightarrow \SH(Y).
\]
This left adjoint should be thought of as an \emph{additive
  pushforward} along $f$; indeed, if $I$ is a finite set and
$\nabla_I \colon \coprod_I X \rightarrow X$ is the fold map then,
under the identification $\SH(\coprod_I X) \simeq \SH(X)^{\times I}$,
the functor $(\nabla_I)_{\sharp}$ is given by
\[
(X_i)_{i \in I} \mapsto \bigoplus_{i \in I} X_i.
\] 
The functor $f_{\sharp}$ is first constructed unstably as a functor 
\[
f_{\sharp}\colon\mathrm{H}(X) \rightarrow \mathrm{H}(Y),
\]
which in turn is induced by the functor
$\mathrm{Sm}_X \rightarrow \mathrm{Sm}_Y$ given by composition with
$f$ (\ie{} the functor that sends a smooth $X$-scheme $T$ to itself
regarded as a $Y$-scheme); see \cite[Section 4.1, Lemma
6.2]{hoyois-sixops} for a construction in the language of this paper
in the more general context of equivariant motivic homotopy theory.

The importance of this additional left adjoint functoriality in
formulating smooth base change was first pointed out by Voevodsky
\cite{voevodsky-four} and worked out by Ayoub in
\cite{ayoub-thesis1}; see \cite[Section 6.1, Proposition
4.2]{hoyois-sixops} for an \icatl{} formulation (in the more general
equivariant context). In our language, smooth base change for motivic spectra says that the functor
$\SH \colon \Sch^{\op} \to \CatI$ is left adjointable with respect to
smooth maps. Moreover, combining this with \cite[Proposition
5.10(1)]{norms} we get that the functor \cref{eq:bh} is
$\smqp$-distributive. Applying \cref{thm:main2} these results
imply:
\begin{thm}
  Motivic spectra give a $\smqp$-distributive functor
  \[ \SH \colon \Span_{\fet}(\Sch) \to \CatI,\]
  and so a functor of \itcats{}
  \[ \BISPAN_{\fet,\smqp}(\Sch) \to \CATI.\]
\end{thm}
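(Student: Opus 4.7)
The plan is to verify that the Bachmann--Hoyois functor $\SH \colon \Span_{\fet}(\Sch) \to \CatI$ recalled in \cref{eq:bh} is $\smqp$-distributive in the sense of \cref{def:dis}, and then invoke \cref{thm:main2} to promote it to a functor of $(\infty,2)$-categories out of $\BISPAN_{\fet,\smqp}(\Sch)$. By \cref{rmk:dist=adj} (or equivalently \cref{thm:distisadj}), $\smqp$-distributivity decomposes into two checks: left $\smqp$-adjointability of the restriction $\SH|_{\Sch^{\op}}$, and the condition that for every distributivity diagram
\[
  \begin{tikzcd}
    {} & w \times_{z} y \arrow{dd}{\tilde{g}} \arrow{dl}[swap]{\epsilon} \arrow{r}{\tilde{f}} & w \arrow{dd}{g} \\
    x \arrow{dr}{l} \\
    {} & y \arrow{r}{f} & z
  \end{tikzcd}
\]
with $l \in \smqp$ and $f \in \fet$, the associated distributivity transformation $g_{\sharp}\tilde{f}_{\otimes}\epsilon^{*} \to f_{\otimes}l_{\sharp}$ is an equivalence in $\CatI$.

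For the first check, the existence of left adjoints $l_{\sharp} \dashv l^{*}$ for every smooth morphism (in particular for every $l \in \smqp$) is part of the standard construction of $\SH$ recalled above; the Beck--Chevalley condition for cartesian squares whose horizontal maps are in $\smqp$ is precisely smooth base change for motivic spectra, which is classical (due to Voevodsky and Ayoub) and stated in the $\infty$-categorical language in \cite[Section 6.1, Proposition 4.2]{hoyois-sixops}. This shows that $\SH|_{\Sch^{\op}}$ is left $\smqp$-adjointable, and combined with the functor \cref{eq:bh} this identifies the bottom row of the distributivity square in \cref{rmk:dist=adj} with the expected pushforward.

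For the second check (the genuine distributivity), the key input is \cite[Proposition 5.10(1)]{norms}, which asserts precisely this compatibility of $f_{\otimes}$ with the smooth left adjoint $l_{\sharp}$ via Weil restriction: given a distributivity diagram as above, the canonical map $g_{\sharp}\tilde{f}_{\otimes}\epsilon^{*} \to f_{\otimes}l_{\sharp}$ in $\SH$ is an equivalence. The only thing to remark is that the distributivity diagram exists in our bispan triple because $R_{f}$ of a smooth quasiprojective scheme is again smooth quasiprojective by \cite[Proposition 7.6.5(h)]{neron} and \cite[Lemma 2.13]{norms}, so that the natural transformation from \cite{norms} indeed matches the distributivity transformation in the sense of \cref{def:L-dis}.

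Combining these two verifications shows that $\SH$ is $\smqp$-distributive, and \cref{thm:main2} then produces the desired functor of $(\infty,2)$-categories $\BISPAN_{\fet,\smqp}(\Sch) \to \CATI$. The main subtlety, already handled in \cite{norms}, is identifying the abstractly defined distributivity transformation \cref{eq:dis-tr} (a composite of units and counits of the $l_{\sharp}\dashv l^{*}$ adjunction together with the functoriality of $\SH$ on spans) with the concrete comparison map studied in \cite[\S 5]{norms} via Weil restriction; once this identification is in place, all the hard work has been done there.
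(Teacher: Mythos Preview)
Your proposal is correct and follows essentially the same approach as the paper: the paper's argument (given in the paragraph immediately preceding the theorem) is precisely that smooth base change yields left $\smqp$-adjointability of $\SH|_{\Sch^{\op}}$, that \cite[Proposition 5.10(1)]{norms} supplies the distributivity transformation being an equivalence, and that \cref{thm:main2} then applies. Your write-up is somewhat more detailed (spelling out the existence of the distributivity diagram and the identification of the abstract transformation with the one in \cite{norms}), but the strategy and the key references are identical.
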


As indicated already in \cite[Remark 2.14]{norms}, the restriction to
those smooth morphisms that are quasiprojective here is an artifact of
the restriction of $\SH$ to schemes instead of algebraic spaces. We
will therefore extend this result by working with algebraic spaces.

\begin{notation}
  For a (qcqs) scheme $S$, we write $\AlgSpc_S$ for the category
  of (qcqs) algebraic spaces over $S$.\footnote{Every morphism
    between qcqs algebraic spaces is a qcqs morphism by \cite[Tag
  03KR,03KS]{StacksProject}; thus we can still ignore the distinction
  between morphisms of finite
    presentation and locally of finite presentation.} This category
  has pullbacks since qcqs morphisms of algebraic spaces are closed
  under base change by \cite[Tags 03KL,03HF]{StacksProject}; note also
  that for an $S$-scheme $S'$ we have an equivalence
  \[ \AlgSpc_{S'} \simeq (\AlgSpc_{S})_{/S'}\]
  by \cite[Tag 04SG]{StacksProject}.
\end{notation}
Here we again have several bispan triples:
\begin{propn}
  The following are bispan triples:
  \begin{enumerate}[(i)]
  \item $(\AlgSpc_{S}, \AlgSpc_{S}^{\flf}, \AlgSpc_{S})$ for any
    scheme $S$, where $\AlgSpc_{S}^{\flf}$ consists of
    finite locally free morphisms of algebraic spaces over $S$,
  \item $(\AlgSpc_{S}, \AlgSpc_{S}^{\fet}, \AlgSpc_{S})$ for any
    scheme $S$, where $\AlgSpc_{S}^{\fet}$ consists of
    finite \'etale morphisms of algebraic spaces over $S$,
  \item $(\AlgSpc_{S}, \AlgSpc_{S}^{\flf}, \AlgSpc^{\sm}_{S})$ for any
    scheme $S$, where $\AlgSpc_{S}^{\sm}$ consists of
    smooth morphisms of algebraic spaces over $S$,
  \item $(\AlgSpc_{S}, \AlgSpc_{S}^{\fet}, \AlgSpc^{\sm}_{S})$ for any
    scheme $S$.
  \item $(\AlgSpc_{S}, \AlgSpc_{S}^{\fet}, \AlgSpc^{\prop}_{S})$ for any
    scheme $S$, where $\AlgSpc^{\prop}_{S}$ consists of proper
    morphisms of algebraic spaces over $S$.
  \end{enumerate}
\end{propn}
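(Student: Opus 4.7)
The plan is to mirror the strategy used for the scheme-level proposition, with the role of \cite[Theorem 7.6.4]{neron} replaced by an existence result for Weil restrictions in the category of algebraic spaces.

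First, I would verify the two span pair conditions for each triple. Each of the four classes $\AlgSpc_{S}^{\flf}$, $\AlgSpc_{S}^{\fet}$, $\AlgSpc_{S}^{\sm}$, $\AlgSpc_{S}^{\prop}$ is stable under base change (by standard Stacks Project references for algebraic spaces, parallel to the scheme-level tags used in the previous proof), and pullbacks of qcqs morphisms exist in $\AlgSpc_{S}$. This gives conditions \ref{item:Fspanpair} and \ref{item:Lspanpair} of \cref{def:bi-quad} in all five cases, essentially for free.

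Second, and this is the real content, I would establish the existence of distributivity diagrams. Given $l \colon X \to Y$ in $\mathcal{C}_{L}$ and $f \colon Y \to Z$ in $\mathcal{C}_{F}$, I need the Weil restriction $R_{f}(X\to Y)$ to exist as an algebraic space over $Z$, to be smooth (respectively proper) whenever $l$ is, and to induce a cartesian square with the obvious counit $f^{*}R_{f}X \to X$. The key input is the representability theorem for Weil restrictions along finite locally free morphisms of algebraic spaces; this can be proved by working étale-locally on $Z$, noting that along an étale cover $Z' \to Z$ the morphism $f$ becomes finite locally free of schemes and $X$ admits an étale cover by quasiprojective $Y$-schemes, so the scheme-level Weil restriction of \cite[Theorem 7.6.4]{neron} can be applied locally and then glued using étale descent (or fpqc descent combined with the fact that $R_{f}$ commutes with base change). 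Preservation of smoothness/properness reduces étale-locally to \cite[Proposition 7.6.5]{neron}.

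The main obstacle will be verifying the descent step cleanly: one must check that the local Weil restrictions glue into a single algebraic space, which reduces to showing that the functor $T \mapsto \Hom_{Y}(T \times_{Z} Y, X)$ on $\AlgSpc_{Z}$ satisfies étale descent and admits an étale atlas. The first is formal from the universal property; the second follows from the local construction above together with Artin's representability criteria (or, more directly, from the statement that a sheaf of sets on the big étale site that is étale-locally an algebraic space is itself an algebraic space, \cite[Tag 04SK]{StacksProject}). Once this is in hand, the distributivity diagram is constructed exactly as in the scheme case via \cref{rmk:distdiagterminal}, and its legs lie in the required subcategories by the étale-local reduction to \cite[Proposition 7.6.5]{neron}.
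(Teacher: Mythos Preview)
Your overall strategy matches the paper's: verify the span pair conditions by stability under base change, then produce distributivity diagrams via Weil restriction along $f \in \mathcal{C}_{F}$ and check that $R_{f}$ preserves the class $\mathcal{C}_{L}$. The difference, and the gap, is in how you propose to obtain the Weil restriction.

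You want to reduce existence of $R_{f}(X \to Y)$ to \cite[Theorem 7.6.4]{neron} by passing to an \'etale cover of $Z$ and then to an \'etale cover of $X$ by quasiprojective $Y$-schemes. The first step is fine: base change of $R_{f}$ along an \'etale cover of $Z$ is unproblematic since $R_{f}$ commutes with base change on the target, and Tag 04SK lets you glue. The second step fails. The result of \cite{neron} requires $X \to Y$ to be quasiprojective, but in cases (i) and (ii) the morphism $l$ is an \emph{arbitrary} qcqs morphism of algebraic spaces, and even in (iii)--(v) you only know it is smooth or proper. Replacing $X$ by an \'etale cover $X' \to X$ does not help: $R_{f}$ is a right adjoint, so it does not take \'etale covers of the source to \'etale covers, and there is no descent datum that reconstructs $R_{f}X$ from the $R_{f}X'$. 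Your appeal to Artin representability is not wrong in principle, but carrying it out is exactly the content of the theorem you are missing; it is not a routine check.

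The paper avoids this by citing Rydh \cite[Theorem 3.7]{rydh-hilb}, which gives the right adjoint $R_{f}$ on all of $\AlgSpc_{Y}$ (qcqs) whenever $f$ is finite locally free, with no quasiprojectivity hypothesis; qcqs is preserved by \cite[Proposition 3.8]{rydh-hilb}. For smoothness the paper does use an \'etale-local reduction, but only on the \emph{base}: one checks smoothness of $R_{f}W \to Y$ after pulling back along any scheme $T \to Y$, where $f$ becomes a finite locally free map of schemes (since finite maps are representable) and one can invoke \cite[Proposition 3.5]{rydh-hilb} together with Tag 0DP0. Properness for $f$ finite \'etale is \cite[Remark 3.9]{rydh-hilb}. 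Swapping your bootstrap from \cite{neron} for a direct citation of Rydh would make your argument complete and essentially identical to the paper's.
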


\begin{proof}
  Morphisms of algebraic spaces that are finite locally free, finite,
  \'etale, and smooth are closed under base change by \cite[Tags 03ZY,
  03ZS, 0466, 03ZE]{StacksProject}, respectively. Thus the subcategories
  $\AlgSpc_{S}^{\flf}$, $\AlgSpc_{S}^{\fet}$, $\AlgSpc_{S}^{\sm}$ of
  $\AlgSpc_{S}$ all give span pairs.

  Suppose $f \colon Y \to Z$ is a finite locally free morphism of
  algebraic spaces. Then the functor
  \[
    f^*\colon \mathrm{AlgSpc}_{Z} \rightarrow \mathrm{AlgSpc}_{Y}
  \]
  admits a right adjoint $R_{f}$, given by Weil restriction of algebraic
  spaces, by a result of Rydh \cite[Theorem
  3.7]{rydh-hilb}; note that $R_{f}$ preserves the qcqs property we require by
  \cite[Proposition 3.8(xiii,xix)]{rydh-hilb}. This gives the bispan triples (i) and (ii) (since
  finite \'etale morphisms are in particular finite locally free).

  To obtain (iii) and (iv) it suffices to note that $R_f$ converts
  smooth morphisms to smooth
  morphisms. If $f \colon X \to Y$ is a finite locally free morphism of schemes this follows from \cite[Proposition 3.5(i,iv)]{rydh-hilb} and
  \cite[Tag 0DP0]{StacksProject}. The extension to the general case is
  easy: First note that for
  $W$ over $X$ we have that $R_{f}W \to Y$ is smooth \IFF{} its
  pullback along any morphism $g \colon T \to Y$ with $T$ a scheme is
  smooth, by \cite[Tag 03ZF]{StacksProject}. In the pullback square
  \[
    \begin{tikzcd}
      U \arrow{r}{g'} \arrow{d}{f'} & X \arrow{d}{f} \\
      T \arrow{r}{g} & Y
    \end{tikzcd}
  \]
  the algebraic space $U$ is a scheme since $f$ is finite and so by
  definition representable. We also have the base change equivalence
  $g^{*}R_{f}W \cong R_{f'}g'^{*}W$, and if $W$ is smooth over $X$
  then $R_{f'}g'^{*}W$ is smooth over $T$ since the base change
  $g'^{*}W$ is smooth over $U$ and $f'$ is a finite locally free
  morphism of schemes. Finally, (v) holds since by the same argument
  starting with \cite[Remark 3.9]{rydh-hilb}
  $R_{f}$ preserves proper morphisms if $f$ is finite \'etale.
\end{proof}

In order to work with motivic spectra over algebraic spaces
effectively we record the following lemma which amounts to saying that
any Nisnevich sheaf on algebraic spaces is right Kan extended from
schemes.

\begin{lemma} \label{lem:rke} Let $\mathcal{C}$ be a complete
  $\infty$-category and $S$ a scheme, and let
\[
\iota\colon \mathrm{Sch}_S \hookrightarrow \mathrm{AlgSpc}_S
\]
be the inclusion. Then the restriction functor
$\iota^*\colon \PShv( \mathrm{AlgSpc}_S, \mathcal{C}) \rightarrow
\PShv( \mathrm{Sch}_S, \mathcal{C})$ induces an equivalence of
$\infty$-categories:
\[
\iota^*\colon\mathrm{Shv}_{\mathrm{Nis}}(\mathrm{AlgSpc}_S,
\mathcal{C}) \isoto \mathrm{Shv}_{\mathrm{Nis}}(\mathrm{Sch}_S, \mathcal{C}),
\]
with the inverse given by right Kan extension.
\end{lemma}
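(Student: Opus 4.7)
The plan is to interpret this as an equivalence of sheaf categories induced by a comparison of sites. Since $\iota$ is fully faithful, right Kan extension $\iota_{*}$ at the level of all $\mathcal{C}$-valued presheaves is also fully faithful, equivalently the counit $\iota^{*}\iota_{*} \Rightarrow \mathrm{id}$ is an equivalence. Moreover $\iota^{*}$ visibly restricts to sheaves since $\iota$ sends Nisnevich covers in $\Sch_{S}$ to Nisnevich covers in $\AlgSpc_{S}$. The problem therefore reduces to checking: (a) $\iota_{*}$ sends Nisnevich sheaves on $\Sch_{S}$ to Nisnevich sheaves on $\AlgSpc_{S}$; (b) the unit $F \to \iota_{*}\iota^{*}F$ is an equivalence for every Nisnevich sheaf $F$ on $\AlgSpc_{S}$.

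The geometric input I would use is the scallop decomposition theorem of Raynaud--Gruson (refined by Knutson and Rydh): every qcqs algebraic space $X$ over $S$ is built from qcqs schemes by a finite sequence of Nisnevich distinguished squares. Concretely, there is a filtration by open subspaces $\emptyset = X_{0} \subset X_{1} \subset \cdots \subset X_{n} = X$ together with Nisnevich squares whose upper-left corner $U_{i} \to X_{i}$ is an \'etale cover by a scheme and whose lower-left corner is the open embedding $X_{i-1} \hookrightarrow X_{i}$. Iterating, this yields a small cofinal diagram in $(\Sch_{S})_{/X}$ computing the value of any Nisnevich sheaf on $X$ as an iterated pullback of values on schemes.

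Granted this, statement (b) follows by induction on the length $n$: both $F(X)$ and $(\iota_{*}\iota^{*}F)(X)$ are computed as the same iterated Nisnevich pullback of values $F(U_{i})$, $F(V_{i})$, $F(X_{i-1})$, using Nisnevich descent for $F$ on one side and the defining limit formula for $\iota_{*}$ on the other; the base case is trivial since $X$ is then already a scheme. Statement (a) proceeds in the same style: to check the Nisnevich sheaf property for $\iota_{*}G$ on a Nisnevich square of algebraic spaces, one pulls the square back along the scallop decompositions of its corners and reduces to the sheaf property of $G$ on Nisnevich squares of schemes, using that limits commute with limits.

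The main obstacle is verifying the cofinality of the scheme-indexed diagram provided by the scallop decomposition --- that is, ensuring that the scallop data really assembles into a diagram whose limit computes $F(X)$, and that pulling back a Nisnevich square of algebraic spaces along the decomposition produces Nisnevich squares of schemes that control the sheaf condition. Both points are consequences of the Raynaud--Gruson/Knutson/Rydh theorem, which I would invoke as a black box; the remaining steps are formal manipulations of adjoints, limits, and Nisnevich descent.
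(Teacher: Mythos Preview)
Your reduction to (a) $\iota_{*}$ preserves Nisnevich sheaves and (b) the unit $F \to \iota_{*}\iota^{*}F$ is an equivalence on sheaves is exactly the paper's strategy, but in both parts your argument diverges from the paper and has gaps.

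For (a), the paper does \emph{not} argue by pulling back Nisnevich squares along scallop decompositions. Instead it checks directly that $\iota$ is \emph{topologically cocontinuous} in the sense of SGA4 (via \cite{gepner-heller}*{Lemma 2.23}): for any scheme $X$ and any Nisnevich sieve $R'$ on $\iota(X)$ in $\AlgSpc_S$, the sieve on $X$ in $\Sch_S$ generated by scheme maps factoring through $R'$ is again Nisnevich. The geometric input is Knutson's theorem \cite{knutson}*{II, Theorem 6.4} that any \'etale map from an algebraic space to a scheme admits, Nisnevich-locally, a refinement by an affine scheme. Your proposal to ``pull the square back along the scallop decompositions of its corners'' is not obviously workable: the scallop decompositions of the four corners of a given Nisnevich square are unrelated, and it is unclear how to extract from them a diagram of Nisnevich squares of \emph{schemes} controlling the sheaf condition.

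For (b), the inductive use of the scallop decomposition is indeed what the paper does, but your version has a real gap. In the scallop square
\[
\begin{tikzcd}
W_i \arrow{r} \arrow{d} & V_i \arrow{d}{p_i} \\
U_{i-1} \arrow{r}{j_i} & U_i
\end{tikzcd}
\]
only $V_i$ is guaranteed to be an affine scheme; the corner $W_i = V_i \times_{U_i} U_{i-1}$ need not be a scheme, so the scallop data does \emph{not} produce a cofinal diagram in $(\Sch_S)_{/X}$ as you assert. The paper resolves this with a two-step induction: $W_i$ is open in the affine $V_i$, hence separated, so the first induction reduces to separated algebraic spaces; for separated $X$ the $W_i$ appearing in its scallop decomposition are in fact affine (as in \cite{adeel-mv}*{Proposition 2.2.13}), and a second induction reduces to affine schemes, where the claim is tautological. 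Note also that the paper's induction applies Nisnevich descent to \emph{both} $F$ and $\iota_{*}\iota^{*}F$ (using part (a) for the latter), rather than attempting to compute the right Kan extension limit via a cofinality argument as you propose.
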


\begin{proof} Let 
\[
\iota_*\colon \PShv( \mathrm{Sch}_S, \mathcal{C}) \rightarrow \PShv( \mathrm{AlgSpc}_S, \mathcal{C})
\]
denote the right adjoint to $\iota^*$ which is computed by right Kan extension. We first claim that $\iota_*$ preserves Nisnevich sheaves, i.e., there exists a filler in the following diagram:
\[
\begin{tikzcd}
\mathrm{Shv}_{\mathrm{Nis}}(\mathrm{Sch}_S, \mathcal{C}) \ar[hook]{d} \ar[dashed]{r} & \mathrm{Shv}_{\mathrm{Nis}}(\mathrm{AlgSpc}_S, \mathcal{C}) \ar[hook]{d}\\
\PShv( \mathrm{Sch}_S, \mathcal{C}) \ar{r}{\iota_*} & \PShv( \mathrm{AlgSpc}_S, \mathcal{C})
\end{tikzcd}
\]
According to \cite[Lemma 2.23]{gepner-heller}, it suffices to verify
that the functor $\iota$ is topologically cocontinuous\footnote{In the
  sense of \cite[\'Expose III, D\'efinition 2.1]{sga4-1} where this is
  called ``continuous''; to avoid confusion with the notion of a
  functor that preserves limits, we borrow this terminology from
  \cite{adeel-mv}.} for the Nisnevich topology on $\mathrm{Sch}_S$
and on $\mathrm{AlgSpc}_S$, as this implies that $\iota^*$
preserves $\mathrm{Nis}$-local equivalences and thus $\iota_*$
preserves sheaves.

Unwinding definitions, this means we need to prove the following claim:
\begin{itemize}
\item[($\ast$)] For any $X \in \mathrm{Sch}_S$ and any Nisnevich sieve $R' \hookrightarrow \iota(X)$ of algebraic spaces, the sieve on $\Sch_S$ generated by morphisms of schemes $X' \rightarrow X$ such that $\iota(X') \rightarrow \iota(X)$ factors through $R'$ is a Nisnevich sieve of $X$.
\end{itemize}
This condition is verified by \cite[Chapter II, Theorem 6.4]{knutson};
indeed for $x \in X$ and $f\colon Y \rightarrow X$ an \'etale morphism such that we have a lift
\[
\begin{tikzcd}
 & Y \ar{d}{f} \\
 \Spec \kappa(x) \ar{r}{x} \ar{ur}& X,
\end{tikzcd}
\]
this result tells us that we can find a completely decomposed \'etale
morphism $U \rightarrow Y$ with $U$ an affine scheme such that
$\Spec \kappa(x) \rightarrow Y$ factors through $U$. Since the
composite $U \rightarrow X$ is an \'etale morphism, the desired claim
is verified.

Now we claim that $\iota^*$ also preserves sheaves. Indeed, if
$\{ U_{\alpha} \rightarrow X \}$ is a family which generates a
Nisnevich covering family of schemes, then it is still a Nisnevich
covering family of algebraic spaces. Therefore $\iota$ is also a
morphism of sites\footnote{In the sense reviewed in, say, \cite[Appendix B.1]{e-shah}.},  and thus $\iota^*$ preserves sheaves, i.e., there exists
a filler in the following diagram:
\[
\begin{tikzcd}
\mathrm{Shv}_{\mathrm{Nis}}(\mathrm{AlgSpc}_S, \mathcal{C}) \ar[hook]{d} \ar[dashed]{r} & \mathrm{Shv}_{\mathrm{Nis}}(\mathrm{Sch}_S, \mathcal{C}) \ar[hook]{d}\\
\PShv( \mathrm{AlgSpc}_S, \mathcal{C}) \ar{r}{\iota^*} & \PShv( \mathrm{Sch}_S, \mathcal{C}).
\end{tikzcd}
\]

Therefore we have an adjunction on the level of Nisnevich sheaves
\[
\iota^*\colon\mathrm{Shv}_{\mathrm{Nis}}(\AlgSpc_S, \mathcal{C})
\rightleftarrows \mathrm{Shv}_{\mathrm{Nis}}(\mathrm{Sch}_S,
\mathcal{C}) :\!\iota_*,
\]
where $\iota_{*}$ is fully faithful since it is given by right Kan
extension along the fully faithful functor $\iota$. Equivalently, the
counit transformation $\iota^{*}\iota_{*} \to \id$ is an equivalence. 

It then suffices to prove that the unit transformation $\id \to
\iota_{*}\iota^{*}$ is also an equivalence. According to \cite[Theorem 3.4.2.1]{SAG}, any qcqs algebraic space admits a scalloped decomposition in the sense of \cite[Definition 2.5.3.1]{SAG}. In this context, this means that any $X \in  \mathrm{AlgSpc}_S$ admits a sequence of open immersions
\[
\emptyset = U_0 \hookrightarrow U_1 \hookrightarrow U_2 \hookrightarrow \cdots U_i \hookrightarrow \cdots U_n = X,
\]
such that for any $1 \leq i \leq n$ we have a bicartesian diagram in algebraic spaces
\[
\begin{tikzcd}
W_i \ar{d} \ar{r} & V_i \ar{d}{p_i} \\
U_{i-1} \ar{r}{j_i} & U_i,
\end{tikzcd}
\]
where each $p_i$ is \'etale. A result of Morel-Voevodsky \cite{mv99} (in the form \cite[Theorem 3.7.5.1]{SAG}) states that any Nisnevich sheaf converts the above square to a cartesian square. Now although $W_i$ is not necessarily a scheme, it is an open sub-algebraic space of an affine scheme and hence is separated. Therefore, by induction and the fact that $\iota^*$ is computed on the level of presheaves, the map $F \rightarrow \iota_*\iota^*F$ is an equivalence for any Nisnevich sheaf $F$ as soon as we know that it is an equivalence when evaluated on a separated algebraic space.

Now, if $X$ is a separated algebraic space then, in the scalloped
decomposition of $X$, we see that $W_i$ is, in fact, affine, as argued
in the proof of \cite[Proposition 2.2.13]{adeel-mv}. Therefore, by
induction again, we need only check that
$F \rightarrow \iota_*\iota^*F$ is an equivalence on affine schemes
which is tautologically true.
\end{proof}

We can now prove:

\begin{thm}\label{thm:mot-bispans} The functor~\eqref{eq:bh} extends
  canonically to a $\sm$-distributive functor
  \[ \SH \colon \Span_{\fet}(\AlgSpc_{S}) \to \CatI \]
  and hence to a functor of $(\infty,2)$-categories
\[
\SH\colon\BISPAN_{\fet,\sm}(\AlgSpc_{S}) \rightarrow \CATI.
\]
\end{thm}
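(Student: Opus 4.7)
The strategy is to apply \cref{thm:main2}, so it suffices to produce a $\sm$-distributive functor $\SH \colon \Span_{\fet}(\AlgSpc_{S}) \to \CatI$ extending the Bachmann--Hoyois functor \eqref{eq:bh}. By the universal property of spans (\cref{thm:spanuniv}), this in turn is equivalent to giving a left $\fet$-adjointable functor $\SH \colon \AlgSpc_{S}^{\op} \to \CATI$ which is moreover $\sm$-distributive in the sense of \cref{def:dis}. My plan is to construct the extension via Nisnevich descent using \cref{lem:rke}, and then to verify each required property by reducing Nisnevich-locally to the scheme case established by Bachmann--Hoyois.

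For the extension, I would first observe that $\SH \colon \Sch_{S}^{\op} \to \CATI$ is a Nisnevich sheaf, so by \cref{lem:rke} its right Kan extension along $\iota \colon \Sch_{S} \hookrightarrow \AlgSpc_{S}$ is the unique Nisnevich sheaf $\SH \colon \AlgSpc_{S}^{\op} \to \CATI$ extending it, and this agrees with the standard definition of motivic spectra on algebraic spaces. To promote this to the span level, I need the norm $f_{\otimes} \colon \SH(X) \to \SH(Y)$ for any finite \'etale $f \colon X \to Y$ in $\AlgSpc_{S}$, together with Beck--Chevalley for cartesian squares with one leg in $\fet$. Here the key point is that finite morphisms of algebraic spaces are representable: on a Nisnevich cover of $Y$ by schemes, $X$ pulls back to schemes, so Bachmann--Hoyois's construction applies and glues via descent. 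This yields the desired functor $\Span_{\fet}(\AlgSpc_{S}) \to \CatI$ by \cref{thm:spanuniv}.

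Next I would verify $\sm$-distributivity. For the left $\sm$-adjointability of the restriction to $\AlgSpc_{S}^{\op}$, I would define $l_{\sharp}$ for $l$ smooth by adapting the construction of \cite[\S 4.1]{hoyois-sixops}, using that smooth morphisms of algebraic spaces are \'etale-locally on the source smooth morphisms of schemes, and deduce smooth base change from the scheme case by Nisnevich descent on the target. For the distributivity transformation associated to $(l,f) \in \sm \times \fet$ with distributivity diagram as in \cref{eq:dis-sq}, both sides $g_{\oplus}\tilde{f}_{\otimes}\epsilon^{\ostar}$ and $f_{\otimes}l_{\oplus}$ are colimit-preserving functors $\SH(X) \to \SH(Z)$, so the transformation may be checked on suspension spectra $\Sigma^{\infty}_{+}U$ for $U \to X$ ranging over a generating family of smooth schemes (locally on $X$). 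Covering $Z$ Nisnevich-locally by schemes makes $Y$ and $U$ schemes as well, reducing by the pasting of \cref{propn:pbdisttr} to the smooth quasiprojective case already verified in \cite[Proposition 5.10]{norms}.

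The main obstacle is ensuring that the Nisnevich descent argument is carried out compatibly with the full span (and distributivity) structure, rather than merely pointwise; the delicate step is producing the norm coherently on $\Span_{\fet}(\AlgSpc_{S})$ in a way that respects span composition. This ultimately relies on the good behavior of Weil restriction for algebraic spaces (Rydh's theorem, as used in the preceding propositions) with respect to Nisnevich covers, and on the functoriality of the universal property of spans (\cref{rmk:SPANftr}) applied to morphisms of span pairs obtained by descent.
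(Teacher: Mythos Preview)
There is a genuine gap in your plan, at the step where you invoke \cref{thm:spanuniv} to produce the functor $\Span_{\fet}(\AlgSpc_{S}) \to \CatI$. That universal property classifies functors out of $\SPAN_{F}(\mathcal{C})$ by left $F$-adjointable functors $\mathcal{C}^{\op} \to \mathcal{X}$, and under this correspondence the forward map $[f]_{F}$ is sent to the \emph{left adjoint} $f_{\oplus}$ of $f^{\ostar}$. For $f$ finite \'etale, the left adjoint of $f^{*}$ on $\SH$ is the additive pushforward $f_{\sharp}$, not the norm $f_{\otimes}$; for instance, along a fold map $\nabla \colon X \amalg X \to X$ these are the direct sum and the tensor product, respectively. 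So the functor you would obtain from \cref{thm:spanuniv} is not the Bachmann--Hoyois functor \eqref{eq:bh} and cannot extend it. Your own description (``I need the norm $f_{\otimes}$ \dots glues via descent'') is the right intuition, but \cref{thm:spanuniv} is the wrong tool to make it coherent.

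The paper's proof avoids this by citing \cite[Proposition C.18]{norms} directly: one first extends $\SH$ to a Nisnevich sheaf on $\AlgSpc_{S}^{\op}$ via \cref{lem:rke}, and then Proposition~C.18 is precisely the descent-based machine that upgrades a sheaf with norms on $\Span_{\fet}$ of schemes to one on $\Span_{\fet}$ of a larger site, producing the coherent functor $\Span_{\fet}(\AlgSpc_{S}) \to \CatI$ with the correct multiplicative pushforwards. Once that functor is in hand, your strategy for checking $\sm$-distributivity is essentially the paper's, though the paper argues more directly: the adjointability and distributivity transformations are stable under base change (cf.\ \cref{propn:pbdisttr}), so one can pull back along a Nisnevich cover by schemes and appeal to the scheme case established in \cite{norms}. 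There is no need to test on suspension spectra.
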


\begin{proof} First, by Lemma~\ref{lem:rke}, the right Kan extension of the Nisnevich sheaf
\[
\SH\colon \Sch^{\op} \rightarrow \CATI
\]
to algebraic spaces defines a Nisnevich sheaf
\[
\SH\colon \AlgSpc^{\op} \rightarrow \CATI.
\]
Therefore we can apply \cite[Proposition C.18]{norms} to obtain an extension 
\[
\SH\colon \Span_{\fet}(\AlgSpc) \rightarrow \CatI
\]
of \eqref{eq:bh}.

Now in order to use \cref{thm:main2}, we need to
verify that this extension satisfies the distributivity property with
respect to smooth morphisms. Since the functors involved in the
adjointability and distributivity transformations are stable under
base change, we are reduced to the case of schemes we already
discussed above. %
\end{proof}

\begin{remark}\label{rem:adeel}
  In the more general setting of spectral algebraic spaces, Khan has
  constructed the unstable motivic homotopy $\infty$-category (defined
  in \cite[Definition 2.4.1]{adeel-mv}). By Nisnevich descent, this
  agrees with the right Kan extended version appearing in the proof of
  Theorem~\ref{thm:mot-bispans}, using the uniqueness part of
  Lemma~\ref{lem:rke}.
\end{remark}

\subsection{Bispans in spectral Deligne-Mumford stacks and $\Perf$}

In this subsection we promote the functor
$\Perf\colon \SpDM \rightarrow \CatI$ to a functor out of an \itcat{}
of bispans. This extends a result of Barwick~\cite[Example
D]{BarwickMackey}, which gives a functor
\[ \Perf \colon \Span_{\FP}(\SpDM) \to \CatI\] encoding the usual
pullback $f^{*}$ and pushforward $f_{*}$ for a morphism in $\SpDM$,
with $\FP$ a class of morphisms for which $f_{*}$ restricts to
perfect objects and satisfies base change. Our version adds a
multiplicative pushforward $f_{\otimes}$ where $f$ is finite \'etale
(at the cost of restricting the class $\FP$ in order to guarantee the
existence of Weil restrictions).  We note that the multiplicative
pushforward in this situation is right Kan extended from the symmetric
monoidal structure in $\Perf$ and is thus not as complicated as in the
motivic and equivariant cases we considered above. However, we include
this section with a view towards applications in algebraic $K$-theory
in the following section.

We will freely use the language of spectral
Deligne--Mumford (DM) stacks introduced in \cite{SAG}. We denote by
$\SpDM_S$ the $\infty$-category of spectral Deligne-Mumford stacks
over a base $S$. We also adopt the following terminology from
\cite[Example D]{BarwickMackey}:
\begin{defn}
  Recall that for $X \in \SpDM$ an object $\mathcal{E} \in \QCoh(X)$ is
  called \emph{perfect} if for every map  $x\colon\Spec A \rightarrow
  X$, where $A$ is an $E_{\infty}$-ring spectrum, the $A$-module
  $x^*\mathcal{E}$ is perfect (\ie{} dualizable or equivalently
  compact in the symmetric monoidal \icat{} $\Mod_{A}$).
  Now suppose that $f\colon X \rightarrow Y$ is a morphism in $\SpDM$. We
  say that $f$ is \emph{perfect} if the pushforward functor
  \[
    f_*\colon\QCoh(X) \rightarrow \QCoh(Y),
  \]
  takes perfect objects to perfect objects.
\end{defn}

The following theorem of Lurie furnishes a large class of perfect
morphisms:
\begin{thm}[{\cite[Theorem 6.1.3.2]{SAG}}]\label{thm:perf}
  Let $f\colon X \rightarrow Y$ be a morphism in $\SpDM$. If $f$
  is proper, locally almost of finite presentation, and of finite
  Tor-amplitude, then $f$ is perfect.
\end{thm}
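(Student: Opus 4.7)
The plan is to reduce to the affine case via proper base change, and then to invoke the standard characterization of perfect modules over a connective $\mathbb{E}_{\infty}$-ring as those that are simultaneously almost perfect and of finite Tor-amplitude.

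First I would observe that being perfect is local on the target for the \'etale topology: an object $\mathcal{F} \in \QCoh(Y)$ is perfect if and only if for every morphism $y \colon \Spec A \to Y$ with $A$ a connective $\mathbb{E}_{\infty}$-ring, the pullback $y^{*}\mathcal{F}$ is a perfect $A$-module. Since $f$ is proper, proper base change applies, giving $y^{*}(f_{*}\mathcal{E}) \simeq (f_{A})_{*}(y')^{*}\mathcal{E}$, where $f_{A} \colon X \times_{Y} \Spec A \to \Spec A$ is the base change of $f$ along $y$ and $y'$ is the base change of $y$ along $f$. The properties of being proper, locally almost of finite presentation, and of finite Tor-amplitude are all stable under base change, so the hypotheses pass to $f_{A}$, reducing us to the case $Y = \Spec A$ where the task is to show that $f_{*}\mathcal{E}$ is perfect as an $A$-module.

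In this affine setting I would use the characterization that an $A$-module is perfect precisely when it is both almost perfect and of finite Tor-amplitude, handling each condition separately. For almost perfectness, the appropriate tool is the spectral analog of Grothendieck's coherence theorem for higher direct images: proper morphisms locally almost of finite presentation carry almost perfect objects to almost perfect objects. This is exactly the ingredient that consumes the ``proper'' and ``locally almost of finite presentation'' hypotheses. For finite Tor-amplitude, I would combine the finite Tor-amplitude of $\mathcal{E}$ (which holds because $\mathcal{E}$ is perfect), the finite Tor-amplitude assumption on $f$, and the uniform cohomological-dimension bound on $f_{*}$ that is forced by properness; a spectral-sequence estimate then yields an explicit Tor-amplitude bound for $f_{*}\mathcal{E}$ over $A$.

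The principal obstacle is the coherence theorem for $f_{*}$ in the almost perfect regime: proving that proper morphisms locally almost of finite presentation preserve almost perfectness is itself a substantial theorem, requiring a d\'evissage that locally factors $f$ as a closed immersion into a projective bundle over $\Spec A$, reducing the problem to projective space, and there spectrally enhancing the classical finiteness of coherent cohomology. Everything else in the argument (the reduction to the affine target, the recombination into perfectness, and the Tor-amplitude bound) is relatively formal once this coherence result and proper base change are in hand.
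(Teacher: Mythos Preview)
The paper does not prove this theorem at all: it is stated with attribution to \cite[Theorem 6.1.3.2]{SAG} and used as a black box. There is therefore no ``paper's own proof'' to compare against.

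That said, your sketch is a reasonable outline of the argument as it appears in Lurie's \emph{Spectral Algebraic Geometry}: reduce to an affine target using proper base change (which is available here since $f$ is qcqs), then use the characterization of perfect $A$-modules as those which are almost perfect and of finite Tor-amplitude, and handle the two conditions separately via the direct image theorem for almost perfect complexes and a Tor-amplitude estimate. You are also right that the substantial input is the coherence/direct image theorem, which is itself a major result in SAG. For the purposes of this paper, however, all of this is simply deferred to the reference; no independent argument is expected.
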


\begin{notation}
  Following \cite[Notation D.17]{BarwickMackey}, we label the
  class of morphisms in Theorem~\ref{thm:perf} by $\FP$.  
\end{notation}

We will also make use the existence of the spectral version of Weil
restriction; see the discussion of \cite[Section 19.1]{SAG} and note
that the definitions are completely analogous to the classical
situation. There Lurie proves the following existence theorem:
\begin{thm}[{\cite[Theorem 19.1.0.1]{SAG}}]\label{thm:weil}
  Suppose that $f\colon X \rightarrow Y$ is a morphism in $\SpDM$
  that is proper, flat, and locally almost of finite presentation. Let
  $p\colon Z \rightarrow X$ be a relative spectral algebraic space
  that is quasi-separated and locally almost of finite
  presentation. Then the Weil restriction $R_f(p) \in \SpDM_{Y}$
  exists.
\end{thm}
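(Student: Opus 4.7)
The plan is to apply Lurie's spectral version of Artin representability to the functor of points
\[
R_f(p) \colon \SpDM_Y^{\op} \to \mathcal{S}, \qquad (T \to Y) \mapsto \Map_{\SpDM_X}(T \times_Y X,\, Z).
\]
This functor is obviously an \'etale sheaf (as both sides of a morphism to $Z$ are \'etale-local on $T$), so the bulk of the work is to verify the hypotheses of spectral Artin representability and then to argue that the resulting stack is in fact a spectral Deligne--Mumford stack which is qs and locally almost of finite presentation over $Y$.

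First I would handle the ``soft'' conditions: nilcompleteness and infinitesimal cohesion. For a tower of square-zero extensions $T_{n+1} \to T_n$ over $Y$, base change $T_\bullet \times_Y X$ still gives such a tower of square-zero extensions of $T_\bullet \times_Y X$ over $X$, using that $f$ is flat (so the derived pullback equals the ordinary one and preserves the square-zero structure on connective covers). Cohesion and nilcompleteness of $R_f(p)$ then reduce directly to the analogous properties of $Z$ as a relative spectral algebraic space, which are already in hand. The cotangent complex is constructed by the standard formula
\[
L_{R_f(p)/Y}\big|_{\sigma} \simeq f_\ast\bigl(\sigma^\ast L_{Z/X}\bigr)
\]
for a section $\sigma \colon T \times_Y X \to Z$; here properness, flatness, and the lafp hypothesis on $f$ combined with lafp of $Z \to X$ guarantee that $f_\ast$ preserves almost perfect objects (via Theorem~\ref{thm:perf}), so $L_{R_f(p)/Y}$ is almost perfect, and the expected base-change identities follow from proper base change for quasicoherent sheaves.

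The local-almost-of-finite-presentation property for $R_f(p)$ amounts to the mapping space $\Map_X(T \times_Y X, Z)$ commuting with filtered colimits of $\tau_{\le n}$-truncated $T$'s. Again flatness of $f$ commutes base change with truncation, and properness plus lafp of $f$ combined with lafp of $p \colon Z \to X$ is exactly the input needed. At this point spectral Artin representability produces a spectral higher Artin stack representing $R_f(p)$.

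The main obstacle, and the deepest input, is upgrading this to a spectral Deligne--Mumford stack. The strategy would be to reduce to $Y$ affine and then to the truncated case via Postnikov towers: writing $Y = \lim_n \tau_{\le n} Y$ and using nilcompleteness, the classical (underived) Weil restriction of the truncation of $Z \to X$ over $\tau_{\le 0} Y$ is a classical algebraic space by the Knutson--Olsson theory of Weil restriction for proper flat fp maps, and one deforms \'etale charts up the Postnikov tower using the smooth-\'etale deformation theory (obstruction classes live in $\mathrm{Ext}$-groups of the cotangent complex just computed, which vanish after passing to an \'etale refinement because $Z \to X$ is DM). This gives an \'etale cover by affines locally on $Y$, hence $R_f(p)$ is a spectral DM stack. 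Quasi-separation follows from the same construction applied to the diagonal, which is again a Weil restriction of the (qs, lafp) diagonal of $Z \to X$.
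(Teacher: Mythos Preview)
The paper does not prove this statement at all: it is simply quoted from Lurie's \emph{Spectral Algebraic Geometry} as \cite[Theorem 19.1.0.1]{SAG}, with no argument given. So there is no ``paper's own proof'' to compare against; the theorem functions purely as a black-box input used to establish the bispan triple structure on $\SpDM_S$ in Proposition~\ref{prop:bispans-dm}.

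Your sketch is a plausible outline of how Lurie actually proves this in SAG (via spectral Artin representability, deformation theory, and reduction to the classical case). One small wrinkle: you invoke Theorem~\ref{thm:perf} to show $f_\ast$ preserves almost perfect objects, but that theorem as stated in the paper concerns \emph{perfect} objects and requires finite Tor-amplitude, which is not among the hypotheses here. The correct input is the separate fact (SAG, Theorem 5.6.0.2 or similar) that pushforward along a proper, lafp morphism preserves almost perfect complexes; flatness is what makes the cotangent-complex formula behave well under base change, not what controls preservation of almost perfectness. This is a citation issue rather than a genuine gap in the strategy.
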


\begin{notation}
  In light of this, let us write
  \begin{itemize}
  \item $\mathcal{W}$ for the class of morphisms in $\SpDM$ that are
    proper, flat, and locally almost of finite presentation,
  \item $\mathcal{Q}$ for the class of morphisms in $\SpDM$ that are
    relative spectral algebraic spaces, quasi-separated, and locally
    almost of finite presentation,
  \item $\FP' \subset \FP$ for the class of
    morphisms in $\FP$ which are furthermore relative spectral algebraic
    spaces,
  \item $\fet$ for the class of finite \'etale morphisms in $\SpDM$.
  \end{itemize}
\end{notation}
Then Weil restrictions of morphisms in $\mathcal{Q}$ along ones in
$\mathcal{W}$ exist in $\SpDM$. Here $\FP' \subseteq \mathcal{Q}$
since proper morphisms are always quasi-separated, and $\fet \subseteq
\mathcal{W}$.

\begin{lemma}\label{lem:weil}
  Suppose that $f\colon X \rightarrow Y$ is a morphism in $\SpDM$ 
  of class $\mathcal{W}$ %
  and
  $p\colon Z \rightarrow X$ is one of class $\mathcal{Q}$. %
  Then $R_{f}(p)$ is again of class $\mathcal{Q}$. 
  Assuming $f$ is moreover finite \'etale, we also have:
  \begin{enumerate}[(a)]
  \item if $p$ is quasi-compact, then $R_f(p) \rightarrow Y$ is quasi-compact;
  \item if $p$ is proper, then $R_f(p) \rightarrow Y$ is proper;
  \item if $p$ is of finite Tor-amplitude, then $R_f(p) \to Y$ is of finite Tor-amplitude.
  \end{enumerate}
\end{lemma}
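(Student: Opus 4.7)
The plan has two logical parts corresponding to the two conclusions of the lemma. For the first claim, that $R_f(p)$ lies in $\mathcal{Q}$, I would extract each of the three required properties (being a relative spectral algebraic space, being quasi-separated, and being locally almost of finite presentation) from the construction of $R_f(p)$ carried out in \cite[Section 19.1]{SAG}. The existence statement \cite[Theorem 19.1.0.1]{SAG} already asserts representability by a spectral DM stack; inspection of its proof (which proceeds by deformation theory, reducing to the affine case and applying the Artin–Lurie representability theorem) shows that $R_f(p) \to Y$ is representable by spectral algebraic spaces and locally almost of finite presentation. Quasi-separatedness can be deduced from the fact that the diagonal of $R_f(p) \to Y$ is itself a Weil restriction (of the diagonal of $p$ along $f$), together with the analogous statement for the diagonal.

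For parts (a)--(c), the key simplification when $f$ is finite \'etale is that \'etale-locally on $Y$ the map $f$ trivializes. More precisely, I would choose an \'etale surjection $Y' \to Y$ such that the pullback $X' := X \times_Y Y'$ splits as $\coprod_{i=1}^{n} Y'$, where $n$ is the degree of $f$. By \'etale-local base change for Weil restriction (a consequence of its universal property, \cf{} \cite[Proposition 19.1.2.1]{SAG}), we have a canonical equivalence
\[
R_f(p) \times_Y Y' \;\simeq\; Z_1 \times_{Y'} Z_2 \times_{Y'} \cdots \times_{Y'} Z_n,
\]
where each $Z_i \to Y'$ is the restriction of $p$ along the $i$-th component $Y' \hookrightarrow X'$.

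The final step is then a soft descent argument. All three properties of interest---quasi-compactness, properness, and finite Tor-amplitude---are stable under fibre products over a common base (for properness via \cite[Tag 01W4]{StacksProject}-style arguments in the spectral setting; for finite Tor-amplitude because Tor-amplitudes add under tensor products, hence under fibred products of modules; for quasi-compactness trivially) and are \'etale-local on the target (they can even be checked after faithfully flat base change of finite presentation). Applying this to the trivialization above yields each of (a), (b), (c).

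The main obstacle, as I see it, will be making precise the claim that each property is \'etale-local on the target in the spectral DM setting, and locating or assembling the corresponding citations in \cite{SAG}; the properness case in particular uses the valuative criterion or its substitute in derived geometry, and finite Tor-amplitude requires a careful statement involving the pullback of the structure sheaf. Everything else should be straightforward given the \'etale-local trivialization of $f$.
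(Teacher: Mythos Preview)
Your proposal is correct and follows essentially the same strategy as the paper: the $\mathcal{Q}$ claim is extracted directly from \cite[Theorem 19.1.0.1]{SAG} (the paper simply cites this without further argument, so your more careful extraction is if anything more thorough), and for (a) and (c) both you and the paper trivialize $f$ \'etale-locally to a fold map, identify $R_f(p)$ with an iterated fibre product $Z_1 \times_Y \cdots \times_Y Z_n$, and invoke stability under base change together with \'etale descent for the property in question.

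The one point of divergence is (b). You propose to treat properness uniformly with (a) and (c), via \'etale-local reduction and stability under fibre products. The paper instead verifies the valuative criterion for properness \cite[Corollary 5.3.1.2]{SAG} directly from the functor-of-points description of $R_f$: once (a) and the $\mathcal{Q}$ properties are in hand, one only needs to check the lifting condition, and a $V$-point of $R_f(p)$ over $Y$ is by adjunction a map $X \times_Y \Spec V \to Z$ over $X$, to which properness of $p$ applies. This sidesteps the need to locate or assemble a descent statement for properness in the spectral setting---precisely the obstacle you flagged---at the cost of a slightly less uniform argument. Either route works.
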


\begin{proof}
  The statement that $R_{f}(p)$ is of class $\mathcal{Q}$ is part of %
  \cite[Theorem 19.1.0.1]{SAG}.

  Let us now verify properties (a)--(c). To verify (a) and (c), note
  that quasi-compactness and having finite Tor-amplitude can be
  detected \'etale locally on the target (for the former, this is
  \cite[Remark 2.3.2.5]{SAG} and the equivalences of \cite[Proposition
  2.3.2.1]{SAG} and for the latter this is \cite[Proposition
  6.1.2.2]{SAG}). Therefore, we may work \'etale locally on $Y$. Since
  $f$ was assumed to be finite \'etale, it is \'etale locally a fold
  map $f\colon X \simeq \coprod_{i=1}^{n} Y \rightarrow Y$. In this
  case, we can write $p \colon Z \rightarrow X$ as a coproduct
  $\coprod_i f_i\colon\coprod_i Z_i \rightarrow \coprod_i
  Y$. Therefore the Weil restriction takes the form
  $R_f(p) \simeq Z_1 \times_{Y} Z_2 \times_{Y} \cdots \times_Y Z_{n}
  \rightarrow Y$. To conclude (a), we note that quasi-compactness is
  stable under base change \cite[Proposition 2.3.3.1]{SAG}, while for
  (c), we note that Tor-amplitudes add up under base change
  \cite[Lemma 6.1.1.6]{SAG}.

  To prove (b), we use the valuative criterion for properness
  \cite[Corollary 5.3.1.2]{SAG}, which applies since we have already
  verified (a) and (1)--(3), together with the functor-of-points description of
  the Weil restriction.
\end{proof}

\begin{propn} \label{prop:bispans-dm}
  Let $S$ be a spectral Deligne-Mumford stack. Then
  \[
    (\SpDM_S, \SpDM^{\fet}_S,\SpDM^{\FP'}_S)
  \] is a bispan triple.
\end{propn}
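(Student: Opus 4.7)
The plan is to verify the three conditions \ref{item:Fspanpair}, \ref{item:Lspanpair}, and (c) of \cref{def:bi-quad} directly, using \cref{thm:weil} and \cref{lem:weil} as the main inputs.

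First I would verify that $(\SpDM_S, \SpDM^{\fet}_S)$ is a span pair. This amounts to showing that finite \'etale morphisms are stable under base change in $\SpDM_S$, which is standard (see \cite[Corollary 3.1.2.2]{SAG}). Similarly, to verify that $(\SpDM_S, \SpDM^{\FP'}_S)$ is a span pair I would check that each of the defining properties of $\FP'$ is stable under base change: properness \cite[Corollary 5.1.2.2]{SAG}, being locally almost of finite presentation \cite[Proposition 4.2.1.2]{SAG}, being of finite Tor-amplitude \cite[Lemma 6.1.1.6]{SAG}, and being a relative spectral algebraic space (since algebraic spaces are stable under base change). This takes care of \ref{item:Fspanpair} and \ref{item:Lspanpair}.

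The main content lies in condition (c): given $l \colon Z \to X$ in $\SpDM^{\FP'}_S$ and $f \colon X \to Y$ in $\SpDM^{\fet}_S$, I must produce a distributivity diagram whose right vertical morphism $g \colon R_f(Z) \to Y$ again lies in $\SpDM^{\FP'}_S$. Since $f$ is finite \'etale it lies in $\mathcal{W}$, and $l$ lies in $\mathcal{Q}$ (as $\FP' \subseteq \mathcal{Q}$), so \cref{thm:weil} produces the Weil restriction $R_f(l)$ together with the distributivity diagram
\[
    \begin{tikzcd}
      {} & R_f(Z) \times_{Y} X \arrow{dd} \arrow{dl}[swap]{\epsilon} \arrow{r}{\tilde{f}} & R_f(Z) \arrow{dd}{g} \\
      Z \arrow{dr}{l} \\
      {} & X \arrow{r}{f} & Y,
    \end{tikzcd}
\]
characterized by the universal property of \cref{def:dis-square}.

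It then remains to verify that $g$ lies in $\FP'$, \ie{} that $g$ is proper, locally almost of finite presentation, of finite Tor-amplitude, and a relative spectral algebraic space. The last two properties (being quasi-separated and relative spectral algebraic space, and being locally almost of finite presentation) are given directly by \cref{thm:weil}. Properness of $g$ follows from \cref{lem:weil}(b) applied to the finite \'etale map $f$ and the proper map $l$, while finite Tor-amplitude of $g$ follows from \cref{lem:weil}(c). This exhausts the defining conditions of $\FP'$, completing the verification. No step here presents a real obstacle, since all the heavy lifting has been packaged into \cref{thm:weil} and \cref{lem:weil}; the argument is essentially a bookkeeping check that the classes of morphisms defining $\FP'$ are preserved by the Weil restriction construction when the base morphism is finite \'etale.
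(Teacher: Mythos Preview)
Your proposal is correct and follows essentially the same approach as the paper's proof: verify base-change stability of $\fet$ and $\FP'$ to get the two span pairs, then invoke the Weil restriction results (\cref{thm:weil} and \cref{lem:weil}) to produce the distributivity diagram with $g \in \FP'$. One small slip: the fact that $R_f(l)$ is a quasi-separated relative spectral algebraic space locally almost of finite presentation is not part of \cref{thm:weil} as stated (which only asserts existence) but is the first assertion of \cref{lem:weil}; otherwise your bookkeeping matches the paper's, just spelled out in more detail.
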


\begin{proof}
  After Lemma~\ref{lem:weil} it suffices to note that morphisms in
  $\fet$ and $\FP'$ are stable under base change. This follows from
  \cite[Proposition 5.1.3.1, Proposition 4.2.1.6, Proposition 6.1.2.2,
  Proposition 1.4.1.11(2), Proposition 3.3.1.8]{SAG}.
\end{proof}

\begin{thm}\label{thm:perfbispan}
Let $S$ be a spectral Deligne-Mumford stack. The functor
\[
\Perf\colon\SpDM^{\op}_S \rightarrow \CatI,
\]
canonically extends to a functor
\[ \Perf \colon \Span_{\fet}(\SpDM_{S}) \to \CatI.\]
Moreover, this is right $\FP'$-distributive (in the sense of \cref{var:rightdist}), and so canonically
extends further to a functor of \itcats{}
\[
\Perf\colon \BISPAN_{\fet, \FP'}(\SpDM_S)^{\twop} \rightarrow \CATI.
\]
\end{thm}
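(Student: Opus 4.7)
The plan is to apply the right-distributive variant of our main theorem (see \cref{var:rightdist} and \cref{thm:main2}). Concretely, we first construct a functor $\Perf\colon \Span_{\fet}(\SpDM_S) \to \CatI$ extending the usual pullback functoriality by adjoining a multiplicative pushforward for finite \'etale morphisms; then verify that this functor is right $\FP'$-distributive; and finally invoke \cref{thm:main2} (in its right-distributive form) to get the desired functor on $\BISPAN_{\fet,\FP'}(\SpDM_S)^{\twop}$.

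For the first step, I would proceed as in the motivic case (cf.\ Theorem~\ref{thm:mot-bispans}). The assignment $X \mapsto \QCoh(X)$ defines a symmetric monoidal presheaf $\SpDM_S^{\op} \to \CAlg(\Pr^{L})$ that is a sheaf for the \'etale topology. Applying the extension procedure of \cite[\S 5--6]{norms} (whose only inputs are that the sheaf is valued in symmetric monoidal presentable \icats{} and that finite \'etale maps are of effective descent), we obtain $\QCoh\colon \Span_{\fet}(\SpDM_S) \to \Pr^{L}$, where for $f\colon X \to Y$ finite \'etale the multiplicative pushforward $f_{\otimes}$ is constructed using the Weil restriction $R_f$ (which exists by \cref{thm:weil}). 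Since $f_{\otimes}$ is a symmetric monoidal functor it preserves dualizable objects, so $\Perf$ is preserved and we obtain the desired $\Perf\colon \Span_{\fet}(\SpDM_S) \to \CatI$.

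Next I verify right $\FP'$-distributivity. Right $\FP'$-adjointability on $\SpDM_S^{\op}$ is almost the content of Barwick's result recalled in \cite[Example D]{BarwickMackey}: for $l\in \FP'\subseteq \FP$ the pullback $l^{\ostar}$ admits a right adjoint $l_{*}\colon \Perf(X) \to \Perf(Y)$ by \cref{thm:perf}, and for any cartesian square with $l,l'\in\FP'$ the proper base change equivalence $g^{*}l_{*} \simeq l'_{*}g'^{*}$ of \cite[Corollary 3.4.2.2]{SAG} shows the square is right adjointable. For the distributivity condition, given a distributivity diagram~\eqref{eq:dis-sq} with $l\in\FP'$ and $f\in\fet$, \cref{lem:weil} guarantees $g=R_f(l)$ lies in $\FP'$, and we must show that the right distributivity transformation
\[ f_{\otimes}l_{*} \longrightarrow g_{*}\tilde{f}_{\otimes}\epsilon^{\ostar} \]
is an equivalence. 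My strategy is to reduce to the case where $f$ is a fold map by \'etale descent. Since $\Perf$ is an \'etale sheaf of \icats{} and all four functors appearing in the transformation commute with \'etale pullback along $Z$ (by smooth/\'etale base change for $l_{*}$, naturality of the norm, and base change for $g_{*}$), it suffices to check the transformation \'etale locally on $Z$. After passing to a splitting of $f$, we reduce to $f = \nabla\colon\coprod_{i=1}^{n} Z \to Z$; then $R \cong X\times_{Z}\cdots\times_{Z} X$ ($n$ copies) and the distributivity transformation becomes the Kunneth comparison map $(l_{*}\mathcal{F})^{\otimes n} \to (l^{\times_{Z} n})_{*}(\mathcal{F}^{\boxtimes n})$.

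The main obstacle is therefore the Kunneth equivalence for $\FP'$ morphisms in spectral algebraic geometry. This is where the finite Tor-amplitude hypothesis in the definition of $\FP$ becomes essential: for $l\colon X\to Z$ in $\FP$ the pushforward $l_{*}\mathcal{O}_{X}$ is perfect over $Z$, and by iterated base change together with the projection formula for $l$ (\cite[Remark 6.4.5.3]{SAG}) one obtains the Kunneth formula in the form $(l^{\times_{Z} n})_{*}(\mathcal{F}^{\boxtimes n}) \simeq (l_{*}\mathcal{F})^{\otimes n}$; this is the spectral analogue of \cite[Proposition 5.6]{norms} and follows from the identification $\Perf(X^{\times_{Z}n}) \simeq \Perf(X)^{\otimes_{\Perf(Z)} n}$. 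With distributivity established, a direct application of \cref{var:rightdist} and \cref{thm:main2} produces the functor of \itcats{} $\BISPAN_{\fet,\FP'}(\SpDM_S)^{\twop} \to \CATI$.
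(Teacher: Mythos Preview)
Your approach is essentially the same as the paper's: extend $\Perf$ to $\Span_{\fet}(\SpDM_S)$ via \'etale descent from the symmetric monoidal structure, verify right $\FP'$-adjointability via proper base change on $\QCoh$ restricted to perfect objects, reduce distributivity to fold maps by checking \'etale-locally on the target, and conclude via base change plus the projection formula.

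A few points deserve sharpening. First, the norm $f_{\otimes}$ is \emph{not} constructed via Weil restriction; Weil restriction supplies the distributivity diagrams (i.e., shows the bispan triple exists, \cref{prop:bispans-dm}), whereas $f_{\otimes}$ itself is obtained from the tensor product on fold maps extended by the \'etale-sheaf property---the paper uses \cite[Proposition~C.9 and Corollary~C.13]{norms}, not \S5--6. Second, in the fold-map reduction the morphism $l$ need not be pulled back from $Z$: it decomposes as $\coprod_i l_i \colon \coprod_i X_i \to \coprod_i Z$, so $R_{\nabla}(l) \simeq X_1 \times_Z \cdots \times_Z X_n$ and the comparison map is $l_{1,*}\mathcal{F}_1 \otimes \cdots \otimes l_{n,*}\mathcal{F}_n \to g_*(\pi_1^*\mathcal{F}_1 \otimes \cdots \otimes \pi_n^*\mathcal{F}_n)$, not the symmetric K\"unneth map you wrote. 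Third, the identification $\Perf(X^{\times_Z n}) \simeq \Perf(X)^{\otimes_{\Perf(Z)} n}$ is neither needed nor generally available; the paper instead factors the distributivity square (written out for $n=2$) into three smaller squares whose mates are the projection-formula transformations for $p_1$ and $\pi_1$ and the base-change transformation for the fibre-product square, invoking \cite[Remark~3.4.2.6]{SAG}. The \'etale-local reduction is justified by the compatibility of distributivity transformations with base change, which is \cref{propn:pbdisttr}.
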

\begin{proof}
  We first apply \cite[Proposition
  C.9]{norms} to extend $\Perf$ to a functor
  \[\Span_{\fold}(\SpDM_{S}) \to \CatI,\]
  where $\SpDM^{\fold}_{S}$ consists of the finite fold maps, \ie{} the
  maps $\coprod_{I}X \to \coprod_{J}X$ with $I \to J$ a map of finite
  sets. Here the pushforward
  \[
    \nabla_{\otimes}\colon \Perf(\coprod_I X) \cong \Perf(X)^{\times
      I} \rightarrow \Perf(X)
  \]
  is just the tensor product, and the base change simply encodes the
  fact that the pullback functors are symmetric monoidal.

  Next, we use \cite[Corollary C.13]{norms} for
  $\mathcal{C} = \mathrm{SpDM}$, $t$ the \'etale topology and $m$ the
  class of finite \'etale map to obtain a functor
  \[
    \Perf \colon \Span_{\fet}(\SpDM_{S}) \to \CatI.
  \]
  The content of this result is that since $\Perf$ is an \'etale sheaf and
  finite \'etale morphisms are \'etale-locally contained in the class
  of fold maps, we can extend the symmetric monoidal structure to
  norms along finite \'etale morphisms.

  In order to show that this functor is right $\FP'$-distributive, we
  first check that its restriction $\Perf \colon \SpDM_{S} \to \CatI$
  is right $\FP'$-adjointable. For any morphism $f \colon X \to Y$ in
  $\SpDM_{S}$ the functor $f^{*} \colon \QCoh(Y) \to \QCoh(Y)$ has a
  right adjoint $f_{*}$. Given a pullback square
  \[
    \begin{tikzcd}
      X' \arrow{r}{f'} \arrow{d}{g'} & Y' \arrow{d}{g} \\
      X \arrow{r}{f} & Y
    \end{tikzcd}
  \]
  the commutative square
  \[
    \begin{tikzcd}
    \QCoh(Y) \arrow{r}{f^{*}} \arrow{d}{g^{*}} & \QCoh(X) \arrow{d}{g'^{*}} \\
    \QCoh(Y') \arrow{r}{f'^{*}} & \QCoh(X')
  \end{tikzcd}
\]
  is right adjointable by \cite[Corollary 3.4.2.2]{SAG} provided $f$
  is quasi-compact and quasi-separated. This is true by definition
  \cite[Definition 5.1.2.1]{SAG} for any proper morphism and so for
  any morphism in $\FP'$. Moreover, if $f$ is of finite Tor amplitude then
  $f_{*}$ preserves perfect complexes by Theorem~\ref{thm:perf}, so in
  this case the adjunction restricts to an adjunction
  \[ f^{*} : \Perf(Y) \rightleftarrows \Perf(X) : f_{*} \]
  on the full subcategories of perfect objects, which still satisfies
  the right adjointability condition if $f$ is also quasi-compact and
  quasi-separated. This holds in particular if $f$ is in $\FP'$, so
  that $\Perf$ is indeed right $\FP'$-adjointable.

  It remains to check the (right) distributivity condition for $p
  \colon X\to Y$ in $\FP'$ and $f \colon Y \to Z$ finite \'etale:
  given a distributivity diagram
  \begin{equation} \label{eq:dis-sq-perf}
    \begin{tikzcd}
      {} & f^{*}R_f(p) \arrow{dd}{\tilde{g}} \arrow{dl}[swap]{\epsilon} \arrow{r}{\tilde{f}} & R_f(p)
      \arrow{dd}{g} \\
      X \arrow{dr}{p} \\
      {} & Y \arrow{r}{f} & Z,
    \end{tikzcd}
  \end{equation}
  the (right) distributivity transformation
  \[ f_{\otimes}p_{*} \to g_{*}\tilde{f}_{\otimes}\epsilon^{*}\]
  must be invertible. Since $\Perf$ is an \'etale sheaf and
  distributivity transformations satisfy base change by
  \cref{propn:pbdisttr}, we may check this \'etale-locally on
  $Z$. Since finite \'etale morphisms are \'etale-locally given by
  finite fold maps, this means we may assume that $f$ is a fold map
  \[ \nabla \colon Y \simeq \coprod_{i=1}^{n} Z \to Z.\]
  Since $\SpDM$ is extensive, we get a
  decomposition of $p$ as
  \[ \coprod_{i=1}^{n}p_{i} \colon \coprod_{i=1}^{n} X_{i} \to
    \coprod_{i=1}^{n} Z, \]
  and an equivalence
  \[ R_{\nabla}(p) \simeq X_{1} \times_{Z} X_{2} \times_{Z} \cdots
    \times_{Z} X_{n},\]
  since the universal property of $R_{\nabla}(p)$ is equivalent to that of
  this iterated fibre product:
  \[
    \begin{split}
     \Map_{\SpDM_{S/\coprod_{i}Z}}(W,R_{\nabla}(p))  & \simeq
     \Map_{\SpDM_{S/\coprod_{i}Z}}(\nabla^{*}W, X) \\
     & \simeq \Map_{\SpDM_{S/\coprod_{i}Z}}(\coprod_{i} W, \coprod_{i} X_{i}) \\     
     & \simeq \prod_{i} \Map_{\SpDM_{S/Z}}(W, X_{i}) \\
     & \simeq \Map_{\SpDM_{S/Z}}(W, X_{1} \times_{Z} \cdots \times_{Z}
     X_{n}).
    \end{split}
  \]
  If $\pi_{i}$ denotes the projection $X_{1} \times_{Z} \cdots \times_{Z}
     X_{n} \to X_{i}$, then $\epsilon \simeq \coprod_{i} \pi_{i}$. 
  Now given $\mathcal{F} \in \Perf(X)$ corresponding to
  $\mathcal{F}_{i} \in \Perf(X_{i})$ under the equivalence $\Perf(X)
  \simeq \prod_{i} \Perf(X_{i})$,
  we can write
  \[ \nabla_{\otimes}p_{*}\mathcal{F} \simeq p_{1,*}\mathcal{F}_{1}
    \otimes \cdots \otimes p_{n,*}\mathcal{F}_{n},\]
  \[ g_{*}\tilde{\nabla}_{\otimes}\epsilon^{*}\mathcal{F} \simeq
    g_{*}(\pi_{1}^{*}\mathcal{F}_{1} \otimes \cdots \otimes
    \pi_{n}^{*}\mathcal{F}_{n}),\]
  with the distributivity map $\nabla_{\otimes}p_{*}\mathcal{F} \to
  g_{*}\tilde{\nabla}_{\otimes}\epsilon^{*}\mathcal{F}$ given by the
  composite
  \[
    \begin{split}
    p_{1,*}\mathcal{F}_{1}
    \otimes \cdots \otimes p_{n,*}\mathcal{F}_{n} & \to
    g_{*}g^{*}(p_{1,*}\mathcal{F}_{1}
    \otimes \cdots \otimes p_{n,*}\mathcal{F}_{n}) \\
    & \simeq g_{*}(g^{*}p_{1,*}\mathcal{F}_{1}
    \otimes \cdots \otimes g^{*}p_{n,*}\mathcal{F}_{n})  \\
    & \simeq g_{*}(\pi_{1}^{*}p_{1}^{*}p_{1,*}\mathcal{F}_{1}
    \otimes \cdots \otimes \pi_{n}^{*}p_{n}^{*}p_{n,*}\mathcal{F}_{n})
    \\
    & \to g_{*}(\pi_{1}^{*}\mathcal{F}_{1}
    \otimes \cdots \otimes \pi_{n}^{*}\mathcal{F}_{n}).
    \end{split}
  \]
  That this is an equivalence now follows from base change and the
  projection formula (which applies for maps in $\FP'$ by \cite[Remark
  3.4.2.6]{SAG}). To keep the notation bearable we spell this out only
  in
  the case $n = 2$, where it follows from \cref{rmk:dist=adj} that the
  distributivity condition is equivalent to the following commutative
  square being right adjointable (where $V:= X_{1} \times_{Z} X_{2}$):
  \[
    \begin{tikzcd}
      \Perf(Z) \times \Perf(Z) \arrow{r}{p_{1}^{*} \times p_{2}^{*}}
      \arrow{dd}{\otimes} & \Perf(X_{1}) \times \Perf(X_{2})
      \arrow{d}{\pi_{1}^{*} \times \pi_{2}^{*}} \\
      & \Perf(V) \times \Perf(V) \arrow{d}{\otimes} \\
      \Perf(Z) \arrow{r}{g^{*}} & \Perf(V).
    \end{tikzcd}
  \]
  We can decompose this diagram as follows:
  \[
    \begin{tikzcd}
      \Perf(Z) \times \Perf(Z) \arrow{r}{p_{1}^{*} \times \id}
      \arrow{ddd}{\otimes} & \Perf(X_{1}) \times \Perf(Z) \arrow{r}{\id
        \times p_{2}^{*}}  \arrow{d}{\id \times p_{1}^{*}} & \Perf(X_{1}) \times \Perf(X_{2})
      \arrow{d}{\id \times \pi_{2}^{*}} \\
      & \Perf(X_{1}) \times \Perf(X_{1}) \arrow{r}{\id \times
        \pi_{1}^{*}} \arrow{dd}{\otimes} &
      \Perf(X_{1}) \times \Perf(V) \arrow{d}{\pi_{1}^{*} \times \id}
      \\
      & & \Perf(V) \times \Perf(V) \arrow{d}{\otimes} \\
      \Perf(Z) \arrow{r}{p_{1}^{*}} &  \Perf(X_{1})
      \arrow{r}{\pi_{1}^{*}} & \Perf(V).
    \end{tikzcd}
  \]
  Since horizontal and vertical pastings of right adjointable squares
  are again right adjointable, it suffices to check the three smaller
  squares in this diagram are all right adjointable. This is true
  since the mate of the
  left square is the projection formula transformation for $p_{1}$,
  \[ p_{1,*}(\blank) \otimes \blank \,\to\, p_{1,*}(\blank \otimes
    p_{1}^{*}(\blank)),\]
  while the mate of the bottom right square is the projection formula
  transformation for $\pi_{1}$, and finally the mate of the top right
  square is $\id_{\Perf(X_{1})}$ times the base change transformation
  \[ p_{1}^{*}p_{2,*} \to \pi_{1,*}\pi_{2}^{*}\]
  corresponding to the pullback  square
    \[
      \begin{tikzcd}
        V \arrow{r}{\pi_{1}} \arrow{d}{\pi_{2}} &
        X_{1} \arrow{d}{p_{1}} \\
        X_{2} \arrow{r}{p_{2}} & Z.
      \end{tikzcd}
    \]

  We have shown that the functor
  \[ \Perf \colon \Span_{\fet}(\SpDM_{S}) \to \CatI \]
  is right $\FP'$-adjointable, and it therefore extends canonically to
 a functor of \itcats{}
 \[ \BISPAN_{\fet,\FP'}(\SpDM_{S}) \to \CATI \]
 by \cref{thm:main2}.
\end{proof}

\section{Norms in algebraic $K$-theory}\label{sec:tambara}

\subsection{Algebraic $K$-theory and polynomial functors}
Our goal in this final section is to combine our results so far with
recent work of Barwick, Glasman, Mathew, and
Nikolaus~\cite{polynomials} in order to construct additional structure
on algebraic $K$-theory spectra. In this subsection we will review the
polynomial functoriality of $K$-theory constructed in
\cite{polynomials}.

Assume that $\mathcal{C}, \mathcal{D}$ are small $\infty$-categories,
then a functor
\[f\colon \mathcal{C} \rightarrow \mathcal{D}\] is said to be a
\emph{polynomial functor} if it is $n$-excisive in the sense of
Goodwillie calculus \cite{goodwillie} for some $n$. For our purposes
it is more convenient to follow the inductive definition in \cite[Definition 2.4, Definition
2.11]{polynomials}, which is based on work of Eilenberg and Maclane~\cite{eilenberg-maclane}:

\begin{defn}\label{def:polynomial}
  Let $\mathcal{A}$ and $\mathcal{B}$ be additive $\infty$-categories and
  assume that $\mathcal{B}$ is idempotent-complete. Then we
  inductively define what it means for a functor
  $F \colon \mathcal{A} \rightarrow \mathcal{B}$ to be \emph{polynomial of
    degree $\leq n$}:
  \begin{itemize}
  \item if $n = -1$ then $f$ must be the zero functor;
  \item if $n = 0$ then $f$ must be constant;
  \item if $n > 0$ then for each fixed $x \in \mathcal{A}$, the functor
    \[
      D_x(f): \mathcal{A} \rightarrow \mathcal{B}, \qquad y \mapsto \mathrm{Fib}(F(x \oplus y) \rightarrow F(y)),
    \]
    must be polynomial of degree $\leq n-1$. (This fibre exists since
    $\mathcal{B}$ is assumed to be idempotent-complete: it is the
    complementary summand to $F(y)$ in $F(x \oplus y)$.)
\end{itemize}
\end{defn}

\begin{remark}
  Via the comparison result of \cite[Proposition 2.15]{polynomials},
  this definition of polynomial functors agrees with the one via
  Goodwillie calculus for all idempotent-complete stable
  $\infty$-categories.   
\end{remark}

\begin{warning}
  The notion of polynomial functor from \cref{def:polynomial}, which
  makes sense in \emph{additive} contexts, is completely unrelated to
  the concept with the same name that we considered in
  \cref{defn:Spoly}, which only exists for slices of the \icat{}
  $\mathcal{S}$.
\end{warning}

\begin{remark}\label{rmk:exactispoly}
  Any exact functor between stable \icats{} is polynomial of degree
  $\leq 1$. Note that this applies in particular to any functor that
  has a left or right adjoint.
\end{remark}

\begin{propn}\label{propn:coprodpoly}
  Suppose $F,G \colon \mathcal{C} \to \mathcal{D}$ are polynomial of
  degree $\leq n$. Then $F \oplus G \colon \mathcal{C} \to
  \mathcal{D}$ is also polynomial of degree $\leq n$.
\end{propn}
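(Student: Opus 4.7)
The plan is to prove this by induction on $n$, using the inductive definition of polynomial functors directly.

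For the base cases, when $n = -1$ both $F$ and $G$ are the zero functor, and so $F \oplus G$ is also zero. When $n = 0$ both $F$ and $G$ are constant, with values $a, b \in \mathcal{D}$ respectively, so $F \oplus G$ is constant at $a \oplus b$. For the inductive step, assume the result holds for functors of degree $\leq n-1$, and let $F, G$ be polynomial of degree $\leq n$ for some $n \geq 1$. I must show that for every $x \in \mathcal{C}$, the cross-effect functor $D_x(F \oplus G)$ is polynomial of degree $\leq n-1$.

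The key computation is that the formation of $D_x(-)$ commutes with direct sums. For any $y \in \mathcal{C}$, since direct sum preserves fibres in an additive idempotent-complete \icat{} (which is what is used to define $D_x$ in the first place), we have
\[
D_x(F \oplus G)(y) = \mathrm{Fib}\bigl((F\oplus G)(x \oplus y) \to (F \oplus G)(y)\bigr) \simeq D_x(F)(y) \oplus D_x(G)(y),
\]
naturally in $y$, so $D_x(F \oplus G) \simeq D_x(F) \oplus D_x(G)$ as functors $\mathcal{C} \to \mathcal{D}$. By hypothesis, both $D_x(F)$ and $D_x(G)$ are polynomial of degree $\leq n-1$, so the inductive hypothesis applies to give that their sum $D_x(F \oplus G)$ is polynomial of degree $\leq n-1$, as required.

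There is no real obstacle here; the proof is entirely formal once one observes that fibres commute with direct sums in an additive idempotent-complete $\infty$-category. The only mild subtlety is verifying this commutation in the $\infty$-categorical setting, but it follows immediately from the splitting $F(x \oplus y) \simeq F(y) \oplus D_x(F)(y)$ used in the very definition of $D_x(F)$ (cf.\ the parenthetical remark in \cref{def:polynomial}).
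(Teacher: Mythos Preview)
Your proof is correct. The induction on $n$ together with the identification $D_x(F \oplus G) \simeq D_x(F) \oplus D_x(G)$ is exactly what is needed, and your justification via the splitting $F(x \oplus y) \simeq F(y) \oplus D_x(F)(y)$ is sound.

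The paper does not actually prove this statement directly: it simply cites \cite[Lemma 5.24(3)]{norms}. Your argument is therefore more self-contained than what appears in the paper, and it is essentially the same argument one would expect to find behind that citation. There is no meaningful difference in strategy, only in whether the (short) induction is spelled out or deferred to the literature.
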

\begin{proof}
  This is a special case of \cite[Lemma 5.24(3)]{norms}.
\end{proof}

\begin{defn}
  We have the (non-full) subcategory
\[
 \CatI^{\poly} \subset \CatI
\]
whose objects are the small, idempotent-complete stable
$\infty$-categories and whose morphisms are the polynomial functors between them.%
\end{defn}

Recall that (connective) algebraic $K$-theory can be defined as a
functor from stable \icats{} to spectra. Passing to the underlying
infinite loop spaces, we get a functor
\[
  \Omega^{\infty}K \colon \CatI^{\stab} \rightarrow \mathcal{S},
\]
where $\CatI^{\stab}$ denotes the \icat{} of stable \icats{} and exact
functors. This is equipped with a transformation $(\blank)^{\simeq}
\rightarrow \Omega^{\infty}K$ that exhibits $\Omega^{\infty}K$ as the
universal additivization of $(\blank)^{\simeq}$ in the sense of
\cite{BarwickK,BlumbergGepnerTabuada}. The main result of
\cite{polynomials} shows that if we restrict to the full subcategory
$\CatI^{\stab,\txt{idem}}$ of idempotent-complete stable \icats{},
then we can extend this to be functorial in all polynomial functors
(rather than only the exact ones):
\begin{thm}[Barwick, Glasman, Mathew, Nikolaus] \label{thm:bgmn}
  The space-valued $K$-theory functor extends to a functor
  $\Omega^{\infty}K^{\poly} \colon   \CatI^{\poly}  \rightarrow
  \mathcal{S}$ rendering the following diagram commutative
  \[
\begin{tikzcd}
\CatI^{\stab,\txt{idem}} \ar{r}{\Omega^{\infty}K} \ar{d} & \mathcal{S}\\
\CatI^{\poly} \ar[dashed,swap]{ur}{\Omega^{\infty}K^{\poly}}. &
\end{tikzcd}
\]
(We will usually just refer to this extension as $\Omega^{\infty}K$.)
\end{thm}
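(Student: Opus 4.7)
The strategy is to use the universal characterization of $\Omega^{\infty}K$ as the additivization of the groupoid core, combined with the cross-effect calculus for polynomial functors. I would begin by recalling that $\Omega^{\infty}K \colon \CatI^{\stab,\txt{idem}} \to \mathcal{S}$ is the universal functor under $(\blank)^{\simeq}$ that is \emph{additive}, meaning it sends split exact sequences of exact functors to split fiber sequences of spaces; equivalently, $\Omega^{\infty}K(\mathcal{C})$ is the group completion of the $E_{\infty}$-space $(\mathcal{C}^{\simeq},\oplus)$. The key preliminary observation is that the target functor $(\blank)^{\simeq}$ trivially extends to $\CatI^{\poly}$, since \emph{every} functor between \icats{} restricts to a map on underlying $\infty$-groupoids; the problem is therefore to extend the additivization itself.

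The main tool is the cross-effect decomposition: for $F \colon \mathcal{C} \to \mathcal{D}$ polynomial of degree $\leq n$, the cross-effects $\mathrm{cr}_{k}F \colon \mathcal{C}^{k} \to \mathcal{D}$ for $1 \leq k \leq n$ are multilinear (exact in each variable separately, with the others fixed), and $\mathrm{cr}_{k}F = 0$ for $k > n$ by iterating \cref{def:polynomial}. Each multilinear functor between idempotent-complete stable \icats{} induces a map of $K$-theory spectra via the multiplicative (smash product) pairing on algebraic $K$-theory; these pieces assemble into a map $F_{*} \colon \Omega^{\infty}K(\mathcal{C}) \to \Omega^{\infty}K(\mathcal{D})$ via the formula implicit in the Taylor tower, refining the classical $K_{0}$-formula $F_{*}[x] = \sum_{k} [\mathrm{cr}_{k}F(x,\ldots,x)]/k!$ to higher degree. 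When $F$ is itself exact, the cross-effects $\mathrm{cr}_{k}F$ vanish for $k \geq 2$, so that this construction recovers the usual exact functoriality; this will supply the desired commutative triangle.

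The main obstacle is coherence at the $\infty$-categorical level: the cross-effect decomposition must be constructed \emph{functorially} in $F$, and one must check that the resulting assignment is compatible with composition of polynomial functors. Composition of polynomials of degrees $\leq m$ and $\leq n$ produces a polynomial of degree $\leq mn$ whose cross-effects involve intricate combinatorics (a Fa\`a di Bruno style formula) in the cross-effects of the factors. I expect this coherence to be obtained not by brute force but by an abstract universal-property argument: one defines a notion of ``polynomial additivization'' as the universal extension of a functor under $(\blank)^{\simeq}$ to $\CatI^{\poly}$ that is compatible with the additivity structure, and shows that the resulting initial object exists by presentability of the relevant functor \icat{} (along the lines of \cref{adjrepble}) and restricts on exact functors to the usual $\Omega^{\infty}K$ by the additivization universal property together with the cross-effect vanishing for exact functors. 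A final remark: once this has been set up on the \icatl{} level, the extended $\Omega^{\infty}K^{\poly}$ will automatically inherit symmetric monoidal behaviour on $\CatI^{\poly}$ as needed for the applications in \S\ref{sec:tambara}.
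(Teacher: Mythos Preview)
The paper does not prove this theorem. It is stated with the attribution ``Barwick, Glasman, Mathew, Nikolaus'' in the theorem header and cited from \cite{polynomials}; no proof is given in the present paper, which treats it as a black box input for \cref{cor:Knorm} and the applications in \S\ref{sec:tambara}. There is therefore nothing in this paper to compare your proposal against.

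That said, a brief comment on your sketch: the formula you write on $K_{0}$, with a division by $k!$, is not correct as stated --- one cannot divide in $K_{0}$ of a general stable \icat{}, and the actual bookkeeping involves alternating sums over subsets rather than a factorial denominator. More substantively, your ``polynomial additivization'' universal property is speculative and would need to be made precise before it could carry any weight; the actual argument in \cite{polynomials} proceeds differently, via an explicit extension of the $Q$-construction (or $S_{\bullet}$-construction) to polynomial functors, using that cross-effects of polynomial functors between idempotent-complete stable \icats{} are exact in each variable. If you want to understand the proof, you should consult that reference directly rather than try to reconstruct it from the statement here.
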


We want to apply this to construct additional norms in $K$-theory, in
the following way:
\begin{cor}\label{cor:Knorm}
  Suppose $(\mathcal{C}, \mathcal{C}_{F},\mathcal{C}_{L})$ is a bispan
  triple, and that $\Phi \colon \Span_{F}(\mathcal{C}) \to \CatI$ is a
  functor such that
  \begin{enumerate}[(1)]
  \item $\Phi(X)$ is an idempotent-complete stable \icat{} for every
    $X \in \mathcal{C}$,
  \item\label{it:fstarpoly} $f^{\ostar} \colon \Phi(Y) \to \Phi(X)$ is a polynomial
    functor for every $f \colon X \to Y$ in $\mathcal{C}$,
  \item $f_{\otimes} \colon \Phi(X) \to \Phi(Y)$  is a polynomial
    functor for every $f \colon X \to Y$ in $\mathcal{C}_{F}$,
  \item $\Phi$ is $L$-distributive.
  \end{enumerate}
  Then $\Phi$ induces a functor
  \[ \Bispan_{F,L}(\mathcal{C}) \to
    \mathcal{S},\]
  which takes $X \in \mathcal{C}$ to $\Omega^{\infty}K(\Phi(X))$. 
\end{cor}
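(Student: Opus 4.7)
The plan is to combine the universal property of $\BISPAN_{F,L}(\mathcal{C})$ from \cref{thm:main2} with the polynomial functoriality of $K$-theory from \cref{thm:bgmn}. Since $\Phi$ is $L$-distributive by assumption (4), \cref{thm:main2} produces a functor of \itcats{}
\[ \widetilde{\Phi} \colon \BISPAN_{F,L}(\mathcal{C}) \to \CATI, \]
and I will work with its underlying functor of \icats{} $\Bispan_{F,L}(\mathcal{C}) \to \CatI$. The goal is then to show this factors through the (non-full) subcategory $\CatI^{\poly} \hookrightarrow \CatI$, after which post-composing with $\Omega^{\infty}K^{\poly}$ from \cref{thm:bgmn} will deliver the desired functor to $\mathcal{S}$.

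For the factorization, assumption (1) immediately handles objects: $\widetilde{\Phi}(X) = \Phi(X)$ is idempotent-complete and stable. For morphisms, \cref{propn:distftrbispandesc} describes $\widetilde{\Phi}$ on a bispan $B = (x \xfrom{s} e \xto{f} b \xto{l} y)$ as the composite
\[ \widetilde{\Phi}(B) \simeq l_{\oplus} f_{\otimes} s^{\ostar}. \]
Here $s^{\ostar}$ is polynomial by (2) and $f_{\otimes}$ is polynomial by (3). For the left adjoint $l_{\oplus} \colon \Phi(b) \to \Phi(y)$ of $l^{\ostar}$, I observe that as a left adjoint it preserves all small colimits; since $\Phi(b)$ and $\Phi(y)$ are stable, preservation of finite colimits forces preservation of finite limits (fibre sequences coincide with cofibre sequences up to shift), so $l_{\oplus}$ is exact and therefore polynomial of degree $\leq 1$ by \cref{rmk:exactispoly}. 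Because $\CatI^{\poly}$ is defined in \cite{polynomials} as a subcategory of $\CatI$, polynomial functors are closed under composition, and $\widetilde{\Phi}(B)$ is polynomial.

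These pointwise checks produce the desired factorization $\Bispan_{F,L}(\mathcal{C}) \to \CatI^{\poly}$, and composition with $\Omega^{\infty}K^{\poly} \colon \CatI^{\poly} \to \mathcal{S}$ yields a functor whose value on $X$ is visibly $\Omega^{\infty}K(\Phi(X))$. There is no serious obstacle in this argument: once the main theorem is in place, every remaining verification reduces either to the formula of \cref{propn:distftrbispandesc} or to the elementary observation that a left adjoint between stable \icats{} is exact. The only thing to be slightly careful about is that producing a functor into the non-full subcategory $\CatI^{\poly}$ from pointwise conditions requires the standard fact that a functor $\mathcal{A} \to \CatI$ factors through a subcategory precisely when it does so on objects and on morphisms, which is immediate from the definition of a subcategory of an \icat{}.
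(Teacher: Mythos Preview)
Your proposal is correct and follows essentially the same approach as the paper: extend via \cref{thm:main2}, check that the underlying functor lands in $\CatI^{\poly}$ using assumptions (1)--(3) together with \cref{rmk:exactispoly} for $l_{\oplus}$, then compose with \cref{thm:bgmn}. Your use of \cref{propn:distftrbispandesc} to make the decomposition $l_{\oplus}f_{\otimes}s^{\ostar}$ explicit is a reasonable elaboration of what the paper leaves implicit.
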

\begin{proof}
  Since $\Phi$ is $L$-distributive, it extends to a functor
  $\BISPAN_{F,L}(\mathcal{C}) \to \CATI$ by \cref{thm:main2}. The
  induced functor
  $\Bispan_{F,L}(\mathcal{C}) \to \CatI$ on underlying \icats{} then factors through the
  subcategory $\CatI^{\poly}$: on objects and for the maps of the form
  $f^{\ostar}$ and $f_{\otimes}$ this is true by assumption, and for
  $f_{\oplus}$ because this is a left adjoint and so polynomial by
  \cref{rmk:exactispoly}. We can then combine the resulting functor
  with that of \cref{thm:bgmn} to complete the proof.
\end{proof}

\begin{remark}
  If $\mathcal{C}_{L} = \mathcal{C}$, then condition \ref{it:fstarpoly}
  is automatic, since $f^{\ostar}$ is a right adjoint for all $f$.
\end{remark}

\subsection{Polynomial functors from distributivity}\label{sec:poly-p}
In order to apply \cref{cor:Knorm} in practice, we need to know that
the functors $p_{\otimes}$ are polynomial for $p$ in
$\mathcal{C}_{F}$. Our goal in this section is to derive a convenient
criterion for this using distributivity, by generalizing an argument
due to Bachmann--Hoyois~\cite[Section 5.5]{norms}. We start by introducing some
conditions and extra structure on bispan triples:

\begin{defn}\label{def:extensive}
  A span pair $(\mathcal{C}, \mathcal{C}_F)$ is said to be an
  \emph{extensive span pair} if
  \begin{enumerate}[(1)]
  \item the \icat{} $\mathcal{C}$ is extensive,
  \item $\mathcal{C}_F$ is closed under
    coproducts,
  \item for any $x \in \mathcal{C}$ the unique morphism
    $\emptyset \rightarrow x$ is in $\mathcal{C}_{F}$,
  \item for any $x\in \mathcal{C}$ the fold map
    $\nabla \colon x \coprod x \rightarrow x$ is in $\mathcal{C}_F$.
  \end{enumerate}
\end{defn}

\begin{remark}
  If $(\mathcal{C}, \mathcal{C}_{F})$ is an extensive span pair, then
  the symmetric monoidal structure on $\Span_{F}(\mathcal{C})$ induced
  by the coproduct in $\mathcal{C}$ via \cref{ex:spancocart} is both
  cartesian and cocartesian. In other words, in this case the
  coproduct in $\mathcal{C}$ gives both the product and coproduct in
  $\Span_{F}(\mathcal{C})$.
\end{remark}

\begin{defn}\label{def:degree}
  Suppose 
  $(\mathcal{C}, \mathcal{C}_F)$ is an extensive span pair. Then a
  \emph{degree structure} on $(\mathcal{C}, \mathcal{C}_F)$ consists
  of a collection of morphisms
  $F_{n} \subseteq \Map([1], \mathcal{C}_{F})$ for each
  $n \in \mathbb{N}$, called \emph{morphisms of degree $n$}, such
  that:
\begin{enumerate}[(1)]
\item for each morphism $f\colon x \rightarrow y$ in $\mathcal{C}_F$,
  there exists a natural number $N < \infty$ and an essentially unique (finite) coproduct decomposition (called the \emph{degree decomposition})
\[
y \simeq \coprod^N_{i=0} y_n^{(f)},
\] such that for each $0 \leq n \leq N$, the morphism
\[
f_n:=f \times_y y_n^{(f)}\colon x^{(f)}_n = x \times_y y_n^{(f)} \to y_n^{(f)}
\]
is of degree $n$.
\item The collection $F_{0}$ consists of the morphisms $\emptyset \rightarrow X$.
\item\label{it:degbc} Morphisms in $F_n$ are stable under base change: if $f\colon x
  \rightarrow y$ is degree $n$, then for any $w \rightarrow y$, the
  morphism $w \times_y x \rightarrow w$ is also of degree $n$.
\item\label{it:degadd} Given morphisms $f\colon x \rightarrow y, g\colon z \rightarrow
  y$ in $\mathcal{C}_F$ which are of degrees $n$ and $m$,
  respectively, then the morphism
\[
x \coprod z \xrightarrow{f \coprod g} y \coprod y \xrightarrow{\nabla} y
\]
 in $\mathcal{C}_F$ is of degree $m + n$. 
\end{enumerate}
Furthermore, given a degree structure, we say a morphism
$f \colon x \to y$ is of \emph{degree $\leq n$} if in the degree
decomposition we have $y_{i}^{(f)} \simeq \emptyset$ for $i > n$.  We
also say that a morphism is of \emph{degree $-1$} if it is the
essentially unique morphism $\emptyset \rightarrow \emptyset$.
\end{defn}

\begin{remark}\label{rem:degree}
  Suppose that $(\mathcal{C}, \mathcal{C}_F)$ is equipped with a
  degree structure. We note that not all morphisms have a well-defined
  degree although each morphism does have a degree
  decomposition. Additionally, the reader is encouraged to think
  of $x^{(f)}_0$ as the ``locus'' in $x$ where $f$ has empty fibres,
  \ie{} fails to be surjective.
\end{remark}

\begin{ex}\label{ex:degree}
  Here are the main examples of degree structures on the span pairs that
  have appeared throughout this paper:
  \begin{enumerate}
  \item Consider the extensive span pair $(\xF, \xF)$. There is a
    degree structure where a morphism of finite sets
    $f\colon x \rightarrow y$ is of degree $n$ when the fibres all
    have cardinality exactly $n$. %
  \item Consider the extensive span pair
    $(\mathcal{S}, \mathcal{S}_{\fin})$. There is a degree structure
    where a morphism
    $f\colon x \rightarrow y$ in  $\mathcal{S}_{\fin}$ is of degree $n$ if all its fibres have
    cardinality exactly $n$.
  \item Consider the extensive span pair $(\xF_{G}, \xF_{G})$. Then we
    say that morphism of finite $G$-sets $f\colon x \rightarrow y$ is
    of degree $n$ when (the underlying sets of) the fibres all have
    cardinality exactly $n$.
  \item Consider the extensive algebro-geometric span pairs
    $(\Sch_{S}, \Sch_{S}^{\flf})$, $(\Sch_{S}, \Sch_{S}^{\fet})$,
    $(\AlgSpc_{S}, \AlgSpc_{S}^{\flf})$,
    $(\AlgSpc_{S}, \AlgSpc_{S}^{\fet})$ and
    $(\SpDM_S, \SpDM^{\fet}_S)$. In each of these, the degree of a
    morphism $f\colon X \rightarrow Y$ can be defined to be $n$ if the
    sheaf of finite, locally free $\mathcal{O}_Y$-modules given by
    $f_*(\mathcal{O}_X)$ is of constant rank $n$.
\end{enumerate}
\end{ex}

\begin{remark}\label{rem:equiv} In all the examples from
  \cref{ex:degree}, the degree 1 morphisms are precisely the
  equivalences. This is not assumed for a general degree structure
  since we do not need it for our main results.
\end{remark}

\begin{construction}\label{const:folddegdecomp} 
We will need to compute how the degree decomposition interacts with
coproducts of morphisms. So let
$f\colon x \rightarrow y, g\colon z \rightarrow y$ be two
morphisms. Each morphism then induces a coproduct decomposition (where
the index runs through a finite set)
\[
y \simeq \coprod_n y_n^{(f)} \qquad y \simeq \coprod_m y_m^{(g)}.
\]
We set 
\[
y_{mn}:= y_n^{(f)} \times_{y} y_m^{(g)}.
\]
From this, we can form the following diagram where each square is cartesian
    \begin{equation*}
    \begin{tikzcd}
       & z_{mn}  \ar{r} \ar{d} & z_{m} \ar{d}{g_m} \\
    x_{mn} \ar{r} \ar{d}   & y_{mn} \ar{r} \ar{d} & y_{m}^{(g)} \arrow[hook]{d} \\
     x_n \ar{r}{f_n}  & y_n^{(f)}  \arrow[hook]{r} & y.
    \end{tikzcd}
  \end{equation*}
  \end{construction}

  \begin{lemma}\label{lem:deg-coprod}
    Let $(\mathcal{C}, \mathcal{C}_F)$ be an extensive pair with a
    degree structure. Let
    $f\colon x \rightarrow y, g\colon z \rightarrow y$ be two
    morphisms in $\mathcal{C}_F$. Then in the degree decomposition of the map
    $\nabla_y \circ (f \amalg g) \colon x \amalg z \rightarrow y$,
    the degree-$k$ component is
\[
\coprod_{m+n=k} (x_{mn} \amalg z_{mn}) \rightarrow \coprod_{m+n=k} y_{mn},
\]
where $z_{mn} \rightarrow y_{mn}$ is of degree $m$ and $x_{mn}
\rightarrow y_{mn}$ is of degree $n$.
\end{lemma}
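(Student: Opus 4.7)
The plan is to exhibit an explicit coproduct decomposition of the morphism $\nabla_y \circ (f \amalg g)$ into pieces of specified degree, and then invoke essential uniqueness of the degree decomposition (Definition~\ref{def:degree}(1)) to conclude.

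First I would use the degree decompositions of $f$ and $g$ separately to write $y \simeq \coprod_n y_n^{(f)}$ and $y \simeq \coprod_m y_m^{(g)}$, and then form the common refinement $y \simeq \coprod_{m,n} y_{mn}$ as in Construction~\ref{const:folddegdecomp}. Extensivity of $\mathcal{C}$ ensures that this double coproduct is well defined and that $x \simeq \coprod_{m,n} x_{mn}$ and $z \simeq \coprod_{m,n} z_{mn}$, since base change along $y_{mn} \hookrightarrow y$ commutes with the coproduct decompositions of $x$ and $z$.

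Next I would identify the degrees of $x_{mn} \to y_{mn}$ and $z_{mn} \to y_{mn}$. By property~\ref{it:degbc} of Definition~\ref{def:degree}, base change preserves degree: since $x_n^{(f)} \to y_n^{(f)}$ has degree $n$, pulling back along $y_{mn} \hookrightarrow y_n^{(f)}$ gives a degree-$n$ map $x_{mn} \to y_{mn}$, and similarly $z_{mn} \to y_{mn}$ has degree $m$. Using extensivity and the fact that $\mathcal{C}_F$ is closed under coproducts, the morphism $\nabla_y \circ (f \amalg g)$ decomposes over the base as a coproduct, indexed by $(m,n)$, of the fold composites
\[
x_{mn} \amalg z_{mn} \xto{f_{mn} \amalg g_{mn}} y_{mn} \amalg y_{mn} \xto{\nabla_{y_{mn}}} y_{mn}.
\]
By property~\ref{it:degadd} of Definition~\ref{def:degree}, each such fold composite has degree $n+m$.

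Finally, grouping these pieces by total degree $k = m+n$ yields a coproduct decomposition of $\nabla_y \circ (f \amalg g)$ whose $k$-th component is the degree-$k$ map
\[
\coprod_{m+n=k}(x_{mn} \amalg z_{mn}) \,\longrightarrow\, \coprod_{m+n=k} y_{mn}.
\]
Essential uniqueness of the degree decomposition (Definition~\ref{def:degree}(1)) identifies this with the degree decomposition of $\nabla_y \circ (f \amalg g)$, giving the claim. No step is a real obstacle here; the only thing to be careful about is checking that the various coproduct reindexings are compatible, which is a direct consequence of extensivity.
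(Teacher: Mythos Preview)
Your proposal is correct and follows essentially the same approach as the paper's proof: form the common refinement $y \simeq \coprod_{m,n} y_{mn}$, use base-change stability of degree to identify the degrees of $x_{mn}\to y_{mn}$ and $z_{mn}\to y_{mn}$, apply the additivity axiom to the fold composite over each $y_{mn}$, and then invoke uniqueness of the degree decomposition after regrouping by $k=m+n$. The paper's version is slightly terser but the logic is identical.
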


\begin{proof}
  Since coproduct decompositions are stable under pullbacks we have
  that for each $n \in \mathbb{N}$,
\[
y_n^{(f)} \simeq y \times_y y_n^{(f)} \simeq (\coprod_{m} y_m^{(g)}) \times_y y_n^{(f)} \simeq \coprod_m y_{mn}.
\]
By \ref{it:degbc} in \cref{def:degree} the pullback $x_{mn}
\to y_{mn}$ of $f$ is therefore also of degree $n$. Similarly, the
pullback of $g$ to $z_{mn} \to y_{mn}$ is of degree $m$, and so by 
\ref{it:degadd} the composite
\[ x_{mn} \amalg z_{mn} \to y_{mn} \amalg y_{mn} \to y_{mn} \]
is of degree $m+n$. Now note that we have
\[y \simeq \coprod_{n} y_{n}^{(f)} \simeq \coprod_{n}\coprod_{m}
  y_{mn} \simeq \coprod_k \coprod_{m+n=k} y_{mn}.\]
Here $\nabla_{y} \circ (f \amalg g)$ restricts over $y_{mn}$ to the
map
$x_{mn} \amalg z_{mn} \to y_{mn}$ of degree $m+n$, so this is a
decomposition into components of fixed degree; by uniqueness this is
therefore the degree composition.
\end{proof}

The purpose of a degree structure as in \cref{def:degree} is
to allow us to prove that certain functors are polynomial by induction
on degrees using distributivity, by abstracting the arguments in
\cite[Section 5.5]{norms}.

\begin{defn}
  We say a bispan triple $(\mathcal{C},
  \mathcal{C}_{F},\mathcal{C}_{L})$ is an \emph{extensive} bispan
  triple if both $(\mathcal{C},\mathcal{C}_{F})$ and
  $(\mathcal{C},\mathcal{C}_{L})$ are extensive span pairs.
\end{defn}

\begin{remark}
  If $(\mathcal{C}, \mathcal{C}_{F},\mathcal{C}_{L})$ is an extensive
  bispan triple, then the symmetric monoidal structure on
  $\Bispan_{F,L}(\mathcal{C})$ induced by the coproduct in
  $\mathcal{C}$ as in \cref{ex:bispancoprod} is cartesian, \ie{} the
  coproduct in $\mathcal{C}$ gives a product in
  $\Bispan_{F,L}(\mathcal{C})$.
\end{remark}

\begin{construction}\label{const:distcoprodext}
  Let $(\mathcal{C},\mathcal{C}_{F},\mathcal{C}_{L})$ be an extensive
  bispan triple, where furthermore the \icat{} $\mathcal{C}$ is
  idempotent-complete.
  Fix $p\colon x \rightarrow y$ in
  $\mathcal{C}_F$ and consider the distributivity diagram for
  $x \amalg x \xrightarrow{\nabla_x} x \xto{p} y$ in the sense of
  Definition~\ref{def:dis-square}:
    \begin{equation} \label{eq:poly-dis}
    \begin{tikzcd}
      {} & p^*w \arrow{dd}{\tilde{g}} \arrow{dl}[swap]{\epsilon} \arrow{r}{\tilde{p}} & w
      \arrow{dd}{g} \\
      x \amalg x \arrow{dr}[swap]{\nabla_x} \\
      {} & x  \arrow{r}{p} & y.
    \end{tikzcd}
  \end{equation}
  Note that, according to Remark~\ref{dist-adj}, $w \rightarrow y$ is
  equivalent to $p_*(x \amalg x \rightarrow x)$. Since $\mathcal{C}$
  is idempotent-complete, we have a decomposition, over $y$:
  \[
    w \simeq y \amalg c \amalg y.
  \]
  Indeed, we can produce two sections
  $s_0, s_1 \colon y \rightarrow w \simeq p_*(x \amalg x \rightarrow x)$
  adjoint to the two coproduct inclusions $x \rightarrow x \amalg
  x$. Since coproduct decompositions are preserved under pullbacks, we
  get $p^*w \simeq x \amalg (c \times_y x) \amalg x$. Since
  coproducts are disjoint, we can further decompose the restriction of
  $\epsilon$ to 
  $c \times_y x \rightarrow x \amalg x$ as a coproduct of two
  morphisms:
\[
\epsilon_L \amalg \epsilon_R: c \times_y x \simeq c_L \amalg c_R \rightarrow x \amalg x. 
\]
Restricting $\widetilde{p}$ to $c_L$ and $c_R$ gives us two maps
\[
\widetilde{p}_L :c_L \rightarrow c \qquad \widetilde{p}_R:c_R \rightarrow c.
\]
We also have the restriction of $g$ to $c$:
\[
k: c \rightarrow y.
\]

All in all~\eqref{eq:poly-dis} is equivalent to:
    \begin{equation} \label{eq:poly-dis2}
    \begin{tikzcd}
      {} & x \amalg c_L \amalg c_R \amalg x \arrow{dd}{} \arrow{dl}[swap]{} \arrow{r}{} & y \amalg c \amalg y
      \arrow{dd}{} \\
      x \amalg x \arrow{dr}{\nabla} \\
      {} & x  \arrow{r}{p} & y.
    \end{tikzcd}
  \end{equation}
  
  We are particularly concerned with the following diagram which we can extract from~\eqref{eq:poly-dis2}, where the middle square is cartesian:
      \begin{equation} \label{eq:poly-dis2part}
    \begin{tikzcd}
      {} & c_L \amalg c_R \arrow{dd} \arrow{dl}[swap]{\epsilon_L \amalg \epsilon_R} \arrow{r}{\widetilde{p}|_{c \times_y x}} & c 
      \arrow{dd}{k} \\
      x \amalg x \arrow{dr}{\nabla} \\
      {} & x  \arrow{r}{p} & y.
    \end{tikzcd}
  \end{equation}  
\end{construction}

The next lemma roughly states that if $p$ is ``surjective'' then each
of $\widetilde{p}_L, \widetilde{p}_R$ must be as well. This will be used to
prove polynomiality of $p_{\otimes}$ via an inductive argument.
    
\begin{lemma}\label{lem:deg}
  Keeping the notation of \cref{const:distcoprodext}, assume
  furthermore that $(\mathcal{C}, \mathcal{C}_{F})$ is equipped with a
  degree structure, and $y^{(p)}_0 \simeq \emptyset$. For $i = L,R$, we
  have $c_0^{(\widetilde{p}_i)} \simeq \emptyset$.
\end{lemma}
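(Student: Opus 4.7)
The strategy is to leverage the universal property of the distributivity diagram~\eqref{eq:poly-dis} together with the disjointness of coproducts afforded by extensivity of $\mathcal{C}$. I would show that if $c_0 := c_0^{(\widetilde{p}_L)}$ were nonempty, then the inclusion $\iota \colon c_0 \hookrightarrow c \hookrightarrow w$ is forced to factor simultaneously through two distinct summands of $w \simeq y \amalg c \amalg y$, which in an extensive \icat{} is only possible when $c_0 \simeq \emptyset$. The hypothesis $y_0^{(p)} \simeq \emptyset$ enters implicitly in guaranteeing that the two sections $s_0, s_1$ are jointly a summand inclusion $y \amalg y \hookrightarrow w$, so that the decomposition from \cref{const:distcoprodext} is an honest coproduct in $\mathcal{C}$. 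The case of $\widetilde{p}_R$ will follow by interchanging $L$ with $R$ and $s_0$ with $s_1$.

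First I would translate the setup through the universal property of \cref{def:dis-square}. Setting $\phi := g \circ \iota \colon c_0 \to y$, the morphism $\iota$ over $y$ corresponds to the unique section $\tilde{\iota} = \epsilon \circ p^*\iota \colon p^*c_0 \to x \amalg x$ of $\nabla_x$ over $p^*\phi$. Using that $s_0, s_1$ correspond under the universal property to the two coproduct inclusions $i_0, i_1 \colon x \to x \amalg x$, the map $\epsilon$ restricts to $i_0 \circ \epsilon_L$ on the summand $c_L$ of $p^*w$ and to $i_1 \circ \epsilon_R$ on $c_R$, where $\epsilon_L, \epsilon_R$ are as in~\eqref{eq:poly-dis2part}; likewise $\epsilon$ restricts to $i_0$ and $i_1$ on the two outer $x$-summands of $p^*w$.

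Next I would exploit the degree hypothesis. By \cref{def:degree}(2), the equality $c_0 = c_0^{(\widetilde{p}_L)}$ amounts to $c_L \times_c c_0 \simeq \emptyset$, and extensivity then gives $p^*c_0 \simeq (c_L \times_c c_0) \amalg (c_R \times_c c_0) \simeq c_R \times_c c_0$. Combined with the description of $\epsilon$ above, this lets $\tilde{\iota}$ factor as
\[
p^*c_0 \simeq c_R \times_c c_0 \longrightarrow c_R \xrightarrow{\epsilon_R} x \xrightarrow{i_1} x \amalg x,
\]
and the composite $p^*c_0 \to x$ agrees with $p^*\phi$. But $i_1 \circ p^*\phi$ is precisely the section of $\nabla_x$ attached under the universal property to $s_1 \circ \phi \colon c_0 \to y \to w$, so by uniqueness one obtains the identity $\iota = s_1 \circ \phi$ of morphisms $c_0 \to w$. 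I expect this identification to be the main technical step, as it requires carefully unpacking $\epsilon$ on each of the four summands of $p^*w$ and confirming that $\widetilde{g}|_{c_R}$ restricts along $c_R \times_c c_0 \hookrightarrow c_R$ to the pullback projection $p^*\phi$.

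Finally I would conclude by disjointness. The map $\iota$ factors through $c \hookrightarrow w$ by construction, whereas the identity $\iota = s_1 \circ \phi$ exhibits it as factoring through the image of $s_1 \colon y \to w$, which is the right-hand $y$-summand of $w \simeq y \amalg c \amalg y$. Since coproducts in $\mathcal{C}$ are disjoint, the canonical decomposition of $c_0$ according to the three summands of $w$ must have its $c$-component simultaneously equal to $c_0$ and to $\emptyset$, forcing $c_0 \simeq \emptyset$. The symmetric argument, in which $c_L$ and $s_1$ are replaced by $c_R$ and $s_0$, then gives $c_0^{(\widetilde{p}_R)} \simeq \emptyset$.
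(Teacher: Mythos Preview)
Your proposal is correct and follows essentially the same approach as the paper's proof: both arguments show that $p^{*}c_{0}^{(\widetilde{p}_{L})}$ lands entirely in the $c_{R}$-summand, deduce via the universal property of $w = p_{*}(x \amalg x)$ that the inclusion $c_{0}^{(\widetilde{p}_{L})} \hookrightarrow w$ factors through the right $y$-summand, and conclude by disjointness with $c$. Your write-up is more explicit about unpacking $\epsilon$ on the summands of $p^{*}w$ and about where the hypothesis $y_{0}^{(p)} \simeq \emptyset$ is used (ensuring $s_{0}, s_{1}$ are disjoint so that $w \simeq y \amalg c \amalg y$ is an honest coproduct decomposition), a point the paper leaves implicit in \cref{const:distcoprodext}.
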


\begin{proof}
  To show that $\widetilde{p}_{L}$ has no component of degree $0$, let
  us decompose $c$ as 
  \[ c \simeq c_{0}^{(\widetilde{p}_{L})} \amalg
    c_{>0}^{(\widetilde{p}_{L})}. \]
  Then $p^{*}c_{0}^{(\widetilde{p}_{L})} \to p^{*}c \simeq c_{L} \amalg
  c_{R}$ factors through $c_{R}$ (since by definition its component
  over $c_{L}$ is $\emptyset$), and hence the composite
  $p^{*}c_{0}^{(\widetilde{p}_{L})} \to x \amalg x$ factors through the
  right copy of $x$. But then the adjoint map
  $c_{0}^{(\widetilde{p}_{L})} \to p_{*}(x \amalg x) \simeq y \amalg c
  \amalg y$ factors through the right copy of $y$, which means
  $c_{0}^{(\widetilde{p}_{L})} \simeq \emptyset$ since it also factors
  through $c$.
  \end{proof}
  
  For the remainder of this section, we fix an extensive bispan triple
  $(\mathcal{C}, \mathcal{C}_{F}, \mathcal{C}_{L})$ such that
  \begin{itemize}
  \item $\mathcal{C}$ is idempotent-complete,%
  \item $(\mathcal{C}, \mathcal{C}_{F})$ has a degree structure,
  \end{itemize}
  and an $L$-distributive
  functor $\Phi \colon \Span_{F}(\mathcal{C}) \to \CatI$ such that
  \begin{enumerate}[(1)]
  \item $\Phi$ preserves finite products,
  \item for each $x \in \mathcal{C}$, the \icat{} $\Phi(x)$ is
    additive,
  \end{enumerate}
  The following computation follows \cite[Corollary 5.15]{norms} closely:  
  \begin{lemma}\label{lem:computation}
  For any $p \colon x
  \rightarrow y$ in $\mathcal{C}_F$, we have for any $E, F \in \Phi(x)$ an
  equivalence
  \[
    p_{\otimes}(E \oplus F) \simeq p_{\otimes}(E) \,\oplus\, k_{\oplus}\nabla_{c,\otimes}\left(\widetilde{p}_{L,\otimes}\epsilon_L^{\ostar}(E),\widetilde{p}_{R,\otimes}\epsilon_R^{\ostar}(F)\right) \,\oplus\, p_{\otimes}(F)
  \]
  in terms of \cref{eq:poly-dis2}.
\end{lemma}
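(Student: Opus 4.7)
The plan is to apply the distributivity equivalence to the diagram \cref{eq:poly-dis2} and then unpack each piece using the decompositions $w \simeq y \amalg c \amalg y$ and $p^*w \simeq x \amalg c_L \amalg c_R \amalg x$ together with the product-preservation of $\Phi$.

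First, I would invoke $L$-distributivity of $\Phi$ (valid since $\nabla_x \colon x \amalg x \to x$ lies in $\mathcal{C}_L$ by the extensivity assumption on $(\mathcal{C},\mathcal{C}_L)$) to obtain the equivalence
\[
p_\otimes \, \nabla_{x,\oplus} \;\simeq\; g_\oplus \, \tilde{p}_\otimes \, \epsilon^\ostar
\]
of functors $\Phi(x \amalg x) \to \Phi(y)$. Using that $\Phi$ preserves products we identify $\Phi(x \amalg x) \simeq \Phi(x) \times \Phi(x)$, and under this identification $\nabla_{x,\oplus}(E,F) \simeq E \oplus F$; so the left-hand side evaluated at $(E,F)$ is exactly $p_\otimes(E \oplus F)$.

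Next I would unpack the right-hand side in stages, using product-preservation to identify $\Phi$ of each coproduct with the product of its components. Since $\epsilon \colon x \amalg c_L \amalg c_R \amalg x \to x \amalg x$ is the coproduct of $\id_x$, $\epsilon_L$, $\epsilon_R$, $\id_x$ (with $c_L, c_R$ landing in the left and right copies of $x$ respectively), the functor $\epsilon^\ostar$ sends $(E,F)$ to $(E, \epsilon_L^\ostar E, \epsilon_R^\ostar F, F)$. Similarly, $\tilde{p}$ decomposes as the coproduct of $p$, $\nabla_c \circ (\tilde{p}_L \amalg \tilde{p}_R)$, and $p$ (where the middle component is the pullback map $c_L \amalg c_R \to c$ of the square in \cref{eq:poly-dis2part}); by functoriality of $(\blank)_\otimes$ and its compatibility with coproducts (as $\Phi$ preserves products),
\[
\tilde{p}_\otimes(E, \epsilon_L^\ostar E, \epsilon_R^\ostar F, F) \;\simeq\; \bigl(p_\otimes E,\; \nabla_{c,\otimes}(\tilde{p}_{L,\otimes}\epsilon_L^\ostar E,\,\tilde{p}_{R,\otimes}\epsilon_R^\ostar F),\; p_\otimes F\bigr).
\]
Finally, writing $g = \nabla_y \circ (\id_y \amalg k \amalg \id_y)$, the functor $g_\oplus$ decomposes as $\nabla_{y,\oplus} \circ (\id \amalg k \amalg \id)_\oplus$, which applied to a triple $(A, B, C)$ yields $A \oplus k_\oplus B \oplus C$. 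Assembling these three identifications gives
\[
g_\oplus \tilde{p}_\otimes \epsilon^\ostar(E,F) \;\simeq\; p_\otimes E \;\oplus\; k_\oplus \nabla_{c,\otimes}\bigl(\tilde{p}_{L,\otimes}\epsilon_L^\ostar E,\,\tilde{p}_{R,\otimes}\epsilon_R^\ostar F\bigr) \;\oplus\; p_\otimes F,
\]
which matches the right-hand side of the claimed equivalence.

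The computation is essentially formal once distributivity is applied; the only subtle step is the identification of the restrictions of $\epsilon$, $\tilde{p}$, and $g$ to the components of the coproduct decompositions coming from $w \simeq y \amalg c \amalg y$, which I would justify by construction of the sections $s_0, s_1$ in \cref{const:distcoprodext} together with extensivity (coproduct decompositions pull back and coproducts are disjoint). The remaining care is purely bookkeeping: checking that each decomposition of a morphism in $\mathcal{C}_F$ (respectively $\mathcal{C}_L$) induces the corresponding product decomposition of $(\blank)_\otimes$ (respectively $(\blank)_\oplus$) under $\Phi$, which is immediate from functoriality and product-preservation.
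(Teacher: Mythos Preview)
Your proposal is correct and follows essentially the same approach as the paper: invoke the distributivity equivalence $p_\otimes \nabla_{x,\oplus} \simeq g_\oplus \tilde{p}_\otimes \epsilon^\ostar$, identify $\nabla_{x,\oplus}$ with the direct sum via product-preservation, and then unpack the right-hand side using the coproduct decompositions. The paper's proof is very terse (it just says the claim ``follows from this equivalence, the transitivity of all the functors involved, and the identification of $\nabla_{\oplus}$ with the direct sum functor''), whereas you have spelled out explicitly how $\epsilon$, $\tilde{p}$, and $g$ decompose componentwise---this is exactly the bookkeeping the paper leaves implicit.
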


\begin{proof}
  The distributivity transformation (\cref{def:L-dis}) for \eqref{eq:poly-dis} gives an equivalence:
  \[
    p_{\otimes}\nabla_{\oplus} \simeq
    g_{\oplus}\tilde{p}_{\otimes}\epsilon^{\ostar}.\]
  The claim then follows from this equivalence, the transitivity of
  all the functors involved, and the identification of
  $\nabla_{\oplus}$ with the direct sum functor, which follows from
  the assumption that $F$ is product-preserving.
\end{proof}

\begin{remark}\label{rmk:folddist}
  For any morphism $\phi \colon y \to x$ in $\mathcal{C}_{L}$, it is
  easy to see that the
  following is a distributivity diagram:
  \[
    \begin{tikzcd}
      {} & y \amalg y \arrow{dl}[swap]{\id \amalg \phi} \arrow{dd}{\phi
        \amalg \phi} 
      \arrow{r}{\nabla_{y}} & y \arrow{dd}{\phi} \\
      y \amalg x \arrow{dr}{\phi \amalg \id} \\
       & x \amalg x \arrow{r}{\nabla_{x}} & x,
    \end{tikzcd}
  \]
  since for $\alpha \colon z \to x$ we have an equivalence
  \[ \left\{
      \begin{tikzcd}
        z \amalg z \arrow{rr} \arrow{dr}[swap]{\alpha \amalg \alpha} & & y
        \amalg x \arrow{dl}{\phi \amalg \id} \\
         & x \amalg x
      \end{tikzcd}
      \right\}
      \simeq
      \left\{
      \begin{tikzcd}
        z  \arrow{rr} \arrow{dr}[swap]{\alpha} & & y
        \arrow{dl}{\phi} \\
        & x
      \end{tikzcd}
      \right\}.
    \]
  It follows that for our $L$-distributive functor $\Phi$ we have an equivalence
  \[ \nabla_{x,\otimes}(\phi_{\oplus}, \id) \simeq
    \phi_{\oplus}\nabla_{y,\otimes}(\id, \phi^{\ostar})\]
  of functors $\Phi(y)\times \Phi(x) \to \Phi(x)$. If we take $\phi =
  \nabla_{x}$ then $\nabla_{x}^{\ostar}$ is the diagonal $\Phi(x) \to
  \Phi(x) \times \Phi(x)$ (since $\Phi$ by assumption preserves
  products) and hence its left adjoint $\nabla_{x,\oplus}$ is the
  coproduct on $\Phi(x)$. From this the previous equivalence
  specializes for $E,E',F \in \Phi(x)$ to a natural equivalence
  \[ \nabla_{x,\otimes}(E \oplus E', F) \simeq \nabla_{x,\otimes}(E,F)
    \oplus \nabla_{x,\otimes}(E',F),\]
  so that the functor $\nabla_{x,\otimes}$ preserves coproducts
  in each variable.
\end{remark}

\begin{propn}\label{propn:polynom}
  Suppose $p \colon x \rightarrow y$ is a morphism of degree $\leq n$ in
  $\mathcal{C}_F$ (for $n \geq -1$). Then the functor $p_{\otimes}
  \colon \Phi(x) \to \Phi(y)$ is
  polynomial of degree $\leq n$.
\end{propn}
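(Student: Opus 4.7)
We argue by induction on $n \geq -1$. For the base cases $n = -1$ and $n = 0$, any morphism $p \colon x \to y$ of degree $\leq n$ necessarily has $x \simeq \emptyset$ (in the extensive setting, the only admissible summands in the degree decomposition force this), so $p_{\otimes} \colon \Phi(\emptyset) \to \Phi(y)$. Since $\Phi$ preserves finite products---including the empty product---and $\emptyset$ is initial in $\mathcal{C}$, hence also terminal in $\Span_F(\mathcal{C})$ (where coproducts in $\mathcal{C}$ provide both products and coproducts), the \icat{} $\Phi(\emptyset)$ is terminal in $\CatI$. Therefore $p_{\otimes}$ is either the identity functor on a trivial \icat{} (the zero functor, polynomial of degree $\leq -1$) or a constant functor (polynomial of degree $\leq 0$), as required.

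For the inductive step with $n \geq 1$, the first move is to reduce to the case where $p$ has \emph{fixed} degree $n$. Applying the degree decomposition $y \simeq \coprod_{i=0}^{n} y_i^{(p)}$ and the product-preservation of $\Phi$ yields equivalences $\Phi(y) \simeq \prod_i \Phi(y_i^{(p)})$ and $\Phi(x) \simeq \prod_i \Phi(x_i^{(p)})$; using the base-change equivalence $\iota_i^{\ostar} p_{\otimes} \simeq (p_i)_{\otimes} (\iota_i')^{\ostar}$ for the summand inclusions, $p_{\otimes}$ splits as the direct sum $\bigoplus_i (p_i)_{\otimes}$ under these decompositions. By \cref{propn:coprodpoly}, it suffices to prove each $(p_i)_{\otimes}$ is polynomial of degree $\leq n$; the inductive hypothesis covers $i < n$, leaving the case $p$ of fixed degree $n$.

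Now assume $p$ has degree exactly $n \geq 1$. By definition of polynomial of degree $\leq n$, it suffices to show $D_E(p_{\otimes})$ is polynomial of degree $\leq n-1$ for every $E \in \Phi(x)$. Applying \cref{lem:computation} and discarding via \cref{propn:coprodpoly} the summand $p_{\otimes}(E)$ (constant in $F$, hence polynomial of degree $\leq 0$), we are reduced to proving that
\[ F \,\mapsto\, k_{\oplus}\nabla_{c,\otimes}\!\bigl(\tilde{p}_{L,\otimes}\epsilon_L^{\ostar}(E),\, \tilde{p}_{R,\otimes}\epsilon_R^{\ostar}(F)\bigr) \]
is polynomial of degree $\leq n-1$. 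The crucial input is the sharpened degree estimate $\deg \tilde{p}_R \leq n-1$, which refines \cref{lem:deg} from the vanishing of the degree-$0$ part of $\tilde{p}_R$ to the vanishing of all components of degree $\geq n$. (In the prototype $\mathcal{C} = \xF$ this reflects that a non-constant assignment $p^{-1}(y) \to \{L,R\}$, being excluded from being the all-$R$ choice, has at most $n-1$ labels $L$.) Given this bound, the inductive hypothesis applies to $\tilde{p}_R$ and yields that $\tilde{p}_{R,\otimes}$ is polynomial of degree $\leq n-1$. Together with the facts that $k_{\oplus}$ is a left adjoint (polynomial of degree $\leq 1$), that $\nabla_{c,\otimes}(A,\blank)$ is linear by \cref{rmk:folddist}, and that $\epsilon_R^{\ostar}$ is compatible with the product decompositions forced by $\Phi$ being product-preserving in the extensive setting, the standard composition rules for polynomial functors (\cf{} \cite[Lemma 5.24]{norms}) then give the required bound $\leq n-1$.

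The main obstacle is thus obtaining the degree estimate $\deg \tilde{p}_R \leq n-1$, which is not immediate from \cref{lem:deg}; the plan is to deduce it by inspecting the degree decomposition of $c$ with respect to $\tilde{p}_R$ and using property (4) of \cref{def:degree} (controlling degrees under coproducts with fold maps) together with an iterated application of the distributivity construction---essentially bootstrapping \cref{lem:deg} stratum by stratum.
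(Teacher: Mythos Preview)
Your overall structure is correct, and you rightly identify the degree estimate $\deg \tilde{p}_R \leq n-1$ as the crux of the induction. However, you stop short of actually proving it, and your proposed plan---``bootstrapping \cref{lem:deg} stratum by stratum'' via an ``iterated application of the distributivity construction''---is too vague to constitute an argument; no iteration is in fact needed.

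The paper's argument for this estimate is direct. Your reduction to fixed degree $n$ (or the weaker reduction to $y_0^{(p)} \simeq \emptyset$, which is all the paper uses) ensures that \cref{lem:deg} applies. Observe that $c_L \amalg c_R \simeq c \times_y x \to c$ is a base change of $p$, hence of degree $\leq n$. This morphism factors as the fold composite
\[
c_L \amalg c_R \xrightarrow{\,\tilde{p}_L \amalg \tilde{p}_R\,} c \amalg c \xrightarrow{\,\nabla_c\,} c,
\]
so \cref{lem:deg-coprod} identifies its degree-$k$ part as lying over $\coprod_{a+b=k} c_a^{(\tilde{p}_L)} \times_c c_b^{(\tilde{p}_R)}$. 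The decisive point is that \cref{lem:deg} applies to \emph{both} $\tilde{p}_L$ and $\tilde{p}_R$, so neither has a degree-$0$ component. Since therefore $c \simeq \coprod_{a \geq 1} c_a^{(\tilde{p}_L)}$, any nonempty $c_b^{(\tilde{p}_R)}$ must meet some $c_a^{(\tilde{p}_L)}$ with $a \geq 1$, contributing to degree $a+b$ of the fold composite; if $b \geq n$ this exceeds $n$, a contradiction. Hence $\tilde{p}_R$ (and symmetrically $\tilde{p}_L$) has degree $\leq n-1$. Your $\xF$-intuition also has the exclusion slightly garbled: exclusion from all-$L$ bounds the number of $L$-labels above by $n-1$, while exclusion from all-$R$ bounds it below by $1$; both together give the range $[1,n-1]$, matching the argument above.
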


\begin{proof}
  By convention, a degree-$(-1)$ morphism is given by
  $\emptyset \rightarrow \emptyset$ and thus defines the zero functor,
  a polynomial functor of degree $-1$.  We now proceed by
  induction. Since any morphism $\emptyset \rightarrow y$ of degree
  zero induces a constant functor
  $\Phi(\emptyset) \simeq \ast \rightarrow \Phi(y)$, a degree-0
  morphism gives a polynomial functor of degree $\leq 0$. Now let us assume
  that the result has been proved for any morphism of degree
  $\leq n-1$ for $n \geq 1$. Consider the
  diagram~\eqref{eq:poly-dis2}. Since degree-$n$ morphisms are stable
  under pullback, the morphism
  $\widetilde{p}_L \amalg \widetilde{p}_R \colon c_L \amalg c_R \rightarrow
  c$ is also of degree $n$. Since $\Phi$ preserves coproduct
  decompositions in
  $\mathcal{C}$ (which are the products in bispans) by assumption,
  coproducts of polynomials of degree $\leq n$ are likewise polynomial
  of degree $\leq n$ by \cref{propn:coprodpoly}, and we already proved
  the case of degree $0$, we may assume that $y^{(p)}_0 \simeq \emptyset$.

Now, $c_L \amalg c_R \rightarrow c$ is the composite
\begin{equation}
  \label{eq:clrcomp}
c_L \amalg c_R \xrightarrow{\widetilde{p}_L \amalg \widetilde{p}_R} c \amalg c \xrightarrow{\nabla_{c}} c.   
\end{equation}
We claim that here $\widetilde{p}_L$ and $\widetilde{p}_R$ are both of
degree $\leq n-1$ --- indeed, since both maps have no component of
degree $0$ it follows from the description of the
degree decomposition of \cref{eq:clrcomp} in
\cref{const:folddegdecomp} and the additivity condition
\ref{it:degadd} in \cref{def:degree} that a component of degree $\geq
n$ in either map would produce a component of degree $> n$ in
\cref{eq:clrcomp}, which is impossible.

Hence by the inductive hypothesis $(\widetilde{p}_L)_{\otimes}$ and
$(\widetilde{p}_R)_{\otimes}$ are polynomial functors of
degree $\leq n-1$. 
To conclude, we fix $E$ and note that Lemma~\ref{lem:computation} yields an equivalence:
\[
  D_E(p_{\otimes})(\blank) \simeq
p_{\otimes}(E) \oplus
k_{\oplus}\nabla_{c,\otimes}\left(\widetilde{p}_{L,\otimes}\epsilon_L^{\ostar}(E),\widetilde{p}_{R,\otimes}\epsilon_R^{\ostar}(\blank)\right)
\]
Here $\nabla_{c,\otimes}(X,\blank)$ preserves finite coproducts by
\cref{rmk:folddist}, as does $k_{\oplus}$ since it is a left adjoint,
so the composite
$k_{\oplus}\nabla_{c,\otimes}\left(\widetilde{p}_{L,\otimes}\epsilon_L^{\ostar}(E),\widetilde{p}_{R,\otimes}\epsilon_R^{\ostar}(\blank)\right)$
is polynomial of degree $\leq n-1$ by \cite[Lemma
5.24(4)]{norms}. Using \cref{propn:coprodpoly} again we get that
$D_E(p_{\otimes})(\blank)$ is polynomial of degree $\leq n-1$ and
hence $p_{\otimes}$ is polynomial of degree $\leq n$.
\end{proof}

Since any morphism  in $\mathcal{C}_{F}$ has a degree decomposition
consisting of finitely many terms and so is of degree $\leq n$ for
some $n$, we have shown:
\begin{cor}\label{cor:polynom}
  Let $p \colon x \rightarrow y$ be a
  morphism in $\mathcal{C}_F$, then $p_{\otimes}$ is a polynomial
  functor.
\end{cor}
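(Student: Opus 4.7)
The statement is immediate from Proposition \ref{propn:polynom} once we reduce to the case of morphisms with a well-defined degree. The plan is to use the degree decomposition of $p$ provided by Definition \ref{def:degree}, together with the product-preservation of $\Phi$ and base change, to decompose $p_{\otimes}$ into a product of transfer functors each of which has a definite degree.

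First, I will invoke the degree decomposition: by definition, there exists a finite $N$ and an essentially unique coproduct decomposition $y \simeq \coprod_{i=0}^{N} y_i^{(p)}$ such that each component $p_i \colon x_i^{(p)} \to y_i^{(p)}$ (obtained by pullback along the coproduct inclusions $j_i \colon y_i^{(p)} \hookrightarrow y$) is a morphism in $\mathcal{C}_F$ of degree exactly $i$, hence of degree $\leq i$. Since $\mathcal{C}$ is extensive, we have $x \simeq \coprod_i x_i^{(p)}$ and $p \simeq \coprod_i p_i$.

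Next, I will identify $p_{\otimes}$ under product decompositions of $\Phi(x)$ and $\Phi(y)$. Because $\Phi$ is product-preserving and coproducts in $\mathcal{C}$ correspond to products in $\Span_F(\mathcal{C})$ (as $(\mathcal{C},\mathcal{C}_F)$ is an extensive span pair), the coproduct inclusions induce equivalences $\Phi(x) \isoto \prod_i \Phi(x_i^{(p)})$ and $\Phi(y) \isoto \prod_i \Phi(y_i^{(p)})$ via the pullback functors $\tilde{j}_i^{\ostar}$ and $j_i^{\ostar}$. The pullback square exhibiting $p_i$ as $j_i^{*}p$ yields via base change (using that $\Phi$ is left adjointable on $\mathcal{C}_{F}$, hence the Beck--Chevalley transformation is invertible, which here becomes the statement that $j_i^{\ostar}p_{\otimes} \simeq p_{i,\otimes} \tilde{j}_i^{\ostar}$) the identification of $p_{\otimes}$ with the product $\prod_i p_{i,\otimes}$ under these equivalences.

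Finally, by Proposition \ref{propn:polynom} each $p_{i,\otimes} \colon \Phi(x_i^{(p)}) \to \Phi(y_i^{(p)})$ is polynomial of degree $\leq i$, hence polynomial of degree $\leq N$. It remains to observe that a finite product of polynomial functors of degree $\leq N$ is again polynomial of degree $\leq N$: this is a straightforward induction on $N$ using that the derivative $D_{(a_i)}(\prod_i F_i)$ evaluated at $(b_i)$ is $\prod_i D_{a_i}(F_i)(b_i)$, which reduces the claim at degree $N$ to the inductive hypothesis at degree $N-1$ (with the base case $N = -1$ being trivial). The main subtlety is only in verifying the base change identification in the second paragraph, but this is immediate from the left $F$-adjointability of $\Phi$ applied to the evident cartesian squares, so I expect no real obstacle.
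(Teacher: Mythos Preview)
Your argument is correct but takes a detour that the paper avoids. The paper simply observes that since any morphism $p$ in $\mathcal{C}_F$ has a degree decomposition with finitely many nonempty terms, it is by definition of degree $\leq N$ for some $N$; then Proposition~\ref{propn:polynom} applies directly to $p$ itself, with no decomposition needed. Your approach instead decomposes $p$ into its degree components $p_i$, applies the Proposition to each, and then reassembles via a product-of-polynomials argument. This works, but it duplicates the reduction that is already carried out inside the proof of Proposition~\ref{propn:polynom} (the step where one uses product-preservation and \cref{propn:coprodpoly} to assume $y_0^{(p)} \simeq \emptyset$).

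One small correction: the identification $j_i^{\ostar} p_\otimes \simeq p_{i,\otimes}\,\tilde j_i^{\ostar}$ is not a consequence of left $F$-adjointability (which concerns left adjoints $(\blank)_\oplus$). It follows directly from the functoriality of $\Phi$ on $\Span_F(\mathcal{C})$: the cartesian square exhibiting $p_i$ as the pullback of $p$ gives two equal composites of spans, and $\Phi$ sends these to equivalent functors. No Beck--Chevalley mate is involved here since both $(\blank)^{\ostar}$ and $(\blank)_\otimes$ are already in the span category.
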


\begin{ex}
  \cref{cor:polynom} applies for instance for any finite group $G$ to the bispan triple
  $(\xF_{G},\xF_{G},\xF_{G})$ of finite $G$-sets and the $G$-symmetric
  monoidal \icat{} of $G$-spectra as discussed in
  \cref{prop:genuine-g}. In this case we can in particular conclude
  that for any subgroup inclusion $H \subseteq G$, the
  Hill--Hopkins--Ravenel norm $\Sp^{H} \to \Sp^{G}$ is a polynomial
  functor. This example has already been studied in more detail by
  Konovalov~\cite{Konovalov}, though the proof of polynomiality is
  similar to ours.
\end{ex}

\subsection{From bispans to Tambara functors}
Summarizing the results of the previous two subsections, we have shown
\begin{cor}\label{cor:polyext}
  Let $(\mathcal{C}, \mathcal{C}_{F}, \mathcal{C}_{L})$ be an
  extensive bispan triple such that $\mathcal{C}$ is
  idempotent-complete and $(\mathcal{C}, \mathcal{C}_{F})$ has a
  degree structure. Suppose that $\Phi\colon \Span_{F} \to \CatI$ is an
  $L$-distributive functor such that
  \begin{itemize}
  \item $\Phi$ preserves finite products,
  \item for each $x \in \mathcal{C}$, the \icat{} $\Phi(x)$ is
    stable and idempotent-complete,
  \item for each morphism $f \colon x \to y$ in $\mathcal{C}$, the
    functor $f^{\ostar} \colon \Phi(x) \to \Phi(y)$ is
    polynomial.\footnote{This is automatic if $\mathcal{C}_{L}= \mathcal{C}$.}
  \end{itemize}
  Then $\Phi$ extends to a functor $\widetilde{\Phi} \colon
  \Bispan_{F,L}(\mathcal{C}) \to \CatI^{\poly}$, and hence induces a product-preserving
  functor $\Bispan_{F,L}(\mathcal{C}) \to \mathcal{S}$ given on
  an object $x \in \mathcal{C}$ by $\Omega^{\infty}K(\Phi(x))$.
\end{cor}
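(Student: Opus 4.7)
The plan is to derive this corollary as a direct application of \cref{cor:Knorm} once we have verified all of its hypotheses. By assumption $\Phi$ is $L$-distributive with stable idempotent-complete values, and each $f^{\ostar}$ is polynomial, so conditions (1), (2) and (4) of \cref{cor:Knorm} are immediate. Thus the entire content of the first half of the statement is the polynomiality of $f_{\otimes}$ for every $f$ in $\mathcal{C}_{F}$, which is exactly the output of \cref{cor:polynom}. Invoking \cref{cor:Knorm} then yields the extension $\widetilde{\Phi} \colon \Bispan_{F,L}(\mathcal{C}) \to \CatI^{\poly}$ and the composite functor $\Omega^{\infty}K \circ \widetilde{\Phi} \colon \Bispan_{F,L}(\mathcal{C}) \to \mathcal{S}$ sending $x$ to $\Omega^{\infty}K(\Phi(x))$.

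It remains only to verify that this composite preserves finite products. First, since $(\mathcal{C},\mathcal{C}_{F},\mathcal{C}_{L})$ is extensive, the coproduct in $\mathcal{C}$ equips $\Bispan_{F,L}(\mathcal{C})$ with a symmetric monoidal structure (by \cref{ex:bispancoprod}), and this structure is in fact cartesian: the same calculation carried out in \cref{rmk:coprodisprodinbispan} shows that $\amalg$ computes the product in the underlying \icat{} of $\Bispan_{F,L}(\mathcal{C})$ as well. Next, the hypothesis that $\Phi \colon \Span_{F}(\mathcal{C}) \to \CatI$ preserves finite products, combined with the analogous identification of coproducts with products in $\Span_{F}(\mathcal{C})$ (\cref{ex:spancocart}), gives $\Phi(x \amalg y) \simeq \Phi(x) \times \Phi(y)$, and the projections in $\Bispan_{F,L}(\mathcal{C})$ are sent by $\widetilde{\Phi}$ to the usual projections.

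Finally, $\Omega^{\infty}K$ preserves finite products of stable idempotent-complete \icats{}: under the identification $K(\mathcal{A} \times \mathcal{B}) \simeq K(\mathcal{A}) \oplus K(\mathcal{B})$ the two projection functors give the projections in spectra, and passing to $\Omega^{\infty}$ turns this direct sum into a product of spaces. Composing these three product-preservation statements, we conclude that $x \mapsto \Omega^{\infty}K(\Phi(x))$ is product-preserving on $\Bispan_{F,L}(\mathcal{C})$.

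The only nontrivial step in this plan is the polynomiality of $f_{\otimes}$, which is already handled by the inductive argument culminating in \cref{cor:polynom}; everything else is bookkeeping assembling results proved earlier in the paper. I do not anticipate any obstacle, since the extensivity hypothesis is precisely what is needed to translate between coproducts in $\mathcal{C}$ and products in the bispan category, and the additivity of $K$-theory on stable \icats{} handles the final compatibility with $\Omega^{\infty}K$.
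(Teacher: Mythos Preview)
Your proposal is correct and matches the paper's approach: the corollary is stated there as a direct summary of \cref{cor:Knorm} and \cref{cor:polynom}, with no separate proof given. Your added verification that the composite preserves finite products (via the remark that coproducts in $\mathcal{C}$ give products in $\Bispan_{F,L}(\mathcal{C})$ for an extensive bispan triple, together with additivity of $K$-theory) simply fills in a detail the paper leaves implicit.
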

Our goal in this final subsection is to give some applications of this
result towards obtaining new structure on algebraic $K$-theory.

We first consider the equivariant situation, as previously discussed
in \S\ref{subsec:equivt}. For a finite group $G$ the category
$\xF_{G}$ of finite $G$-sets is extensive and idempotent-complete, and
by Example~\ref{ex:degree}(3) it also has a degree structure. We
therefore get:
\begin{cor}\label{cor:tambara}
  Let $\mathcal{C} \colon \Span(\xF_{G}) \to \CatI$ be a $G$-symmetric
  monoidal \icat{} that is compatible with additive transfers. If the
  \icat{} $\mathcal{C}^{H}$ is stable and idempotent-complete for
  every $H \subseteq G$, then $\mathcal{C}$ extends to a functor
  \[\widetilde{\mathcal{C}} \colon \Bispan(\xF_{G}) \to \CatI^{\poly},\]
  and hence induces a product-preserving
  functor
  \[ \Bispan(\xF_{G}) \to \mathcal{S}, \qquad G/H \mapsto \Omega^{\infty}K(\mathcal{C}^{H}).\]
\end{cor}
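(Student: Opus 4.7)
The plan is to verify that the hypotheses of Corollary~\ref{cor:polyext} hold for the bispan triple $(\xF_{G}, \xF_{G}, \xF_{G})$ and the functor $\mathcal{C}$, and then invoke it directly. First, the \icat{} $\xF_{G}$ is extensive and idempotent-complete, and since it is locally cartesian closed (being a presheaf category on $BG$), all distributivity diagrams exist, so $(\xF_{G}, \xF_{G}, \xF_{G})$ is an extensive bispan triple. The degree structure on $(\xF_{G}, \xF_{G})$ is provided by Example~\ref{ex:degree}(3), where a map is of degree $n$ precisely when every (underlying) fibre has cardinality $n$; the required properties (stability under base change and additivity under fold maps) are straightforward.

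Next I would verify the conditions on $\mathcal{C}$. The assumption that $\mathcal{C}$ is compatible with additive transfers (\cref{def:comp-add}) is exactly $L$-distributivity by \cref{propn:SpanFGdist}, since here $L = \xF_{G}$. That $\mathcal{C}$ preserves finite products is the definition of a $G$-symmetric monoidal \icat{}. For $X \in \xF_{G}$, decomposing $X \cong \coprod_{i} G/H_{i}$ as a coproduct of orbits gives $\mathcal{C}(X) \simeq \prod_{i} \mathcal{C}^{H_{i}}$, and a finite product of stable idempotent-complete \icats{} is again stable and idempotent-complete. Finally, since $\mathcal{C}_{L} = \mathcal{C}$, the condition that $f^{\ostar}$ is polynomial for every morphism $f$ is automatic: every $f^{\ostar}$ is a right adjoint (with left adjoint $f_{\oplus}$), hence exact between stable \icats{}, hence polynomial of degree $\leq 1$ by \cref{rmk:exactispoly}.

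Applying \cref{cor:polyext} therefore produces the extension
\[
\widetilde{\mathcal{C}} \colon \Bispan(\xF_{G}) \to \CatI^{\poly},
\]
and composing with the polynomially-functorial $\Omega^{\infty}K$ from \cref{thm:bgmn} yields the functor $\Bispan(\xF_{G}) \to \mathcal{S}$ taking $G/H$ to $\Omega^{\infty}K(\mathcal{C}^{H})$. The last thing to check is that this composite preserves finite products. By \cref{rmk:coprodisprodinbispan} (specialized to the extensive \icat{} $\xF_{G}$), the product in $\Bispan(\xF_{G})$ is computed by the coproduct in $\xF_{G}$, which $\mathcal{C}$ sends to products in $\CatI$; since $\Omega^{\infty}K$ of a finite product of stable idempotent-complete \icats{} is the product of the $\Omega^{\infty}K$s, product-preservation follows.

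No step is genuinely difficult here: the main content has already been packaged into \cref{propn:SpanFGdist}, \cref{cor:polyext}, and \cref{thm:bgmn}. The only point requiring slight care is confirming that the degree-structure hypothesis and the idempotent-completeness of $\mathcal{C}(X)$ for non-orbit $X$ reduce cleanly to statements about orbits, which is immediate from extensivity of $\xF_{G}$ together with product-preservation of $\mathcal{C}$.
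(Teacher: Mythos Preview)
Your proposal is correct and follows essentially the same approach as the paper. The paper's own argument is even more terse: the sentence preceding the corollary simply records that $\xF_{G}$ is extensive and idempotent-complete and carries the degree structure of Example~\ref{ex:degree}(3), then invokes Corollary~\ref{cor:polyext}; your explicit checks that $\mathcal{C}(X)$ is stable and idempotent-complete for non-orbit $X$, that $f^{\ostar}$ is automatically polynomial, and that the composite is product-preserving are details the paper leaves implicit in the phrase ``we therefore get''.
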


\begin{remark}
  A \emph{Tambara functor} \cite{Tambara} is a functor
  $T \colon \Bispan(\xF_{G}) \to \Ab$, or equivalently a functor
  $T \colon \Bispan(\xF_{G}) \to \Set$ such that the induced
  commutative monoid structure on $T(G/H)$ is a group for every
  $H \subseteq G$. The output of \cref{cor:tambara} is an \icatl{}
  analogue of this: it is a functor
  $T \colon \Bispan(\xF_{G}) \to \mathcal{S}$ such that the
  commutative monoid structure on $T(G/H)$ is grouplike for every
  $H \subseteq G$. We will refer to this as a \emph{homotopical
    Tambara functor}. Note that it is expected that connective genuine
  $G$-$E_{\infty}$-ring spectra are equivalent to these homotopical
  Tambara functors.
\end{remark}

\begin{ex}
  By \cref{prop:genuine-g} the $G$-symmetric monoidal
  \icat{} of $G$-spectra (given by $G/H \mapsto \Sp^{H}$) satisfies the
  hypotheses of \cref{cor:tambara}. Hence the $G$-equivariant
  $K$-theory of the sphere spectrum is a homotopical Tambara functor:
  the functor
\[
  \Omega^{\infty}K(\mathbb{S}_{G}) \colon\mathcal{O}_{G}^{\op}
  \rightarrow \mathcal{S} \qquad G/H \mapsto \Omega^{\infty} K(\Sp^H),
\]
extends canonically to a product-preserving functor
$ \Bispan(\xF_G) \rightarrow \mathcal{S}.$  
\end{ex}

To generalize this, we introduce some terminology:
\begin{defn}
  Let $G$ be a finite group. For us a \emph{genuine
    $G$-$\mathbb{E}_{\infty}$-ring spectrum} $E$ will be a section of the
  cocartesian fibration $\int \Sp_G \rightarrow \Span(\xF_G)$ sending
  a backward arrow to a cocartesian edge.
\end{defn}

\begin{remark}
This definition mimics the
  definition of a normed motivic spectrum in the context of motivic
  homotopy theory \cite{norms}; see especially \cite[Definition
  9.14]{norms}. As explained there, it is also equivalent to the
  classical definition of a genuine $G$-$\mathbb{E}_{\infty}$-ring
  spectrum by comparison of associated monads.
\end{remark}

By a similar argument as in \cite[Proposition 7.6(4)]{norms}, the
formation of modules assemble into a functor
\[
\Mod_E\colon\Span(\xF_G) \rightarrow \CatI, \qquad G/H \mapsto \Mod_{E^{H}}(\Sp^{H}),
\]
which satisfies the hypotheses of \cref{cor:tambara}. We then have:
\begin{thm}\label{genuinering}
  The algebraic $K$-theory of a genuine $G$-$\mathbb{E}_{\infty}$-ring
  spectrum $E$ is a homotopical Tambara functor:
  the functor
\[
  \Omega^{\infty}K(E) \colon \mathcal{O}_{G}^{\op}
  \rightarrow \mathcal{S} \qquad G/H \mapsto \Omega^{\infty} K(E^H) := \Omega^{\infty} K(\Mod_{E^H}(\Sp^{H})),
\]
extends canonically to a product-preserving functor
$ \Bispan(\xF_G) \rightarrow \mathcal{S}.$
\end{thm}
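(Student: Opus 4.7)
The plan is to reduce Theorem~\ref{genuinering} to \cref{cor:tambara} applied to the functor $\Mod_{E} \colon \Span(\xF_{G}) \to \CatI$ asserted in the excerpt (obtained along the lines of \cite[Proposition 7.6(4)]{norms}), restricted to perfect modules so that all values are small. The work then amounts to checking the three hypotheses of \cref{cor:tambara}: (a) $\Mod_{E}$ preserves finite products; (b) for each $H \subseteq G$ the \icat{} $\Perf_{E^{H}}(\Sp^{H})$ is stable and idempotent-complete; (c) $\Mod_{E}$ is compatible with additive transfers in the sense of \cref{def:comp-add}. Once these are verified, \cref{cor:tambara} produces a functor $\widetilde{\Mod}_{E} \colon \BISPAN(\xF_{G}) \to \CATI$, and postcomposing with $\Omega^{\infty}K$ (via \cref{thm:bgmn}) together with product-preservation yields the desired homotopical Tambara functor.

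Step (a) follows from the product-preservation of $\sigma_{G}$ (\cref{prop:genuine-g}) together with the fact that modules over a product of $\mathbb{E}_{\infty}$-algebras in a product of symmetric monoidal \icats{} decompose into the product of module \icats{}. Step (b) is immediate: $\Perf_{E^{H}}(\Sp^{H})$ is small, stable, and idempotent-complete as the \icat{} of dualizable modules over an $\mathbb{E}_{\infty}$-ring spectrum. The norms $N_{H}^{K}$ preserve dualizable objects since they are symmetric monoidal, and the restrictions and additive transfers preserve perfect objects for analogous reasons, so $\Mod_{E}$ really does restrict to a functor $\Span(\xF_{G}) \to \CatI$ with values in small stable idempotent-complete \icats{}.

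Step (c) is the main content. Following \cref{Gcatleftadjble} and \cref{propn:SpanFGdist} we need to verify: (i) for each inclusion $H \subseteq K$, the restriction functor on modules has a left adjoint and the double coset formula holds, and (ii) distributivity transformations for pairs $l \in \xF_{G}$, $\psi \in \xF_{G}$ are equivalences. For (i), the module-level left adjoint is induction along the ring map $E^{K} \to (q_{H}^{K})^{\ostar}E^{H}$ that is adjoint to the multiplicative structure map, and the base-change condition reduces to the one for $\Sp^{(\blank)}$ (\cref{prop:genuine-g}) by checking on underlying spectra --- since restriction of modules creates colimits, and the forgetful functor to spectra detects equivalences, the mate for modules is an equivalence \IFF{} the mate for underlying spectra is. The hard part is (ii): one must show that the distributivity transformation for $\Mod_{E}$ is an equivalence. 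The strategy is again to reduce to the underlying $G$-symmetric monoidal structure on $\Sp^{(\blank)}$: by \cref{rmk:dist=adj} distributivity is an adjointability condition, and the forgetful functors from modules to spectra are conservative and create the relevant limits/colimits, while the module structures on the norms and transfers are determined by those on the underlying $G$-$\mathbb{E}_{\infty}$-ring. Concretely, the distributivity map on modules is obtained by applying the distributivity for $\sigma_{G}$ fibrewise and then using the multiplicative norm structure maps of $E$ to identify the endpoints, so invertibility on spectra (which holds by \cref{prop:genuine-g}) forces invertibility on modules.

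With (a), (b), (c) in hand, \cref{cor:tambara} produces the extension $\widetilde{\Mod}_{E}\colon \BISPAN(\xF_{G}) \to \CATI^{\poly}$ (using that polynomiality of the multiplicative norms on modules follows from \cref{cor:polynom} applied to the extensive bispan triple $(\xF_{G},\xF_{G},\xF_{G})$ with its degree structure of \cref{ex:degree}(3)); postcomposing with $\Omega^{\infty}K$ and noting product-preservation gives the desired homotopical Tambara functor $\Bispan(\xF_{G}) \to \mathcal{S}$. The principal obstacle in this program is the module-level distributivity check in (ii); everything else is formal once one grants the construction of $\Mod_{E}$ as a functor out of $\Span(\xF_{G})$.
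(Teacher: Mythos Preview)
Your proposal is correct and follows essentially the same approach as the paper: the paper's argument consists only of the sentence immediately preceding the theorem, which asserts that $\Mod_E$ (constructed as in \cite[Proposition 7.6(4)]{norms}) satisfies the hypotheses of \cref{cor:tambara}, after which the result is immediate. You have simply unpacked those hypotheses and sketched their verification in more detail than the paper does, correctly identifying the module-level distributivity as the substantive step.
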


\begin{remark}\label{rem:g-stuff}
  As already mentioned, a homotopical Tambara functor is expected to
  be the same thing as a connective $G$-$\mathbb{E}_{\infty}$-ring
  spectrum. Assuming this, \cref{genuinering} says that algebraic
  $K$-theory preserves $G$-$\mathbb{E}_{\infty}$-ring structures. As
  far as we are aware, this is a completely new structure on algebraic
  K-theory --- indeed the recent paper \cite{GMMOKth} seems to be the
  first to construct even an associative ring structure valued in
  $G$-spectra, though the results of \cite{BarwickMackey2} should also
  suffice to construct $K(E)$ as an ordinary $E_{\infty}$-algebra in
  $G$-spectra.
\end{remark}

Another interesting class of examples arises from the $G$-symmetric
monoidal \icats{} obtained from group actions as in \cref{ex:g-smc}:
\begin{cor}
  Let $\mathcal{C}$ be an idempotent-complete small stable \icat{},
  equipped with a symmetric monoidal structure that is compatible with
  finite coproducts. Then the functor
\[
  \Omega^{\infty}K_{G}(\mathcal{C}) \colon \mathcal{O}_{G}^{\op}
  \rightarrow \mathcal{S} \qquad G/H \mapsto \Omega^{\infty}
  K(\Fun(BG,\mathcal{C})),
\]
extends canonically to a homotopical Tambara functor
$ \Bispan(\xF_G) \rightarrow \mathcal{S}$. \qed
\end{cor}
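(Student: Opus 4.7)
The plan is to deduce this from the preceding Corollary \ref{cor:tambara} by constructing an appropriate $G$-symmetric monoidal \icat{} from $\mathcal{C}$ and verifying its hypotheses. Concretely, I would first invoke Example \ref{ex:g-smc} to produce, out of $\mathcal{C}$, a $G$-symmetric monoidal \icat{}
\[
\mathcal{C}_{G} \colon \Span(\xF_{G}) \xrightarrow{\Span(\mathcal{G})} \Span_{\fin}(\mathcal{S}) \longrightarrow \CatI,
\]
where $\mathcal{G} \colon \xF_{G} \to \mathcal{S}_{\fin}$ is the functor $X \mapsto X_{hG}$ (which preserves pullbacks since it factors through the straightening equivalence $\Fun(BG,\mathcal{S}) \simeq \mathcal{S}_{/BG}$ followed by the forgetful functor) and the second functor is the one associated to $\mathcal{C}$ via Corollary \ref{cor:RKESpanfinCMon}. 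Under this assignment $G/H \mapsto \Fun((G/H)_{hG}, \mathcal{C}) \simeq \Fun(BH, \mathcal{C})$, yielding the desired underlying $G$-\icat{}.

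Next I would verify the hypotheses of Corollary \ref{cor:tambara}. For compatibility with additive transfers, observe that since $\otimes$ preserves finite coproducts in each variable, Proposition \ref{propn:SpanKdist} (applied with $\mathcal{K}$ the full subcategory of finite sets inside $\mathcal{S}$) tells us that the functor $\Span_{\fin}(\mathcal{S}) \to \CatI$ associated to $\mathcal{C}$ is $\mathcal{K}$-distributive. Since $\Span(\mathcal{G})$ is a morphism of span pairs (with $\mathcal{G}$ sending finite $G$-sets to finite discrete spaces), restricting along it preserves the left adjointability and distributivity transformations, so $\mathcal{C}_{G}$ is $G$-symmetric monoidal compatible with additive transfers in the sense of Definition \ref{def:comp-add}. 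The remaining hypothesis---that each \icat{} $\Fun(BH, \mathcal{C})$ is stable and idempotent-complete---is automatic from the corresponding properties of $\mathcal{C}$ since these are inherited by diagram \icats{}.

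Finally, applying Corollary \ref{cor:tambara} to $\mathcal{C}_{G}$ produces a product-preserving functor $\Bispan(\xF_{G}) \to \mathcal{S}$ whose value at $G/H$ is $\Omega^{\infty} K(\Fun(BH, \mathcal{C}))$, which is precisely the claimed homotopical Tambara functor extending $\Omega^{\infty}K_{G}(\mathcal{C})$. There is no real obstacle here: the proof is a direct application of the preceding machinery, with the only point requiring a moment's attention being the verification that the functor $\mathcal{G}$ preserves the relevant pullbacks and that $\Span(\mathcal{G})$ preserves products (the latter holding because $\mathcal{G}$ preserves coproducts, which it does since left adjoints preserve colimits in each factor).
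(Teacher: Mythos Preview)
Your proposal is correct and matches the paper's implicit argument: the corollary is stated with a bare \qed{} precisely because it follows by feeding the $G$-symmetric monoidal \icat{} of Example~\ref{ex:g-smc} into Corollary~\ref{cor:tambara}, and you have spelled out exactly that route. One small slip to fix: in your parenthetical you say $\mathcal{G}$ sends finite $G$-sets to finite discrete spaces, but $\mathcal{G}(G/H)\simeq BH$ is not discrete; what you need (and what Example~\ref{ex:g-smc} actually establishes) is that $\mathcal{G}$ sends all \emph{morphisms} of $\xF_G$ into $\mathcal{S}_{\fin}$ and preserves pullbacks, which together with the identification of slices $(\xF_G)_{/X}\hookrightarrow \mathcal{S}_{/\mathcal{G}(X)}$ shows that $\mathcal{G}$ carries distributivity diagrams to distributivity diagrams, so distributivity does indeed restrict along $\Span(\mathcal{G})$.
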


\begin{remark} In \cite{BarwickMackey2}, the authors proved that the
  $K$-theory of a ``na\"ive'' $G$-symmetric monoidal \icat{} admits
  the structure of a Green functor. The corollary above treats the
  case of a $G$-symmetric monoidal \icat{} where the action is trivial
  and produces a Tambara functor. We expect that a more general
  statement holds: the $K$-theory of certain ``na\"ive'' $G$-symmetric
  monoidal \icats{} should also form Tambara functors. We leave this to
  the interested reader.
\end{remark}

\begin{ex}
 As in Example~\ref{ex:g-rep}, let $R$ be an
 $\mathbb{E}_{\infty}$-ring spectrum and consider the $G$-symmetric
 monoidal \icat{} from that example, given by $G/H \mapsto \Fun(BG,
 \Perf(R))$. We obtain a spectral Tambara functor $\Bispan(\xF_{G})
 \to \mathcal{S}$ given by
 \[ G/H \mapsto K(\Fun(BG, \Perf(R))). \]
\end{ex}

Lastly, we note that plugging in our algebro-geometric examples of
bispans also gives additional structure on $K$-theory. Applying \cref{cor:polyext} to the extension of $\Perf$ to bispans as
in Theorem~\ref{thm:perfbispan} gives us the following result about the
$K$-theory of spectral Deligne--Mumford stacks:
\begin{thm}
  The $K$-theory presheaf
\[
\Omega^{\infty}K\colon \SpDM_{S}^{\op} \rightarrow \mathcal{S}, \qquad
X \mapsto \Omega^{\infty} K(\Perf(X)),
\]
canonically extends to a product-preserving functor $\Bispan_{\fet,\FP'}(\SpDM_{S}) \rightarrow \mathcal{S}$.
\end{thm}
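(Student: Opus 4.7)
The strategy is to factor the functor $\Perf \colon \Bispan_{\fet,\FP'}(\SpDM_{S}) \to \CatI$ coming from Theorem~\ref{thm:perfbispan} through $\CatI^{\poly}$, and then compose with the polynomial $K$-theory functor of Theorem~\ref{thm:bgmn}. Each $\Perf(X)$ is small, stable, and idempotent-complete, so it suffices to verify that for every bispan $X \xfrom{p} E \xto{f} B \xto{l} Y$ the composite $l_{*}f_{\otimes}p^{*}$ is polynomial. Here $p^{*}$ has a right adjoint and so is exact, and $l_{*}$ is exact as well since $l$ is perfect by Theorem~\ref{thm:perf}; both are therefore polynomial of degree $\leq 1$ by Remark~\ref{rmk:exactispoly}. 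As polynomial functors are closed under composition, the task reduces to showing that $f_{\otimes}$ is polynomial for each finite \'etale $f$.

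For this I would adapt the proof of Proposition~\ref{propn:polynom} to the right-distributive setting of Theorem~\ref{thm:perfbispan}. First, one verifies that $(\SpDM_{S}, \SpDM_{S}^{\fet}, \SpDM_{S}^{\FP'})$ is an extensive bispan triple in the sense of Definition~\ref{def:extensive}: $\SpDM_{S}$ is extensive, and both $\fet$ and $\FP'$ are closed under finite coproducts and contain the morphisms $\emptyset \to X$ as well as the fold maps $X \amalg X \to X$ (which are finite \'etale and, being proper, flat, of finite Tor-amplitude, and representable by spectral algebraic spaces, also lie in $\FP'$). Second, equip $(\SpDM_{S}, \SpDM_{S}^{\fet})$ with the degree structure of Example~\ref{ex:degree}(4) via the rank of $f_{*}\mathcal{O}_{E}$. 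Third, apply $\Perf$ to the distributivity diagram for $\nabla_{E}$ and $f$: since $\Perf(E)$ is additive and $\nabla_{*}$ computes the direct sum there, right $\FP'$-distributivity yields the dual analogue of Lemma~\ref{lem:computation},
\[
  f_{\otimes}(E \oplus F) \simeq f_{\otimes}(E)\,\oplus\,k_{*}\nabla_{c,\otimes}\bigl(\tilde{f}_{L,\otimes}\epsilon_{L}^{*}(E),\,\tilde{f}_{R,\otimes}\epsilon_{R}^{*}(F)\bigr)\,\oplus\,f_{\otimes}(F),
\]
after which the inductive argument of Proposition~\ref{propn:polynom} goes through essentially verbatim, using that $\epsilon_{i}^{*}$ and $k_{*}$ are exact and that $\nabla_{c,\otimes}$ preserves finite coproducts in each variable (by the dual of Remark~\ref{rmk:folddist}, as finite products agree with finite coproducts in an additive category).

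Composing the resulting factorization $\Bispan_{\fet,\FP'}(\SpDM_{S}) \to \CatI^{\poly}$ with $\Omega^{\infty}K$ yields the desired functor. For product preservation, coproducts in $\SpDM_{S}$ serve as products in $\Bispan_{\fet,\FP'}(\SpDM_{S})$ by the same formal computation as in Remark~\ref{rmk:coprodisprodinbispan}; moreover $\Perf(X \amalg Y) \simeq \Perf(X) \times \Perf(Y)$ because perfect sheaves split along disjoint unions, and $K$-theory preserves products of stable $\infty$-categories, so the composite is product-preserving. The main obstacle is the careful dualization of Construction~\ref{const:distcoprodext} and Lemma~\ref{lem:deg}: one must verify that in the right-distributive framework the Weil restriction $f_{*}(E \amalg E \to E)$ still admits a canonical decomposition $B \amalg c \amalg B$ (the two copies of $B$ arising as adjoint sections of the two coproduct inclusions) and that the ``no degree-zero component'' statement of Lemma~\ref{lem:deg} persists. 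Both checks are routine once one unwinds the universal property of the Weil restriction in the spectral setting.
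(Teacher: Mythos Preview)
Your proposal is correct and follows the paper's approach: the paper simply states that the theorem follows by applying Corollary~\ref{cor:polyext} to the extension of $\Perf$ from Theorem~\ref{thm:perfbispan}, and your argument spells out exactly what this entails, including the routine dualization from the left-distributive to the right-distributive setting (using that $\nabla_{*} \simeq \nabla_{\oplus}$ in stable \icats{}) that the paper leaves implicit. One small quibble: $p^{*}$ on $\Perf$ need not have a right adjoint for arbitrary $p$, but it is still exact as the restriction of an exact functor on $\QCoh$, so your conclusion stands.
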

In other words, the $K$-theory of spectral Deligne--Mumford stacks has
multiplicative norms along finite \'etale maps. On the other hand,
applying \cref{thm:mot-bispans} gives us a result about the $K$-theory
of $\SH$, which is in some sense a stable analogue of the secondary
$K$-theory explored in the thesis of R\"ondigs \cite{rondigs}:
\begin{thm}
 The $K$-theory presheaf
\[
\Omega^{\infty}K\colon \AlgSpc^{\op} \rightarrow \mathcal{S}, \qquad X \mapsto K(\SH(X)),
\]
canonically extends to a product-preserving functor $\Bispan_{\fet,\sm}(\AlgSpc_{S}) \rightarrow \mathcal{S}$.
\end{thm}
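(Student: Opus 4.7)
The plan is to apply \cref{cor:polyext} to the $\sm$-distributive functor $\SH\colon \Span_{\fet}(\AlgSpc_S) \to \CatI$ of \cref{thm:mot-bispans} and post-compose with the BGMN extension $\Omega^{\infty}K\colon \CatI^{\poly} \to \mathcal{S}$ of \cref{thm:bgmn}. Since algebraic $K$-theory in the sense of \cref{thm:bgmn} is only defined on small idempotent-complete stable \icats{}, I would first restrict to the full subcategories $\SH(X)^{c}$ of compact objects; the pullback $f^{*}$, the smooth pushforward $f_{\sharp}$, and the finite \'etale norm $f_{\otimes}$ all preserve compact objects by standard arguments from the six-functor formalism for motivic spectra (in particular, each has a colimit-preserving right adjoint, so preserving compacts follows formally). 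This upgrades the construction of \cref{thm:mot-bispans} to a $\sm$-distributive functor $\SH^{c}\colon \Span_{\fet}(\AlgSpc_S) \to \CatI$ taking values in small idempotent-complete stable \icats{}.

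To invoke \cref{cor:polyext} I would then verify the remaining hypotheses. First, $(\AlgSpc_S, \AlgSpc^{\fet}_S, \AlgSpc^{\sm}_S)$ is an extensive bispan triple, since $\AlgSpc_S$ is extensive and each of $\fet,\sm$ contains the empty map and all fold maps and is closed under finite coproducts. Second, $(\AlgSpc_S, \AlgSpc^{\fet}_S)$ carries the degree structure of \cref{ex:degree}(4), in which a finite \'etale $f\colon X \to Y$ has degree $n$ precisely when $f_{*}\mathcal{O}_X$ is locally free of constant rank $n$; stability under base change and the additivity rule along fold maps are both standard. Third, $\SH^{c}$ preserves finite products: Nisnevich descent gives $\SH(X \amalg Y) \simeq \SH(X) \times \SH(Y)$, and this restricts to compact objects. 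Fourth, for an arbitrary morphism $f$ of algebraic spaces the pullback $f^{\ostar} = f^{*}$ is a symmetric monoidal colimit-preserving functor between stable \icats{}, hence exact, and so polynomial of degree $\leq 1$ by \cref{rmk:exactispoly}.

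Given these verifications, \cref{propn:polynom} (equivalently \cref{cor:polynom}) shows that every norm $f_{\otimes}$ along a finite \'etale $f$ is polynomial, and \cref{cor:polyext} then produces the extension $\widetilde{\SH^{c}}\colon \Bispan_{\fet,\sm}(\AlgSpc_S) \to \CatI^{\poly}$. Post-composition with $\Omega^{\infty}K$ from \cref{thm:bgmn} yields the desired functor $\Bispan_{\fet,\sm}(\AlgSpc_S) \to \mathcal{S}$, whose underlying presheaf on $\AlgSpc_S^{\op}$ is $X \mapsto \Omega^{\infty}K(\SH(X)^{c})$. Product-preservation is automatic: for an extensive bispan triple the coproduct in $\mathcal{C}$ furnishes the product in $\Bispan_{F,L}(\mathcal{C})$ (as noted after \cref{def:extensive}), $\SH^{c}$ converts coproducts of algebraic spaces to products of \icats{} by Nisnevich descent, and $\Omega^{\infty}K$ preserves finite products because $K(\mathcal{C} \times \mathcal{D}) \simeq K(\mathcal{C}) \times K(\mathcal{D})$ for small stable \icats{}.

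The principal obstacle is the polynomiality of the norm $f_{\otimes}$ along finite \'etale morphisms, which is exactly the content of \cref{propn:polynom} and depends on having a well-behaved degree structure; checking that the motivic situation fits that inductive framework is the main content. A secondary technical point is the passage to compact objects needed to land in $\CatI^{\poly}$, but this is standard for $\SH$ once one knows each of the three functorialities preserves compacts.
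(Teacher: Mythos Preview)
Your proposal is correct and matches the paper's approach: the paper states this theorem without separate proof, presenting it as an immediate application of \cref{cor:polyext} to the $\sm$-distributive functor of \cref{thm:mot-bispans}, and you have correctly spelled out the required verifications (extensive bispan triple, degree structure from \cref{ex:degree}(4), product-preservation via Nisnevich descent, exactness of $f^{*}$), including the passage to compact objects needed to land in $\CatI^{\poly}$, which the paper leaves implicit.

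One small correction: your parenthetical justification that $f_{\otimes}$ preserves compact objects because it ``has a colimit-preserving right adjoint'' is wrong for the norm --- the finite \'etale norm is not even additive, so it has no adjoint on either side. The conclusion is still correct, but the argument you gave applies only to $f^{*}$ and $f_{\sharp}$; for $f_{\otimes}$ one argues instead (as in \cite{norms}) that \'etale-locally the norm is an iterated smash product, which preserves compacts, and that compactness in $\SH$ can be detected \'etale-locally.
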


\begin{remark} The multiplicative pushforwards along finite \'etale morphisms on $\SH$ induce a kind of ``Adams operations'' on $K(\SH)$. It would be interesting to explore some computational consequences of this structure. 
\end{remark}

\begin{bibdiv}
\begin{biblist}
\bib{sga4-1}{book}{
  label={SGA4},
  title={Th\'{e}orie des topos et cohomologie \'{e}tale des sch\'{e}mas. {T}ome 1: {T}h\'{e}orie des topos},
  series={Lecture Notes in Mathematics, Vol. 269},
  note={S\'{e}minaire de G\'{e}om\'{e}trie Alg\'{e}brique du Bois-Marie 1963--1964 (SGA 4), Dirig\'{e} par M. Artin, A. Grothendieck, et J. L. Verdier. Avec la collaboration de N. Bourbaki, P. Deligne et B. Saint-Donat},
  publisher={Springer-Verlag, Berlin-New York},
  date={1972},
}

\bib{StacksProject}{webpage}{
  label={Stacks},
  title={The Stacks Project},
  url={http://stacks.math.columbia.edu},
}

\bib{ayoub-thesis1}{article}{
  author={Ayoub, Joseph},
  title={Les six op\'erations de {G}rothendieck et le formalisme des cycles \'evanescents dans le monde motivique. {II}},
  journal={Ast\'erisque},
  number={315},
  date={2007},
}

\bib{mot-tambara}{article}{
  author={Bachmann, Tom},
  title={Motivic Tambara functors},
  journal={Math. Z.},
  volume={297},
  date={2021},
  number={3-4},
  pages={1825--1852},
  eprint={arXiv:1807.02981},
}

\bib{norms}{article}{
  author={Bachmann, Tom},
  author={Hoyois, Marc},
  title={Norms in motivic homotopy theory},
  journal={Ast\'{e}risque},
  number={425},
  date={2021},
  label={BaHo21},
  eprint={arXiv:1711.03061},
}

\bib{BarwickThesis}{book}{
  author={Barwick, Clark},
  title={$(\infty ,n)$-{C}at as a closed model category},
  note={Thesis (Ph.D.)--University of Pennsylvania},
  date={2005},
}

\bib{BarwickK}{article}{
  author={Barwick, Clark},
  title={On the algebraic $K$-theory of higher categories},
  journal={J. Topol.},
  volume={9},
  date={2016},
  number={1},
  pages={245--347},
}

\bib{BarwickMackey}{article}{
  author={Barwick, Clark},
  title={Spectral {M}ackey functors and equivariant algebraic $K$-theory ({I})},
  journal={Adv. Math.},
  volume={304},
  date={2017},
  pages={646--727},
  eprint={arXiv:1404.0108},
  date={2014},
}

\bib{BDGNS1}{article}{
  author={Barwick, Clark},
  author={Dotto, Emanuele},
  author={Glasman, Saul},
  author={Nardin, Denis},
  author={Shah, Jay},
  title={Parametrized higher category theory and higher algebra: A general introduction},
  eprint={arXiv:1608.03654},
  date={2016},
}

\bib{BarwickMackey2}{article}{
  author={Barwick, Clark},
  author={Glasman, Saul},
  author={Shah, Jay},
  title={Spectral Mackey functors and equivariant algebraic $K$-theory, II},
  journal={Tunis. J. Math.},
  volume={2},
  date={2020},
  number={1},
  pages={97--146},
  eprint={arXiv:1505.03098},
}

\bib{polynomials}{article}{
  author={Barwick, Clark},
  author={Glasman, Saul},
  author={Mathew, Akhil},
  author={Nikolaus, Thomas},
  title={K-theory and polynomial functors},
  date={2021},
  eprint={arXiv:2102.00936},
}

\bib{BarwickSchommerPriesUnicity}{article}{
  eprint={arXiv:1112.0040},
  author={Barwick, Clark},
  author={Schommer-Pries, Christopher},
  title={On the unicity of the theory of higher categories},
  journal={J. Amer. Math. Soc.},
  volume={34},
  date={2021},
  number={4},
  pages={1011--1058},
}

\bib{BlumbergGepnerTabuada}{article}{
  author={Blumberg, Andrew J.},
  author={Gepner, David},
  author={Tabuada, Gon\c {c}alo},
  title={A universal characterization of higher algebraic $K$-theory},
  journal={Geom. Topol.},
  volume={17},
  date={2013},
  number={2},
  pages={733--838},
}

\bib{BlumbergHillNinfty}{article}{
  label={BlHi15},
  author={Blumberg, Andrew J.},
  author={Hill, Michael A.},
  title={Operadic multiplications in equivariant spectra, norms, and transfers},
  journal={Adv. Math.},
  volume={285},
  date={2015},
  pages={658--708},
  issn={0001-8708},
  review={\MR {3406512}},
  doi={10.1016/j.aim.2015.07.013},
}

\bib{blumberg-hill}{article}{
  label={BlHi18},
  author={Blumberg, Andrew J.},
  author={Hill, Michael A.},
  title={Incomplete {T}ambara functors},
  journal={Algebr. Geom. Topol.},
  volume={18},
  date={2018},
  number={2},
  pages={723--766},
}

\bib{BohmannNorm}{article}{
  author={Bohmann, Anna Marie},
  title={A comparison of norm maps},
  note={With an appendix by Bohmann and Emily Riehl},
  journal={Proc. Amer. Math. Soc.},
  volume={142},
  date={2014},
  number={4},
  pages={1413--1423},
}

\bib{neron}{book}{
  author={Bosch, Siegfried},
  author={L{\"u}tkebohmert, Werner},
  author={Raynaud, Michel},
  date={2017},
  publisher={Springer},
  title={N{\'e}ron Models},
  date={1990},
}

\bib{BrunTambara}{article}{
  author={Brun, M.},
  title={Witt vectors and equivariant ring spectra applied to cobordism},
  journal={Proc. Lond. Math. Soc. (3)},
  volume={94},
  date={2007},
  number={2},
  pages={351--385},
}

\bib{cisinski-deglise}{book}{
  author={Cisinski, D.-C.},
  author={D\'eglise, Fr\'ed\'eric},
  title={Triangulated categories of mixed motives},
  series={Springer Monographs in Mathematics},
  publisher={Springer, Cham},
  year={2019},
}

\bib{CranchThesis}{article}{
  author={Cranch, James},
  title={Algebraic theories and $(\infty ,1)$-categories},
  date={2010},
  eprint={arXiv:1011.3243},
}

\bib{CranchSpan}{article}{
  author={Cranch, James},
  title={Algebraic theories, span diagrams and commutative monoids in homotopy theory},
  date={2011},
  eprint={arXiv:1109.1598},
}

\bib{DressMackey}{book}{
  author={Dress, Andreas W. M.},
  title={Notes on the theory of representations of finite groups. Part I: The Burnside ring of a finite group and some AGN-applications},
  note={With the aid of lecture notes, taken by Manfred K\"{u}chler},
  publisher={Universit\"{a}t Bielefeld, Fakult\"{a}t f\"{u}r Mathematik, Bielefeld},
  date={1971},
}

\bib{drew-gallauer}{article}{
  author={Drew, Brad},
  author={Gallauer, Martin},
  title={The universal six-functor formalism},
  year={2020},
  eprint={arXiv:2009.13610},
}

\bib{eilenberg-maclane}{article}{
  author={Eilenberg, Samuel},
  author={Mac Lane, Saunders},
  title={On the groups {$H(\Pi ,n)$}. {II}. {M}ethods of computation},
  journal={Ann. of Math. (2)},
  volume={60},
  date={1954},
  pages={49--139},
}

\bib{EHKSY1}{article}{
  author={Elmanto, Elden},
  author={Hoyois, Marc},
  author={Khan, Adeel A.},
  author={Sosnilo, Vladimir},
  author={Yakerson, Maria},
  title={Motivic infinite loop spaces},
  journal={Camb. J. Math.},
  volume={9},
  date={2021},
  number={2},
  pages={431--549},
  eprint={arXiv:1711.05248},
}

\bib{e-shah}{article}{
  author={Elmanto, Elden},
  author={Shah, Jay},
  title={Scheiderer motives and equivariant higher topos theory},
  journal={Adv. Math.},
  volume={382},
  date={2021},
  pages={Paper No. 107651, 116},
  eprint={arXiv:1912.11557},
}

\bib{ElmendorfGSpace}{article}{
  author={Elmendorf, A. D.},
  title={Systems of fixed point sets},
  journal={Trans. Amer. Math. Soc.},
  volume={277},
  date={1983},
  number={1},
  pages={275--284},
}

\bib{fulton-mac}{article}{
  author={Fulton, William},
  author={MacPherson, Robert},
  title={Characteristic classes of direct image bundles for covering maps},
  journal={Ann. of Math. (2)},
  volume={125},
  date={1987},
  number={1},
  pages={1--92},
}

\bib{GagnaHarpazLanaryScale}{article}{
  author={Gagna, Andrea},
  author={Harpaz, Yonatan},
  author={Lanari, Edoardo},
  title={On the equivalence of all models for $(\infty ,2)$-categories},
  date={2020},
  eprint={arXiv:1911.01905},
}

\bib{GaitsgoryRozenblyum1}{book}{
  author={Gaitsgory, Dennis},
  author={Rozenblyum, Nick},
  title={A study in derived algebraic geometry. Vol. I. Correspondences and duality},
  series={Mathematical Surveys and Monographs},
  volume={221},
  publisher={American Mathematical Society, Providence, RI},
  date={2017},
  note={Available from \url {http://www.math.harvard.edu/~gaitsgde/GL/}.},
}

\bib{GambinoKock}{article}{
  author={Gambino, Nicola},
  author={Kock, Joachim},
  title={Polynomial functors and polynomial monads},
  journal={Math. Proc. Cambridge Philos. Soc.},
  volume={154},
  date={2013},
  number={1},
  pages={153--192},
}

\bib{GGN}{article}{
  author={Gepner, David},
  author={Groth, Moritz},
  author={Nikolaus, Thomas},
  title={Universality of multiplicative infinite loop space machines},
  journal={Algebr. Geom. Topol.},
  volume={15},
  date={2015},
  number={6},
  pages={3107--3153},
}

\bib{enr}{article}{
  author={Gepner, David},
  author={Haugseng, Rune},
  title={Enriched $\infty $-categories via non-symmetric $\infty $-operads},
  journal={Adv. Math.},
  volume={279},
  pages={575--716},
  eprint={arXiv:1312.3178},
  date={2015},
}

\bib{polynomial}{article}{
  author={Gepner, David},
  author={Haugseng, Rune},
  author={Kock, Joachim},
  title={$\infty $-operads as analytic monads},
  date={2017},
  eprint={arXiv:1712.06469},
}

\bib{freepres}{article}{
  author={Gepner, David},
  author={Haugseng, Rune},
  author={Nikolaus, Thomas},
  title={Lax colimits and free fibrations in $\infty $-categories},
  eprint={arXiv:1501.02161},
  journal={Doc. Math.},
  volume={22},
  date={2017},
  pages={1225--1266},
}

\bib{gepner-heller}{article}{
  author={Gepner, David},
  author={Heller, Jeremiah},
  title={The tom Dieck splitting theorem in equivariant motivic homotopy theory},
  year={2019},
  eprint={arXiv:1910.11485},
}

\bib{GlasmanStrat}{article}{
  author={Glasman, Saul},
  title={Stratified categories, geometric fixed points and a generalized Arone-Ching theorem},
  date={2017},
  eprint={arXiv:1507.01976},
}

\bib{goodwillie}{article}{
  author={Goodwillie, Thomas G.},
  title={Calculus. II. Analytic functors},
  journal={$K$-Theory},
  volume={5},
  date={1991/92},
  number={4},
  pages={295--332},
}

\bib{GreenleesMayMU}{article}{
  author={Greenlees, J. P. C.},
  author={May, J. P.},
  title={Localization and completion theorems for $M{\rm U}$-module spectra},
  journal={Ann. of Math. (2)},
  volume={146},
  date={1997},
  number={3},
  pages={509--544},
}

\bib{GuillouMaySpMack}{article}{
  author={Guillou, Bertrand},
  author={May, J. P.},
  title={Models of $G$-spectra as presheaves of spectra},
  date={2017},
  eprint={arXiv:1110.3571},
}

\bib{GMMOinfloops}{article}{
  author={Guillou, Bertrand},
  author={May, J. Peter},
  author={Merling, Mona},
  author={Osorno, Ang\'{e}lica M.},
  title={A symmetric monoidal and equivariant Segal infinite loop space machine},
  journal={J. Pure Appl. Algebra},
  volume={223},
  date={2019},
  number={6},
  pages={2425--2454},
}

\bib{GMMOsymmonGcat}{article}{
  author={Guillou, Bertrand J.},
  author={May, J. Peter},
  author={Merling, Mona},
  author={Osorno, Ang\'{e}lica M.},
  title={Symmetric monoidal $G$-categories and their strictification},
  journal={Q. J. Math.},
  volume={71},
  date={2020},
  number={1},
  pages={207--246},
}

\bib{GMMOKth}{article}{
  author={Guillou, Bertrand J.},
  author={May, J. Peter},
  author={Merling, Mona},
  author={Osorno, Ang\'{e}lica M.},
  title={Multiplicative equivariant $K$-theory and the Barratt-Priddy-Quillen theorem},
  date={2021},
  eprint={arXiv:2102.13246},
}

\bib{HarpazAmbi}{article}{
  author={Harpaz, Yonatan},
  title={Ambidexterity and the universality of finite spans},
  journal={Proc. Lond. Math. Soc. (3)},
  volume={121},
  date={2020},
  number={5},
  pages={1121--1170},
  eprint={arXiv:1703.09764},
}

\bib{HarpazNuitenPrasmaInfty2}{article}{
  author={Harpaz, Yonatan},
  author={Nuiten, Joost},
  author={Prasma, Matan},
  title={Quillen cohomology of $(\infty ,2)$-categories},
  journal={High. Struct.},
  volume={3},
  date={2019},
  number={1},
  pages={17--66},
}

\bib{enrcomp}{article}{
  author={Haugseng, Rune},
  title={Rectifying enriched $\infty $-categories},
  journal={Algebr. Geom. Topol.},
  volume={15},
  number={4},
  pages={1931--1982},
  eprint={arXiv:1312.3178},
  date={2015},
}

\bib{spans}{article}{
  author={Haugseng, Rune},
  title={Iterated spans and classical topological field theories},
  journal={Math. Z.},
  volume={289},
  number={3},
  pages={1427--1488},
  date={2018},
  eprint={arXiv:1409.0837},
}

\bib{adjmnd}{article}{
  author={Haugseng, Rune},
  title={On lax transformations, adjunctions, and monads in $(\infty ,2)$-categories},
  date={2020},
  eprint={arXiv:2002.01037},
}

\bib{HeineEnrMod}{article}{
  author={Heine, Hadrian},
  title={An equivalence between enriched $\infty $-categories and $\infty $-categories with weak action},
  date={2020},
  eprint={arXiv:2009.02428},
}

\bib{Hermida}{article}{
  author={Hermida, Claudio},
  title={Representable multicategories},
  journal={Adv. Math.},
  volume={151},
  date={2000},
  number={2},
  pages={164--225},
}

\bib{HHRKervaire}{article}{
  author={Hill, M. A.},
  author={Hopkins, M. J.},
  author={Ravenel, D. C.},
  title={On the nonexistence of elements of Kervaire invariant one},
  journal={Ann. of Math. (2)},
  volume={184},
  date={2016},
  number={1},
  pages={1--262},
  eprint={arXiv:0908.3724v2},
}

\bib{HinichYoneda}{article}{
  author={Hinich, Vladimir},
  title={Yoneda lemma for enriched $\infty $-categories},
  journal={Adv. Math.},
  volume={367},
  date={2020},
  pages={107129, 119},
  eprint={arXiv:1805.07635},
}

\bib{HopkinsHill}{article}{
  author={Hopkins, Michael J.},
  author={Hill, Michael A.},
  title={Equivariant symmetric monoidal structures},
  date={2016},
  eprint={arXiv:1610.03114},
}

\bib{hoyois-sixops}{article}{
  author={Hoyois, Marc},
  title={The six operations in equivariant motivic homotopy theory},
  journal={Adv. Math.},
  volume={305},
  date={2017},
  pages={197--279},
}

\bib{kzero}{article}{
  author={Joukhovitski, Seva},
  title={{$K$}-theory of the {W}eil transfer functor},
  note={Special issues dedicated to Daniel Quillen on the occasion of his sixtieth birthday, Part I},
  journal={$K$-Theory},
  volume={20},
  date={2000},
  number={1},
  pages={1--21},
}

\bib{adeel-mv}{article}{
  author={Khan, Adeel A.},
  title={The {M}orel-{V}oevodsky localization theorem in spectral algebraic geometry},
  journal={Geom. Topol.},
  volume={23},
  date={2019},
  number={7},
  pages={3647--3685},
}

\bib{knutson}{book}{
  author={Knutson, Donald},
  title={Algebraic spaces},
  series={Lecture Notes in Mathematics, Vol. 203},
  publisher={Springer-Verlag, Berlin-New York},
  date={1971},
}

\bib{Konovalov}{article}{
  author={Konovalov, Nikolai},
  title={Goodwillie tower of the norm functor},
  date={2020},
  eprint={arXiv:2010.09097},
}

\bib{LMMS}{book}{
  author={Lewis, L. G., Jr.},
  author={May, J. P.},
  author={McClure, J. E.},
  author={Steinberger, M.},
  title={Equivariant stable homotopy theory},
  series={Lecture Notes in Mathematics},
  volume={1213},
  note={With contributions by J. E. McClure},
  publisher={Springer-Verlag},
  place={Berlin},
  date={1986},
}

\bib{mv99}{article}{
  author={Morel, Fabien},
  author={Voevodsky, Vladimir},
  date={1999},
  journal={Inst. Hautes \'{E}tudes Sci. Publ. Math.},
  number={90},
  pages={45--143},
  title={${\bf A}^{1}$-homotopy theory of schemes},
}

\bib{HTT}{book}{
  author={Lurie, Jacob},
  title={Higher Topos Theory},
  series={Annals of Mathematics Studies},
  publisher={Princeton University Press},
  address={Princeton, NJ},
  date={2009},
  volume={170},
  note={Available from \url {http://math.harvard.edu/~lurie/}},
}

\bib{HA}{book}{
  author={Lurie, Jacob},
  title={Higher Algebra},
  date={2017},
  note={Available at \url {http://math.harvard.edu/~lurie/}.},
}

\bib{SAG}{book}{
  author={Lurie, Jacob},
  title={Spectral Algebraic Geometry},
  date={2018},
  note={Available at \url {http://math.harvard.edu/~lurie/}.},
}

\bib{MacphersonEnr}{article}{
  author={Macpherson, Andrew W.},
  title={The operad that co-represents enrichment},
  journal={Homology Homotopy Appl.},
  volume={23},
  date={2021},
  number={1},
  pages={387--401},
  eprint={arXiv:1902.08881},
}

\bib{MacphersonCorr}{article}{
  author={Macpherson, Andrew W.},
  title={A bivariant {Y}oneda lemma and $(\infty ,2)$-categories of correspondences},
  date={2020},
  eprint={arXiv:2005.10496},
}

\bib{MayMerlingOsorno}{article}{
  title={Equivariant infinite loop space theory, {I}. The space level story},
  author={May, J. Peter},
  author={Merling, Mona},
  author={Osorno, Ang\'elica M.},
  date={2017},
  eprint={arXiv:1704.03413},
}

\bib{NardinThesis}{article}{
  author={Nardin, Denis},
  title={Parametrized higher category theory and higher algebra: Expos\'e {IV} -- Stability with respect to an orbital $\infty $-category},
  date={2016},
  eprint={arXiv:1608.07704},
}

\bib{Ostermayr}{article}{
  author={Ostermayr, Dominik},
  title={Equivariant $\Gamma $-spaces},
  journal={Homology Homotopy Appl.},
  volume={18},
  date={2016},
  number={1},
  pages={295--324},
}

\bib{PatchkoriaRigid}{article}{
  author={Patchkoria, Irakli},
  title={Rigidity in equivariant stable homotopy theory},
  journal={Algebr. Geom. Topol.},
  volume={16},
  date={2016},
  number={4},
  pages={2159--2227},
}

\bib{RezkThetaN}{article}{
  author={Rezk, Charles},
  title={A Cartesian presentation of weak $n$-categories},
  journal={Geom. Topol.},
  volume={14},
  date={2010},
  number={1},
  pages={521--571},
}

\bib{RiehlVerityAdj}{article}{
  author={Riehl, Emily},
  author={Verity, Dominic},
  title={Homotopy coherent adjunctions and the formal theory of monads},
  journal={Adv. Math.},
  volume={286},
  date={2016},
  pages={802--888},
  eprint={arXiv:1310.8279},
}

\bib{rondigs}{article}{
  author={R\"ondigs, Oliver},
  title={The Grothendieck ring of varieties and algebraic K-theory of spaces},
  date={2016},
  eprint={arXiv:1611.09327},
}

\bib{RubinNorm}{article}{
  author={Rubin, Jonathan},
  title={Normed symmetric monoidal categories},
  date={2017},
  eprint={arXiv:1708.04777},
}

\bib{rydh-hilb}{article}{
  author={Rydh, David},
  title={Representability of {H}ilbert schemes and {H}ilbert stacks of points},
  journal={Comm. Algebra},
  volume={39},
  date={2011},
  number={7},
  pages={2632--2646},
}

\bib{ShahThesis}{article}{
  author={Shah, Jay},
  title={Parametrized higher category theory and higher algebra: {E}xpos\'e {II} --- {I}ndexed homotopy limits and colimits},
  date={2018},
  eprint={arXiv:1809.05892},
}

\bib{Shimakawa}{article}{
  author={Shimakawa, Kazuhisa},
  title={Infinite loop $G$-spaces associated to monoidal $G$-graded categories},
  journal={Publ. Res. Inst. Math. Sci.},
  volume={25},
  date={1989},
  number={2},
  pages={239--262},
}

\bib{StefanichCorr}{article}{
  title={Higher sheaf theory {I}: Correspondences},
  author={Stefanich, Germán},
  date={2020},
  eprint={arXiv:2011.03027},
}

\bib{StreetPoly}{article}{
  author={Street, Ross},
  title={Polynomials as spans},
  date={2019},
  eprint={arXiv:1903.03890},
}

\bib{StricklandTambara}{article}{
  author={Strickland, Neil},
  title={Tambara functors},
  date={2012},
  eprint={arXiv:1205.2516},
}

\bib{Tambara}{article}{
  author={Tambara, D.},
  title={On multiplicative transfer},
  journal={Comm. Algebra},
  volume={21},
  date={1993},
  number={4},
  pages={1393--1420},
}

\bib{treumann}{article}{
  author={Treumann, David},
  title={Representations of finite groups on modules over K-theory},
  date={2015},
  eprint={arXiv:1503.02477},
}

\bib{voevodsky-four}{article}{
  title={Four functors formalism},
  date={1999},
  note={Unpublished, available from \url {http://www.math.ias.edu/vladimir/files/2015_todeligne3_copy.pdf}},
  author={Voevodsky, Vladimir},
}

\bib{WalkerBispan}{article}{
  author={Walker, Charles},
  title={Universal properties of bicategories of polynomials},
  journal={J. Pure Appl. Algebra},
  volume={223},
  date={2019},
  number={9},
  pages={3722--3777},
  eprint={arXiv:1806.10477},
}
\end{biblist}
\end{bibdiv}

\end{document}